\tikzset{
	mynode/.style={fill,circle,inner sep=1.5pt,outer sep=0pt}
}
\numberwithin{equation}{section}
\pgfplotsset{/pgf/number format/use comma,compat=newest}
\theoremstyle{plain}
\newtheorem{thm}{Theorem}[section]
\newtheorem{cor}[thm]{Corollary}
\newtheorem{lem}[thm]{Lemma}
\newtheorem{prop}[thm]{Proposition}
\theoremstyle{definition}
\newtheorem{defn}[thm]{Definition}
\newtheorem{rem}[thm]{Remark}
\newcommand{\R}{\mathbb{R}}
\newcommand{\N}{\mathbb{N}}
\newcommand{\Sf}{\mathbb{S}}
\newcommand{\dist}{\textnormal{dist}}
\newcommand{\med}{\textnormal{med}}
\newcommand{\diam}{\textnormal{diam}}
\newcommand{\lip}{\textnormal{Lip}}
\newcommand{\supp}{\textnormal{Supp} }
\newcommand{\sbvp}{SBV^p}
\newcommand{\sbv}{SBV}
\newcommand{\gsbv}{GSBV}
\newcommand{\gsbvp}{GSBV^p}
\newcommand{\gammaiso}{\gamma_{\textnormal{iso}}}
\newcommand{\usurf}{\overline{u}_{x_0}^{\textnormal{surf}}}
\newcommand{\ubulk}{\overline{u}_{x_0}^{\textnormal{bulk}}}
\DeclareMathOperator*{\esssup}{ess\,sup}
\DeclareMathOperator*{\essinf}{ess\,inf}
\DeclareMathOperator*{\aplim}{ap-\,lim}
\newcommand{\G}{\mathcal{G}}
\newcommand{\Om}{\Omega}
\definecolor{blue_links}{RGB}{13,0,180} 
\title[Lower semicontinuity and relaxation for free discontinuity functionals with non-standard growth]{Lower semicontinuity and relaxation for free discontinuity functionals with non-standard growth}
\author[S. Almi]{Stefano Almi}
\address[Stefano Almi]{Dipartimento di Matematica e Applicazioni ``R.~Caccioppoli'',
	Universit\`a di Napoli Federico II, via Cintia, 80126 Napoli, Italy.}
\email{stefano.almi@unina.it}
\author[D. Reggiani]{Dario Reggiani}
\address[Dario Reggiani]{Scuola Superiore Meridionale, Universit\'a di Napoli Federico II
	Largo San Marcellino 10, 80138 Napoli, Italy.}
\email{dario.reggiani@unina.it}
\author[F. Solombrino]{Francesco Solombrino}
\address[Francesco Solombrino]{Dipartimento di Matematica e Applicazioni ``R.~Caccioppoli'',
	Universit\`a di Napoli Federico II, via Cintia, 80126 Napoli, Italy.}
\email{francesco.solombrino@unina.it}
\begin{document}
	
	\subjclass[2020]{49J45, 46E30, 49M20}
	\keywords{Free discontinuity functionals, non-standard growth,  integral representation, lower semicontinuity, relaxation}

	\maketitle
	
	\begin{abstract}
		A lower semicontinuity result and a relaxation formula for free discontinuity functionals with non-standard growth in the bulk energy are provided. Our analysis is based on a non-trivial adaptation of the blow-up~\cite{Ambrosiolsc} and of the global method for relaxation~\cite{BFLMrelaxSBVp} to the setting of generalized special function of bounded variation with Orlicz growth. Key tools developed in this paper are an integral representation result and a Poincar\'e inequality under non-standard growth.
	\end{abstract}

	\section{Introduction}
	\label{s:intro}
	 Integral functionals with non-standard growth first appeared in the works of Zhikov~\cite{zikov3, zikov} for modeling composite materials characterized by a strongly anisotropic behavior. The non-standard character of such functionals is typically expressed in terms of a point-dependent integrability of the deformation gradient, which may be captured, in a functional setting, in terms of variable exponents spaces~\cite{czech} or, more in general, in Orlicz type of spaces (see, e.g.,~\cite{Hastobook}). As relevant examples of bulk energies undergoing non-standard growth we report here the so-called {\em variable exponent} and the {\em double-phase} case
	\begin{equation}
	\label{e:intro1}
	|\xi|^{p(x)} \qquad\text{and} \qquad |\xi|^{p} + a(x) |\xi|^{q} \qquad \text{for $(x, \xi) \in \R^{d} \times \R^{d \times m}$,}
	\end{equation}
	for suitable choices of the exponent function $p \colon \R^{d} \to (1, +\infty)$, of the exponents  $1 < p < q <+\infty$, and of the weight function~$a \colon \R^{d} \to [0, +\infty)$. We refer to Section~\ref{subs: examples} for a list of relevant examples in the literature. 
	
	In a Sobolev/Orlicz setting, the study of integral functionals with non-standard growth has attracted an increasing attention in the last decades. Lower semicontinuity, relaxation, and integral representation have been tackled in a number of papers: we mention the works~\cite{MR1972867, MR1632814, MR2021666,  MR1450951}, dealing with the role of convexity and quasiconvexity in the {\em gap problem} for functionals characterized by a $(p, q)$-growth, i.e., different grow rates from above and below. Further results on relaxation in the gap problem have been presented in~\cite{MR2139562}, where some explicit examples of concentration effects for bulk densities of double-phase type~\eqref{e:intro1} are discussed, pointing out the importance of the H\"older regularity of~$a(\cdot)$ in the relaxation procedure. The variable exponent setting has been dealt with in~\cite{CosciaMucci} under the so called log-H\"older continuity assumption on the exponent function~$p(\cdot)$.  Further instances of non-standard growth in partial differential equations may be found in~\cite{MR3102165, MR3073153, MR3865198, BlancoDoublephVarexpo, MR3023424, MR4305170, MR3348928}, while applications ranging from optimal design  to electro-rheological fluids and homogeneization appeared in~\cite{MR4008674, MR4415789, MR1810360, MR1607245, MR1329546}. Let us also mention the somehow related topic of regularity of minimizers of functionals with non-standard growth, which have been thoroughly discussed in~\cite{acerbi2, acerbi3, MR1675954, MR3209686, ELEUTERI2023103815} for the variable exponent, in, e.g.,~\cite{MR3570955, MR3775180,  MR3360738, MR3294408, MR3765553, MR3945612, MR1094446} for the double phase, and in~\cite{MR4397041} in a unified and generalized framework.
			
When dealing with composite materials, it is rather natural to account for failure phenomena, such as fracture, which can not be captured by a mere bulk energy defined on Sobolev or Orlicz spaces. This leads us to the extension of the above non-standard growth functionals to a Free Discontinuity setting, where singularities may appear in the form of jump discontinuities. Besides Materials Science, applications of a variable exponent in the setting of functions of bounded variation already appeared in image reconstruction~\cite{MR2246061} (see also~\cite{MR2391646, MR2971400}), where an intermediate regime between Total Variation and the isotropic diffusion away from the edges was proposed. 

The aim of this paper it to provide a {\em unified framework} for lower semicontinuity and relaxation of functionals of the form
\begin{equation}
\label{e:intro2}
\mathcal{G}(u) := \int_{\Om} f(x, \nabla u) \, d x + \int_{J_{u}} g(x, [u](x), \nu_{u}) \, d \mathcal{H}^{d-1}\,,
\end{equation}
focusing on the role played by the  non-standard growth condition of volume integrand~$f (x, \xi)$. In formula~\eqref{e:intro2}, $\Om \subseteq \R^{d}$ is an open bounded subset of~$\R^{d}$, $u \in GSBV(\Om; \R^{m})$ is a function of {\em generalized special bounded variation} (see~\cite[Section~4.5]{Ambrosio2000FunctionsOB}), $J_{u}$ denotes the jump set of~$u$, $\nu_{u}$ stands for the approximate unit normal to~$u$, and $[u]:= u^{+} - u^{-}$ represents the jump of~$u$, that is, the difference between the traces~$u^{+}$ and~$u^{-}$ of~$u$ on~$J_{u}$, defined according to the orientation of~$\nu_{u}$. For the sole variable exponent case, lower semicontinuity results for~$\mathcal{G}$ have been obtained in~\cite{DCLVlscvariable}, while $\Gamma$-convergence and relaxation issues have been recently considered in~\cite{SSSvariableexpo}. The key assumptions in the mentioned papers are superlinearity of $p(\cdot)$ (meaning $\min_{\Om} p(\cdot) >1$), which allows for a separation of scales in the $\Gamma$-convergence and relaxation processes, and a log-H\"older continuity of the exponent, necessary to avoid the Lavrentiev's phenomenon, as originally observed in~\cite{zikov2}.

Our primary interest is in providing suitable conditions which either imply the lower semicontinuity of the functional~$\G$ in~\eqref{e:intro2} or guarantee an explicit formula for its lower semicontinuous envelope, while assuming $BV$-ellipticity~\cite[Chapter~5]{Ambrosio2000FunctionsOB} of~$g$ and non-standard growth for~$f$. In this regard, we will suppose that there exists a superlinear {\em generalized $\Phi$-function} (see also Definitions~\ref{wcs} and~\ref{def: genorliczclass}) $\psi\colon \Om \times [0, +\infty) \to [0, +\infty)$ such that
\begin{equation}
\label{e:intro3}
a \psi(x, |\xi|) \leq f(x, \xi) \leq b(1 + \psi(x, |\xi|))  \qquad \text{for $(x, \xi) \in \Om \times \R^{d \times m}$}
\end{equation}
for some $0 < a < b<+\infty$. The superlinearity of~$\psi$ (and thus of~$f$) is expressed by the conditions~\hyperlink{id}{$\textnormal{(Inc)}_p$} and \hyperlink{id}{$\textnormal{(Dec)}_{q}$} for some $p , q \in (1, +\infty)$ (see also Definition~\ref{inc dec general def}), meaning that for a.e.~$x \in \Om$ the maps $t \mapsto \frac{\psi(x, t)}{t^{p}}$ and $t \mapsto \frac{\psi(x, t)}{t^{q}}$ are monotone increasing and monotone decreasing, respectively. The third basic hypothesis on~$\psi$, common to all our results, is~\hyperlink{A0}{\textnormal{(A0)}}. Loosely speaking, such condition does not allow for a too degenerate behavior on small balls contained in~$\Om$ of the functions
\begin{equation}
\label{e:intro3}
\psi^{+}_{B} (t) := \sup_{x \in B} \, \psi(x, t) \qquad \psi^{-}_{B} (t):= \inf_{x \in B} \, \psi(x, t) \qquad \text{for $t \in [0, +\infty)$ and $B \subseteq \Om$}.
\end{equation}
We remark that~\hyperlink{id}{$\textnormal{(Inc)}_p$},~\hyperlink{id}{$\textnormal{(Dec)}_{q}$}, and~\hyperlink{A0}{\textnormal{(A0)}} are well-suited for a blow-up argument, which is at the core of our proofs strategy, and are rather standard in the theory of generalized Orlicz spaces~\cite{Hastobook}.

Let us discuss our results in more details. In Theorem~\ref{lsc of the functional} we prove the  lower semicontinuity of the functional~$\G$ in the space~$GSBV^{\psi} (\Om; \R^{m})$, the space of functions~$u \in GSBV(\Om; \R^{m})$ with~$L^{\psi}$-integrable approximate gradient~$\nabla u$. We refer to Definition~\ref{norma orlicz} and Section~\ref{s:gen} for more details on such space. Besides~\hyperlink{id}{$\textnormal{(Inc)}_p$},~\hyperlink{id}{$\textnormal{(Dec)}_{q}$}, and~\hyperlink{A0}{\textnormal{(A0)}}, in Theorem~\ref{lsc of the functional} we ask for the quasiconvexity of~$f$ and for a mild continuity property of~$\psi(x, t)$ with respect to~$x \in \Om$ (see~\eqref{.} for a precise statement). We notice that the last condition allows for improvements in the existing theory of generalized Orlicz spaces. For instance, in the variable exponent case the log-H\"older continuity of~$p(\cdot)$ is not necessary for~\eqref{.}, as pointed out in Section~\ref{subs: examples}. The proof of Theorem~\ref{lsc of the functional} is based on the approximation strategy of~\cite{marc} and on a localization technique which leads us to study the asymptotic behavior of~$\G$ around Lebesgue points of the limit energy, as first done in~\cite{Ambrosiolsc} in the $GSBV$-setting (see also~\cite[Theorem~5.29]{Ambrosio2000FunctionsOB}). Our assumptions are designed in such a way that the maximal operator for $SBV^{p}$-functions, exploited in~\cite{Ambrosiolsc} for the construction of more regular approximating sequences, can be replaced by the maximal operator in Orlicz spaces,  for which continuity estimates has been obtained in~\cite{Hastothemaximal}  (see also Theorem~\ref{max op bdd teo}). 

In Theorem~\ref{relax} we show a relaxation formula of the functional~$\G$ of the form
\begin{equation}
\label{e:intro5}
\overline{\G} (u) := \int_{\Om} \mathcal{Q}f(x, \nabla u)\, dx + \int_{J_{u}} \mathcal{R}g(x, [u], \nu_{u}) d \mathcal{H}^{d-1},
\end{equation}
which therefore maintains the original structure~\eqref{e:intro2}. In particular, in~\eqref{e:intro5} the functions~$\mathcal{Q}f\colon \Om \times \R^{m \times d} \to [0, +\infty)$ and~$\mathcal{R}g\colon \Om \times \R^{m} \times \mathbb{S}^{d-1} \to [0, +\infty)$ denote the quasiconvex and the BV-elliptic envelope of~$f$ and~$g$, respectively. When dealing with relaxation, we have to strengthen condition~\eqref{.}. Thus, in Theorem~\ref{relax} we replace the latter with the stronger assumption~\hyperlink{adA1}{\textnormal{(adA1)}}, which allows to estimate~$\psi^{+}_{B}$ with~$\psi^{-}_{B}$ on small balls~$B \subseteq \Om$ with a fixed control rate. We remark that condition~\hyperlink{adA1}{\textnormal{(adA1)}} is weaker than the more traditional~\hyperlink{A1}{(A1)}, introduced in the framework of Orlicz spaces (see, e.g.,~\cite{Hastobook}). In the variable exponent case,~\hyperlink{adA1}{\textnormal{(adA1)}} still requires log-H\"older continuity of~$p(\cdot)$, while in the double-phase case it calls for a H\"older continuity of the weight~$a(x)$, which may be however weaker than the one considered in regularity theory in Sobolev/Orlicz setting (see, for instance,~\cite{MR3775180, MR3294408}). Further examples are discussed in Section~\ref{subs: examples}.

The proof of Theorem~\ref{relax} follows the well-established strategy of the global method of relaxation~\cite{BFLMrelaxSBVp, BFMrelaxglobal}, which need to be adapted to the non-standard growth framework~\eqref{e:intro3}. The crucial step towards the relaxation formula~\eqref{e:intro5} is the integral representation of the lower semicontinuous envelope~$\overline{\G}$ of~$\G$ in $GSBV^{\psi} (\Om; \R^{m})$ (see Theorem~\ref{int rep teo} and Corollary~\ref{trans invar rep}). This ensures that~$\overline{\G}$ can still be written as the sum of a bulk energy depending on the approximate gradient~$\nabla u \in L^{\psi} (\Om; \R^{m \times d})$ and of a surface term obtained by integrating over the jump set~$J_{u}$ of~$u$ a suitable function depending on~$x$,~$[u]$, and~$\nu_{u}$. The fundamental tool to characterize the behavior of the blow-up energy on jump points and on approximate differentiability points of~$u$ is a new Poincar\'e inequality for special functions of bounded variation with~$L^{\varphi}$-integrable approximate gradient, where $\varphi$ denotes a (generalized) $\Phi$-function in the sense of Definition~\ref{wcs}. Such inequality,  obtained in Theorems~\ref{POINCARE} and~\ref{POINCARE GEN}, is of independent mathematical interest, as it can not be recovered by the classical approach of~\cite{DGCLexistence} or of~\cite{SSSvariableexpo} for the variable exponent, while it builds upon fine properties of rearrangments of $BV$-functions, adapting and combining the techniques of~\cite{AlvinoLionTromb, CianchiNachrichten, CianchiFuscoCrelle, CianchiFuscoBV}. The possible non-homogeneity of~$\varphi$ in space implies that a suitable truncation of~$u$ can not be controlled only in term of~$\nabla u$, as some remainder term appears, that can be estimated along the blow-up procedure thanks to~\hyperlink{adA1}{\textnormal{(adA1)}}. The crucial role played by the Poincar\'e inequality is evident in Lemmas~\ref{vol seq} and~\ref{surf seq} and enables us to recover the densities of the lower semicontinuous envelope~$\overline{\G}$ as blow-up limits of cell minimization formulas. In both lemmas, the Poincar\'e inequality is exploited to replace the optimal blow-up sequences with more regular functions, without excessively increasing the energy.

\subsection*{Outlook and open problems.} Our paper presents a general framework for lower semicontinuity and relaxation of free discontinuity functionals under non-standard growth of the bulk functional. The generalized $\Phi$-functions we consider cover most of the examples appeared in the literature, and in particular among them the variable exponent and the double phase~\eqref{e:intro1}. Because of the superlinear assumption~\hyperlink{id}{$\textnormal{(Inc)}_p$} for $p>1$, however, we can not handle an $\ell \log \ell$-kind of behavior, which has been studied in~\cite{leoverdeLSC} for a lower-semicontinuity problem in~$SBV$ without $x$-dependence on the integrand function in~\eqref{e:intro2}. Hence, an extension of our results in the above direction will be considered in a forthcoming research. Furthermore, regularity issues for minimizers of the functional~$\G$ may be investigated, in the spirit of~\cite[Chapter~6]{Ambrosio2000FunctionsOB}, and could lead to stronger assumptions on the growth function~$\psi$ (see, e.g.,~\cite{acerbi2, MR1895714, acerbi3} for the variable exponent). Also the investigation of lower semicontinuity and relaxation issues for free discontinuity functionals with bulk energies having mixed $(p,q)$-growth condition, relevant for the modeling of determinant constraints, is not fully covered by our theory and deserves further analysis.

\subsection*{Plan of the paper.} In Section~\ref{s:notation} we introduce the basic notation of the paper. In Section~\ref{s:mainresults} we state the main results of our paper: the integral representation in Theorem~\ref{int rep teo} and Corollary~\ref{trans invar rep}, the lower semicontinuity result in Theorem~\ref{lsc of the functional}, and the relaxation formula in Theorem~\ref{relax}. Section~\ref{sec: preliminaries} is devoted to some preliminaries on (generalized) $\Phi$-function and on $(G)SBV$-spaces, as well as to the discussion of the main assumptions of the above theorems. In particular, in Section~\ref{subs: examples} we report a number of generalized $\Phi$-function known in the literature, and discuss how they fall into our theory. In Section~\ref{s:poincare} we state and prove the Poincar\'e inequality for $SBV$-functions with $L^{\varphi}$-integrable approximate gradient (see Theorems~\ref{POINCARE} and~\ref{POINCARE GEN}). Finally, in Sections~\ref{s:integral-repr}--\ref{s:relax} we proceed with the proof of Theorems~\ref{int rep teo},~\ref{lsc of the functional}, and~\ref{relax}, respectively.

	\section{Notations}
	\label{s:notation}
	
	Through the paper we assume that $\Omega \subset \R^d$ is a bounded open set with Lipschitz boundary and that $d \geq 2$. We denote by $\mathcal{A}(\Omega)$ and $\mathcal{B}(\Omega)$ the family of open sets and the family of Borel measurable sets contained in $\Omega$, respectively. For every $x \in \R^d$ and $\varepsilon>0$ we indicate with $B_\varepsilon(x)$ the ball centered in $x$ with radius $\varepsilon$, if $x=0$ we write $B_\varepsilon$. Given $x \in \R^d$ we indicate with $|x|$ its Euclidean norm. The set $\R^{m \times d}$ is the set of $m \times d$ matrices with real coefficients, $\mathbb{S}^{d-1}:=\{ x \in \R^d \colon |x|=1 \}$, and $\R^d_0:=\R^d \setminus \{ 0 \}$. The Lebesgue measure of the $d$-dimensional unit ball is indicated $\omega_d$. We denote by $\mathcal{L}^d$ and $\mathcal{H}^k$ the $d$-dimensional Lebesgue measure and the $k$-dimensional Hausdorff measure, respectively. The space $L^0(\Omega)$ stands for the space of all measurable functions in $\Omega$.
	Given $x_0 \in \R^d$ and $\varepsilon>0$, for any set $A \subset \R^d$ we set
	\begin{equation}\label{000}
		A_{\varepsilon,x_0}:=x_0+\varepsilon (A-x_0).
	\end{equation}
	The closure of a set $A$ is indicated with $\overline{A}$, the diameter with $\diam(A)$. Given two sets $A_1,A_2 \subset \R^d$ we denote their symmetric difference with $A_1 \Delta A_2$. We write $\chi_A$ for the characteristic function of any set $A \subset \R^d$. If $A$ is a set of finite perimeter we indicate with $\partial^M A$ its essential boundary (the points of the boundary which do not have density zero nor one) and with $\partial^* A$ its reduced boundary (the points for which a "normal" can be defined).
	For a Carath\'eodory function $\varphi \colon \Omega \times \mathbb{R}^+\to \mathbb{R}$ and a ball $B \subset \R^d$ we define
	\begin{equation}\label{+-}
		\varphi^-_B(t):=\essinf_{x \in B \cap \Omega} \varphi(x,t) \ \ \ \mbox{and} \ \ \ \varphi^-_B(t):=\esssup_{x \in B \cap \Omega} \varphi(x,t).
	\end{equation}
 	for every $t \geq 0$.
	For a monotone function $\varphi$ on the real line, the customary notations $\varphi(t-)$ and $\varphi(t+)$ are used to denote left and right limits in $t$, respectively.
	For an increasing coercive function~$\varphi$, setting for simplicity $\varphi(+\infty):=+\infty$, we denote by $\varphi^{-1} \colon [0,+\infty] \to [0,+\infty]$  the left inverse of $\varphi$ defined as
	$$
	\varphi^{-1}(s):=\inf \{ t \geq 0 \colon \varphi(t) \geq s \}.
	$$

	\section{Main Results}
	\label{s:mainresults}
	
	The paper will be concerned with integral representation, lower semicontinuity, and relaxation of functionals defined on the space  $\gsbv^{\psi}(\Omega,\R^m)$ of generalized special functions of bounded variation with $\psi$-growth on the gradient. Here $\psi$ is a suitable (generalized) Orlicz function $\psi$. We will consider functionals $$\mathcal{F}: \gsbv^{\psi}(\Omega,\R^m) \times \mathcal{A}(\Omega) \to [0,+\infty)$$ satisfying the following assumptions:
	\begin{enumerate}[label=(H\arabic*), ref=H\arabic*]
		\item $\mathcal{F}(u,\cdot)$ is a Borel measure for any $u \in \gsbv^{\psi}(\Omega,\R^m)$; \label{H1}
		\item $\mathcal{F}(\cdot,A)$ is lower semicontinuous with respect to convergence in measure in $\Omega$ for any $A \in \mathcal{A}(\Omega)$; \label{H2}
		\item $\mathcal{F}(\cdot,A)$ is local for every $A \in \mathcal{A}(\Omega)$, that is, if $u,v \in \gsbv^{\psi}(\Omega,\R^m)$ satisfy $u=v$ $\mathcal{L}^d$-a.e. in $A$ then $\mathcal{F}(u,A)=\mathcal{F}(v,A)$; \label{H3}
		\item there exist $0<a<b$ such that for every $u \in \gsbv^{\psi}(\Omega,\R^m)$ and every $B \in \mathcal{B}(\Omega)$ it holds \label{H4}
		$$
		a\left( \int_B \psi(x,|\nabla u|) dx + \mathcal{H}^{d-1}(J_u \cap B) \right) \leq \mathcal{F}(u,B) \leq b\left( \int_B (1+\psi(x,|\nabla u|)) dx + \mathcal{H}^{d-1}(J_u \cap B) \right).
		$$
	\end{enumerate}

In \eqref{H4}, as we said, we consider a generalized Orlicz function $\psi$. The assumptions we make on $\psi$ will be detailed in Section \ref{sec: preliminaries} (see \hyperlink{idg}{(aInc)}, \hyperlink{idg}{(aDec)} and the so-called weight condition \hyperlink{A0}{(A0)}). These properties are standard in the theory of generalized Orlicz functions. We also assume that $\psi$ satisfies 
	\begin{itemize}
	\item[(adA1)] For every ball $B \subset \Omega$ with $\diam(B) \leq 1$ there exists $\beta \in (0,1)$ such that 
	\begin{equation*}
		\psi_B^+(\beta t) \leq \psi_B^-(t) \ \ \ \mbox{for all $t \in \left[\sigma,(\psi_B^-)^{-1}\left( \frac{1}{\diam(B)} \right)\right]$}\,,
	\end{equation*}
	\end{itemize}
	where $\sigma \ge 1$ is the constant in the weight condition \hyperlink{A0}{\textnormal{(A0)}}. We remark that this condition is a {\it weaker variant} of the standard assumption \hyperlink{A1}{(A1)} (see Definition \ref{adA1}), called local continuity condition and usually considered for generalized Orlicz spaces. 
All the main results of the paper will hold, if one requires that  \hyperlink{A0}{\textnormal{(A0)}}, \hyperlink{adA1}{\textnormal{(adA1)}}, \hyperlink{idg}{$\textnormal{(aInc)}$} and \hyperlink{idg}{\textnormal{(aDec)}} are satisfied on $\Omega$. They encompass a broad range of applications, with some relevant examples that will be discussed in Section \ref{subs: examples}.

	Our first result concerns the integral representation, and requires some notation to be fixed. For every $u \in \gsbv^{\psi}(\Omega,\R^m)$ and every $A \in \mathcal{A}(\Omega)$ we define
	\begin{equation}\label{m}
		\mathbf{m}_\mathcal{F}(u,A):=\inf_{v \in \gsbv^{\psi}(\Omega,\R^m)} \{ \mathcal{F}(v,A) \colon v=u \ \mbox{in a neighborhood of $\partial A$} \}.
	\end{equation}
	Moreover, given $x_0 \in \Omega$, $u_0 \in \R^m$ and $\xi \in \R^{m \times d}$, we define the affine function $\ell_{x_0,u_0,\xi} \colon \R^d \to \R^m$ as 
	$$
	\ell_{x_0,u_0,\xi}:=u_0+\xi(x-x_0).
	$$
	Given $x_0 \in \Omega$, $\mathfrak{a},\mathfrak{b} \in \R^m$ and $\nu \in \mathbb{S}^{d-1}$ we also introduce $u_{x_0,\mathfrak{a},\mathfrak{b},\nu} \colon \R^d \to \R^m$ as
	$$
	u_{x_0,\mathfrak{a},\mathfrak{b},\nu}=
	\begin{cases*}
		\mathfrak{a} & if $(x-x_0) \cdot \nu>0$, \\
		\mathfrak{b} & if $(x-x_0) \cdot \nu <0$.
	\end{cases*}
	$$
Our main result concerning the integral representation is the following. Below, $\Phi_w(\Omega)$ denotes the class of weak generalized  $\Phi$-functions, whose definition is recalled in Definition \ref{def: genorliczclass}.
	\begin{thm}[Integral representation in $\gsbv^{\psi}$]\label{int rep teo}
		Let $\psi \in \Phi_w(\Omega)$ satisfy \hyperlink{A0}{\textnormal{(A0)}}, \hyperlink{adA1}{\textnormal{(adA1)}}, \hyperlink{idg}{$\textnormal{(aInc)}$} and \hyperlink{idg}{\textnormal{(aDec)}} on $\Omega$. Assume that $\mathcal{F}: \gsbv^{\psi}(\Omega,\R^m) \times \mathcal{A}(\Omega) \to [0,+\infty)$ satisfies assumptions \eqref{H1}--\eqref{H4}. Then, for all $u \in \gsbv^{\psi}(\Omega,\R^m)$ and all $A \in \mathcal{A}(\Omega)$
		$$
		\mathcal{F}(u,A)=\int_A f(x,u(x),\nabla u(x))\, dx+ \int_{J_u \cap A} g(x,u^+(x),u^-(x),\nu_u(x)) \, d\mathcal{H}^{d-1},
		$$
		where
		\begin{align}\label{f}
			&f(x_0,u_0,\xi):=\limsup_{\varepsilon \to 0^+} \frac{\mathbf{m}_\mathcal{F}(\ell_{x_0,u_0,\xi},B_\varepsilon(x_0))}{\omega_d \varepsilon^d} \ \ \ \mbox{for $(x_0,u_0,\xi_0) \in \Omega \times \R^m \times \R^{m \times d}$}, \\
			&g(x_0,\mathfrak{a},\mathfrak{b},\nu):=\limsup_{\varepsilon \to 0^+} \frac{\mathbf{m}_\mathcal{F}(u_{x_0,\mathfrak{a},\mathfrak{b},\nu},B_\varepsilon(x_0))}{\omega_{d-1} \varepsilon^{d-1}} \ \ \ \mbox{for all $x_0 \in \Omega$, $\mathfrak{a},\mathfrak{b} \in \R^m$ and $\nu \in \mathbb{S}^{d-1}$}. \label{g}
		\end{align}
	\end{thm}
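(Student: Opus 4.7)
The plan is to follow the \emph{global method for relaxation} of \cite{BFMrelaxglobal, BFLMrelaxSBVp}, suitably adapted to the non-standard growth condition in \eqref{H4}. Fix $u \in \gsbv^{\psi}(\Omega; \R^m)$. By \eqref{H1} and the upper bound in \eqref{H4}, the finite Borel measure $A \mapsto \mathcal{F}(u, A)$ is absolutely continuous with respect to $\mu_u := \mathcal{L}^d \llcorner \Omega + \mathcal{H}^{d-1} \llcorner J_u$. By the Besicovitch differentiation theorem and the mutual singularity of the two summands in $\mu_u$, it is enough to identify $\tfrac{d \mathcal{F}(u, \cdot)}{d \mathcal{L}^d}(x_0) = f(x_0, u(x_0), \nabla u(x_0))$ at $\mathcal{L}^d$-a.e.\ $x_0 \in \Omega$ and $\tfrac{d \mathcal{F}(u, \cdot)}{d \mathcal{H}^{d-1} \llcorner J_u}(x_0) = g(x_0, u^+(x_0), u^-(x_0), \nu_u(x_0))$ at $\mathcal{H}^{d-1}$-a.e.\ $x_0 \in J_u$, with $f$, $g$ as in \eqref{f}--\eqref{g}.

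The core step is a \textbf{fundamental estimate}: at $\mu_u$-a.e.\ $x_0$ one has $\mathcal{F}(u, B_\varepsilon(x_0)) = \mathbf{m}_\mathcal{F}(u, B_\varepsilon(x_0)) + o(\varepsilon^d)$ (respectively $o(\varepsilon^{d-1})$ at jump points). The inequality $\mathbf{m}_\mathcal{F}(u, \cdot) \leq \mathcal{F}(u, \cdot)$ is immediate from \eqref{m}. For the reverse, the idea is to pick $v_\delta$ almost optimal for $\mathbf{m}_\mathcal{F}(u, B_{(1-\delta)\varepsilon}(x_0))$, glue $v_\delta$ to $u$ on the annulus $B_\varepsilon(x_0)\setminus B_{(1-\delta)\varepsilon}(x_0)$ via a Fubini-type selection of a good cut-off radius, and combine \eqref{H2}--\eqref{H3} with the growth \eqref{H4} to conclude. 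The crucial ingredient to estimate the bulk energy produced on the transition annulus and to bound the corresponding $\psi$-modular of the gradient is the Poincar\'e-type inequality of Theorems~\ref{POINCARE}--\ref{POINCARE GEN} for $SBV$-functions with $L^\psi$-integrable approximate gradient.

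To finally match the explicit densities in \eqref{f}--\eqref{g}, I would pass to the first-order blow-up at $x_0$. At an $\mathcal{L}^d$-Lebesgue point $x_0$ of $\nabla u$ where $u$ is approximately differentiable, the rescaled maps $u_\varepsilon(y) := (u(x_0 + \varepsilon y) - u(x_0))/\varepsilon$ converge in measure on $B_1$ to $y \mapsto \nabla u(x_0) y$; locality \eqref{H3}, lower semicontinuity \eqref{H2}, and a scaling argument compatible with the Orlicz structure of $\psi$ then yield
\[
\lim_{\varepsilon \to 0^+} \frac{\mathbf{m}_\mathcal{F}(u, B_\varepsilon(x_0))}{\omega_d \varepsilon^d} = \lim_{\varepsilon \to 0^+} \frac{\mathbf{m}_\mathcal{F}(\ell_{x_0, u(x_0), \nabla u(x_0)}, B_\varepsilon(x_0))}{\omega_d \varepsilon^d} = f(x_0, u(x_0), \nabla u(x_0)).
\]
The analogous argument at $\mathcal{H}^{d-1}$-a.e.\ $x_0 \in J_u$, using the Federer--Vol'pert characterization of $J_u$ and the trace properties of $GSBV$ to produce the two-valued step function $u_{x_0, u^+(x_0), u^-(x_0), \nu_u(x_0)}$ as the blow-up limit in measure, yields the surface density $g$.

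The main obstacle is the replacement of $u$ by its tangent profile inside the definition of $\mathbf{m}_\mathcal{F}$: since $\psi(\cdot, t)$ is not constant in $x$, a naive truncation of the blow-up sequence brings in a term modulated by $\psi^+_{B_\varepsilon(x_0)}$, whereas the a~priori energy bounds from \eqref{H4} only control $\psi^-_{B_\varepsilon(x_0)}$. Condition \hyperlink{adA1}{\textnormal{(adA1)}}, combined with \hyperlink{A0}{\textnormal{(A0)}} and the monotonicity encoded in \hyperlink{idg}{$\textnormal{(aInc)}$} and \hyperlink{idg}{$\textnormal{(aDec)}$}, is precisely tailored to close this discrepancy at scales below $\diam(B_\varepsilon(x_0))$, while the Orlicz Poincar\'e inequality of Section~\ref{s:poincare} provides the quantitative control on the transition energy that is needed to let $\varepsilon \to 0^+$ and conclude.
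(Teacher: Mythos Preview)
Your outline follows the global method correctly at the coarse level (Besicovitch differentiation, reduction to $\mathbf{m}_\mathcal{F}$, comparison with the tangent profile), and your identification of the role of \hyperlink{adA1}{\textnormal{(adA1)}} and of the Orlicz Poincar\'e inequality is right. There are, however, two points where the mechanism you describe does not match what is actually needed.

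First, what you call the ``fundamental estimate'' (that $\mathcal{F}(u,B_\varepsilon(x_0))$ and $\mathbf{m}_\mathcal{F}(u,B_\varepsilon(x_0))$ agree asymptotically) is \emph{not} proved by gluing a near-minimizer to $u$ on an annulus. In the paper this step is Lemma~\ref{lem 1}, obtained through the auxiliary quantity $\mathbf{m}_\mathcal{F}^*$ of \eqref{m star}: one shows $\mathcal{F}=\mathbf{m}_\mathcal{F}^*$ (Lemma~\ref{lem 4 for lem 1}) via a Vitali-type covering by small balls, patching the near-minimizers $v_i^\delta$ into a global $v^\delta$ that converges to $u$ in measure, and then invoking \eqref{H2}. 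The annulus-gluing device you describe --- the actual \emph{fundamental estimate}, Lemma~\ref{fund est} --- is used later, for a different purpose.

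Second, and more seriously, the step where you pass from $\mathbf{m}_\mathcal{F}(u,B_\varepsilon(x_0))$ to $\mathbf{m}_\mathcal{F}(\ell_{x_0,u(x_0),\nabla u(x_0)},B_\varepsilon(x_0))$ cannot be done by rescaling: $\mathcal{F}$ is an abstract functional satisfying \eqref{H1}--\eqref{H4}, with no change-of-variables formula, so there is no way to relate $\mathcal{F}$ on $B_\varepsilon(x_0)$ to anything on $B_1$, and \eqref{H2} applies only to sequences on a \emph{fixed} domain. The paper (Lemmas~\ref{lem 2.1}--\ref{lem 2.2} and \ref{lem 3.1}--\ref{lem 3.2}) works entirely at scale $\varepsilon$: one takes a near-minimizer $z_\varepsilon$ for $\mathbf{m}_\mathcal{F}(\overline{u}_{x_0}^{\mathrm{bulk}},B_{(1-3\theta)\varepsilon}(x_0))$ and uses Lemma~\ref{fund est} to glue it to the approximation $u_\varepsilon$ built in Lemma~\ref{vol seq} (resp.\ Lemma~\ref{surf seq}). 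These approximation lemmas are where the Poincar\'e inequality and \hyperlink{adA1}{\textnormal{(adA1)}} actually enter: they produce, by the truncation~\eqref{other truncat op}, a function equal to $u$ near $\partial B_\varepsilon(x_0)$ and close to the tangent profile in the $\psi$-modular sense on $B_{\lambda\varepsilon}(x_0)$, so that the joining penalty in Lemma~\ref{fund est}(ii) vanishes as $\varepsilon\to 0$. Your proposal places the Poincar\'e inequality in the wrong step and omits this competitor-transfer argument, which is the substantive new content in the non-standard growth setting.
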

\noindent If translation invariance with respect to $u$ is assumed  on $\mathcal{F}$, namely
	\begin{enumerate}[label=(H5), ref=H5]
		\item $\mathcal{F}(u+c,A)=\mathcal{F}(u,A)$ for every $u \in \gsbv^\psi(\Omega,\R^m)$, every $A \in \mathcal{B}(\Omega)$ and every $c \in \R^m$\,,\label{H5}
	\end{enumerate}
we have the following result.
	\begin{cor}\label{trans invar rep}
		Let $\psi \in \Phi_w(\Omega)$ be as in Theorem \ref{int rep teo} and suppose that $\mathcal{F}: \gsbv^{\psi}(\Omega,\R^m) \times \mathcal{A}(\Omega) \to [0,+\infty)$ satisfies assumptions \eqref{H1}--\eqref{H5}. Then, for all $u \in \gsbv^{\psi}(\Omega,\R^m)$ and all $A \in \mathcal{A}(\Omega)$
		$$
		\mathcal{F}(u,A)=\int_A f(x,\nabla u(x))\, dx+ \int_{J_u \cap A} g(x,[u](x),\nu_u(x)) \, d\mathcal{H}^{d-1},
		$$
		where $f$ and $g$ are as in \eqref{f} and \eqref{g}, respectively.
	\end{cor}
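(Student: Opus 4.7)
The plan is to derive Corollary~\ref{trans invar rep} as a direct consequence of Theorem~\ref{int rep teo}, using \eqref{H5} only to simplify the functional dependence of the densities obtained from the blow-up formulas \eqref{f} and \eqref{g}. First, I would apply Theorem~\ref{int rep teo} to produce a representation
$$
\mathcal{F}(u,A) = \int_A \tilde{f}(x,u(x),\nabla u(x))\,dx + \int_{J_u \cap A} \tilde{g}(x,u^+(x),u^-(x),\nu_u(x))\,d\mathcal{H}^{d-1},
$$
with $\tilde{f}$, $\tilde{g}$ given by \eqref{f}, \eqref{g}. It then suffices to show that $\tilde{f}$ does not depend on its second argument, and that $\tilde{g}$ depends on $\mathfrak{a},\mathfrak{b}$ only through $\mathfrak{a}-\mathfrak{b}$; one can then set $f(x,\xi):=\tilde{f}(x,0,\xi)$ and $g(x,z,\nu):=\tilde{g}(x,z,0,\nu)$ and observe that $u^+(x)-u^-(x)=[u](x)$ on $J_u$.

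For the bulk density, fix $x_0\in\Omega$, $u_0\in\R^m$, $\xi\in\R^{m\times d}$, $\varepsilon>0$, and any $c\in\R^m$. From $\ell_{x_0,u_0,\xi}(x)=u_0+\xi(x-x_0)$ we get $\ell_{x_0,u_0,\xi}-c=\ell_{x_0,u_0-c,\xi}$. The map $v\mapsto v-c$ is a bijection on $\gsbv^{\psi}(\Omega,\R^m)$, it preserves the boundary condition $v=\ell_{x_0,u_0,\xi}$ on a neighbourhood of $\partial B_{\varepsilon}(x_0)$, transforming it into $w=\ell_{x_0,u_0-c,\xi}$, and by \eqref{H5} it leaves $\mathcal{F}(\cdot,B_{\varepsilon}(x_0))$ unchanged. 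Therefore the infimum in \eqref{m} satisfies
$$
\mathbf{m}_{\mathcal{F}}(\ell_{x_0,u_0,\xi},B_{\varepsilon}(x_0)) = \mathbf{m}_{\mathcal{F}}(\ell_{x_0,u_0-c,\xi},B_{\varepsilon}(x_0)) \qquad \text{for every } c\in\R^m.
$$
Choosing $c=u_0$ and passing to the $\limsup$ as $\varepsilon\to 0^+$ in \eqref{f}, we conclude that $\tilde{f}(x_0,u_0,\xi)=\tilde{f}(x_0,0,\xi)$ is independent of $u_0$.

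The surface density is handled analogously. Since $u_{x_0,\mathfrak{a},\mathfrak{b},\nu}-c = u_{x_0,\mathfrak{a}-c,\mathfrak{b}-c,\nu}$ for every $c\in\R^m$, the same change of variable $v\mapsto v-c$, combined with \eqref{H5}, yields
$$
\mathbf{m}_{\mathcal{F}}(u_{x_0,\mathfrak{a},\mathfrak{b},\nu},B_{\varepsilon}(x_0)) = \mathbf{m}_{\mathcal{F}}(u_{x_0,\mathfrak{a}-c,\mathfrak{b}-c,\nu},B_{\varepsilon}(x_0)).
$$
Taking $c=\mathfrak{b}$ and letting $\varepsilon\to 0^+$ in \eqref{g} gives $\tilde{g}(x_0,\mathfrak{a},\mathfrak{b},\nu)=\tilde{g}(x_0,\mathfrak{a}-\mathfrak{b},0,\nu)$, which shows that $\tilde{g}$ depends on $(\mathfrak{a},\mathfrak{b})$ only through the jump $\mathfrak{a}-\mathfrak{b}$. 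Substituting the reduced densities $f$ and $g$ in the representation of Theorem~\ref{int rep teo} yields the formula claimed in the statement.

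I do not expect any genuine difficulty here: once Theorem~\ref{int rep teo} is granted, the corollary is an immediate bookkeeping step. The only point worth verifying carefully is that the admissibility constraint ``$v=u$ in a neighborhood of $\partial A$'' in \eqref{m} is indeed stable under the constant shift $v\mapsto v-c$ (so that the shift produces a bijection of admissible competitors and preserves the infimum), and this is trivial.
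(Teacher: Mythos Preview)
Your proposal is correct and follows essentially the same approach as the paper: apply Theorem~\ref{int rep teo} to obtain the representation with densities $\tilde f,\tilde g$ given by \eqref{f}--\eqref{g}, then use \eqref{H5} (which, as you observe, passes to $\mathbf{m}_{\mathcal F}$ via the bijection $v\mapsto v-c$ on admissible competitors) to conclude that $\tilde f$ is independent of $u_0$ and $\tilde g$ depends on $(\mathfrak a,\mathfrak b)$ only through $\mathfrak a-\mathfrak b$. The paper's proof is slightly terser but identical in substance.
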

	
	Our next result concerns the lower semicontinuity in $\gsbv^\psi(\Omega,\R^m)$ of variational functionals $\mathcal{G} : L^0(\Omega,\R^m) \times \mathcal{A}(\Omega) \to [0,+\infty]$ of the form
	\begin{equation}\label{functional}
		\mathcal{G}(u,A)= \int_A f(x,\nabla u(x))+\int_{J_u} g(x,[u](x),\nu_u(x)) \, d\mathcal{H}^{d-1},
	\end{equation}
	In \eqref{functional} the function $f:\Omega \times \R^{m \times d} \to [0,+\infty)$ satisfies the following assumptions:
	\begin{enumerate} [label=(f\arabic*), ref=f\arabic*]
		\item \label{f1} $f$ is a Carathéodory function;
		\item \label{f2} there exist two constants $a,b>0$ such that 
		\begin{equation}\label{growth of f}
			a \psi(x,|\xi|) \leq f(x,\xi) \leq b(1+\psi(x,|\xi|))
		\end{equation}
		for every $x \in \Omega$ and $\xi \in \R^{m \times d}$.
	\end{enumerate}
 	On the other hand, the function $g \colon \Omega \times  \R^m_0 \times \Sf^{d-1} \to [0,+\infty)$ satisfies the following assumptions:
 	\begin{enumerate} [label=(g\arabic*), ref=g\arabic*]
 		\item \label{g1} $g$ is a Borel measurable function lower semicontinuous in $x$ and continuous in the remaining variables;
 		\item \label{g2} there exist $\alpha_1,\alpha_2>0$ such that for every $x \in \Omega$, $\zeta \in \R^m_0$ and $\nu \in \Sf^{d-1}$
 		$$
 		\alpha_1 \leq g(x,\zeta,\nu) \leq \alpha_2.
 		$$
 	\end{enumerate}
	 For the result below, we can further weaken \hyperlink{adA1}{\textnormal{(adA1)}}. We will namely assume that $\psi$ complies with the following property: for $\sigma \ge 1$ being the constant in the weight condition  \hyperlink{A0}{\textnormal{(A0)}}, and for $\mathcal{L}^d$-a.e. $x_0 \in \Omega$ there exists $C=C(x_0)>0$ such that
		\begin{align}\label{.}
			\begin{split}
				\mbox{given $\theta>\sigma$, we can find $\varepsilon_0>0$} & \mbox{ such that for every $\varepsilon \leq \varepsilon_0$ and every $t \in [\sigma,\theta],$} \\
				&\psi^+_{B_\varepsilon(x_0)}(t) \leq C \psi^-_{B_\varepsilon(x_0)}(t).
			\end{split}
		\end{align} 
	As we will discuss in Section \ref{sec: preliminaries}, if $\psi \in \Phi_w(\Omega)$ satisfies \hyperlink{A0}{\textnormal{(A0)}}, \hyperlink{idg}{\textnormal{(aDec)}} and \hyperlink{adA1}{(adA1)} on $\Omega$, then it also satisfies \eqref{.}.  Hence, the assumptions for the lower semicontinuity Theorem below are weaker than those in Theorem \ref{int rep teo}.
	
	\begin{thm}\label{lsc of the functional}
		Let $\psi \in \Phi_w(\Omega)$ satisfying \hyperlink{A0}{\textnormal{(A0)}},  \hyperlink{idg}{$\textnormal{(aInc)}$} and \hyperlink{idg}{\textnormal{(aDec)}} on $\Omega$. Assume also that $\psi$ is satisfies \eqref{.}. 
		Consider a functional $\mathcal{G} \colon L^0(\Omega,\R^m) \times \mathcal{A}(\Omega) \to [0,+\infty]$ as in \eqref{functional}. Let $f \colon \Omega \times \R^{m \times d} \to [0,+\infty)$ be a function satisfying \eqref{f1}-\eqref{f2} and such that $z \mapsto f(x,z)$ is quasiconvex in $\R^{m \times d}$ for every $x \in \Omega$. 
		Let $g \colon \Omega \times \R^m_0 \times \mathbb{S}^{d-1} \to [0, +\infty)$ be a function satisfying \eqref{g1}--\eqref{g2} and such that $(\zeta,\nu) \mapsto g(x,\zeta,\nu)$ is BV-elliptic for every $x \in \Omega$.
		Then, for every $A \in \mathcal{A}(\Omega)$, we have
		$$
		\mathcal{G}(u,A) \leq \liminf_{k \to + \infty} \mathcal{G}(u_k,A)
		$$
		for every sequence $\{ u_k \}_k \subset \gsbv^{\psi}(A,\R^m)$ converging to a function $u \in \gsbv^{\psi}(A,\R^m)$ in measure.
	\end{thm}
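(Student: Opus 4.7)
The plan is to follow the classical blow-up strategy of Ambrosio~\cite{Ambrosiolsc}, adapted to the non-standard growth setting via the Orlicz maximal operator (Theorem~\ref{max op bdd teo}) and the continuity condition~\eqref{.}. First, one may assume $L := \liminf_k \G(u_k, A) < +\infty$ and, up to a subsequence, that the liminf is actually attained as a limit. Thanks to \eqref{growth of f} and \eqref{g2}, the nonnegative Radon measures
$$
\mu_k := f(\cdot, \nabla u_k)\,\mathcal{L}^d\llcorner A + g(\cdot, [u_k], \nu_{u_k})\,\mathcal{H}^{d-1}\llcorner (J_{u_k}\cap A)
$$
are uniformly bounded on $A$, so up to a further subsequence $\mu_k \overset{*}{\rightharpoonup} \mu$ as Radon measures on $\overline{A}$. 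The strategy then reduces to establishing the two local lower bounds
$$
\frac{d\mu}{d\mathcal{L}^d}(x_0) \ge f(x_0, \nabla u(x_0)) \quad \text{for $\mathcal{L}^d$-a.e.\ $x_0 \in A$},
$$
$$
\frac{d\mu}{d\mathcal{H}^{d-1}\llcorner J_u}(x_0) \ge g(x_0, [u](x_0), \nu_u(x_0)) \quad \text{for $\mathcal{H}^{d-1}$-a.e.\ $x_0 \in J_u$},
$$
since then the Radon--Nikodym theorem (together with mutual singularity) yields $\mu \ge f(\cdot, \nabla u)\,\mathcal{L}^d + g(\cdot, [u], \nu_u)\,\mathcal{H}^{d-1}\llcorner J_u$ on $A$, and hence $\G(u,A) \le \mu(A) \le \liminf_k \mu_k(A) = L$.

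For the bulk estimate I would blow up at a Lebesgue point $x_0 \in A$ of $\nabla u$ at which \eqref{.} is in force. After the change of variables $y \mapsto x_0 + \varepsilon y$ and a diagonal extraction $k = k(\varepsilon) \to \infty$, one reduces to a liminf inequality on $B_1$ for the rescaled gradients $\nabla v_\varepsilon$ of a sequence $v_\varepsilon \to \ell_{x_0,u(x_0),\nabla u(x_0)}$, which one wishes to treat via the quasiconvexity of $\xi \mapsto f(x_0,\xi)$. The \emph{main obstacle} is that $v_\varepsilon$ lies only in $\gsbv^\psi$, not in a Sobolev class, so the quasiconvexity inequality cannot be applied directly. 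Following the scheme of \cite{marc,Ambrosiolsc}, one replaces $v_\varepsilon$ by a Sobolev competitor $w_\varepsilon$ that coincides with $v_\varepsilon$ outside a ``bad'' set of small $(d-1)$-dimensional measure --- controlled by $\mathcal{H}^{d-1}(J_{v_\varepsilon})$, which vanishes after rescaling --- and whose gradient is pointwise dominated by a maximal function of $|\nabla v_\varepsilon|$. Here the $L^p$-maximal estimate of \cite{Ambrosiolsc} must be replaced by the $L^\psi$-continuity of the Orlicz maximal operator from \cite{Hastothemaximal}, i.e.\ Theorem~\ref{max op bdd teo}, which applies precisely because \hyperlink{A0}{\textnormal{(A0)}}, \hyperlink{idg}{\textnormal{(aInc)}}, and \hyperlink{idg}{\textnormal{(aDec)}} hold. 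Condition \eqref{.} is then invoked to freeze the $x$-dependence of $\psi$ on $B_\varepsilon(x_0)$ up to a multiplicative error vanishing in the limit; combined with the quasiconvexity of $f(x_0,\cdot)$, this yields the bulk lower bound.

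For the surface estimate I would fix an $\mathcal{H}^{d-1}$-a.e.\ point $x_0 \in J_u$ of unit density such that the rescalings $u(x_0+\varepsilon\,\cdot)$ converge in $L^1_{\mathrm{loc}}$ to the planar jump $u_{x_0, u^+(x_0), u^-(x_0), \nu_u(x_0)}$, and then extract by diagonalization a sequence from $u_k$ whose rescalings converge to the same jump function in measure on $B_1$. The bulk contribution to $\mu_k\bigl(\overline{B_\varepsilon(x_0)}\bigr)$ is of order $\varepsilon^d$ by \eqref{growth of f}, hence negligible against $\omega_{d-1}\varepsilon^{d-1}$; the relevant bookkeeping for the varying $\psi(x,\cdot)$ is carried out via \hyperlink{A0}{\textnormal{(A0)}} and \hyperlink{idg}{\textnormal{(aDec)}}. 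The $BV$-ellipticity of $g(x_0,\cdot,\cdot)$, combined with the lower semicontinuity in $x$ granted by \eqref{g1}, then delivers the surface lower bound. Putting the two bounds together completes the argument; the hardest point throughout is the simultaneous use of the Orlicz maximal truncation and the $x$-freezing \eqref{.} in the bulk blow-up.
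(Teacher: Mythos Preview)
Your proposal is essentially correct and follows the paper's approach for the bulk part: blow-up at Lebesgue points, reduction to a liminf inequality on $B_1$ (the paper's Proposition~\ref{100}), Lusin--maximal function regularization of the $\gsbv$ competitors via Theorem~\ref{max op bdd teo}/Corollary~\ref{max op bdd cor}, and the freezing of the $x$-dependence through~\eqref{.}. The paper additionally passes through a truncation (Lemma~\ref{truncation lemma}) to secure $L^\infty$ bounds and uses Chacon's biting lemma to manage equiintegrability of $\psi_k^-(M|\nabla v_k|)$, but these are technical refinements of the scheme you outline.

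For the surface term the paper does \emph{not} perform a blow-up at jump points as you propose. Instead it observes that \hyperlink{idg}{$\textnormal{(aInc)}_\gamma$} with $\gamma>1$ forces $\sup_k\int_A|\nabla u_k|^\gamma\,dx<+\infty$, so the sequence lies in $\gsbv^\gamma$ and one may invoke directly the classical BV-elliptic lower semicontinuity result of Ambrosio (stated here as Theorem~\ref{lsc surf}). Your blow-up approach would also work---it is essentially what that classical result proves internally---but the paper's shortcut avoids redoing that analysis.
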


	The third main result is a relaxation result, which requires both the use of Theorems \ref{int rep teo} and \ref{lsc of the functional}. Given $\mathcal{G}$ as in \eqref{functional}, for every $u \in \gsbv^{\psi}(\Omega,\R^m)$ and every $A \in \mathcal{A}(\Omega)$ we denote the lower semicontinuous envelope of the functional $\mathcal{G}$ as
	\begin{equation*}
		\overline{\mathcal{G}}(u,A):=\inf \left\{ \liminf_{k \to + \infty} \mathcal{G}(u_k,A) \colon \{ u_k \}_k \subset \gsbv^{\psi}(A,\R^m) \mbox{ and } u_k \to u \mbox{ in measure on $A$} \right\}.
	\end{equation*}
	We assume that $g \colon \Omega \times \R^m_0 \times \mathbb{S}^{d-1} \to [0,+\infty)$ satisfies also the following additional properties:
	\begin{enumerate}[label=(g3), ref=g3]
		\item \label{g3} there exists $c>0$ such that for every $x \in \Omega$ and every $\nu \in \Sf^{d-1}$ it holds
		$$
		g(x,\zeta_1,\nu) \leq g(x,\zeta_2,\nu) \ \ \ \mbox{for every $\zeta_1, \zeta_2 \in \R^m_0$ with $c |\zeta_1| \leq |\zeta_2|$;}
		$$
	\end{enumerate}
	\begin{enumerate}[label=(g4), ref=g4]
		\item \label{g4} for every $x \in \Omega$, $\zeta \in \R^m_0$ and $\nu \in \Sf^{d-1}$
		$$
		g(x,\zeta,\nu)=g(x,-\zeta,-\nu).
		$$
	\end{enumerate}
	
	\begin{thm}\label{relax}
		Let $\psi\in \Phi_w(\Omega)$ satisfying \hyperlink{A0}{\textnormal{(A0)}}, \hyperlink{adA1}{\textnormal{(adA1)}}, \hyperlink{idg}{$\textnormal{(aInc)}$} and \hyperlink{idg}{\textnormal{(aDec)}} on $\Omega$. Let $\mathcal{G}$ be as in \eqref{functional} and $f \colon \Omega \times \R^{m \times d} \to [0,+\infty)$ satisfying \eqref{f1} and \eqref{f2}.  Assume also that $g \colon \Omega \times  \R^m_0 \times \Sf^{d-1} \to [0,+\infty)$ is a continuous function satisfying \eqref{g1}--\eqref{g4}.
		Then, for every $u \in \gsbv^{\psi}(\Omega,\R^m)$ and every $A \in \mathcal{A}(\Omega)$,
		\begin{equation*}
			\overline{\mathcal{G}}(u,A)=\int_A \mathcal{Q}f(x,\nabla u(x)) \, dx +\int_{J_u \cap A} \mathcal{R}g(x,[u](x),\nu_u(x)) \, d\mathcal{H}^{d-1},
		\end{equation*}
		where $\mathcal{Q}f$ is the quasiconvex envelope of $f$ and $\mathcal{R}g$ is the BV-elliptic envelope of $g$.
	\end{thm}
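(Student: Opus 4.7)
The plan is to apply the integral representation result Corollary \ref{trans invar rep} to the relaxed functional $\overline{\mathcal{G}}(\cdot,\cdot)$ and then to identify the two blow-up densities that it produces with the quasiconvex envelope $\mathcal{Q}f$ and the $BV$-elliptic envelope $\mathcal{R}g$, respectively.

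First I verify that $\overline{\mathcal{G}}$ complies with \eqref{H1}--\eqref{H5}. Properties \eqref{H2}, \eqref{H3} and \eqref{H5} transfer trivially from $\mathcal{G}$ to its relaxation; note that translation invariance holds since the integrands in \eqref{functional} only see $\nabla u$ and $[u]$. The two-sided bound \eqref{H4} follows from \eqref{growth of f} and \eqref{g2}: the upper estimate is obtained by taking $u$ itself as constant recovery sequence, while the lower one combines the classical $\mathcal{H}^{d-1}$-lower semicontinuity of the jump set with the lower semicontinuity of $u\mapsto \int_A \psi(x,|\nabla u|)\,dx$ on $\gsbv^\psi$ provided by Theorem \ref{lsc of the functional} applied to $f=a\psi$. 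The measure property \eqref{H1} is the delicate one: I would establish it via a De~Giorgi--Letta argument, whose nontrivial step is the fundamental estimate for $\overline{\mathcal{G}}$; this is obtained by gluing competitors on overlapping open sets, the resulting bulk and surface remainders being absorbed by the Poincar\'e inequality of Theorem \ref{POINCARE GEN} together with \hyperlink{adA1}{\textnormal{(adA1)}}.

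With \eqref{H1}--\eqref{H5} at our disposal, Corollary \ref{trans invar rep} yields
\[
\overline{\mathcal{G}}(u,A)=\int_A \bar f(x,\nabla u)\,dx+\int_{J_u\cap A}\bar g(x,[u],\nu_u)\,d\mathcal{H}^{d-1},
\]
with densities $\bar f$, $\bar g$ defined by the blow-up cell formulas \eqref{f}--\eqref{g} applied to $\mathbf m_{\overline{\mathcal{G}}}$. To identify $\bar f(x_0,\xi)=\mathcal{Q}f(x_0,\xi)$, the inequality $\bar f\ge \mathcal{Q}f$ is obtained by noticing that the functional $\int_A \mathcal{Q}f(x,\nabla u)\,dx+\int_{J_u\cap A} g\,d\mathcal{H}^{d-1}$ satisfies all hypotheses of Theorem \ref{lsc of the functional} (quasiconvexity of $\mathcal{Q}f$, $BV$-ellipticity of $g$), so it is lower semicontinuous and bounded above by $\mathcal{G}$, hence by $\overline{\mathcal{G}}$. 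The reverse inequality is built by fixing $\varphi\in W^{1,\infty}_0(B_\varepsilon(x_0);\R^m)$ almost realizing $\mathcal{Q}f(x_0,\xi)$ in its defining infimum, plugging $\ell_{x_0,u_0,\xi}+\varphi$ as competitor in $\mathbf m_{\overline{\mathcal{G}}}(\ell_{x_0,u_0,\xi},B_\varepsilon(x_0))$, and sending $\varepsilon\to 0$: the freezing of $\psi(x,\cdot)$ at $x_0$ is ensured by \eqref{.}, which follows from \hyperlink{adA1}{\textnormal{(adA1)}}. This is essentially Lemma \ref{vol seq}. The identification $\bar g=\mathcal{R}g$ at jump points is analogous: the lower bound uses Theorem \ref{lsc of the functional} applied to $\int_A f\,dx+\int_{J_u\cap A}\mathcal{R}g\,d\mathcal{H}^{d-1}$, with \eqref{g3}--\eqref{g4} ensuring that $\mathcal{R}g$ inherits measurability, two-sided bounds and continuity; the upper bound is produced by taking an $SBV$ competitor on $B_\varepsilon(x_0)$ approximating $u_{x_0,\mathfrak{a},\mathfrak{b},\nu}$ with surface energy close to $\mathcal{R}g(x_0,\mathfrak{a}-\mathfrak{b},\nu)$ in the defining formula of the $BV$-elliptic envelope, and invoking Lemma \ref{surf seq}, which relies on the Poincar\'e inequality of Theorem \ref{POINCARE} to make the bulk contribution at scale $\varepsilon$ of lower order than the surface one.

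The hard part throughout is the careful control of error terms generated by the modification/truncation steps in the non-standard growth framework: unlike the $p$-growth case, an $SBV^\psi$ function cannot be freely truncated or rescaled without producing remainders whose magnitude depends on the $x$-profile of $\psi(x,\cdot)$. Condition \hyperlink{adA1}{\textnormal{(adA1)}} together with the new Poincar\'e inequality in Theorems \ref{POINCARE}--\ref{POINCARE GEN} is precisely what is needed to reabsorb these remainders at the blow-up scale, and orchestrating this inside Lemmas \ref{vol seq} and \ref{surf seq} is the technical heart of the argument.
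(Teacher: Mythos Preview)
Your strategy---verify \eqref{H1}--\eqref{H5} for $\overline{\mathcal{G}}$, apply Corollary \ref{trans invar rep}, then identify the resulting densities via the lower bound from Theorem \ref{lsc of the functional} and the upper bound via explicit competitors---is exactly the paper's. However, you misplace where the heavy machinery sits. Lemmas \ref{vol seq}--\ref{surf seq} and the Poincar\'e inequalities of Theorems \ref{POINCARE}--\ref{POINCARE GEN} are consumed entirely \emph{inside} the proof of Theorem \ref{int rep teo}; once Corollary \ref{trans invar rep} is at hand they are not invoked again in the relaxation argument. In particular: the fundamental estimate needed for \eqref{H1} (Lemma \ref{fund est}) is a plain cut-off argument using only the doubling property of $\psi$, with no Poincar\'e inequality; the lower bound in \eqref{H4} comes straight from Ioffe's theorem rather than from Theorem \ref{lsc of the functional}; and the upper bounds $\bar f\le\mathcal{Q}f$, $\bar g\le\mathcal{R}g$ are obtained by plugging a single competitor into $\mathbf{m}_{\overline{\mathcal{G}}}\le\mathcal{G}$ and freezing the $x$-variable of $f$ via the Scorza--Dragoni theorem (respectively, using the uniform continuity of $g$)---not via condition \eqref{.}, which only controls $\psi$ and not $f$ itself, and not via Lemmas \ref{vol seq}/\ref{surf seq}. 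For the surface part the competitor $w$ realising $\mathcal{R}g$ can be taken with $\nabla w=0$ a.e., so there is no bulk contribution to make lower order at all.
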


	\section{Preliminaries}
	\label{sec: preliminaries}
	
In this preliminary section we recall some basic facts about (generalized) $\Phi$-functions and Orlicz spaces, and on $GSBV$ functions. Finally, in Subsection \ref{subs: examples} we give  a list of non-standard growth functions $\psi$ which fit into the scope of our results.	
	\subsection{(Generalized) $\Phi$-functions and Orlicz spaces}
	We begin by collecting some basic definitions and useful facts about $\Phi$-functions and generalized Orlicz spaces. For a complete treatment of the topic (and the proofs of the statements below which are left without proof) we refer to \cite{Hastobook}.
	
	\begin{defn}\label{almost inc def}
		A function $g\colon (0,+\infty) \to \R$ is almost increasing (resp. almost decreasing) if there exists a constant $a \geq 1$ such that $g(s) \leq ag(t)$ (resp $ag(s) \geq g(t)$) for every $0<s<t$. 
	\end{defn}
	
	Increasing and decreasing functions are included in the above definition if $a=1$.
	
	\begin{defn}\label{inc dec} \hypertarget{id} 
		Let $f \colon (0,\infty) \to \R$ and $p,q>0$. We say that $f$ satisfies 
		\begin{itemize}
			\item[$\textnormal{(Inc)}_p$] if $\frac{f(t)}{t^p}$ is increasing; 
			\item[$\textnormal{(aInc)}_p$] if $\frac{f(t)}{t^p}$ is almost increasing;
			\item[$\textnormal{(Dec)}_q$] if $\frac{f(t)}{t^q}$ is decreasing;
			\item[$\textnormal{(aDec)}_q$] if $\frac{f(t)}{t^q}$ is almost decreasing;
		\end{itemize}
		We say that $f$ satisfies $\textnormal{(aInc)}$, $\textnormal{(Inc)}$, $\textnormal{(aDec)}$ or $\textnormal{(Dec)}$ if there exist $p>1$ or $q<\infty$ such that $f$ satisfies $\textnormal{(Inc)}_p$, $\textnormal{(aInc)}_p$, $\textnormal{(Dec)}_q$ or $\textnormal{(aDec)}_q$, respectively.
	\end{defn}
	
	\begin{defn}\label{wcs}
		Let $\varphi : [0,+\infty) \to [0,+\infty]$ be increasing with $\varphi(0)=0=\lim_{t \to 0^+} \varphi(t)$ and $\lim_{t \to +\infty} \varphi(t)=+\infty$. Such $\varphi$ is called a $\Phi$-prefunction. We say that a $\Phi$-prefunction $\varphi$ is a
		\begin{itemize}
			\item[-] weak $\Phi$-function if it satisfies \hyperlink{id}{$\textnormal{(aInc)}_1$} on $(0,+\infty)$;
			\item[-] convex $\Phi$-function if it is left-continuous and convex;
			\item[-] strong $\Phi$-function if it is continuous in the topology of $[0,+\infty]$ and convex.
		\end{itemize}
		The set of weak, convex and strong $\Phi$-functions are denoted by $\Phi_w$, $\Phi_c$ and $\Phi_s$, respectively.
	\end{defn}
	
	It follows by definition that $\Phi_s \subset \Phi_c \subset \Phi_w$. If a function $\varphi \in \Phi_c$ satisfies \hyperlink{id}{(aDec)} then $\varphi \in \Phi_s$.
	
	\begin{defn}\label{equiv func}
		Two function $\varphi$ and $\psi$ are called equivalent, $\varphi \simeq \psi$, if there exists $L \geq 1$ such that $\varphi(t/L) \leq \psi(t) \leq \varphi(Lt)$ for all $t \geq 0$. The notation "$\approx$" is instead used with the following meaning: $\varphi \approx \psi$ if and only if there exist two constants $c_1,c_2>0$ such that $c_1 \varphi \leq \psi \leq c_2 \varphi$. 
	\end{defn}
	
	\begin{lem}\label{equiv properties}
		Let $\varphi,\psi \colon [0,+\infty) \to [0,+\infty]$ be increasing with $\varphi \simeq \psi$. Then, the following facts hold:
		\begin{enumerate}
			\item[\textnormal{(a)}] if $\varphi$ is a $\Phi$-prefunction, then $\psi$ is a $\Phi$-prefunction;
			\item[\textnormal{(b)}] if $\varphi$ satisfies \hyperlink{id}{$\textnormal{(aInc)}_p$}, then $\psi$ satisfies \hyperlink{id}{$\textnormal{(aInc)}_p$};
			\item[\textnormal{(c)}] if $\varphi$ satisfies \hyperlink{id}{$\textnormal{(aDec)}_q$}, then $\psi$ satisfies \hyperlink{id}{$\textnormal{(aDec)}_q$}.
		\end{enumerate}
	\end{lem}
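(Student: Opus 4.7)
The proof is a routine verification that the three defining properties each transfer across the equivalence $\varphi \simeq \psi$, losing at most a power of $L$ in the constants. I will treat the three parts independently.

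For (a), the only non-trivial items are the boundary behavior, since monotonicity of $\psi$ is assumed. From $\psi(t)\le \varphi(Lt)$ and $\varphi(0)=0$ I get $\psi(0)=0$, and letting $t\to 0^+$ gives $\lim_{t\to 0^+}\psi(t)\le \lim_{s\to 0^+}\varphi(s)=0$. Similarly, from $\psi(t)\ge \varphi(t/L)$ and $\lim_{s\to+\infty}\varphi(s)=+\infty$, passing $t\to+\infty$ yields $\lim_{t\to+\infty}\psi(t)=+\infty$. So $\psi$ is a $\Phi$-prefunction.

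For (c), the asymmetry of the defining inequality of \hyperlink{id}{$\textnormal{(aDec)}_q$} cooperates with the equivalence. Fix $0<s<t$ and let $a\ge 1$ be the constant for $\varphi$. Using $\psi(t)\le \varphi(Lt)$, $\psi(s)\ge \varphi(s/L)$, and the fact that $s/L<Lt$ (since $L\ge 1$, $s<t$), the $\textnormal{(aDec)}_q$ inequality for $\varphi$ applied to the ordered pair $s/L<Lt$ gives
\[
\frac{\psi(t)}{t^q}\le \frac{\varphi(Lt)}{t^q}= L^q\,\frac{\varphi(Lt)}{(Lt)^q}\le aL^q\,\frac{\varphi(s/L)}{(s/L)^q}=aL^{2q}\,\frac{\varphi(s/L)}{s^q}\le aL^{2q}\,\frac{\psi(s)}{s^q},
\]
so $\psi$ satisfies \hyperlink{id}{$\textnormal{(aDec)}_q$} with constant $aL^{2q}$.

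Part (b) is the only one where a small care is needed, which I regard as the main (and really only) obstacle: chaining the inequalities as in (c) bounds $\psi(s)/s^p$ not by $\psi(t)/t^p$ but by $\psi(L^2 t)/t^p$, because the natural ordering via \hyperlink{id}{$\textnormal{(aInc)}_p$} applied to $\varphi$ forces one to pass through $\varphi(Lt)$, which is only controlled by $\psi(L^2 t)$. I would handle this by splitting into two regimes. If $s<t/L^2$, then $Ls<t/L$, and \hyperlink{id}{$\textnormal{(aInc)}_p$} for $\varphi$ gives
\[
\frac{\psi(s)}{s^p}\le \frac{\varphi(Ls)}{s^p}=L^p\,\frac{\varphi(Ls)}{(Ls)^p}\le aL^p\,\frac{\varphi(t/L)}{(t/L)^p}=aL^{2p}\,\frac{\varphi(t/L)}{t^p}\le aL^{2p}\,\frac{\psi(t)}{t^p}.
\]
In the complementary range $t/L^2\le s<t$, monotonicity of $\psi$ yields $\psi(s)\le \psi(t)$, while $s^p\ge t^p/L^{2p}$, so $\psi(s)/s^p\le L^{2p}\psi(t)/t^p$ directly. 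Combining the two cases gives \hyperlink{id}{$\textnormal{(aInc)}_p$} for $\psi$ with constant $aL^{2p}$, concluding the proof.
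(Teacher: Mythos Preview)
Your proof is correct. Note, however, that the paper does not give its own proof of this lemma: it is stated as a standard fact in the theory of generalized Orlicz spaces, with the proofs of this and the surrounding preliminary statements deferred to the reference \cite{Hastobook}. Your argument is the natural elementary verification; the case split in part (b) is a clean way to handle the one place where the naive chaining of inequalities overshoots, and the resulting constant $aL^{2p}$ matches what one would get from the standard reference.
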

	
	\begin{lem}\label{upgrade}
		If $\varphi \in \Phi_w$ satisfies \hyperlink{id}{$\textnormal{(aInc)}_p$} with $p \geq 1$, then there exists $\psi \in \Phi_c$ equivalent to $\varphi$ such that $\psi^{1/p}$ is convex. In particular, $\psi$ satisfies  \hyperlink{id}{$\textnormal{(Inc)}_p$}.
	\end{lem}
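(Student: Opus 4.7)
I would build $\psi$ from $\varphi$ in two regularization steps, producing the chain $\varphi \simeq \tilde\varphi \simeq \psi$ by tightening the assumption in stages. First pass from the \emph{almost}-monotonicity $\textnormal{(aInc)}_p$ to genuine monotonicity $\textnormal{(Inc)}_p$ by a sup-envelope trick, then obtain convexity of $\psi^{1/p}$ by an integral smoothing. The hypothesis $p\geq 1$ enters only at the very end to convert convexity of $\psi^{1/p}$ into convexity of $\psi$ itself.

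\textbf{Step 1: from $\textnormal{(aInc)}_p$ to $\textnormal{(Inc)}_p$.} With $a \geq 1$ the almost-monotonicity constant, set
\[
\tilde\varphi(t) := t^p \sup_{0 < s \leq t} \frac{\varphi(s)}{s^p}, \qquad \tilde\varphi(0):=0.
\]
Then $t \mapsto \tilde\varphi(t)/t^p$ is increasing by construction, and the sandwich $\varphi(t)/t^p \le \tilde\varphi(t)/t^p \le a\,\varphi(t)/t^p$ forces $\tilde\varphi \approx \varphi$, hence $\tilde\varphi \simeq \varphi$. Monotonicity of $\tilde\varphi$ follows from that of $t \mapsto t^p$ and of the supremum, and its limiting behaviour at $0$ and $+\infty$ together with membership in $\Phi_w$ are then immediate from the equivalence and Lemma \ref{equiv properties}.

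\textbf{Step 2: convex envelope via integration.} Write $g(s) := (\tilde\varphi(s)/s^p)^{1/p}$, which is nondecreasing by Step 1, and set
\[
\psi(t) := \left(\int_0^t g(s)\, ds\right)^{\!p}, \qquad t \geq 0.
\]
The function $\psi^{1/p}$ is the primitive of a nondecreasing function, hence convex with $\psi^{1/p}(0)=0$. The bounds
\[
\tfrac12\tilde\varphi(t/2)^{1/p} \leq (t/2)\,g(t/2) \leq \psi^{1/p}(t) \leq t\, g(t) = \tilde\varphi(t)^{1/p}
\]
give $\psi \simeq \tilde\varphi$, and since $p\geq 1$ and $\psi^{1/p}\geq 0$, composing with the convex increasing map $x\mapsto x^p$ yields that $\psi$ itself is convex. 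The monotonicity of $\psi^{1/p}(t)/t$ (difference quotient from $0$ of a convex function vanishing at $0$) gives $\textnormal{(Inc)}_p$ for $\psi$, and left-continuity plus the correct limits at $0$ and $+\infty$ come from the integral definition and the equivalence, so that $\psi \in \Phi_c$.

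\textbf{Main obstacle.} The only real subtlety is that $\varphi \in \Phi_w$ may take the value $+\infty$: one needs $g$ to remain locally bounded on any interval where $\tilde\varphi$ is finite (which follows from $\tilde\varphi(s)/s^p$ being increasing, so $g(s) \leq g(T)$ for $s \leq T$), while on the half-line where $\tilde\varphi \equiv +\infty$ one simply has $\psi = +\infty$, consistent with the equivalence. Everything else in the argument is routine manipulation of monotonicity and convexity.
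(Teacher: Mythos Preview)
Your argument is correct. The paper does not actually supply a proof of this lemma: it is listed among the preliminary facts on $\Phi$-functions that are quoted without proof from the reference \cite{Hastobook}, so there is no ``paper's own proof'' to compare against. That said, your two-step construction---the sup-envelope $\tilde\varphi(t)=t^p\sup_{0<s\le t}\varphi(s)/s^p$ to upgrade $\textnormal{(aInc)}_p$ to $\textnormal{(Inc)}_p$, followed by the integral $\psi^{1/p}(t)=\int_0^t(\tilde\varphi(s)/s^p)^{1/p}\,ds$ to produce convexity of $\psi^{1/p}$---is essentially the standard argument used in that reference.

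Two minor remarks. First, the implication ``$\tilde\varphi\approx\varphi$ hence $\tilde\varphi\simeq\varphi$'' in Step~1 is not automatic in general; it relies on $\textnormal{(aInc)}_1$ (available since $\varphi\in\Phi_w$) to convert multiplicative comparability into the dilation-type equivalence of Definition~\ref{equiv func}. You use this implicitly. Second, in your displayed chain of inequalities in Step~2 the factor $\tfrac12$ is harmless but unnecessary: in fact $(t/2)\,g(t/2)=\tilde\varphi(t/2)^{1/p}$ exactly, which already yields $\tilde\varphi(t/2)\le\psi(t)\le\tilde\varphi(t)$ and hence $\psi\simeq\tilde\varphi$ directly. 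Neither point affects the validity of the proof.
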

	
	\begin{thm}\label{weak and strong equiv}
		Every weak $\Phi$-function is equivalent to a strong $\Phi$-function. Moreover, if $\varphi \in \Phi_w$ is finite valued and satisfies \hyperlink{id}{$\textnormal{(Inc)}_1$}, then we can find $\psi \in \Phi_s$ such that
		$$
		\psi(t) \leq \varphi(t) \leq \psi(2t) \ \ \ t \geq 0.
		$$
	\end{thm}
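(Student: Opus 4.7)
The plan is to prove the quantitative "moreover" statement first, and then derive the general claim from it via Lemma \ref{upgrade}. The key construction in both cases is the integral
$$
\psi(t) := \int_0^t \frac{\varphi(s)}{s}\, ds \qquad (t \geq 0),
$$
which for $\varphi$ satisfying \hyperlink{id}{$\textnormal{(Inc)}_1$} has the virtue that $\psi'(t) = \varphi(t)/t$ is monotone increasing, forcing $\psi$ to be convex, and produces a sandwich of $\varphi$ between $\psi(\cdot)$ and $\psi(2\,\cdot)$ essentially for free.

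For the moreover part, I would first argue that $\psi$ is well-defined and finite-valued on $[0,+\infty)$: the integrand is bounded near $0$ since $s \mapsto \varphi(s)/s$ is increasing and nonnegative, and finite on $(0,+\infty)$ since $\varphi$ is. The sandwich $\psi(t)\leq \varphi(t)\leq \psi(2t)$ follows by comparing the monotone integrand to its value at an endpoint of the respective interval of integration: the upper bound from $\psi(t)\leq \int_0^t \varphi(t)/t\, ds = \varphi(t)$, and the lower one from $\psi(2t)-\psi(t) \geq \int_t^{2t}\varphi(t)/t\, ds = \varphi(t)$. Then I would verify the remaining $\Phi_s$ requirements: $\psi(0)=0$ and $\psi(t)\to 0$ as $t\to 0^+$ by continuity of the integral, $\psi(t)\to+\infty$ as $t\to+\infty$ from the lower sandwich bound combined with the corresponding property for $\varphi$, convexity from the monotonicity of $\psi'$, and continuity as a map into $[0,+\infty]$ automatic for a finite-valued convex function on $[0,+\infty)$.

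For the first (qualitative) statement, I would invoke Lemma \ref{upgrade} with $p=1$ to replace $\varphi$ by an equivalent $\tilde\varphi\in\Phi_c$ satisfying \hyperlink{id}{$\textnormal{(Inc)}_1$}. In the finite-valued subcase the moreover part immediately yields a $\psi\in\Phi_s$ with $\psi\simeq\tilde\varphi\simeq\varphi$, with equivalence constant $2$ in the first relation. The main technical hurdle I expect is the remaining case where $\tilde\varphi\equiv+\infty$ on $[t_\infty,+\infty)$ for some finite $t_\infty>0$: the integral $\int_0^{t_\infty}\tilde\varphi(s)/s\, ds$ may well be finite (the singularity of $\tilde\varphi$ at $t_\infty$ need not be non-integrable), so the naive $\psi$ would fail to be continuous into $[0,+\infty]$ at $t_\infty$. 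I would resolve this by adding to $\psi$ a corrective convex summand designed to genuinely blow up at $t_\infty$—for instance a multiple of $(t_\infty-t)^{-1}$ on $(t_\infty/2, t_\infty)$ and $+\infty$ beyond—calibrated so as to preserve equivalence with $\tilde\varphi$ on $[0, t_\infty)$ while producing a strong $\Phi$-function on all of $[0,+\infty]$.
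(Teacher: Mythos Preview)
The paper does not give its own proof of this theorem: it is one of the preliminary facts imported from \cite{Hastobook} (see the opening sentence of Section~\ref{sec: preliminaries}). So there is no in-paper argument to compare against, and your proposal should be judged on its own merits.

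Your proof of the ``moreover'' part is correct and is in fact the classical construction. The key points---finiteness of the integral near $0$ by monotonicity of $\varphi(s)/s$, the sandwich $\psi(t)\le\varphi(t)\le\psi(2t)$ by comparing the integrand with its endpoint values, convexity from the increasing derivative, and continuity into $[0,+\infty]$ as an automatic consequence of finite-valued convexity---are all cleanly identified. This is essentially how the result is proved in the reference.

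Your reduction of the first statement to the second via Lemma~\ref{upgrade} with $p=1$ is the right idea, and your identification of the remaining obstruction is accurate: after passing to $\tilde\varphi\in\Phi_c$, the only way $\tilde\varphi$ can fail to be strong is a right-discontinuity at the boundary $t_\infty$ of its effective domain, with $\tilde\varphi(t_\infty)<+\infty$ but $\tilde\varphi\equiv+\infty$ on $(t_\infty,+\infty)$. Your proposed fix---adding a convex summand blowing up at $t_\infty$---is workable, though the calibration you allude to deserves one more sentence of care: you need the corrective term to be dominated by $\tilde\varphi(L\,\cdot)$ on $[0,t_\infty)$ for some $L>1$, which is automatic once $Lt>t_\infty$ (since then $\tilde\varphi(Lt)=+\infty$), so it suffices to keep the correction supported in $(t_\infty/L,t_\infty)$. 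With that detail supplied, the argument goes through.
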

	
	Now, we recall the concept of doubling functions and its equivalence with  \hyperlink{id}{$\textnormal{(aDec)}$}.
	
	\begin{defn}\label{doubling}
		We say that a function $\varphi \colon [0,+\infty) \to [0,+\infty]$ satisfies $\Delta_2$, or that it is doubling, if there exists a constant $K \geq 2$ such that 
		$$
		\varphi(2t) \leq K\varphi(t) \ \ \ \mbox{for all $t \geq 0$}.
		$$
	\end{defn}
	
	\begin{lem}\label{doubling and dec}
		The following statements hold.
		\begin{enumerate}
			\item[\textnormal{(a)}] If $\varphi \in \Phi_w$, then $\Delta_2$ is equivalent to  \hyperlink{id}{$\textnormal{(aDec)}$}.
			\item[\textnormal{(b)}] If $\varphi \in \Phi_c$, then $\Delta_2$ is equivalent to  \hyperlink{id}{$\textnormal{(Dec)}$}.
		\end{enumerate}
	\end{lem}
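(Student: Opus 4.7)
For part (a), both implications are relatively short. The direction $\textnormal{(aDec)} \Rightarrow \Delta_2$ is immediate: if $\varphi(t)/t^q$ is almost decreasing with constant $a$, then applying the inequality to the pair $(s, 2s)$ yields $\varphi(2s) \leq a\, 2^q \varphi(s)$, which is doubling with $K = a\, 2^q$. Conversely, to deduce $\textnormal{(aDec)}$ from $\Delta_2$, I would iterate the doubling estimate to obtain $\varphi(2^n t) \leq K^n \varphi(t)$ for every $n \in \N$, and then for an arbitrary pair $0 < s < t$ pick the integer $n \geq 0$ with $2^n s \leq t < 2^{n+1} s$ and use monotonicity of $\varphi$:
\[
\varphi(t) \leq \varphi(2^{n+1} s) \leq K^{n+1} \varphi(s) \leq K\,(t/s)^{\log_2 K}\,\varphi(s).
\]
This is precisely $\textnormal{(aDec)}_q$ with $q = \log_2 K$ and constant $K$. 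A preliminary observation (ensuring that everything makes sense) is that $\Delta_2$ together with $\lim_{t \to 0^+}\varphi(t) = 0$ forces $\varphi$ to be finite-valued on $[0,+\infty)$.

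For part (b), the direction $\textnormal{(Dec)} \Rightarrow \Delta_2$ reduces to (a) since $\textnormal{(Dec)}$ trivially implies $\textnormal{(aDec)}$. The crux is the reverse direction $\Delta_2 \Rightarrow \textnormal{(Dec)}$, where convexity of $\varphi$ must be used essentially. The plan is to work with the right derivative $\varphi'_+$, which exists and is nondecreasing on $(0,+\infty)$ by convexity, and for which $\varphi(t) = \int_0^t \varphi'_+(s)\, ds$. Monotonicity of $\varphi'_+$ gives
\[
\varphi(2t) - \varphi(t) = \int_t^{2t}\varphi'_+(s)\, ds \geq t\,\varphi'_+(t),
\]
which combined with $\Delta_2$ produces the key pointwise estimate $t\,\varphi'_+(t) \leq (K-1)\varphi(t)$. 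For any $q \geq K-1$, the (a.e.) derivative of $\varphi(t)/t^q$ equals $\bigl(t\varphi'_+(t) - q\varphi(t)\bigr)/t^{q+1} \leq 0$, and since $\varphi(t)/t^q$ is absolutely continuous on compact subintervals of $(0,+\infty)$, it is non-increasing, which is exactly $\textnormal{(Dec)}_q$.

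The main obstacle is the implication $\Delta_2 \Rightarrow \textnormal{(Dec)}$ in part (b): one must exploit convexity in a nontrivial way, and must justify the derivative argument for a convex function that need not be everywhere differentiable. This is handled by passing to the right derivative $\varphi'_+$ and invoking the absolute continuity of convex functions on compact subintervals of the interior of their effective domain. A minor point is also to exclude the possibility that $\varphi$ equals $+\infty$ somewhere, but this is ruled out by the same argument as in part (a).
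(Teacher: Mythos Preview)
Your proof is correct. Note, however, that the paper does not actually supply a proof of this lemma: it is one of the preliminary facts quoted from \cite{Hastobook} (see the sentence at the start of Section~\ref{sec: preliminaries}). So there is no ``paper's own proof'' to compare against directly; the argument you give is essentially the standard one from that reference.

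Two small remarks. In part~(a), your exponent $q = \log_2 K$ satisfies $q \geq 1$ since $K \geq 2$, so $\textnormal{(aDec)}_q$ is indeed established for a finite~$q$, as the definition requires. In part~(b), your exponent $q = K-1 \geq 1$ works, and the justification via the right derivative and local absolute continuity of convex functions is the right way to handle the possible non-differentiability. The preliminary observation that $\Delta_2$ forces $\varphi$ to be finite-valued is needed here too, so that $\varphi$ is locally Lipschitz on $(0,+\infty)$ and the integral representation $\varphi(t)=\int_0^t \varphi'_+(s)\,ds$ is valid.
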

	
	\begin{rem}
	By the previous Lemma it follows that if $\varphi \in \Phi_c$ satisfies  \hyperlink{id}{$\textnormal{(aDec)}_q$}, then it satisfies  \hyperlink{id}{$\textnormal{(Dec)}_{q_2}$} for some possibly larger $q_2$. This and Lemma \ref{equiv properties}(c) imply that if $\varphi$ satisfies $\Delta_2$ and $\psi \simeq \varphi$, then $\psi$ satisfies $\Delta_2$.
	\end{rem}
	
	\begin{thm}\label{inverse equiv}
		Let $\psi,\varphi \in \Phi_w$. Then $\varphi \simeq \psi$ if and only if $\varphi^{-1} \approx \psi^{-1}$. In particular, if $\psi(t/L) \leq \varphi(t) \leq \psi(Lt)$ for some $L \geq 1$ then,
		$$
		\psi^{-1}(t)/L \leq \varphi^{-1}(t) \leq L\psi^{-1}(t).
		$$ 
	\end{thm}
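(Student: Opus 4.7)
My plan rests on two elementary properties of the left inverse $\varphi^{-1}(s)=\inf\{t\geq 0:\varphi(t)\geq s\}$ of an increasing function $\varphi$, both immediate from the definition: (i) if $\varphi(t)\geq s$ then $\varphi^{-1}(s)\leq t$, and (ii) $\varphi^{-1}(s)>t$ if and only if $\varphi(t)<s$. The delicate point is that a weak $\Phi$-function may have jumps or plateaus, so the tempting equivalence $\varphi^{-1}(s)\leq t\Leftrightarrow \varphi(t)\geq s$ may fail; the proof must therefore proceed by an intermediate-value argument with strict inequalities.

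First I would establish the quantitative direct implication, which already encompasses the ``in particular'' part of the statement. Assuming $\psi(t/L)\leq \varphi(t)\leq \psi(Lt)$ for all $t\geq 0$, I argue by contradiction for the lower bound: if $\varphi^{-1}(s)<\psi^{-1}(s)/L$, I pick $t$ strictly between these two values. Then (ii) applied to $\varphi$ gives $\varphi(t)\geq s$, while (ii) applied to $\psi$ together with $Lt<\psi^{-1}(s)$ gives $\psi(Lt)<s$, contradicting $\varphi(t)\leq\psi(Lt)$. Hence $\varphi^{-1}(s)\geq \psi^{-1}(s)/L$. The bound $\varphi^{-1}(s)\leq L\psi^{-1}(s)$ is obtained symmetrically starting from $\varphi(t)\geq \psi(t/L)$.

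For the converse, I would assume $c_1\varphi^{-1}(s)\leq \psi^{-1}(s)\leq c_2\varphi^{-1}(s)$ and, after replacing $c_1$ by $\min(c_1,1)$ and $c_2$ by $\max(c_2,1)$ if necessary, suppose $c_1\leq 1\leq c_2$. The aim is to show $\psi(c_1 t)\leq \varphi(t)\leq \psi(c_2 t)$ for all $t\geq 0$. For the upper bound, suppose by contradiction $\varphi(t)>\psi(c_2 t)$ and pick $s\in(\psi(c_2 t),\varphi(t))$: by (i), $\varphi(t)\geq s$ yields $\varphi^{-1}(s)\leq t$, while by (ii), $\psi(c_2 t)<s$ yields $\psi^{-1}(s)>c_2 t$; the hypothesis then gives $\psi^{-1}(s)\leq c_2\varphi^{-1}(s)\leq c_2 t$, a contradiction. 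The lower bound $\psi(c_1 t)\leq \varphi(t)$ is analogous, with the roles of $\varphi$ and $\psi$ interchanged. Setting $L=\max(c_2,1/c_1)\geq 1$ finally gives $\psi(t/L)\leq \varphi(t)\leq \psi(Lt)$, i.e.\ $\varphi\simeq\psi$.

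The whole argument is elementary and uses nothing beyond monotonicity and the infimum characterisation of $\varphi^{-1}$, so no substantial obstacle is anticipated; the weak $\Phi$-function structure is actually irrelevant, since only monotonicity is used. The only point requiring attention is the systematic use of strictly intermediate values $s$ to sidestep the possible discontinuities of $\varphi$ and $\psi$, which makes the contradiction mechanism preferable to a direct manipulation of the equivalence between $\varphi^{-1}(s)\leq t$ and $\varphi(t)\geq s$.
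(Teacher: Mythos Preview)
The paper does not give its own proof of this statement; it is listed among the preliminaries whose proofs are deferred to the reference \cite{Hastobook}. Your line of argument is the natural one and, for the direct implication, it is correct: the two appeals to (ii) there invoke only the valid one-way implications $\varphi^{-1}(s)>t\Rightarrow\varphi(t)<s$ (the contrapositive of (i)) and $\varphi^{-1}(s)<t\Rightarrow\varphi(t)\geq s$.

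There is, however, a small gap in the converse. Your property (ii) is stated as an ``if and only if'', but for a general increasing function only the forward direction holds: from $\varphi(t)<s$ one gets $\varphi^{-1}(s)\geq t$, not the strict inequality, since the set $\{t':\varphi(t')\geq s\}$ may equal $(t,\infty)$ when $\varphi$ is not right-continuous at $t$ and $\varphi(t)<s\leq\varphi(t+)$. (You yourself note that the negated equivalence $\varphi^{-1}(s)\leq t\Leftrightarrow\varphi(t)\geq s$ may fail, which is logically the same observation.) In your converse argument you write ``$\psi(c_2 t)<s$ yields $\psi^{-1}(s)>c_2 t$'', which is precisely the unsafe direction. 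The fix is short: with only $\psi^{-1}(s)\geq c_2 t$, the hypothesis $\psi^{-1}(s)\leq c_2\varphi^{-1}(s)\leq c_2 t$ forces equality, hence $\psi((c_2 t)+)\geq s$; letting $s\uparrow\varphi(t)$ gives $\psi((c_2 t)+)\geq\varphi(t)$, so $\varphi(t)\leq\psi(c' t)$ for every $c'>c_2$. Since the converse only claims $\varphi\simeq\psi$ (no specific constant), this suffices.
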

	We now come to the generalized setting, where explicit dependence on the space variable $x$ is allowed.
	
	\begin{defn}\label{inc dec general def}\hypertarget{idg}
		Let $(A,\Sigma,\mu)$ be a complete, $\sigma$-finite measure space. Let $\varphi \colon A \times [0,+\infty) \to [0,+\infty]$ and $p,q>0$. We say that $\varphi$ satisfies $\textnormal{(aInc)}_p$ or $\textnormal{(aDec)}_q$ if there exists a constant $a \geq 1$ such that $t \mapsto \varphi(x,t)$ satisfies \hyperlink{id}{$\textnormal{(aInc)}_p$} or \hyperlink{id}{$\textnormal{(aDec)}_q$} respectively, for $\mu$-a.e. $x \in A$. When $a=1$ we use the notation $\textnormal{(Inc)}_p$ and $\textnormal{(Dec)}_q$. For (Inc) and (Dec) the definition is analogous.
	\end{defn}
	
	\begin{defn}\label{def: genorliczclass}
		Let $(A,\Sigma,\mu)$ be a complete, $\sigma$-finite measure space. A function $\varphi \colon A \times [0,+\infty) \to [0,+\infty]$ is said to be a generalized $\Phi$-prefunction on $(A,\Sigma,\mu)$ if $x \mapsto \varphi(x,|f(x)|)$ is measurable for any $f \in L^0(A,\mu)$ and $\varphi(x,\cdot)$ is a $\Phi$-prefunction for $\mu$-a.e. $x \in A$. We say that the generalized $\Phi$-prefunction $\varphi$ is
		\begin{enumerate}
			\item[-] a weak generalized $\Phi$-function if it satisfies \hyperlink{idg}{$\textnormal{(Inc)}_1$} on $(0,+\infty)$;
			\item[-] a convex generalized $\Phi$-function if $\varphi(x,\cdot) \in \Phi_c$ for $\mu$-a.e. $x \in A$;
			\item[-] a strong generalized $\Phi$-function if $\varphi(x,\cdot) \in \Phi_s$ for $\mu$-a.e. $x \in A$.
		\end{enumerate}
	\end{defn}
	If $\varphi$ is a generalized weak $\Phi$-function on $(A,\Sigma,\mu)$, we write $\varphi \in \Phi_w(A,\mu)$. Similarly we define $\varphi \in \Phi_c(A,\mu)$ and $\varphi \in \Phi_s(A,\mu)$. If $A \subset \R^d$ is an open set and $\mu=\mathcal{L}^d$, we omit the measure dependence and simply write $\Phi_w(A)$, $\Phi_c(A)$ or $\Phi_s(A)$ and we say that $\varphi$ is a generalized $\Phi$-function on $A$. 
	
	\begin{prop}
		Let $\varphi \colon A \times [0,+\infty) \to [0,+\infty]$, $x \mapsto \varphi(x,t)$ be measurable for every $t \geq 0$ and $t \mapsto \varphi(x,t)$ be increasing and left-continuous for $\mu$-a.e. $x \in A$. If $f \in L^0(A,\mu)$. Then $x \mapsto \varphi(x,|f(x)|)$ is measurable.
	\end{prop}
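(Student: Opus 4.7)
The plan is to reduce the problem to the given fixed-$t$ measurability of $x \mapsto \varphi(x,t)$ by approximating $|f|$ from below by simple measurable functions and then passing to the limit via the left-continuity of $\varphi(x,\cdot)$. First I would discard the negligible exceptional set $N \in \Sigma$ outside of which $\varphi(x,\cdot)$ is increasing and left-continuous, and work on $A \setminus N$; completeness of $\mu$ will let me redefine the function arbitrarily on $N$ at the end.

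Next I would invoke the standard dyadic construction to produce an increasing sequence of nonnegative simple measurable functions $f_n \colon A \to [0,+\infty)$ with $f_n(x) \uparrow |f(x)|$ pointwise. Writing each $f_n$ in the form $f_n = \sum_{k=1}^{N_n} t_k^{(n)} \chi_{E_k^{(n)}}$ with disjoint measurable sets $E_k^{(n)}$ and values $t_k^{(n)} \ge 0$, one has
\[
\varphi(x, f_n(x)) \;=\; \sum_{k=1}^{N_n} \varphi(x, t_k^{(n)})\, \chi_{E_k^{(n)}}(x),
\]
which is a finite sum of products of measurable functions, since $x \mapsto \varphi(x, t_k^{(n)})$ is measurable by hypothesis. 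Thus each $\varphi(\cdot, f_n(\cdot))$ is measurable.

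The final step is to identify the pointwise limit. For $x \in A \setminus N$ the sequence $\varphi(x, f_n(x))$ is increasing in $n$ (by monotonicity of $\varphi(x,\cdot)$) and bounded above by $\varphi(x,|f(x)|)$. If $f_n(x) = |f(x)|$ for some $n$, the limit is trivially $\varphi(x,|f(x)|)$; otherwise $f_n(x)$ strictly increases to $|f(x)|$ from below, and left-continuity of $\varphi(x,\cdot)$ at $|f(x)|$ forces $\varphi(x,f_n(x)) \to \varphi(x,|f(x)|)$. Hence $x \mapsto \varphi(x,|f(x)|)$ is a pointwise limit of measurable functions on $A \setminus N$, and redefining it as zero on $N$ keeps it measurable thanks to completeness of $(A,\Sigma,\mu)$.

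I do not expect a real obstacle here—the result is essentially textbook—but the single place where the hypothesis is used in an essential way is in the last paragraph: if $\varphi(x,\cdot)$ were only right-continuous (or had a jump at $|f(x)|$ that is approached from below), the same strategy of approximation by simple functions from below would fail, and one would either need to approximate from above (which is delicate when $|f|$ is unbounded or $\varphi$ takes the value $+\infty$) or impose a joint measurability assumption on $\varphi$.
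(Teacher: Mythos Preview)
Your proof is correct and is exactly the standard argument; the paper does not give its own proof of this proposition but refers to \cite{Hastobook}, where essentially the same simple-function approximation combined with left-continuity is used. One cosmetic remark: in your displayed identity for $\varphi(x,f_n(x))$ you implicitly assume the $E_k^{(n)}$ partition $A$ (or that $\varphi(x,0)=0$); just make sure your simple-function representation includes the set $\{f_n=0\}$ so that the formula holds pointwise.
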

	
	Properties of $\Phi$-functions are generalized point-wise uniformly to the generalized $\Phi$-function case.
	
	\begin{defn}\label{equiv func gen}
		Two function $\varphi, \psi \colon A \times [0,+\infty) \to [0,+\infty]$ are called equivalent, $\varphi \simeq \psi$, if there exists $L \geq 1$ such that $$\varphi(x,t/L) \leq \psi(x,t) \leq \varphi(x,Lt) \ \ \ \mbox{for $\mu$-a.e. $x \in A$ and for all $t \geq 0$.}$$
	\end{defn}
	
	\begin{lem}\label{equiv properties gen}
		Let $\varphi,\psi \colon A \times [0,+\infty) \to [0,+\infty]$, be increasing with respect to the second variable, such that $\varphi \simeq \psi$, and $x \mapsto \varphi(x,|f(x)|)$ and $x \mapsto \psi(x,|f(x)|)$ be measurable for any $f \in L^0(A,\mu)$. Then
		\begin{enumerate}
			\item[\textnormal{(a)}] if $\varphi$ is a generalized $\Phi$-prefunction, then $\psi$ is a generalized $\Phi$-prefunction;
			\item[\textnormal{(b)}] if $\varphi$ satisfies \hyperlink{idg}{$\textnormal{(aInc)}_p$}, then $\psi$ satisfies \hyperlink{idg}{$\textnormal{(aInc)}_p$};
			\item[\textnormal{(c)}] if $\varphi$ satisfies \hyperlink{idg}{$\textnormal{(aDec)}_q$}, then $\psi$ satisfies \hyperlink{idg}{$\textnormal{(aDec)}_q$}.
		\end{enumerate}
	\end{lem}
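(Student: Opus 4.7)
The plan is to reduce the generalized statement to the pointwise analogue Lemma \ref{equiv properties}, taking care that the constants involved in the equivalence $\varphi \simeq \psi$ are independent of $x$, so that one obtains the uniform constants required by Definitions \ref{inc dec general def} and \ref{def: genorliczclass}. The hypothesis $\varphi \simeq \psi$ provides some $L \geq 1$ such that, for $\mu$-a.e.\ $x \in A$,
\begin{equation*}
\varphi(x, t/L) \leq \psi(x,t) \leq \varphi(x, L t) \qquad \text{for all } t \geq 0,
\end{equation*}
with the \emph{same} $L$ for $\mu$-a.e.\ $x$. In particular, for $\mu$-a.e.\ $x$, the one-variable functions $\varphi(x,\cdot)$ and $\psi(x,\cdot)$ are equivalent in the sense of Definition \ref{equiv func} with common constant $L$.

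For part (a), I would fix an exceptional $\mu$-null set so that outside it the inequality above holds and $\varphi(x,\cdot)$ is a $\Phi$-prefunction, and then invoke Lemma \ref{equiv properties}(a) pointwise to conclude that $\psi(x,\cdot)$ is a $\Phi$-prefunction for $\mu$-a.e.\ $x$. The remaining requirement in Definition \ref{def: genorliczclass} is measurability of $x \mapsto \psi(x, |f(x)|)$ for $f \in L^0(A,\mu)$, and this is explicitly assumed in the statement, so no further argument is needed.

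For parts (b) and (c), the delicate point is to certify that the constant produced by the pointwise argument can be chosen independently of $x$. Assume $\varphi$ satisfies \hyperlink{idg}{$\textnormal{(aInc)}_p$} with a uniform constant $a \geq 1$, so that for $\mu$-a.e.\ $x$ and all $0 < s < t$ one has $\varphi(x,s)/s^p \leq a\, \varphi(x,t)/t^p$. Combining this with $\psi(x,s) \leq \varphi(x, Ls)$ and $\varphi(x, t/L) \leq \psi(x,t)$, and splitting into the two cases $L^2 s < t$ and $L^2 s \geq t$ (in the latter case one simply uses monotonicity of $\psi(x,\cdot)$ together with $(t/s)^p \leq L^{2p}$), one obtains
\begin{equation*}
\frac{\psi(x,s)}{s^p} \leq a L^{2p}\, \frac{\psi(x,t)}{t^p}
\end{equation*}
for $\mu$-a.e.\ $x$. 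The constant $a L^{2p}$ depends only on $a$ and $L$, not on $x$, hence $\psi$ satisfies \hyperlink{idg}{$\textnormal{(aInc)}_p$} in the sense of Definition \ref{inc dec general def}. Part (c) is entirely symmetric: run the same case split with the roles of $s$ and $t$ interchanged and with \hyperlink{id}{$\textnormal{(aDec)}_q$} in place of (aInc)$_p$, producing a constant of the form $a L^{2q}$.

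The main (and mild) obstacle is the bookkeeping of the uniform constants in $x$; once one observes that the $L$ from the equivalence and the constant from (aInc)$_p$ (resp.\ (aDec)$_q$) are already assumed uniform, the proof is essentially a pointwise application of Lemma \ref{equiv properties}, and no substantive new idea is required.
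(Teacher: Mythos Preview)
The paper does not actually provide a proof of this lemma; it is simply stated in the preliminaries as a known fact (implicitly from \cite{Hastobook}), so there is no paper proof to compare against. Your argument is nonetheless correct and is the standard one: reduce to the pointwise Lemma~\ref{equiv properties} while keeping track of uniform constants. Your case split in part~(b) is clean and yields the explicit constant $aL^{2p}$; part~(c) follows by the symmetric computation as you indicate, and part~(a) is immediate from the pointwise lemma plus the measurability hypothesis. Nothing is missing.
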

	
	\begin{lem}\label{upgrade gen}
		If $\varphi \in \Phi_w(A,\mu)$ satisfies \hyperlink{id}{$\textnormal{(aInc)}_p$} with $p \geq 1$, then there exists $\psi \in \Phi_c(A,\mu)$ equivalent to $\varphi$ such that $\psi^{1/p}$ is convex. In particular, $\psi$ satisfies  \hyperlink{id}{$\textnormal{(Inc)}_p$}.
	\end{lem}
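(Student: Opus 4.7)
The plan is to apply Lemma \ref{upgrade} pointwise in $x$ and then verify that the resulting function lies in $\Phi_c(A, \mu)$ with a uniform equivalence constant. For $\mu$-a.e. $x \in A$, $\varphi(x,\cdot)$ is a weak $\Phi$-function satisfying \hyperlink{id}{$\textnormal{(aInc)}_p$} with a single constant $a \geq 1$, by Definition \ref{inc dec general def}. Applying Lemma \ref{upgrade} pointwise yields, for each such $x$, a convex $\Phi$-function $\psi_x$ equivalent to $\varphi(x, \cdot)$ with $\psi_x^{1/p}$ convex. Inspection of the proof of Lemma \ref{upgrade} shows that the equivalence constant $L$ depends only on $a$ and $p$, so setting $\psi(x, t) := \psi_x(t)$ produces a candidate for which all pointwise claims and the uniform equivalence $\psi \simeq \varphi$ in the sense of Definition \ref{equiv func gen} are immediate.

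The last assertion of the statement is a short calculation. If $h(t) := \psi(x, t)^{1/p}$ is convex with $h(0) = 0$, then writing $s = (s/t)\, t + (1 - s/t)\, 0$ and using convexity gives $h(s)/s \leq h(t)/t$ for $0 < s \leq t$. Hence $\psi(x, t)/t^p = (h(t)/t)^p$ is non-decreasing in $t$, which is exactly \hyperlink{id}{$\textnormal{(Inc)}_p$}.

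The main obstacle is the measurability requirement in Definition \ref{def: genorliczclass}, namely that $x \mapsto \psi(x, |f(x)|)$ be measurable for every $f \in L^0(A, \mu)$. To address it I would realise the construction of Lemma \ref{upgrade} by explicit formulas that manifestly preserve measurability. First, one upgrades \hyperlink{id}{$\textnormal{(aInc)}_p$} to \hyperlink{id}{$\textnormal{(Inc)}_p$} by setting
$$
\tilde\varphi(x, t) := t^p \sup_{s \in (0, t] \cap \mathbb{Q}} \frac{\varphi(x, s)}{s^p},
$$
which satisfies $\varphi \leq \tilde\varphi \leq a \varphi$ by \hyperlink{id}{$\textnormal{(aInc)}_p$} and is measurable in $x$ for each fixed $t$, being a countable supremum of $x$-measurable functions. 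Then one defines
$$
\psi(x, t) := \left( \int_0^t \frac{\tilde\varphi(x, s)^{1/p}}{s}\, ds \right)^p.
$$
Since $s \mapsto \tilde\varphi(x, s)^{1/p}/s$ is non-decreasing (by the upgraded \hyperlink{id}{$\textnormal{(Inc)}_p$}), $\psi(x, \cdot)^{1/p}$ is convex by construction, and its $p$-th power is then a $\Phi_c$-function because $p\ge 1$. Moreover, a direct estimate using monotonicity of $\tilde\varphi$ yields
$$
(\log 2)^p\, \tilde\varphi(x, t/2) \leq \psi(x, t) \leq \tilde\varphi(x, t),
$$
giving $\psi \simeq \varphi$ with a constant depending only on $a$ and $p$. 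Fubini--Tonelli then provides measurability of $x \mapsto \psi(x, t)$ for each $t$, and since $\psi(x, \cdot)$ is continuous, the Proposition stated before Definition \ref{equiv func gen} yields measurability of $x \mapsto \psi(x, |f(x)|)$ for every $f \in L^0(A, \mu)$, concluding the proof.
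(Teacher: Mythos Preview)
The paper does not supply its own proof of this lemma; it is stated among the preliminaries with proof deferred to \cite{Hastobook}. Your explicit construction --- first upgrading $\textnormal{(aInc)}_p$ to $\textnormal{(Inc)}_p$ via a countable supremum and then convexifying $\tilde\varphi^{1/p}$ by the integral formula --- is the standard route taken in that reference, and the verification of convexity, the equivalence bounds $(\log 2)^p\tilde\varphi(x,t/2)\le\psi(x,t)\le\tilde\varphi(x,t)$, the joint measurability of $\tilde\varphi$ (as a countable supremum over rationals), and the final appeal to the Proposition preceding Definition~\ref{equiv func gen} are all sound.

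There is one small slip. The pointwise inequality $\varphi \leq \tilde\varphi$ is not justified: the supremum in the definition of $\tilde\varphi$ runs only over rationals, and a weak $\Phi$-function need not be left-continuous, so for irrational $t$ one cannot conclude $\varphi(x,t)/t^p \le \sup_{s\in(0,t]\cap\mathbb Q}\varphi(x,s)/s^p$. What $\textnormal{(aInc)}_p$ does give is $\tilde\varphi \leq a\varphi$ together with, upon choosing a rational $s\in(t/2,t]$, the bound $\varphi(x,t/2)/(t/2)^p \leq a\,\varphi(x,s)/s^p \leq a\,\tilde\varphi(x,t)/t^p$, hence the equivalence $\tilde\varphi \simeq \varphi$ with constant depending only on $a$. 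Since the rest of your argument uses only this equivalence, the correction is harmless and the proof goes through.
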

	
	We can now discuss our main assumptions on the growth function $\psi$, starting from the weight condition (A0) and the local continuity condition (A1).
	\begin{defn}\label{A0} \hypertarget{A0}
		Let $\varphi \in \Phi_w(\Omega)$. We say that $\varphi$ satisfies (A0) if there exists a constant $\sigma \geq 1$ such that
		$$
		\varphi\left(x,\frac{1}{\sigma}\right) \leq 1 \leq \varphi(x,\sigma) \ \ \ \mbox{for $\mathcal{L}^d$-a.e. $x \in \Omega$}.
		$$
	\end{defn}
	
	\begin{prop}\label{equiv A0}
		Given $\varphi,\psi \in \Phi_w(\Omega)$, if $\varphi \simeq \psi$ and $\varphi$ satisfies \hyperlink{A0}{\textnormal{(A0)}}, then $\psi$ satisfies \hyperlink{A0}{\textnormal{(A0)}}. 
	\end{prop}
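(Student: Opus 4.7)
\textbf{Proof plan for Proposition \ref{equiv A0}.} The statement is a purely pointwise fact following directly from the definitions of equivalence and of the weight condition \hyperlink{A0}{(A0)}. My plan is to simply propagate the constant $\sigma$ in the condition for $\varphi$ through the equivalence relation to find an analogous constant $\sigma'$ for $\psi$.

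Concretely, let $\sigma \ge 1$ be the constant given by \hyperlink{A0}{(A0)} for $\varphi$, and let $L \ge 1$ be the constant given by Definition~\ref{equiv func gen} for the equivalence $\varphi \simeq \psi$. I set $\sigma' := L\sigma \ge 1$, and claim that $\sigma'$ witnesses \hyperlink{A0}{(A0)} for $\psi$. For the upper half of the inequality, apply the right-hand side of the equivalence with $t = 1/\sigma'$ to get, for $\mathcal{L}^d$-a.e.\ $x \in \Omega$,
\[
\psi\!\left(x, \tfrac{1}{\sigma'}\right) \le \varphi\!\left(x, L \cdot \tfrac{1}{L\sigma}\right) = \varphi\!\left(x, \tfrac{1}{\sigma}\right) \le 1.
\]
For the lower half, apply the left-hand side of the equivalence with $t = \sigma'$ to obtain, for $\mathcal{L}^d$-a.e.\ $x \in \Omega$,
\[
\psi(x, \sigma') \ge \varphi\!\left(x, \tfrac{L\sigma}{L}\right) = \varphi(x, \sigma) \ge 1.
\]

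There is no serious obstacle here: monotonicity of $\varphi(x,\cdot)$ is not even needed, since the equivalence already gives the two inequalities at the specific points $1/\sigma'$ and $\sigma'$. The only minor care is to check that $\sigma' \ge 1$, which is immediate from $L \ge 1$ and $\sigma \ge 1$, and that the common null set on which both \hyperlink{A0}{(A0)} for $\varphi$ and the equivalence $\varphi \simeq \psi$ may fail is still $\mathcal{L}^d$-negligible, which follows from the union of two null sets being null.
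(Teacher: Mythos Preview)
Your proof is correct. The paper does not provide its own proof of this proposition; it is one of the preliminary facts for which the paper refers to \cite{Hastobook}, and your argument is precisely the standard direct verification one would expect.
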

	
	If $\varphi \in \Phi_w(\Omega)$ satisfies \hyperlink{A0}{(A0)}, then $\varphi_B^-,\varphi_B^+ \in \Phi_w$ for every $B \Subset \Omega$ (see \cite[Lemma 2.5.16]{Hastobook}).
	
	\begin{defn}\label{A1} \hypertarget{A1}
		Let $\varphi \in \Phi_w(\Omega)$ satisfy \hyperlink{A0}{(A0)}. We say that $\varphi$ satisfies (A1) if for every ball $B \subset \Omega$ with $\mathcal{L}^d(B) \leq 1$ there exists $\beta \in (0,1)$ such that
			\begin{equation}\label{iv}
				\varphi_B^+(\beta t) \leq \varphi_B^-(t) \ \ \ \mbox{for all $t \in \left[\sigma,(\varphi_B^-)^{-1}\left( \frac{1}{\mathcal{L}^d(B)} \right)\right]$}.
			\end{equation}
	\end{defn}
	
	Notice that the above definition explicitly depends on the space dimension $d$. To our purposes, an adimensional (and weaker) version of \hyperlink{A1}{(A1)} will be sufficient. We state it below.

	\begin{defn}\label{adA1} \hypertarget{adA1}
		Let $\varphi \in \Phi_w(\Omega)$ satisfy \hyperlink{A0}{(A0)}. We say that $\varphi$ satisfies (adA1)
		if for every ball $B \subset \Omega$ with $\diam(B) \leq 1$ there exists $\beta \in (0,1)$ such that 
		\begin{equation}\label{iiiv}
			\varphi_B^+(\beta t) \leq \varphi_B^-(t) \ \ \ \mbox{for all $t \in \left[\sigma,(\varphi_B^-)^{-1}\left( \frac{1}{\diam(B)} \right)\right]$}.
		\end{equation}
	\end{defn}
	
	\begin{lem}\label{equiv A1}
		Let $\varphi,\psi \in \Phi_w(\Omega)$ and $\varphi \simeq \psi$. If $\varphi$ satisfies \hyperlink{A0}{\textnormal{(A0)}} and \hyperlink{A1}{\textnormal{(A1)}} \textnormal{(}resp. \hyperlink{adA1}{\textnormal{(adA1)}}\textnormal{)}, then $\psi$ does as well. 
	\end{lem}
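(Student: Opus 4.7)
The plan is to transfer the condition from $\varphi$ to $\psi$ via the equivalence constant, following the same template already used implicitly in Proposition \ref{equiv A0}. Since $\varphi \simeq \psi$, fix $L \geq 1$ with $\varphi(x,t/L) \leq \psi(x,t) \leq \varphi(x,Lt)$ for $\mathcal{L}^d$-a.e.\ $x \in \Omega$ and every $t \geq 0$. First I would take the essential sup and inf over $x \in B \cap \Omega$ in this pointwise chain: the same inequalities are preserved, yielding
\[
\varphi_B^{\pm}(t/L) \leq \psi_B^{\pm}(t) \leq \varphi_B^{\pm}(Lt).
\]
In particular $\varphi_B^{-} \simeq \psi_B^{-}$ with the same constant $L$, and Theorem \ref{inverse equiv} then upgrades this to the inverse bound $(\psi_B^{-})^{-1}(s) \leq L(\varphi_B^{-})^{-1}(s)$ for every $s \geq 0$.

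Next, let $\sigma_\varphi \geq 1$ be a valid \hyperlink{A0}{(A0)}-constant for $\varphi$. A direct check from $\varphi \simeq \psi$ shows that $\sigma_\psi := L\sigma_\varphi$ is a valid \hyperlink{A0}{(A0)}-constant for $\psi$, so this will play the role of $\sigma$ in the \hyperlink{adA1}{(adA1)} condition we want for $\psi$. The natural candidate for the parameter $\beta$ associated to $\psi$ is $\beta_\psi := \beta_\varphi/L^2$, where $\beta_\varphi \in (0,1)$ is the constant provided by \hyperlink{adA1}{(adA1)} for $\varphi$ on the ball $B$.

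The heart of the verification, and the main (rather modest) obstacle, is to show that whenever $t$ belongs to the target interval $[\sigma_\psi, (\psi_B^{-})^{-1}(1/\diam(B))]$ appearing in \hyperlink{adA1}{(adA1)} for $\psi$, the rescaled point $s := t/L$ belongs to the interval $[\sigma_\varphi, (\varphi_B^{-})^{-1}(1/\diam(B))]$ on which \hyperlink{adA1}{(adA1)} for $\varphi$ can be invoked. The lower endpoint is covered precisely by the choice $\sigma_\psi = L\sigma_\varphi$, while the upper endpoint is covered by the inverse comparison established above. Granted this, the chain
\[
\psi_B^{+}(\beta_\psi t) \leq \varphi_B^{+}(\beta_\varphi t / L) = \varphi_B^{+}(\beta_\varphi s) \leq \varphi_B^{-}(s) = \varphi_B^{-}(t/L) \leq \psi_B^{-}(t)
\]
gives \hyperlink{adA1}{(adA1)} for $\psi$ with parameter $\beta_\psi$. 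The proof of the \hyperlink{A1}{(A1)} statement is formally identical, with $\diam(B)$ replaced by $\mathcal{L}^d(B)$ throughout; the restriction $\mathcal{L}^d(B) \leq 1$ instead of $\diam(B) \leq 1$ does not interact with the argument.
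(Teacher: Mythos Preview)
Your proof is correct. The paper does not actually provide its own proof of this lemma: it is stated among the preliminary facts on generalized $\Phi$-functions, for which the paper refers to \cite{Hastobook}; the \hyperlink{adA1}{(adA1)} variant is the paper's own condition, but the transfer argument is identical to the standard one for \hyperlink{A1}{(A1)}, which is exactly what you have written out. Your choice $\sigma_\psi=L\sigma_\varphi$, $\beta_\psi=\beta_\varphi/L^2$, the use of Theorem~\ref{inverse equiv} to compare the upper endpoints of the intervals, and the final chain are the natural and expected route.
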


	\begin{rem}\label{RESCALE}
		Let $\varphi \in \Phi_s(B_\varepsilon)$. Define $\varphi_\varepsilon(x,\cdot):=\varphi(\varepsilon x,\cdot)$ for every $x \in B_1$. If $\varphi$ satisfies \hyperlink{A0}{(A0)} with a constant $\sigma \geq 1$ then $\psi$ satisfies \hyperlink{A0}{(A0)} with the same constant $\sigma$ of $\varphi$. If $\varphi$ satisfies \hyperlink{idg}{$\textnormal{(Inc)}_p$} with $p \in [1,\infty)$ (resp. \hyperlink{idg}{$\textnormal{(Dec)}_q$} with $q \in [1,\infty)$), then $\psi$ satisfies \hyperlink{idg}{$\textnormal{(Inc)}_p$} with the same $p$ (resp. \hyperlink{idg}{$\textnormal{(Dec)}_q$} with the same $q$).
	\end{rem}
	
	The next two results show that the validity of our main assumptions is unchanged by passing to equivalent $\Phi$-functions. will be used to simplify our proofs while remaining in full generality.
	
	\begin{lem}\label{UPGRDE}
		Let $\varphi \in \Phi_w(\Omega)$ satisfy \hyperlink{A0}{\textnormal{(A0)}}, \hyperlink{adA1}{\textnormal{(adA1)}}, \hyperlink{idg}{$\textnormal{(aInc)}$} and \hyperlink{idg}{\textnormal{(aDec)}} on $\Omega$. Then, there exist $\psi \in \Phi_s(\Omega)$ which satisfies \hyperlink{A0}{\textnormal{(A0)}}, \hyperlink{adA1}{\textnormal{(adA1)}}, \hyperlink{idg}{$\textnormal{(Inc)}$} and \hyperlink{idg}{\textnormal{(Dec)}} on $\Omega$ and such that $\psi \simeq \varphi$ and $\psi \approx \varphi$.
	\end{lem}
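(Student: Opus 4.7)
The plan is essentially a bookkeeping exercise, chaining together the equivalence and upgrade results from Section \ref{sec: preliminaries} applied pointwise in $x$.

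First, since $\varphi \in \Phi_w(\Omega)$ satisfies \hyperlink{idg}{$\textnormal{(aInc)}_p$} for some $p > 1$, I apply Lemma \ref{upgrade gen} to produce $\psi \in \Phi_c(\Omega)$ with $\psi \simeq \varphi$, with $\psi^{1/p}$ convex, and with $\psi$ satisfying \hyperlink{idg}{$\textnormal{(Inc)}_p$}. This immediately supplies both the equivalence $\psi \simeq \varphi$ and the \hyperlink{idg}{(Inc)} property.

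Next, I transfer the remaining hypotheses through equivalence: \hyperlink{A0}{(A0)} passes from $\varphi$ to $\psi$ by Proposition \ref{equiv A0}; \hyperlink{adA1}{(adA1)} by Lemma \ref{equiv A1}; and \hyperlink{idg}{(aDec)} by Lemma \ref{equiv properties gen}(c). To upgrade $\psi$ from $\Phi_c(\Omega)$ to $\Phi_s(\Omega)$, I observe that \hyperlink{A0}{(A0)} gives $\psi(x,1/\sigma)\le 1$ for a.e.~$x$, so $\psi(x,\cdot)$ is finite at a positive point, and the transferred \hyperlink{idg}{(aDec)} then forces $\psi(x,\cdot)$ to be finite-valued on $[0,+\infty)$. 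A finite-valued convex function is continuous on $[0,+\infty)$, and $\psi(x,t)\to +\infty$ as $t\to +\infty$ by the $\Phi$-prefunction axioms, yielding continuity in the topology of $[0,+\infty]$. Hence $\psi(x,\cdot)\in\Phi_s$ for a.e.~$x$, that is, $\psi\in\Phi_s(\Omega)$. Finally, because $\psi\in\Phi_c$ satisfies \hyperlink{idg}{(aDec)}, the remark following Lemma \ref{doubling and dec} upgrades \hyperlink{idg}{$\textnormal{(aDec)}_q$} to \hyperlink{idg}{$\textnormal{(Dec)}_{q_2}$} for some possibly larger $q_2$, completing the list of properties.

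The last step is to show $\psi \approx \varphi$. The relation $\psi \simeq \varphi$ yields some $L \geq 1$ with $\psi(x,t/L) \leq \varphi(x,t) \leq \psi(x,Lt)$ for a.e.~$x$ and all $t \geq 0$. Since $\psi$ satisfies \hyperlink{idg}{(aDec)}, it is doubling by Lemma \ref{doubling and dec}, hence $\psi(x,Lt) \leq K_L\,\psi(x,t)$ and $\psi(x,t) \leq K_L\,\psi(x,t/L)$ for a constant $K_L$ depending only on $L$ and the doubling constant. Inserting these into the $\simeq$-inequalities produces two-sided pointwise bounds $c_1 \psi(x,t) \leq \varphi(x,t) \leq c_2 \psi(x,t)$, which is precisely $\psi \approx \varphi$. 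The only mildly subtle point in the whole argument is this last step, where the doubling character — a consequence of \hyperlink{idg}{(aDec)} — is used to collapse the notions $\simeq$ and $\approx$; everything else is a direct application of the preparatory lemmas.
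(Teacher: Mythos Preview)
Your proof is correct and follows essentially the same route as the paper's: apply Lemma \ref{upgrade gen} to obtain $\psi\in\Phi_c(\Omega)$ with \hyperlink{idg}{(Inc)}, transfer \hyperlink{A0}{(A0)}, \hyperlink{adA1}{(adA1)}, and \hyperlink{idg}{(aDec)} via the equivalence lemmas, upgrade \hyperlink{idg}{(aDec)} to \hyperlink{idg}{(Dec)} for convex $\Phi$-functions, and deduce $\psi\approx\varphi$ from $\psi\simeq\varphi$ plus doubling. If anything, your ordering is slightly more careful than the paper's, since you explicitly justify $\psi\in\Phi_s(\Omega)$ \emph{after} transferring \hyperlink{idg}{(aDec)}, whereas the paper asserts ``$\Phi_c(\Omega)$ and hence in $\Phi_s(\Omega)$'' before recording that step.
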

	
	\begin{proof}
		By assumption we have that $\varphi$ satisfies \hyperlink{idg}{\textnormal{(aDec)}} on $\Omega$ and thus is finite valued. Therefore, using Lemma \ref{upgrade gen}, there exists $\psi \in \Phi_c(\Omega)$ and hence in $\Phi_s(\Omega)$ such that $\psi \simeq \varphi$ and satisfying \hyperlink{idg}{\textnormal{(Inc)}} on $\Omega$. Moreover, by Lemma \ref{equiv properties gen} and Lemma \ref{doubling and dec}, we have that $\psi$ satisfies \hyperlink{idg}{\textnormal{(Dec)}} on $\Omega$. Finally, properties \hyperlink{A0}{\textnormal{(A0)}} and \hyperlink{adA1}{\textnormal{(adA1)}} come from Lemma \ref{equiv A0} and \ref{equiv A1}, respectively, while the fact that $\psi \approx \varphi$ is a consequence of $\psi \simeq \varphi$ and the fact that both functions are doubling.
	\end{proof}

	\begin{lem}\label{equiv of 2.6}
		Let $\varphi \in \Phi_w(\Omega)$ satisfy \hyperlink{A0}{\textnormal{(A0)}}, and \hyperlink{idg}{\textnormal{(aDec)}} on $\Omega$ and \eqref{.} $\mathcal{L}^d$-a.e. in $\Omega$. Given $\psi \in \Phi_w(\Omega)$ such that $\varphi \simeq \psi$ then $\psi$ satisfies \eqref{.} $\mathcal{L}^d$-a.e. in $\Omega$.
	\end{lem}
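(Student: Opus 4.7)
The plan is to transfer condition \eqref{.} from $\varphi$ to $\psi$ using the equivalence $\varphi \simeq \psi$, compensating for the scaling factor in the argument of $\psi^\pm_{B_\varepsilon(x_0)}$ via the (aDec) property. Note first that by Proposition \ref{equiv A0} and Lemma \ref{equiv properties gen}(c), $\psi$ satisfies \hyperlink{A0}{(A0)} and \hyperlink{idg}{(aDec)} on $\Omega$, so the statement of \eqref{.} is meaningful for $\psi$. Since (A0) is monotone in $\sigma$ (any larger value of $\sigma$ still works), we may assume without loss of generality that $\psi$ and $\varphi$ share a common (A0) constant $\sigma \ge L\sigma_\varphi$, where $L\geq 1$ is the equivalence constant from Definition \ref{equiv func gen}.

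Let $L\geq 1$ be such that $\varphi(x,t/L) \leq \psi(x,t) \leq \varphi(x,Lt)$ for a.e.\ $x \in \Omega$ and all $t \ge 0$. Taking essential supremum and infimum over $x \in B_\varepsilon(x_0) \cap \Omega$, one obtains
\begin{equation*}
\psi^+_{B_\varepsilon(x_0)}(t) \leq \varphi^+_{B_\varepsilon(x_0)}(Lt), \qquad \psi^-_{B_\varepsilon(x_0)}(t) \geq \varphi^-_{B_\varepsilon(x_0)}(t/L).
\end{equation*}
Now let $q\geq 1$ and $a\geq 1$ be the exponent and constant in the \hyperlink{idg}{(aDec)$_q$} property of $\varphi$. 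A direct sup/inf argument shows that both $\varphi^+_{B_\varepsilon(x_0)}$ and $\varphi^-_{B_\varepsilon(x_0)}$ satisfy \hyperlink{id}{(aDec)$_q$} with the same constant $a$, uniformly in $\varepsilon$. Applying this to shift the argument from $t/L$ to $Lt$ gives
\begin{equation*}
\varphi^+_{B_\varepsilon(x_0)}(Lt) \leq a\, L^{2q}\, \varphi^+_{B_\varepsilon(x_0)}(t/L).
\end{equation*}

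Fix $x_0 \in \Omega$ in the full-measure set on which \eqref{.} holds for $\varphi$. Given $\theta > \sigma$, set $\theta_\varphi := \theta/L > \sigma_\varphi$, and apply \eqref{.} for $\varphi$ at $x_0$ with this $\theta_\varphi$ to obtain $\varepsilon_0 >0$ and $C = C(x_0) >0$ such that, for all $\varepsilon \leq \varepsilon_0$ and all $s \in [\sigma_\varphi, \theta_\varphi]$,
\begin{equation*}
\varphi^+_{B_\varepsilon(x_0)}(s) \leq C\, \varphi^-_{B_\varepsilon(x_0)}(s).
\end{equation*}
For $t \in [\sigma, \theta]$ we have $t/L \in [\sigma_\varphi, \theta_\varphi]$, so chaining the four inequalities yields
\begin{equation*}
\psi^+_{B_\varepsilon(x_0)}(t) \leq \varphi^+_{B_\varepsilon(x_0)}(Lt) \leq a L^{2q} \varphi^+_{B_\varepsilon(x_0)}(t/L) \leq a L^{2q} C\, \varphi^-_{B_\varepsilon(x_0)}(t/L) \leq aL^{2q} C\, \psi^-_{B_\varepsilon(x_0)}(t),
\end{equation*}
which is exactly \eqref{.} for $\psi$ with constant $C' := aL^{2q}C$. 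The proof involves no deep step; the main obstacle is merely the bookkeeping of the various constants $\sigma_\varphi$, $\sigma$, $\theta$, and $L$, and ensuring that the rescaled range $t/L$ falls inside the range on which \eqref{.} for $\varphi$ may be applied — this is precisely what forces us to enlarge the (A0)-constant of $\psi$ by the factor $L$.
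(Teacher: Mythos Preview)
Your proof is correct and follows essentially the same approach as the paper's: transfer \eqref{.} from $\varphi$ to $\psi$ via the equivalence $\varphi \simeq \psi$, absorbing the extra factor $L$ in the argument through the doubling/(aDec) property. The only cosmetic difference is that you apply (aDec) of $\varphi$ in the middle of the chain (to pass from $\varphi^+_{B_\varepsilon(x_0)}(Lt)$ to $\varphi^+_{B_\varepsilon(x_0)}(t/L)$), whereas the paper applies the doubling of $\psi$ at the end (to pass from $\psi^-_{B_\varepsilon(x_0)}(L^2 t)$ back to $\psi^-_{B_\varepsilon(x_0)}(t)$); both choices work and yield the same structure of argument.
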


	\begin{proof}
		We have that $\psi$ satisfies \hyperlink{A0}{\textnormal{(A0)}} with some $\sigma \geq 1$, and it is doubling on $\Omega$ with a constant $K \geq 2$. Let $x_0 \in \Omega$ such that \eqref{.} holds for $\varphi$, we want to prove that \eqref{.} holds for $\psi$ in $x_0$ as well. We have that there exists $C_\varphi=C_\varphi(x_0)>0$ such that given $\theta > \sigma$, we can find $\varepsilon_0 >0$ such that for every $\varepsilon \leq \varepsilon_0$ and every $t \in [\sigma,\theta]$
		$$
		\varphi_{B_\varepsilon(x_0)}^+(t) \leq C_\varphi \varphi_{B_\varepsilon(x_0)}^-(t).
		$$
		There exists $L \geq 1$ with $\varphi(t/L) \leq \psi(t) \leq \varphi(Lt)$ for every $t \geq 0$. Observe that $\psi$ satisfies \hyperlink{A0}{\textnormal{(A0)}} with $L\sigma$. Then, for every $\theta \geq L\sigma$ we deduce that there exists $\varepsilon_0=\varepsilon_0(L\theta)>0$ such that for every $\varepsilon \leq \varepsilon_0$
		$$
		\psi_{B_\varepsilon(x_0)}^+(t) \leq \varphi_{B_\varepsilon(x_0)}^+(Lt) \leq C_\varphi \varphi_{B_\varepsilon(x_0)}^-(Lt) \leq  C_\varphi \psi_{B_\varepsilon(x_0)}^-(L^2 t) \leq C_\varphi K^{2\log_2(L)+1}\psi_{B_\varepsilon(x_0)}^-(t), \ \ \ t \in [L\sigma,\theta],
		$$
		where we have used property \eqref{.} of $\varphi$ in $x_0$ and the fact that $\psi$ is doubling. Therefore $\psi$ satisfies property \eqref{.} in $x_0$ with the constant $C_\varphi K^{2\log_2(L)+1}$.
	\end{proof}

	Finally, in the next elementary Lemma we observe that \hyperlink{adA1}{(adA1)}, together with  \hyperlink{A0}{\textnormal{(A0)}}, \hyperlink{idg}{\textnormal{(aDec)}}, is stronger than condition \eqref{.} on $\Omega$.
	
	\begin{lem}\label{adA1 then 2.6}
		Let $\varphi \in \Phi_w(\Omega)$ satisfy \hyperlink{A0}{\textnormal{(A0)}}, \hyperlink{idg}{\textnormal{(aDec)}} and \hyperlink{adA1}{\textnormal{(adA1)}}. Then $\varphi$ satisfies \eqref{.} on every $x_0 \in \Omega$.
	\end{lem}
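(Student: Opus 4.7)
The plan is to fix $x_0 \in \Omega$ and $\theta > \sigma$ and to choose $\varepsilon_0$ so small that, for every $\varepsilon \leq \varepsilon_0$, the whole interval $[\sigma,\theta]$ sits inside the admissible range $\bigl[\sigma,(\varphi^-_{B_\varepsilon(x_0)})^{-1}\bigl(1/\diam(B_\varepsilon(x_0))\bigr)\bigr]$ appearing in \hyperlink{adA1}{\textnormal{(adA1)}}. Once this containment is guaranteed, \hyperlink{adA1}{\textnormal{(adA1)}} will supply the comparison $\varphi^+_B(\beta t) \leq \varphi^-_B(t)$ on $[\sigma,\theta]$, and a final application of \hyperlink{idg}{\textnormal{(aDec)}} will let me exchange $\beta t$ for $t$ on the left-hand side at the price of a multiplicative constant of the form $a/\beta^q$, producing property~\eqref{.}.

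Step by step, I would let $q \geq 1$ and $a \geq 1$ denote the exponent and constant of \hyperlink{idg}{\textnormal{(aDec)}}. First, I combine this with \hyperlink{A0}{\textnormal{(A0)}}, which gives $\varphi(x,1/\sigma)\leq 1$ for $\mathcal{L}^d$-a.e.\ $x \in \Omega$, to obtain the $B$-independent bound
\[
\varphi(x,\theta) \,\leq\, a(\theta\sigma)^{q}\varphi(x,1/\sigma) \,\leq\, a(\theta\sigma)^{q} \qquad \text{for $\mathcal{L}^d$-a.e. } x \in \Omega,
\]
hence $\varphi^+_B(\theta) \leq a(\theta\sigma)^{q}$ for every ball $B \subseteq \Omega$. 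Second, I pick $\varepsilon_0 < \min\{\dist(x_0,\partial\Omega),\, 1/2,\, 1/(4a(\theta\sigma)^{q})\}$. For every $\varepsilon \leq \varepsilon_0$, setting $B := B_\varepsilon(x_0)$, we then have $\diam(B)=2\varepsilon\leq 1$ and
\[
\varphi^-_B(\theta) \,\leq\, \varphi^+_B(\theta) \,\leq\, a(\theta\sigma)^{q} \,<\, \frac{1}{2\varepsilon} \,=\, \frac{1}{\diam(B)};
\]
by monotonicity of $\varphi^-_B$ and the definition of the left inverse, this strict inequality forces $\theta \leq (\varphi^-_B)^{-1}(1/\diam(B))$, so $[\sigma,\theta]$ is admissible. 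Third, \hyperlink{adA1}{\textnormal{(adA1)}} produces $\beta \in (0,1)$ with $\varphi^+_B(\beta t)\leq \varphi^-_B(t)$ for every $t \in [\sigma,\theta]$. Finally, \hyperlink{idg}{\textnormal{(aDec)}} applied pointwise in $x$ to the pair $(\beta t, t)$ yields $\varphi(x,t)\leq (a/\beta^{q})\varphi(x,\beta t)$; taking essential suprema in $x \in B$ and chaining with the preceding inequality gives
\[
\varphi^+_B(t) \,\leq\, \frac{a}{\beta^{q}}\,\varphi^+_B(\beta t) \,\leq\, \frac{a}{\beta^{q}}\,\varphi^-_B(t),
\]
so $C := a/\beta^{q}$ is the constant required by~\eqref{.} at $x_0$.

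The only point that requires any care is the second step, namely securing the strict bound $\varphi^-_B(\theta) < 1/\diam(B)$ uniformly for small $\varepsilon$: this is precisely where \hyperlink{A0}{\textnormal{(A0)}} and \hyperlink{idg}{\textnormal{(aDec)}} are jointly used, the former pinning the value of $\varphi$ at $1/\sigma$ and the latter transporting that pointwise control polynomially up to the fixed value $\theta$, the resulting constant $a(\theta\sigma)^{q}$ depending only on $\theta$ and on the data of $\varphi$ but not on the ball $B$. Apart from that, the argument is entirely elementary; no continuity of $\varphi$ in the variable $x$ is needed beyond what \hyperlink{adA1}{\textnormal{(adA1)}} itself already encodes.
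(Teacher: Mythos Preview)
Your proof is correct and follows essentially the same approach as the paper's: show that for $\varepsilon$ small the interval $[\sigma,\theta]$ lies in the admissible range of \hyperlink{adA1}{(adA1)}, apply \hyperlink{adA1}{(adA1)}, and then absorb the factor $\beta$ via \hyperlink{idg}{(aDec)}/doubling. The only cosmetic differences are that you justify the containment explicitly via the uniform bound $\varphi^+_B(\theta)\le a(\theta\sigma)^q$ coming from \hyperlink{A0}{(A0)} and \hyperlink{idg}{(aDec)} (the paper simply asserts the existence of a suitable $\varepsilon_0$), and that you express the final constant as $a/\beta^{q}$ directly from \hyperlink{idg}{(aDec)} while the paper writes it as $K^{1-\log_2\beta}$ via the equivalent doubling formulation.
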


	\begin{proof}
		Let $x_0 \in \Omega$, let $\sigma$ be the constant of \hyperlink{A0}{\textnormal{(A0)}} and let $K$ be the doubling constant of $\psi$. Fix $\theta > \sigma$. We take $\varepsilon_0>0$ such that $\theta \leq \varphi_{B_{\varepsilon_0}(x_0)}^-\left(\frac{1}{2\varepsilon_0}\right)$. By \hyperlink{adA1}{\textnormal{(adA1)}} there exists $\beta \in (0,1)$ such that for every $\varepsilon \leq \varepsilon_0$
		$$
		\varphi_{B_\varepsilon(x_0)}^+(\beta t) \leq \varphi_{B_\varepsilon(x_0)}^-( t), \ \ \ \mbox{ for every $t \in [\sigma,\theta] \subseteq \left[\sigma,\varphi_{B_\varepsilon(x_0)}^-\left(\frac{1}{2\varepsilon}\right)\right]$}.
		$$
		This together with the doubling property of $\varphi$ implies that \eqref{.} holds for every $x_0 \in \Omega$ with the uniform constant $K^{1-\log_2(\beta)}$.
	\end{proof}
	
	We recall the concept of generalized Orlicz space and report a brief list of useful definitions and properties of such spaces.
	Given $\varphi \in \Phi_w(\Omega)$ and $f \in L^0(\Omega,\R^m)$ we define the \textit{modular} $\rho_\varphi(f)$ as
	$$
	\rho_\varphi(f):=\int_{\Omega} \varphi(x,|f(x)|) \, dx.
	$$
	The set 
	$$
	L^{\varphi}(\Omega,\R^m):=\{ f \in L^0(\Omega) \colon \rho_\varphi(\lambda f)<+\infty \ \ \ \mbox{for some $\lambda>0$} \},
	$$
	is called a generalized Orlicz space. 
	
	\begin{lem}\label{lemma modulor}
		Let $\varphi \in \Phi_w(\Omega)$. Then
		\begin{enumerate}
			\item[\textnormal{(a)}] $L^\varphi(\Omega,\R^m)=\{ f \in L^0(\Omega,\R^m) \colon \lim_{\lambda \to 0^+} \rho_\varphi(\lambda f)=0 \}$;
			\item[\textnormal{(b)}] if $\varphi$ additionally satisfies \hyperlink{idg}{$\textnormal{(aDec)}$} we have
			$$
			L^\varphi(\Omega,\R^m)=\{ f \in L^0(\Omega,\R^m) \colon \rho_\varphi(f)<+\infty \}.
			$$
		\end{enumerate}
	\end{lem}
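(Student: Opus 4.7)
The plan is to address (a) via a dominated convergence argument and to prove (b) by directly comparing $\rho_\varphi(f)$ with $\rho_\varphi(\lambda_0 f)$ for the $\lambda_0>0$ given by the definition of $L^\varphi$, distinguishing the cases $\lambda_0\geq 1$ and $\lambda_0<1$; only in the second case will \hyperlink{idg}{\textnormal{(aDec)}} be needed.

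For part (a), the inclusion "$\supseteq$" is immediate since the vanishing of $\rho_\varphi(\lambda f)$ as $\lambda\to 0^+$ yields some $\lambda>0$ with $\rho_\varphi(\lambda f)<+\infty$. For the converse, I would fix $f\in L^\varphi(\Omega,\R^m)$ and pick $\lambda_0>0$ with $\rho_\varphi(\lambda_0 f)<+\infty$. For every $\lambda\in(0,\lambda_0]$, monotonicity of $\varphi(x,\cdot)$ gives the pointwise bound $\varphi(x,\lambda|f(x)|)\le\varphi(x,\lambda_0|f(x)|)$, producing an integrable envelope. On the other hand, since $\varphi(x,\cdot)$ is a $\Phi$-prefunction, $\lim_{t\to 0^+}\varphi(x,t)=0$ for $\mathcal{L}^d$-a.e.\ $x$, so $\varphi(x,\lambda|f(x)|)\to 0$ $\mathcal{L}^d$-a.e. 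Dominated convergence then yields $\rho_\varphi(\lambda f)\to 0$.

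For part (b), the inclusion "$\supseteq$" follows by taking $\lambda=1$ in the definition of $L^\varphi$. For the converse, given $f\in L^\varphi$ and $\lambda_0>0$ with $\rho_\varphi(\lambda_0 f)<+\infty$, the case $\lambda_0\geq 1$ is trivial by monotonicity: $\rho_\varphi(f)\le\rho_\varphi(\lambda_0 f)<+\infty$. The case $\lambda_0<1$ is where \hyperlink{idg}{\textnormal{(aDec)}} enters. By assumption there exist $a\ge 1$ and $q>0$ such that $t\mapsto\varphi(x,t)/t^q$ is almost decreasing for a.e.\ $x$. Applying this at $s=\lambda_0|f(x)|<|f(x)|=t$ gives
\[
\varphi(x,|f(x)|)\le a\,\lambda_0^{-q}\,\varphi(x,\lambda_0|f(x)|)
\]
for $\mathcal{L}^d$-a.e.\ $x\in\Omega$. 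Integrating over $\Omega$ furnishes $\rho_\varphi(f)\le a\lambda_0^{-q}\rho_\varphi(\lambda_0 f)<+\infty$, which closes the argument.

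The main obstacle, modest as it is, lies in the last step: without \hyperlink{idg}{\textnormal{(aDec)}} the function $\varphi(x,\cdot)$ may grow arbitrarily fast, so finiteness of $\rho_\varphi(\lambda_0 f)$ for some small $\lambda_0$ gives no control on $\rho_\varphi(f)$; the polynomial domination built into \hyperlink{idg}{\textnormal{(aDec)}} is precisely what allows the rescaling factor $\lambda_0^{-q}$ to absorb the passage from $\lambda_0|f|$ to $|f|$.
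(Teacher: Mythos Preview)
Your proof is correct. The paper does not supply its own proof of this lemma; it is stated among the preliminary facts on generalized Orlicz spaces whose proofs are explicitly omitted with a reference to \cite{Hastobook}, and your argument is the standard one found there.
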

	
	Next we define the (quasi)-norm associated to generalized Orlicz spaces.
	
	\begin{defn}\label{norma orlicz}
		Let $\varphi \in \Phi_w(\Omega)$. For $f \in L^0(\Omega,\R^m)$ we define
		$$
		\Vert f \Vert_{L^\varphi(\Omega,\R^m)}:=\inf \left\{ \lambda>0 \colon \rho_\varphi\left(\frac{f}{\lambda}\right) \leq 1 \right\}.
		$$
	\end{defn}
	Most of the time we will abbreviate the notation writing $\Vert f \Vert_\varphi$. Now we present some properties linking the modular with the Orlicz norm.
	
	\begin{lem}\label{orl norm o no}
		If $\varphi \in \Phi_w(\Omega)$ then $\Vert \cdot \Vert_\varphi$ is a quasi-norm. If $\varphi \in \Phi_c(\Omega)$ then $\Vert \cdot \Vert_\varphi$ is a norm.
	\end{lem}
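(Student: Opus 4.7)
The plan is to verify the three quasi-norm axioms (non-degeneracy, positive homogeneity, and the quasi-triangle inequality) for a general $\varphi\in\Phi_w(\Omega)$, and then refine the last one to an actual triangle inequality in the convex case.

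First, positive homogeneity follows immediately from a change of variables: for $\alpha\ne 0$, $\rho_\varphi(\alpha f/\lambda) = \rho_\varphi(f/(\lambda/|\alpha|))$ holds pointwise, so the substitution $\mu := \lambda/|\alpha|$ in the infimum defining $\|\alpha f\|_\varphi$ yields $\|\alpha f\|_\varphi = |\alpha|\|f\|_\varphi$; the case $\alpha=0$ is trivial since $\varphi(x,0)=0$. For non-degeneracy, $\|f\|_\varphi=0$ means that $\rho_\varphi(f/\lambda)\le 1$ for arbitrarily small $\lambda>0$. If $|f|>0$ on a set $E\subset\Omega$ of positive Lebesgue measure, then, for every $x\in E$, $\varphi(x,|f(x)|/\lambda)\to +\infty$ as $\lambda\to 0^+$ because $\varphi(x,\cdot)$ is a $\Phi$-prefunction and hence satisfies $\lim_{t\to\infty}\varphi(x,t)=+\infty$. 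Fatou's lemma then gives $\rho_\varphi(f/\lambda)\to+\infty$, a contradiction; the converse is immediate.

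For the triangle inequality in the convex case $\varphi \in \Phi_c(\Omega)$, fix $\lambda_1>\|f\|_\varphi$ and $\lambda_2>\|g\|_\varphi$, so that $\rho_\varphi(f/\lambda_1)\le 1$ and $\rho_\varphi(g/\lambda_2)\le 1$. Set $\theta:=\lambda_1/(\lambda_1+\lambda_2)\in(0,1)$; since
\[
\frac{|f+g|}{\lambda_1+\lambda_2}\ \le\ \theta\,\frac{|f|}{\lambda_1}+(1-\theta)\,\frac{|g|}{\lambda_2},
\]
monotonicity and convexity of $\varphi(x,\cdot)$ yield pointwise
\[
\varphi\!\left(x,\frac{|f+g|}{\lambda_1+\lambda_2}\right)\le \theta\,\varphi\!\left(x,\frac{|f|}{\lambda_1}\right)+(1-\theta)\,\varphi\!\left(x,\frac{|g|}{\lambda_2}\right).
\]
Integrating over $\Omega$ gives $\rho_\varphi((f+g)/(\lambda_1+\lambda_2))\le 1$, whence $\|f+g\|_\varphi\le \lambda_1+\lambda_2$. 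Letting $\lambda_i\searrow \|\cdot\|_\varphi$ produces the full triangle inequality, so $\|\cdot\|_\varphi$ is a norm.

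Finally, for $\varphi\in\Phi_w(\Omega)$ in the general case, I would invoke Theorem~\ref{weak and strong equiv} (applied pointwise in $x$, which gives a strong generalized $\Phi$-function $\tilde\varphi\in\Phi_s(\Omega)\subset \Phi_c(\Omega)$ with $\tilde\varphi\simeq\varphi$ via some constant $L\ge 1$). The definition of equivalence immediately gives the two-sided bound $L^{-1}\|\cdot\|_{\tilde\varphi}\le \|\cdot\|_{\varphi}\le L\|\cdot\|_{\tilde\varphi}$ on Luxemburg norms (using that if $\rho_\varphi(f/\lambda)\le 1$ then $\rho_{\tilde\varphi}(f/(L\lambda))\le 1$, and symmetrically). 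Combining this with the triangle inequality for $\|\cdot\|_{\tilde\varphi}$ established in the previous paragraph yields
\[
\|f+g\|_\varphi\le L\|f+g\|_{\tilde\varphi}\le L\bigl(\|f\|_{\tilde\varphi}+\|g\|_{\tilde\varphi}\bigr)\le L^2\bigl(\|f\|_\varphi+\|g\|_\varphi\bigr),
\]
so $\|\cdot\|_\varphi$ is a quasi-norm with constant at most $L^2$. The only subtlety to be careful about is the pointwise applicability of Theorem~\ref{weak and strong equiv}: one needs that the equivalent strong $\Phi$-function can be chosen measurably in $x$, which is a standard measurable selection/explicit construction argument in the theory of generalized Orlicz spaces and causes no genuine difficulty.
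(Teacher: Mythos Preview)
Your proof is correct, and the paper does not actually provide its own proof of this lemma: it is listed among the preliminary facts for which the reader is referred to \cite{Hastobook}. Your approach---verifying homogeneity and non-degeneracy directly, proving the triangle inequality in the convex case via the standard Luxemburg argument, and reducing the weak case to the convex one through an equivalent convex $\Phi$-function---is the standard one found in that reference.

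One small cleanup: you invoke Theorem~\ref{weak and strong equiv} pointwise in $x$ and then flag the measurable-selection issue as a ``subtlety''. You can sidestep this entirely by citing Lemma~\ref{upgrade gen} instead, which is precisely the generalized version of that upgrade (from $\Phi_w(\Omega)$ to $\Phi_c(\Omega)$) and already handles measurability and uniformity of the equivalence constant. Since any $\varphi\in\Phi_w(\Omega)$ satisfies \hyperlink{idg}{$\textnormal{(aInc)}_1$} by definition, Lemma~\ref{upgrade gen} applies with $p=1$ and directly yields $\tilde\varphi\in\Phi_c(\Omega)$ with $\tilde\varphi\simeq\varphi$; then Proposition~\ref{equiv funcs then equiv norms} (or your explicit two-sided bound) gives the equivalence of quasi-norms, and the rest of your argument goes through with no hand-waving.
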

	
	\begin{prop}\label{equiv funcs then equiv norms}
		Let $\varphi,\psi \in \Phi_w(\Omega)$. If $\varphi \simeq \psi$, then $L^\varphi(\Omega,\R^m)=L^\psi(\Omega,\R^m)$ and the two norms are equivalent.
	\end{prop}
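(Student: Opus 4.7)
The plan is to unfold the equivalence $\varphi\simeq \psi$ and carry it through the definitions of the modular and the Luxemburg (quasi)norm, exploiting only the monotonicity of $\Phi$-prefunctions in the second variable. By Definition \ref{equiv func gen}, there exists $L\ge 1$ such that
$$\varphi(x,t/L)\le \psi(x,t)\le \varphi(x,Lt)\qquad\text{for a.e. }x\in\Omega\text{ and all }t\ge 0,$$
equivalently (replacing $t$ by $Lt$ in the left inequality) $\varphi(x,t)\le \psi(x,Lt)$. Thus the roles of $\varphi$ and $\psi$ are completely symmetric, and it suffices to establish each estimate in one direction only.

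For the set-theoretic equality, I would rely on Lemma \ref{lemma modulor}(a). Pick $f\in L^\varphi(\Omega,\R^m)$, so that $\lim_{\lambda\to 0^+}\rho_\varphi(\lambda f)=0$. The rescaling inequality above and monotonicity of $\psi$ in its second variable give
$$\rho_\psi(\lambda f)=\int_\Omega \psi(x,\lambda|f(x)|)\,dx\le \int_\Omega \varphi(x,L\lambda|f(x)|)\,dx=\rho_\varphi(L\lambda f),$$
and letting $\lambda\to 0^+$ forces $\rho_\psi(\lambda f)\to 0$, hence $f\in L^\psi(\Omega,\R^m)$. The converse inclusion follows by symmetry, giving $L^\varphi(\Omega,\R^m)=L^\psi(\Omega,\R^m)$.

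For the equivalence of (quasi)norms, fix $f\ne 0$ and let $\lambda>\|f\|_\varphi$. By definition of the Luxemburg (quasi)norm one can choose $\lambda'\in(\|f\|_\varphi,\lambda]$ with $\rho_\varphi(f/\lambda')\le 1$, and monotonicity of $\varphi$ gives $\rho_\varphi(f/\lambda)\le \rho_\varphi(f/\lambda')\le 1$. Applying the pointwise bound $\psi(x,t)\le \varphi(x,Lt)$ with $t=|f(x)|/(L\lambda)$ yields
$$\rho_\psi\!\left(\frac{f}{L\lambda}\right)=\int_\Omega \psi\!\left(x,\frac{|f(x)|}{L\lambda}\right)dx\le \int_\Omega \varphi\!\left(x,\frac{|f(x)|}{\lambda}\right)dx=\rho_\varphi\!\left(\frac{f}{\lambda}\right)\le 1,$$
so $L\lambda$ is admissible in the infimum defining $\|f\|_\psi$, i.e. $\|f\|_\psi\le L\lambda$. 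Letting $\lambda\downarrow \|f\|_\varphi$ gives $\|f\|_\psi\le L\|f\|_\varphi$, and the symmetric argument yields $\|f\|_\varphi\le L\|f\|_\psi$, proving that the two (quasi)norms are equivalent with constant $L$.

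There is no real obstacle here: the only point that requires mild care is the passage from $\lambda>\|f\|_\varphi$ to $\rho_\varphi(f/\lambda)\le 1$, which I handle via monotonicity rather than invoking left-continuity of $\varphi(x,\cdot)$, so that the argument remains valid for arbitrary $\varphi\in\Phi_w(\Omega)$ rather than only for convex generalized $\Phi$-functions.
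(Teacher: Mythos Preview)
Your proof is correct and is the standard argument for this fact. The paper itself does not supply a proof of this proposition: it is one of the preliminary results on generalized Orlicz spaces that the authors state without proof, referring globally to \cite{Hastobook} for the details. Your argument matches the usual textbook approach, using only monotonicity of $\Phi$-prefunctions and the characterization of $L^\varphi$ in Lemma~\ref{lemma modulor}(a); the care you take to avoid invoking left-continuity (so that the argument applies to all of $\Phi_w(\Omega)$ rather than just $\Phi_c(\Omega)$) is appropriate. One cosmetic remark: since the admissible set $\{\mu>0:\rho_\varphi(f/\mu)\le 1\}$ is upward-closed by monotonicity, once $\lambda>\|f\|_\varphi$ you can conclude $\rho_\varphi(f/\lambda)\le 1$ directly without introducing the auxiliary $\lambda'$.
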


\begin{rem}\label{rem: convexgratis}
Lemmas \ref{UPGRDE}, \ref{equiv of 2.6} and \ref{equiv funcs then equiv norms} entail, in particular, that it suffices to prove all our results under the stronger assumption that $\psi \in \Phi_s(\Omega)$ satisfying \hyperlink{A0}{$\textnormal{(A0)}$}, \hyperlink{idg}{$\textnormal{(Inc)}$}, \hyperlink{idg}{$\textnormal{(Dec)}$} and \hyperlink{adA1}{(adA1)} (or \eqref{.}, depending on the need) on $\Omega$. The extension to a $\psi \in\Phi_w(\Omega)$ is indeed immediate, upon noticing that, if we take equivalent Orlicz functions, then the generalized Orlicz space remains the same.
\end{rem}

	\begin{lem}[Unit ball property]\label{unit ball prop}
		Let $\varphi \in \Phi_w(\Omega)$. Given $f \in L^0(\Omega,\R^m)$ we have
		$$
		\Vert f \Vert_\varphi < 1 \ \ \ \Rightarrow \ \ \ \rho_\varphi(f) \leq 1 \ \ \ \Rightarrow \ \ \ \Vert f \Vert_\varphi \leq 1.
		$$
		If in addition $\varphi$ is left continuous then $\rho_\varphi(f) \leq 1 \Leftrightarrow \Vert f \Vert_\varphi \leq 1$. Moreover, the following properties hold:
		\begin{enumerate}
			\item[\textnormal{(a)}] if $\Vert f \Vert_\varphi <1$, then $\rho_\varphi(f) \leq \Vert f \Vert_\varphi$;
			\item[\textnormal{(b)}] if $\Vert f \Vert_\varphi > 1$, then $\Vert f \Vert_\varphi \leq \rho_\varphi(f)$;
			\item[\textnormal{(c)}] in any case, $\Vert f \Vert_\varphi \leq \rho_\varphi(f)+1$.
		\end{enumerate}
	\end{lem}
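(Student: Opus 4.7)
The lemma collects standard relationships between the Luxemburg quasi-norm and the modular. By Remark \ref{rem: convexgratis}, combined with Lemma \ref{upgrade gen} and Proposition \ref{equiv funcs then equiv norms}, it suffices to prove the statement under the additional assumption that $\varphi \in \Phi_s(\Omega)$ (in particular, $\varphi(x,\cdot)$ is convex and left-continuous). Convexity together with $\varphi(x,0) = 0$ then delivers the two one-sided scaling inequalities $\varphi(x, \lambda t) \leq \lambda\, \varphi(x, t)$ for $\lambda \in [0,1]$ and $\varphi(x, t/\mu) \leq \mu^{-1}\varphi(x,t)$ for $\mu \geq 1$, which drive all the quantitative estimates below.

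For the first chain, if $\Vert f \Vert_\varphi < 1$, the definition of infimum provides some $\lambda \in (\Vert f \Vert_\varphi, 1)$ with $\rho_\varphi(f/\lambda) \leq 1$; since $\lambda < 1$ and $\varphi(x, \cdot)$ is increasing, $\rho_\varphi(f) \leq \rho_\varphi(f/\lambda) \leq 1$. The implication $\rho_\varphi(f) \leq 1 \Rightarrow \Vert f \Vert_\varphi \leq 1$ is immediate by testing the defining infimum at $\lambda = 1$. For the equivalence under left-continuity (automatic in $\Phi_s$, and the only place where left-continuity is genuinely used), I pick $\lambda_n \downarrow \Vert f \Vert_\varphi$ with $\lambda_n > \Vert f \Vert_\varphi$ satisfying $\rho_\varphi(f/\lambda_n) \leq 1$; when $\Vert f \Vert_\varphi > 0$ the pointwise monotone convergence $|f|/\lambda_n \uparrow |f|/\Vert f \Vert_\varphi$, together with left-continuity of $\varphi(x,\cdot)$ and the monotone convergence theorem, gives $\rho_\varphi(f/\Vert f \Vert_\varphi) \leq 1$, whence $\rho_\varphi(f) \leq 1$ by monotonicity since $\Vert f \Vert_\varphi \leq 1$; the case $\Vert f \Vert_\varphi = 0$ is handled by sending $\lambda \to 0^+$ in $\rho_\varphi(f/\lambda) \leq 1$, which forces $f = 0$ a.e.

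Finally, for the sharp bounds (a), (b), (c), I exploit the two scaling inequalities from the first paragraph. For (a), for each $\lambda \in (\Vert f \Vert_\varphi, 1)$ convexity yields $\varphi(x,|f|) = \varphi(x, \lambda \cdot |f|/\lambda) \leq \lambda\, \varphi(x, |f|/\lambda)$, so that $\rho_\varphi(f) \leq \lambda\, \rho_\varphi(f/\lambda) \leq \lambda$; sending $\lambda \downarrow \Vert f \Vert_\varphi$ gives $\rho_\varphi(f) \leq \Vert f \Vert_\varphi$. For (b), the hypothesis $\Vert f \Vert_\varphi > 1$ together with the contrapositive of the second implication of the first chain forces $\rho_\varphi(f) > 1$, so that $\mu := \rho_\varphi(f) \geq 1$ and the scaling $\varphi(x, t/\mu) \leq \mu^{-1}\varphi(x,t)$ yields $\rho_\varphi(f/\mu) \leq \mu^{-1}\rho_\varphi(f) = 1$; hence $\mu$ is a competitor in the defining infimum, giving $\Vert f \Vert_\varphi \leq \mu = \rho_\varphi(f)$. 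Property (c) then follows by distinguishing the two cases $\rho_\varphi(f) \leq 1$, in which $\Vert f \Vert_\varphi \leq 1 \leq 1 + \rho_\varphi(f)$, and $\rho_\varphi(f) > 1$, in which (b) applies. I expect the main subtlety to be the monotone-convergence argument in the left-continuous case, which requires some care to handle the boundary cases $\Vert f \Vert_\varphi \in \{0,1\}$ and the pointwise behavior on $\{f=0\}$; once this is settled, all remaining assertions reduce to algebraic manipulations with the defining infimum.
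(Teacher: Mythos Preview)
The paper does not prove this lemma; it is among the preliminary facts for which the reader is referred to \cite{Hastobook}. Your arguments for the chain of implications and for (a)--(c) are essentially the standard ones and are correct once the scaling inequalities are available, but the opening reduction to $\varphi\in\Phi_s(\Omega)$ is both unnecessary and incorrect. Remark~\ref{rem: convexgratis} does not apply here: it concerns the main theorems of the paper under the additional hypotheses \hyperlink{A0}{(A0)}, \hyperlink{idg}{(aInc)}, \hyperlink{idg}{(aDec)}, \hyperlink{adA1}{(adA1)}, none of which are assumed in the present lemma. More fundamentally, passing to an equivalent $\psi\in\Phi_s(\Omega)$ only yields $\|\cdot\|_\varphi\approx\|\cdot\|_\psi$ and $\rho_\psi(f/L)\le\rho_\varphi(f)\le\rho_\psi(Lf)$; the sharp inequalities $\rho_\varphi(f)\le\|f\|_\varphi$ and $\|f\|_\varphi\le\rho_\varphi(f)$ in (a) and (b), as well as the threshold conditions $\|f\|_\varphi<1$ and $\|f\|_\varphi>1$, do not survive this passage.

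The remedy is to drop the reduction entirely. By Definition~\ref{def: genorliczclass}, every $\varphi\in\Phi_w(\Omega)$ already satisfies \hyperlink{idg}{$\textnormal{(Inc)}_1$}, so $t\mapsto\varphi(x,t)/t$ is increasing for a.e.\ $x$; this directly gives $\varphi(x,\lambda t)\le\lambda\,\varphi(x,t)$ for $\lambda\in[0,1]$ and $\varphi(x,t/\mu)\le\mu^{-1}\varphi(x,t)$ for $\mu\ge1$, which are exactly the scaling inequalities you derive from convexity. With this replacement your proofs of the first chain and of (a)--(c) go through verbatim, and left-continuity is invoked only where the statement explicitly assumes it.
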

	
	In generalized Orlicz spaces we also have a generalization of the concept of Lebesgue points.
	
	\begin{prop}\label{leb pts prop}
		Let $\varphi \in \Phi_w(\Omega)$ satisfying \hyperlink{A0}{\textnormal{(A0)}} and \hyperlink{idg}{\textnormal{(aDec)}}. Then, for every $f \in L^0(\Omega,\R^m)$ such that $\rho_\varphi(f)<+\infty$, we have
		$$
		\lim_{\varepsilon \to 0^+} \fint_{B_\varepsilon(x_0)} \varphi(x,|f(x)-f(x_0)|) \, dx=0 \ \ \ \mbox{for $\mathcal{L}^d$-a.e. $x_0 \in \Omega$}.
		$$
	\end{prop}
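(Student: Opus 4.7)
The plan is to imitate the classical proof of the Lebesgue differentiation theorem, circumventing the $x$-dependence of $\varphi$ by a countable-density argument in $\R^m$. By Remark \ref{rem: convexgratis} I may assume $\varphi \in \Phi_s(\Omega)$, so that \hyperlink{idg}{(aDec)} yields a uniform doubling bound $\varphi(x,2t) \leq K \varphi(x,t)$ for a.e.\ $x$ and every $t \geq 0$. A preliminary observation is that for each fixed $t \geq 0$ the function $x \mapsto \varphi(x,t)$ belongs to $L^\infty(\Omega)$: when $t\leq 1/\sigma$ this is monotonicity combined with \hyperlink{A0}{(A0)}, while otherwise iterating doubling a finite number of times (depending on $t$) does the job. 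Then from the doubling-triangle inequality $\varphi(x,|f(x) - v|) \leq K \varphi(x, |f(x)|) + K \varphi(x,|v|)$ and the assumption $\rho_\varphi(f) < \infty$ one deduces that $\varphi(\cdot,|f - v|) \in L^1(\Omega)$ for every $v \in \R^m$.

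Fix a countable dense set $V = \{v_n\}_n \subset \R^m$. The classical Lebesgue differentiation theorem, applied to the countably many functions $\varphi(\cdot,|f - v_n|) \in L^1(\Omega)$ (for $v_n \in V$) and $\varphi(\cdot, 1/n) \in L^\infty(\Omega)$ (for $n \in \N$), produces an $\mathcal{L}^d$-null set $N \subset \Omega$ outside of which
\[
\lim_{\varepsilon \to 0^+}\fint_{B_\varepsilon(x_0)} \varphi(x,|f(x)-v_n|)\,dx = \varphi(x_0,|f(x_0)-v_n|), \quad \lim_{\varepsilon \to 0^+} \fint_{B_\varepsilon(x_0)} \varphi(x, 1/n)\,dx = \varphi(x_0, 1/n)
\]
for every $n$. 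By further enlarging $N$ one may also assume $|f(x_0)| < \infty$ on $\Omega \setminus N$, which is legitimate because $\lim_{t \to +\infty}\varphi(x,t) = +\infty$ by the $\Phi$-prefunction definition, together with $\rho_\varphi(f)<\infty$.

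Now fix $x_0 \in \Omega \setminus N$ and choose $v_n \in V$ with $|v_n - f(x_0)| \leq 1/n$, which is possible by density. The doubling-triangle inequality gives
\[
\varphi(x,|f(x) - f(x_0)|) \leq K \varphi(x, |f(x) - v_n|) + K \varphi(x, |v_n - f(x_0)|) \leq K \varphi(x, |f(x) - v_n|) + K \varphi(x, 1/n)
\]
for a.e.\ $x$, so averaging over $B_\varepsilon(x_0)$ and letting $\varepsilon \to 0^+$ yields
\[
\limsup_{\varepsilon \to 0^+}\fint_{B_\varepsilon(x_0)} \varphi(x,|f(x) - f(x_0)|)\, dx \leq K \varphi(x_0, |f(x_0) - v_n|) + K \varphi(x_0, 1/n) \leq 2K \varphi(x_0, 1/n),
\]
using monotonicity of $\varphi(x_0, \cdot)$ in the last step. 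Sending $n \to \infty$ and invoking continuity of $\varphi(x_0, \cdot)$ at $0$ (where it vanishes) forces the $\limsup$ to be zero, which is the desired conclusion.

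The main difficulty is essentially the bookkeeping needed to verify that the countably many functions to which the classical Lebesgue theorem is applied are honestly in $L^1_{\mathrm{loc}}$: this is precisely where the combination of \hyperlink{A0}{(A0)} (anchoring an upper bound on small arguments) and \hyperlink{idg}{(aDec)} (propagating it to arbitrary arguments via uniform doubling) becomes indispensable. Everything else is triangle inequality and density.
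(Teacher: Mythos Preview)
Your argument is correct and is essentially the standard density-plus-doubling proof; the paper itself omits the argument and simply refers to \cite{HHlebpts}, whose method is the same in spirit. One small remark: your appeal to Remark~\ref{rem: convexgratis} is not quite licit, since that remark (via Lemma~\ref{UPGRDE}) presupposes \hyperlink{idg}{(aInc)} and \hyperlink{adA1}{(adA1)}, neither of which is assumed here --- but this is harmless, because you never actually use that $\varphi \in \Phi_s$: the uniform doubling constant follows directly from \hyperlink{idg}{(aDec)} via Lemma~\ref{doubling and dec}, and the only other properties you invoke (monotonicity of $\varphi(x,\cdot)$ and $\lim_{t\to 0^+}\varphi(x,t)=0$) are already part of the $\Phi$-prefunction definition.
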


	\begin{proof}
		The proof can be carried out exactly as the proof of Theorem 3.1 in \cite{HHlebpts} with some minor changes, so we omit it.
	\end{proof}
	
	We present some properties of the maximal operator in Orlicz  spaces (see \cite[Chapter 4]{Hastobook} or~\cite{Hastothemaximal}). Since the proof of Proposition \ref{100} only needs the estimate for $\Phi$-functions independent of $x$, we give the statement only for non generalized $\Phi$-functions.
	\begin{defn}\label{maximal ope}
		Given $f \in L^1_{\textnormal{loc}}(\Omega,\R^m)$ we define the restricted maximal operator to $\Omega$ as 
		$$
		M f(x):=\sup_{r>0} \frac{1}{\omega_d r^d} \int_{B_r(x) \cap \Omega} |f(y)| \, dy.
		$$
		Analogously, for a non-negative, finite Radon measure $\mu$ on $\Omega$ one can define
		$$
		M \mu(x):=\sup_{r >0} \frac{\mu(B_r(x) \cap \Omega)}{\omega_d r^d}.
		$$
	\end{defn}

	\begin{thm}\label{max op bdd teo}
		Let $\varphi \in \Phi_w$ finite valued satisfy \hyperlink{idg}{$\textnormal{(Inc)}_\gamma$} with $\gamma >1$. Then, the restricted maximal operator $M \colon L^\varphi(\Omega,\R^m) \to L^\varphi(\Omega,\R^m)$ is bounded.
		In particular, we have that if $\Vert f \Vert_\varphi \neq 0$, setting $\varepsilon:=\frac{1}{2\Vert f \Vert_\varphi}$, there exists $c=c(\varphi^{-1}(1),d,\gamma)$ such that $\rho_\varphi(c\varepsilon Mf) \leq 1$.
	\end{thm}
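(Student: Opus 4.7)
The plan is to reduce the generalized Orlicz boundedness to the classical $L^\gamma$ Hardy--Littlewood maximal theorem via a convex substitution. First, I invoke Lemma~\ref{upgrade}: since $\varphi$ satisfies \hyperlink{id}{$\textnormal{(Inc)}_\gamma$} with $\gamma > 1$, there exists $\psi \in \Phi_c$ equivalent to $\varphi$, say
\[
\psi(t/L) \le \varphi(t) \le \psi(Lt) \quad \text{for all } t \ge 0 \text{ and some } L \ge 1,
\]
such that $h := \psi^{1/\gamma}$ is convex, increasing, and vanishes at the origin; in particular $\psi$ itself satisfies \hyperlink{id}{$\textnormal{(Inc)}_\gamma$}. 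Since equivalent weak $\Phi$-functions produce equivalent Orlicz quasi-norms (Proposition~\ref{equiv funcs then equiv norms}), it suffices to prove the bound at the level of the modular $\rho_\psi$ and then transfer back.

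The heart of the argument is the pointwise estimate
\[
\psi(Mf(x)) \le \bigl( M(h \circ |f|)(x) \bigr)^\gamma \quad \text{for every } x \in \Omega.
\]
To establish it, I extend $f$ by zero to $\R^d$ and observe that the restricted $Mf$ equals the classical non-restricted maximal function of this extension. For each $r > 0$, Jensen's inequality applied to the convex function $h$ and the probability measure $dy/(\omega_d r^d)$ on $B_r(x)$ yields
\[
h\!\left( \frac{1}{\omega_d r^d} \int_{B_r(x) \cap \Omega} |f|\,dy \right) \le \frac{1}{\omega_d r^d} \int_{B_r(x) \cap \Omega} h(|f|)\,dy \le M(h \circ |f|)(x).
\]
Taking the supremum over $r > 0$ and using the monotonicity of $h$ gives $h(Mf(x)) \le M(h \circ |f|)(x)$; raising both sides to the $\gamma$-th power produces the claim.

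Integrating the pointwise estimate and invoking the classical Hardy--Littlewood maximal theorem on $L^\gamma(\R^d)$, whose operator norm $c_\gamma$ depends only on $d$ and $\gamma$ since $\gamma > 1$, I obtain
\[
\rho_\psi(Mf) \le \int_\Omega \bigl(M(h \circ |f|)\bigr)^\gamma dx \le c_\gamma^\gamma \int_\Omega h(|f|)^\gamma dx = c_\gamma^\gamma\, \rho_\psi(f).
\]
For the quantitative form, assume $\|f\|_\varphi \neq 0$ and set $\varepsilon = 1/(2\|f\|_\varphi)$, so that $\|\varepsilon f\|_\varphi = 1/2 < 1$. By part (a) of the unit ball property (Lemma~\ref{unit ball prop}) I have $\rho_\varphi(\varepsilon f) \le 1/2$, and then $\psi(t/L) \le \varphi(t)$ gives $\rho_\psi(\varepsilon f/L) \le 1$. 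Applying the previous display to $\varepsilon f/L$ yields $\rho_\psi(\varepsilon Mf/L) \le c_\gamma^\gamma$. Since \hyperlink{id}{$\textnormal{(Inc)}_\gamma$} for $\psi$ reads $\psi(st) \le s^\gamma \psi(t)$ for $s \in (0,1]$, choosing $s = 1/c_\gamma$ produces $\rho_\psi(\varepsilon Mf/(Lc_\gamma)) \le 1$. Transferring back via $\varphi(t) \le \psi(Lt)$ finally gives $\rho_\varphi(c \varepsilon Mf) \le 1$ with $c = 1/(L^2 c_\gamma)$, which is the sought estimate.

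The only real obstacle is constant bookkeeping: the final $c$ must depend only on $\varphi^{-1}(1)$, $d$, and $\gamma$. The dimension and $\gamma$ enter through $c_\gamma$, while the dependence on $\varphi^{-1}(1)$ is absorbed in the equivalence constant $L$ produced by Lemma~\ref{upgrade}, which is controlled by the natural normalization of $\varphi$. The argument itself is robust; no subtlety arises from restricting averages to $\Omega$, since the restricted maximal operator equals the classical one on $\R^d$ applied to the zero extension of $f$, and Jensen applies on each full ball $B_r(x)$.
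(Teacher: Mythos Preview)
Your argument is correct and is essentially the standard proof that the paper invokes by reference to \cite[Theorem 4.3.3]{Hastobook}: pass to an equivalent convex $\Phi$-function whose $\gamma$-th root is convex, apply Jensen pointwise to push the maximal operator inside, and reduce to the classical $L^\gamma$ maximal theorem. The modular and norm manipulations at the end are fine.

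The one soft spot is the final paragraph on constants. Lemma~\ref{upgrade} as stated does not tell you what the equivalence constant $L$ depends on, so your claim that it is ``controlled by the natural normalization of $\varphi$'' is not justified by the tools you cite. The clean fix is to bypass Lemma~\ref{upgrade} and instead apply Theorem~\ref{weak and strong equiv} directly to $\varphi^{1/\gamma}$ (which lies in $\Phi_w$ and satisfies \hyperlink{id}{$\textnormal{(Inc)}_1$} because $\varphi$ satisfies \hyperlink{id}{$\textnormal{(Inc)}_\gamma$}): this yields $\tilde\psi\in\Phi_s$ with $\tilde\psi(t)\le \varphi^{1/\gamma}(t)\le \tilde\psi(2t)$, and then $\psi:=\tilde\psi^\gamma$ gives $L=2$ universally. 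With this, your final constant $c=1/(4c_\gamma)$ depends only on $d$ and $\gamma$, which is certainly of the form $c(\varphi^{-1}(1),d,\gamma)$ as required. The paper's explicit formula $c=(\varphi^{-1}(1))^3/(16c_1)$ comes from the slightly different bookkeeping in \cite{Hastobook}, but both are admissible.
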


	\begin{proof}
		The proof builds upon the key estimate \cite[Theorem 4.3.3]{Hastobook}. In particular, let $c_1=c_1(d,\gamma)$ be the constant such that $\Vert M f \Vert_{L^\gamma(\Omega)} \leq c_1 \Vert f \Vert_{L^\gamma(\Omega)}$. Then, the constant $c$ is given by
		$$
		c:=\frac{(\varphi^{-1}(1))^3}{16 c_1}.
		$$
	\end{proof}
	
	\begin{cor}\label{max op bdd cor}
		Let $\varphi \in \Phi_w$ finite valued satisfy \hyperlink{idg}{$\textnormal{(Inc)}_\gamma$} with $\gamma >1$. Assume in addition that it satisfies \hyperlink{idg}{$\textnormal{(Dec)}_q$} with $1<q<\infty$. Then, there exists $C=C(\varphi^{-1}(1),d,\gamma,q)>0$ such that for every $f \in L^\varphi(\Omega,\R^m)$ 
		\begin{equation}\label{-100}
		\rho_\varphi(Mf) \leq C (\rho_\varphi(f)+1)^q.
		\end{equation}
	\end{cor}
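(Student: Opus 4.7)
The plan is to deduce the modular estimate \eqref{-100} from Theorem \ref{max op bdd teo} via the unit ball property and the homogeneity-type behavior supplied by \hyperlink{id}{$\textnormal{(Dec)}_q$}. The case $f=0$ being trivial, I assume $\Vert f\Vert_\varphi>0$ throughout.

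First, I would upgrade Theorem \ref{max op bdd teo} to a norm bound of the form $\Vert Mf\Vert_\varphi\leq K\Vert f\Vert_\varphi$, with $K:=2/c$ and $c=c(\varphi^{-1}(1),d,\gamma)$. This is immediate: setting $\varepsilon:=1/(2\Vert f\Vert_\varphi)$, the theorem yields $\rho_\varphi(c\varepsilon Mf)\leq 1$, and the unit ball property (Lemma \ref{unit ball prop}) then gives $\Vert c\varepsilon Mf\Vert_\varphi\leq 1$, which rearranges to the desired norm inequality.

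Next, to turn this norm bound into a modular bound, I would normalize appropriately. Set $\lambda:=1+K\Vert f\Vert_\varphi\geq 1$ and $h:=Mf/\lambda$, so that
$$
\Vert h\Vert_\varphi\leq\frac{K\Vert f\Vert_\varphi}{1+K\Vert f\Vert_\varphi}<1,
$$
which, by Lemma \ref{unit ball prop}, implies $\rho_\varphi(h)\leq 1$. The key step is then to invoke \hyperlink{id}{$\textnormal{(Dec)}_q$}, which is equivalent to $\varphi(\lambda t)\leq\lambda^q\varphi(t)$ for every $\lambda\geq 1$ and every $t\geq 0$; integrating this pointwise inequality gives
$$
\rho_\varphi(Mf)=\rho_\varphi(\lambda h)\leq\lambda^q\rho_\varphi(h)\leq\lambda^q=(1+K\Vert f\Vert_\varphi)^q.
$$

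Finally, I would trade the Luxemburg norm for the modular using Lemma \ref{unit ball prop}(c), namely $\Vert f\Vert_\varphi\leq\rho_\varphi(f)+1$, combined with the elementary inequality $1+K(s+1)\leq(1+K)(1+s)$ valid for $s\geq 0$. This yields
$$
\rho_\varphi(Mf)\leq(1+K)^q\bigl(\rho_\varphi(f)+1\bigr)^q,
$$
so \eqref{-100} holds with $C:=(1+K)^q$, which depends only on $\varphi^{-1}(1),d,\gamma,q$ as required. The argument is a short combination of already established tools; the only genuinely corollary-specific ingredient is the use of \hyperlink{id}{$\textnormal{(Dec)}_q$} to pull the scalar $\lambda$ out of $\rho_\varphi$ with the explicit exponent $q$, and no substantive obstacle is expected.
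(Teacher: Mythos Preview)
Your proof is correct and follows essentially the same approach as the paper: both deduce the modular bound from Theorem \ref{max op bdd teo} via the unit ball property, $(\mathrm{Dec})_q$ to extract the scalar with exponent $q$, and Lemma \ref{unit ball prop}(c) to pass from norm to modular. Your version is in fact slightly more streamlined, since by normalizing with $\lambda=1+K\Vert f\Vert_\varphi\geq 1$ you avoid both the paper's detour through an equivalent $\psi\in\Phi_s$ and its case distinction $c\varepsilon\leq 1$ versus $c\varepsilon>1$.
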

	
	\begin{proof}
		Using Theorem \ref{weak and strong equiv}, we can find $\psi \in \Phi_s$ such that $\psi(t) \leq \varphi(t) \leq \psi(2t)$ for every $t \geq 0$. Moreover, by Lemma \ref{equiv properties}, $\psi$ satisfies \hyperlink{idg}{$\textnormal{(Inc)}_\gamma$} and \hyperlink{idg}{$\textnormal{(Dec)}_q$}. Let us prove \eqref{-100} for $\psi$. If $f \equiv 0$ then there is nothing to prove. Assume that $f$ is not identically zero on $\Omega$. Since $\Vert \cdot \Vert_\psi$ is a norm by Lemma \ref{orl norm o no}, we have that $\Vert f \Vert_\psi \neq 0$. Let $\varepsilon:=\frac{1}{2\Vert f \Vert_\psi}$ and $c=c(\psi^{-1}(1),d,\gamma)=c(\varphi^{-1}(1),d,\gamma)>0$ be the constant from Theorem \ref{max op bdd teo}. If $c\varepsilon \leq 1$ by \hyperlink{idg}{$\textnormal{(Dec)}_q$} and Theorem \ref{max op bdd teo} we have
		$$
		1 \geq \int_\Omega \psi(c\varepsilon Mf(x)) \, dx \geq (c\varepsilon)^{q} \int_\Omega \psi(x,Mf(x)) \, dx.
		$$
		Therefore, using also Lemma \ref{unit ball prop}(c) we get
		$$
		\int_\Omega \psi(Mf(x)) \, dx \leq c^{-q} (2 \Vert f \Vert_\psi)^q \leq 2^q c^{-q} \left(1+ \int_\Omega \psi(|f(x)|) \, dx \right)^q,
		$$
		If $c\varepsilon >1$, then it is enough to observe that since $\psi$ is increasing 
		$$
		\int_\Omega \psi( Mf(x)) \, dx \leq \int_\Omega \psi(c \varepsilon Mf(x)) \, dx \leq 1 \leq 1+ \Vert f \Vert_\psi \leq 2+2\rho_\psi(f) \leq 2(1+\rho_\psi(f))^q.
		$$
		Setting $C:=\max\{ 2^qc^{-q},2 \}$ we conclude \eqref{-100} for $\psi$. Finally, for $\varphi$ we have
		$$
		\rho_\varphi(Mf/2) \leq \rho_\psi(Mf) \leq C (\rho_\psi(f)+1)^q \leq C(\rho_\varphi(f)+1)^q,
		$$
		using again \hyperlink{idg}{$\textnormal{(Dec)}_q$}, we conclude \eqref{-100} for $\varphi$ as well with $C=C(\varphi^{-1}(1),d,\gamma,q)$.
	\end{proof}
	
	\subsection{(Generalized) Special functions of Bounded Variation}
	\label{s:gen}
	 We now discuss the definition and some useful properties of $\gsbv$ and $\sbv$ function. For a complete treatment of the topic we refer to \cite{Ambrosio2000FunctionsOB}.
	
	Let $A \in \mathcal{A}(\R^d)$ and $x \in A$ such that 
	$$
	\limsup_{r \searrow 0} \frac{\mathcal{L}^d(A \cap B(x,r))}{r^d}>0,
	$$
	Given $u \in L^0(A,\R^m)$, we say that $a \in \R^m$ is the approximate limit of $u$ at $x$ if
	$$
	\lim_{r \searrow 0} \frac{\mathcal{L}^d(\Omega \cap B(x,r) \cap \left\{ |a-v(x)|>\varepsilon \right\})}{r^d}=0 \ \ \ \mbox{for every $\varepsilon>0$}. 
	$$
	In that case we write 
	$$
	\aplim_{y \to x} u(y)=a.
	$$
	We say that $x \in A$ is an approximate jump point of $u$, and we write $x \in J_u$, if there exists $a, b \in \R^m$ with $a \neq b$ and $\nu \in \mathbb{S}^{d-1}$ such that
	$$
	\underset{\underset{(y-x) \cdot \nu>0}{y \to x}}{\textnormal{ap-}\lim} u(y)=a \ \ \ \ \mbox{and} \ \ \ \ \underset{\underset{(y-x) \cdot \nu<0}{y \to x}}{\textnormal{ap-}\lim} u(y)=b.
	$$
	In particular, for every $x \in J_u$ the triple $(a, b, \nu)$ is uniquely determined up to a change of sign of $\nu$ and a permutation of $a$ and $b$. We indicate such triple by $(u^+(x), u^-(x), \nu_u(x))$. The jump of $u$ at $x \in J_u$ is defined as $[u](x):=u^+(x)-u^-(x) \in \R^m_0$. The set $J_u$ is countably rectifiable and has $\nu$ as an approximate unit normal vector at $\mathcal{H}^{d-1}$-every point (see \cite{Nin20211233}).
	
	The space $\textnormal{BV}(A,\R^m)$ of functions of bounded variation is the set of $u \in L^1(A;\R^m)$ whose distributional gradient $Du$ is a bounded Radon measure on $A$ with values in $\R^{m \times d}$. Given $u \in \textnormal{BV}(A,\R^m)$ we can write $Du=D^a u+ D^s u$, where $D^a u$ is absolutely continuous and $D^s u$ is singular w.r.t. $\mathcal{L}^d$.  The density $\nabla u \in L^1(A,\R^{m \times d})$ of $D^a u$ w.r.t. $\mathcal{L}^d$ coincides a.e. in $A$ with the approximate gradient of $u$. That is, for a.e. $x \in A$ it holds
	$$
	\underset{y \to x}{\textnormal{ap-}\lim} \frac{u(y)-u(x)-\nabla u(x)\cdot (y-x)}{|x-y|}=0.
	$$
	
	The space $\sbv(A,\R^m)$ of special functions of bounded variation is defined as
	the set of all $u \in \textnormal{BV}(A,\R^m)$ such that $|D^s u|(A \setminus J_u)=0$. Moreover, we denote by $\sbv_{\textnormal{loc}}(\Omega,\R^m)$ the space of functions belonging to $\sbv(U,\R^m)$ for every $U \Subset A$ open. For $p \in [1,+\infty)$, $\sbvp(A,\R^m)$ stands for the set of functions $u \in \sbv(A,\R^m)$, with approximate gradient $\nabla u \in L^p(A,\R^{m \times d})$ and $\mathcal{H}^{d-1}(J_u)<+\infty$, that is
	$$
	\sbv^p(A,\R^m):=\{ u \in \sbv(A,\R^m) \colon \nabla u \in L^p(\Omega,\R^m), \mathcal{H}^{d-1}(J_u)<+\infty \}.
	$$
	We say that $u \in \gsbv(A,\R^m)$ if $X(u) \in \sbv_{\textnormal{loc}}(A,\R^m)$ for every $X \in C_c^1(\R^m,\R^m)$. Also for $u \in \gsbv(A,\R^m)$ the approximate gradient $\nabla u$ exists $\mathcal{L}^d$-a.e. in $A$ and the jump set $J_u$ is countably $\mathcal{H}^{d-1}$-rectifiable with approximate unit normal vector $\nu_u$. Finally, for $p \in [1,+\infty)$, we define as before
	$$
	\gsbv^p(A,\R^m):=\{ u \in \gsbv(A,\R^m) \colon \nabla u \in L^p(\Omega,\R^m), \mathcal{H}^{d-1}(J_u)<+\infty \};
	$$ 
	it is known that $\gsbvp(A,\R^m)$ is a vector space (see e.g. \cite[pg. 172]{DMFTqscrackgrowth}).
	
	Moreover, given a generalized Orlicz function $\varphi \in \Phi_w(\Omega)$ we denote with $\sbv^{\varphi}(\Omega,\R^m)$ the space of functions $u \in \sbv(\Omega,\R^m)$ with $\mathcal{H}^{d-1}(J_u) < +\infty$ and $\nabla u \in L^{\varphi}(\Omega,\R^{m \times d})$. The definition of $\gsbv^\varphi(\Omega,\R^m)$ is analogous. \\
	
	The following three theorems can be found in \cite{DGCLexistence} for the scalar case and in \cite{CLskvalued} for vector valued $\sbv$ functions.
	
	\begin{thm}[{\cite[Lemma 2.6]{DGCLexistence}}]\label{DGCL 1}
		Let $p \geq 1$ and $u \in \sbv(\Omega,\R^m)$ be such that 
		$$
		\int_K |\nabla u|^p \, dx+\mathcal{H}^{d-1}(J_u \cap K) <+\infty
		$$
		for every compact $K \Subset \Omega$. Then
		$$
		\lim_{\varepsilon \to 0^+} \frac{1}{\varepsilon^{d-1}} \left( \int_{B_\varepsilon(x_0)} |\nabla u(x)|^p \, dx +\mathcal{H}^{d-1}(B_\varepsilon(x_0) \cap J_u) \right)=0 \ \ \ \mbox{for $\mathcal{H}^{d-1}$-a.e. $x_0 \in \Omega \setminus J_u$.}
		$$
	\end{thm}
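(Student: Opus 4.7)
Define the locally finite Radon measure $\mu$ on $\Omega$ by $\mu(A) := \int_{A} |\nabla u|^p\,dx + \mathcal{H}^{d-1}(J_u \cap A)$ for every Borel set $A \subseteq \Omega$; by the assumption on $u$ this is well defined on compact subsets. The statement amounts to showing that the $(d{-}1)$-dimensional upper density
\[
\Theta^{*,d-1}(\mu,x_0) := \limsup_{\varepsilon \to 0^+} \frac{\mu(B_\varepsilon(x_0))}{\omega_{d-1}\varepsilon^{d-1}}
\]
vanishes for $\mathcal{H}^{d-1}$-a.e.\ $x_0 \in \Omega \setminus J_u$. I would split $\mu = \nu_1 + \nu_2$, with $\nu_1(A) := \int_A |\nabla u|^p\,dx$ and $\nu_2(A) := \mathcal{H}^{d-1}(J_u \cap A)$, and treat each piece separately.

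For the bulk part $\nu_1$: by the Lebesgue differentiation theorem, $\nu_1(B_\varepsilon(x_0))/\varepsilon^d \to \omega_d|\nabla u(x_0)|^p$ for $\mathcal{L}^d$-a.e.\ $x_0$, and hence $\nu_1(B_\varepsilon(x_0))/\varepsilon^{d-1} \to 0$ off a Borel set $N$ with $\mathcal{L}^d(N)=0$. Since $\nu_1 \ll \mathcal{L}^d$, also $\nu_1(N)=0$. For every $\delta>0$, the super-level set $N_\delta := \{x \in \Omega : \Theta^{*,d-1}(\nu_1,x) \geq \delta\}$ is contained in $N$ and the classical upper-density comparison lemma (see e.g.\ \cite[Theorem 2.56]{Ambrosio2000FunctionsOB}) yields $\mathcal{H}^{d-1}(N_\delta) \leq C\delta^{-1}\nu_1(N_\delta) = 0$. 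Taking a countable union as $\delta \to 0^+$ shows that $\Theta^{*,d-1}(\nu_1,x_0) = 0$ for $\mathcal{H}^{d-1}$-a.e.\ $x_0 \in \Omega$.

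For the jump part $\nu_2$: since $J_u$ is countably $\mathcal{H}^{d-1}$-rectifiable with $\mathcal{H}^{d-1}(J_u \cap K)<+\infty$ for every compact $K \Subset \Omega$, the density theorem for rectifiable sets (see e.g.\ \cite[Theorem 2.63]{Ambrosio2000FunctionsOB}) produces $\Theta^{*,d-1}(\nu_2,x_0) = 0$ for $\mathcal{H}^{d-1}$-a.e.\ $x_0 \in \R^d \setminus J_u$. Summing the two estimates gives the claim at $\mathcal{H}^{d-1}$-a.e.\ $x_0 \in \Omega \setminus J_u$. The only delicate point is the upgrade from $\mathcal{L}^d$-a.e.\ to $\mathcal{H}^{d-1}$-a.e.\ in the bulk piece; this is where the absolute continuity $\nu_1 \ll \mathcal{L}^d$ is essential, since a $\mathcal{L}^d$-null set may carry positive $\mathcal{H}^{d-1}$-mass but cannot carry $\nu_1$-mass. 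No SBV-specific machinery is used beyond the countable rectifiability of $J_u$.
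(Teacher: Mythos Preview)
Your argument is correct and is precisely the standard density/derivation proof for this kind of statement. Note that the paper does not supply its own proof of this theorem: it is quoted verbatim from \cite[Lemma~2.6]{DGCLexistence} (and \cite{CLskvalued} for the vectorial case), so there is nothing to compare against beyond the original reference, whose proof follows the same two-step decomposition you outline---absolutely continuous part via Lebesgue differentiation plus the density comparison \cite[Theorem~2.56]{Ambrosio2000FunctionsOB}, and jump part via the density theorem for rectifiable sets.
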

	
	For the other two theorems we need first to fix some notations.
	Given $a=(a_1,\dots,a_m)$ and $ b=(b_1,\dots,b_m)$ vectors in $ \R^m$, we set $$a \wedge b:=(\min(a_1, b_1),\dots,\min(a_m,b_m)) \ \ \ \mbox{and} \ \ \ a \vee b:=(\max(a_1, b_1),\dots,\max(a_m,b_m)).$$ 
	Let $B$ be a ball in $\R^d$. For every measurable function $u \colon B \to \R^m$, given $0 \leq s \leq \mathcal{L}^d(B)$, we define
	$$
	u_*(s;B):=((u_1)_*(s;B),\dots,(u_m)_*(s;B)),
	$$
	where, for every $i=1,\dots,m$,
	$$
	(u_i)_*(s;B):=\inf \left\{ t \in \R \colon \mathcal{L}^d (\left\{u_i<t \right\}) \geq s \right\}.
	$$
	Moreover, we define $\displaystyle \med(u;B):=u_*\left(\frac{\mathcal{L}^d(B)}{2};B\right)$.
	
	Let $\gammaiso$ be the dimensional constant in the relative isoperimetric inequality in balls. For every $u \in \gsbv(\Omega,\R^m)$ such that
	$$
	\left( 2 \gammaiso\mathcal{H}^{d-1}(J_u \cap B) \right)^{\frac{d}{d-1}} \leq \frac{1}{2} \mathcal{L}^d(B),
	$$ 
	we define
	\begin{align}
		\begin{split}\label{tau}
			&\tau'(u;B):=u_* \left( \left( 2\gammaiso\mathcal{H}^{d-1}(J_u \cap B) \right)^{\frac{d}{d-1}};B \right), \\ 
			&\tau''(u;B):=u_* \left( \mathcal{L}^d(B)- \left( 2\gammaiso\mathcal{H}^{d-1}(J_u \cap B) \right)^{\frac{d}{d-1}};B \right)
		\end{split}
	\end{align}
	and the truncation operator in $B$
	\begin{equation}\label{truncat operat}
		T_Bu(x):=(\tau'(u;B) \vee u(x)) \wedge \tau''(u;B).
	\end{equation}
	Notice that it holds
	\begin{equation}\label{truncated difference}
		\mathcal{L}^d(\{T_B u \neq u \} \cap B) \leq 2 \left( 2 \gammaiso \mathcal{H}^{d-1}(J_u \cap B) \right)^{\frac{d}{d-1}}.
	\end{equation}
	
	\begin{thm}[{\cite[Theorem 3.1 and Remark 3.2]{DGCLexistence}}]\label{DGCL 2 alas poinc ineq}
		Let $B \subset \R^d$ be a ball and $u \in \sbv(B,\R^m)$ satisfy
		\begin{equation}\label{poinc 1}
			\left( 2\gammaiso \mathcal{H}^{d-1}(J_u \cap B) \right)^{\frac{d}{d-1}} \leq \frac{1}{2}\mathcal{L}^d(B).
		\end{equation}
		If $1 \leq p <d$ then,
		\begin{equation}\label{poinc 2}
			\left( \int_B |T_B u-\med(u,B)|^{p^*} \, dx \right)^{1/p^*} \leq \frac{2 \gammaiso p(d-1)}{d-p} \left( \int_B |\nabla u|^{p} \, dx \right)^{1/p},
		\end{equation}
		and 
		\begin{equation}\label{poinc 3}
			\mathcal{L}^d(\{ T_B u \neq u \} \cap B) \leq 2\left( 2 \gammaiso \mathcal{H}^{d-1}(J_u \cap B) \right)^{\frac{d}{d-1}}.
		\end{equation}
		If $p \geq d$ instead, for every $q \geq 1$ we have
		\begin{equation}\label{poinc 4}
			\Vert T_B u -\med(u,B) \Vert_{L^q(B,\R^m)} \leq \frac{2 \gammaiso q (d-1)}{d} \mathcal{L}^d(B)^{\frac{1}{d}+\frac{1}{q}-\frac{1}{p}}\Vert \nabla u \Vert_{L^p(B,\R^m)}.
		\end{equation}
	\end{thm}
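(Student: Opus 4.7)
The plan is to reduce the inequality to the classical relative isoperimetric inequality in a ball, applied to the level sets of the truncated function $T_B u$. I will first treat the scalar subcritical case $m=1$, $1 \leq p < d$; then extend to the vector case componentwise; and finally obtain \eqref{poinc 4} by a H\"older interpolation from \eqref{poinc 2}. Estimate \eqref{poinc 3} is immediate from the definitions, since by construction $\{T_B u \neq u\} \cap B$ is contained (componentwise) in the union $\{u_i < \tau'_i\} \cup \{u_i > \tau''_i\}$, each of which has Lebesgue measure bounded by $(2\gammaiso \mathcal{H}^{d-1}(J_u \cap B))^{d/(d-1)}$ by the definition of the rearrangement $u_*$; the factor $2$ comes from summing the two tails.

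For the scalar subcritical case, set $v := T_B u - \med(u,B) \in \sbv(B)$ and, for $t \in \R$, let $E_t := \{v > t\}$. By the $\sbv$ coarea formula and the fact that truncation only selects portions of the original gradient and jump set,
\[
\int_\R P(E_t, B)\, dt \;=\; |Dv|(B) \;\leq\; \int_B |\nabla u|\, dx + \mathcal{H}^{d-1}(J_u \cap B).
\]
The decisive property is that, by the very choice of $\tau', \tau''$, for every $t > 0$ we have $\mathcal{L}^d(E_t) \leq (2\gammaiso \mathcal{H}^{d-1}(J_u \cap B))^{d/(d-1)}$, and for $t<0$ the analogous bound holds for $\mathcal{L}^d(B \setminus E_t)$. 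Hypothesis \eqref{poinc 1} then guarantees $\mathcal{L}^d(E_t) \leq \tfrac{1}{2}\mathcal{L}^d(B)$ for $t>0$ (and the symmetric statement for $t<0$), which is exactly the regime of the relative isoperimetric inequality in the ball:
\[
\min\{\mathcal{L}^d(E_t), \mathcal{L}^d(B \setminus E_t)\}^{(d-1)/d} \;\leq\; \gammaiso\, P(E_t, B).
\]
Combining this with the layer-cake representation of $\int_B |v|^{p^*}dx$ and Minkowski's integral inequality (the standard Fleming--Rishel derivation of the Sobolev embedding from isoperimetry) raised to the power $p$ yields \eqref{poinc 2} with the stated constant $\tfrac{2\gammaiso p(d-1)}{d-p}$. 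The vector-valued case $m \geq 2$ is handled by applying the scalar estimate to each component $v_i := (T_B u)_i - \med(u_i,B)$; this is consistent because both $\tau'$ and $\tau''$ were defined componentwise and $T_B u$ was built via the coordinatewise $\vee$ and $\wedge$ operations, and because $|\nabla u_i| \leq |\nabla u|$ and $J_{u_i} \subseteq J_u$.

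For \eqref{poinc 4} with $p \geq d$, I would apply \eqref{poinc 2} with an auxiliary exponent $p_0 \in [1,d)$ chosen so that $p_0^* \geq q$, then use H\"older on $B$ to convert the resulting $L^{p_0^*}$ bound to $L^q$ and the $L^{p_0}$ norm of $\nabla u$ to the $L^p$ norm, picking up the factor $\mathcal{L}^d(B)^{1/d + 1/q - 1/p}$. The explicit constant $\tfrac{2\gammaiso q(d-1)}{d}$ emerges after optimizing in $p_0 \nearrow d$: the factor $p_0/(d-p_0)$ in \eqref{poinc 2} blows up, but the H\"older correction introduces a compensating factor that becomes $q/d$ in the limit.

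The main obstacle, and the reason the naked $\sbv$ function $u$ admits no such inequality, is the jump contribution $\mathcal{H}^{d-1}(J_u \cap B)^{d/(d-1)}$ that a priori appears on the right-hand side after coarea. The truncation $T_B u$ at the rearranged levels $\tau', \tau''$ is precisely calibrated so that the level sets $E_t$ one integrates over lie in the regime where the isoperimetric inequality dominates and the jump-set mass is absorbed into the measure bound on $E_t$ rather than into the perimeter; hypothesis \eqref{poinc 1} is exactly what is needed to keep this mechanism operative.
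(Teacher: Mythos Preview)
The paper does not give its own proof of this statement; it is quoted from \cite{DGCLexistence}. That said, the essential mechanism is reproduced later in the paper's Proposition~\ref{p1} (Steps~1--2), so I can compare against that.

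Your sketch contains a genuine error at the ``decisive property''. You claim that for every $t>0$
\[
\mathcal{L}^d(E_t)\;\leq\;\bigl(2\gammaiso\,\mathcal{H}^{d-1}(J_u\cap B)\bigr)^{\frac{d}{d-1}}=:\Lambda,
\]
with $E_t=\{T_Bu-\med(u,B)>t\}$. This is false: for $0<t<\tau''-\med(u,B)$ one has $E_t=\{u>t+\med(u,B)\}$, whose measure for small $t>0$ is close to $\mathcal{L}^d(B)/2$, not to $\Lambda$. What the truncation at $\tau',\tau''$ actually buys is the \emph{opposite} inequality: for $t$ in the truncated range one has $\Lambda\le \min\{\mathcal{L}^d(E_t),\,\mathcal{L}^d(B\setminus E_t)\}\le \tfrac12\mathcal{L}^d(B)$, the upper bound coming from the median and the lower bound from the definition of $\tau',\tau''$.

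This lower bound is exactly what makes the absorption work. By the relative isoperimetric inequality,
\[
P(E_t,B)\;\ge\;\gammaiso^{-1}\,\Lambda^{(d-1)/d}\;=\;2\,\mathcal{H}^{d-1}(J_u\cap B)
\]
for every $t$ in the truncated range. Hence in the coarea identity
\[
\int P(E_t,B)\,dt\;\le\;\int_B|\nabla u|\,dx+(\tau''-\tau')\,\mathcal{H}^{d-1}(J_u\cap B),
\]
the jump term on the right is bounded by $\tfrac12\int P(E_t,B)\,dt$ and can be moved to the left, yielding $\int \mathcal{L}^d(E_t)^{(d-1)/d}\,dt\le 2\gammaiso\int_B|\nabla u|\,dx$. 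From here the Fleming--Rishel chain gives \eqref{poinc 2} with the stated constant. Your final paragraph hints at an absorption mechanism, but the one you describe (``absorbed into the measure bound on $E_t$'') is not the correct one; the jump mass is absorbed into the \emph{perimeter} via the lower bound on the level-set measures.

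Your derivation of \eqref{poinc 4} from \eqref{poinc 2} is fine once you make the choice $p_0=\tfrac{dq}{d+q}$ explicit (so $p_0^*=q$ and $\tfrac{p_0}{d-p_0}=\tfrac{q}{d}$); phrasing it as a limit $p_0\nearrow d$ with a ``compensating factor'' obscures that the computation is exact for this choice. Estimate \eqref{poinc 3} and the componentwise reduction to $m=1$ are handled correctly.
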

	
	\begin{thm}[{\cite[Theorem 3.6]{DGCLexistence}}]\label{DGCL 3}
		Let $u \in \sbv_{\textnormal{loc}}(\Omega,\R^m)$ and $p>1$ and let $x_0 \in \Omega$. If 
		$$
		\lim_{\varepsilon \to 0^+} \frac{1}{\varepsilon^{d-1}} \left( \int_{B_\varepsilon(x_0)} |\nabla u(x)|^p \, dx +\mathcal{H}^{d-1}(B_\varepsilon(x_0) \cap J_u) \right)=0,
		$$
		then $x_0 \neq J_u$ and 
		$$
		\lim_{\varepsilon \to 0^+} \med(u,B_\varepsilon(x_0))=\aplim_{x \to x_0} u(x) \in \R^m.
		$$
		Moreover, there exists $\varepsilon_0>0$ such that for $\delta \in (3/4,1)$ and $\varepsilon, \rho>0$ with $\delta \varepsilon < \rho < \varepsilon < \varepsilon_0$,
		\begin{align*}
			& \tau'(u,B_\rho(x_0)) \leq \med(u,B_\varepsilon(x_0)) \leq \tau''(u,B_\rho(x_0)), \\
			& \tau'(u,B_\varepsilon(x_0)) \leq \med(u,B_\rho(x_0)) \leq \tau''(u,B_\varepsilon(x_0)).
		\end{align*}
	\end{thm}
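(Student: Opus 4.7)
The plan is to combine the truncation-based Poincar\'e--Sobolev inequality of Theorem~\ref{DGCL 2 alas poinc ineq} with the smallness assumption on the normalized gradient and jump energy at $x_0$, and to use the monotonicity properties of the rearrangement $u_*(\cdot;B)$ to compare values on nested balls. I will only discuss the case $1<p<d$; the case $p\geq d$ is identical upon replacing the exponent $p^*$ by any $q\geq 1$ via the alternative inequality \eqref{poinc 4}.

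First I would fix $\varepsilon_0>0$ so small that for every $\varepsilon\leq \varepsilon_0$
\begin{equation*}
\bigl(2\gammaiso\mathcal{H}^{d-1}(J_u\cap B_\varepsilon(x_0))\bigr)^{\frac{d}{d-1}}\leq \tfrac{1}{4}\omega_d(\delta\varepsilon)^d,
\end{equation*}
which is possible because the hypothesis gives $\mathcal{H}^{d-1}(J_u\cap B_\varepsilon(x_0))=o(\varepsilon^{d-1})$. This guarantees that assumption \eqref{poinc 1} of Theorem~\ref{DGCL 2 alas poinc ineq} is satisfied on both $B_\varepsilon(x_0)$ and $B_\rho(x_0)$ for all $\rho\in(\delta\varepsilon,\varepsilon)$, and simultaneously ensures $\mathcal{L}^d(\{T_{B_\varepsilon}u\neq u\}\cap B_\varepsilon(x_0))=o(\varepsilon^d)$ through \eqref{poinc 3}. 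Applying the Poincar\'e inequality \eqref{poinc 2} with exponent $p$ on $B_\varepsilon(x_0)$, dividing by $\omega_d\varepsilon^d$ and invoking the hypothesis, an elementary computation using $(d-1)p^*/p-d=d(p-1)/(d-p)>0$ yields
\begin{equation*}
\fint_{B_\varepsilon(x_0)}|T_{B_\varepsilon}u-\med(u;B_\varepsilon(x_0))|^{p^*}\,dx\;\longrightarrow\;0\quad\text{as }\varepsilon\to 0^+.
\end{equation*}

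Next I would prove the two sandwich relations, which are the technical heart of the statement. Fix $\varepsilon\leq\varepsilon_0$ and $\rho\in(\delta\varepsilon,\varepsilon)$. By definition of $\tau'$ and $\tau''$ and monotonicity of $u_*(\cdot;B_\rho)$, for each component $i$ the set $\{u_i<\tau'_i(u;B_\rho)\}\cap B_\rho$ has Lebesgue measure at most $(2\gammaiso\mathcal{H}^{d-1}(J_u\cap B_\rho))^{d/(d-1)}$, and similarly for $\{u_i>\tau''_i\}$. If we had $\med_i(u;B_\varepsilon(x_0))<\tau'_i(u;B_\rho(x_0))$ for some index $i$, then at least half of $B_\varepsilon(x_0)$ would see $u_i<\tau'_i(u;B_\rho(x_0))$; restricting to $B_\rho(x_0)$ and using $\rho>\delta\varepsilon$ with $\delta>3/4$, one checks $\tfrac12\omega_d\varepsilon^d-\omega_d(\varepsilon^d-\rho^d)>\left(2\gammaiso\mathcal{H}^{d-1}(J_u\cap B_\rho)\right)^{d/(d-1)}$ for small $\varepsilon$, a contradiction. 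The three remaining inequalities are proved by the same argument with the roles of $B_\rho$ and $B_\varepsilon$ interchanged or by replacing strict order with the opposite one. This gives both claimed double inequalities.

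With the sandwich in hand I would conclude as follows. The trivial inequality $\tau'(u;B_\varepsilon)\leq\med(u;B_\varepsilon)\leq\tau''(u;B_\varepsilon)$ and the fact that $\tau''(u;B_\varepsilon)-\tau'(u;B_\varepsilon)\to 0$ (which itself follows from $L^{p^*}$-convergence of the truncation to its median and the vanishing measure of $\{T_{B_\varepsilon}u\neq u\}$) force $\med(u;B_\varepsilon(x_0))$ to be Cauchy along the family $\varepsilon\to 0^+$, with a limit $a\in\R^m$. Combining the vanishing of $\fint_{B_\varepsilon}|T_{B_\varepsilon}u-\med|^{p^*}$ with the control \eqref{truncated difference} on the discrepancy set, one gets $\fint_{B_\varepsilon(x_0)}\chi_{\{|u-a|>\eta\}}\,dx\to 0$ for every $\eta>0$, which is precisely $\aplim_{x\to x_0}u(x)=a$ and in particular rules out $x_0\in J_u$. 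The main obstacle here is the comparison between nested balls leading to the $\tau'$--$\med$ sandwich, since it cannot be argued by a simple rescaling: one has to rely on the fact that the relative measure $\rho^d/\varepsilon^d$ stays bounded below by $\delta^d>(3/4)^d$, which is the reason for the assumption $\delta>3/4$.
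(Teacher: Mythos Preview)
The paper does not give its own proof of this statement: it is quoted verbatim from \cite[Theorem~3.6]{DGCLexistence} and used as a black box in the proof of Lemma~\ref{surf seq}. So there is nothing in the paper to compare your argument against. Your overall strategy---combine the truncated Poincar\'e inequality of Theorem~\ref{DGCL 2 alas poinc ineq} with a measure-theoretic comparison of $(u_i)_*$ on nested balls---is exactly the right one and matches the approach of De~Giorgi--Carriero--Leaci.

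That said, your sandwich argument for the inequality $\tau'(u;B_\rho)\le \med(u;B_\varepsilon)$ has a genuine dimensional issue. You write that from $\med_i(u;B_\varepsilon)<\tau'_i(u;B_\rho)$ one gets, after restricting to $B_\rho$, the inequality $\tfrac12\omega_d\varepsilon^d-\omega_d(\varepsilon^d-\rho^d)>\bigl(2\gammaiso\mathcal{H}^{d-1}(J_u\cap B_\rho)\bigr)^{d/(d-1)}$. The right-hand side is $o(\varepsilon^d)$, but for the left-hand side even to be positive you need $\rho^d>\varepsilon^d/2$, i.e.\ $\rho/\varepsilon>2^{-1/d}$. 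Since $2^{-1/3}\approx 0.794>3/4$, this fails for $d\ge 3$ when $\delta$ is taken close to $3/4$. So either the threshold in the statement should really be $\delta\in(2^{-1/d},1)$ (which is harmless for the paper's application in Lemma~\ref{surf seq}, where any fixed $\delta<1$ suffices), or you need a more careful argument for this particular inequality; your counting works unchanged for the second line $\tau'(u;B_\varepsilon)\le \med(u;B_\rho)$, where the bigger ball carries the $\tau'$.

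A smaller point: you claim $\tau''(u;B_\varepsilon)-\tau'(u;B_\varepsilon)\to 0$ follows from the $L^{p^*}$-smallness of $T_{B_\varepsilon}u-\med$ together with \eqref{truncated difference}. This does not follow directly, since $L^{p^*}$-smallness of an average does not control the oscillation. The standard way to obtain the Cauchy property of $\varepsilon\mapsto\med(u;B_\varepsilon)$ is instead to use the sandwich inequalities together with the Poincar\'e bound $\fint_{B_\varepsilon}|T_{B_\varepsilon}u-\med|^{p}\,dx\to 0$ on \emph{both} $B_\varepsilon$ and $B_\rho$ and compare the two truncations on their overlap, rather than trying to show $\tau''-\tau'\to 0$ outright.
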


	\subsection{Examples}
	\label{subs: examples}
	In this last subsection of preliminaries, we present some examples of generalized Orlicz functions to which our theory applies. We consider the following Orlicz functions:
	
	\begin{itemize}
		\item[(I)] \textbf{Variable exponent}: $\psi(x,t)=t^{p(x)}$.
		For the variable exponent case, the semicontinuity was already addressed in \cite{DCLVlscvariable}, while integral representation and $\Gamma$-convergence are studied in \cite{SSSvariableexpo}. 
		\item[(II)] \textbf{Perturbed Orlicz}: $\psi(x,t)=a(x)\varphi(t)$. 
		\item[(III)] \textbf{Double phase}: $\psi(x,t)=t^p+a(x)t^q$. 
		\item[(IV)] \textbf{Degenerate double phase}: $\psi(x,t)=t^p+a(x)t^p\log(\mathrm{e}+t)$.
		\item[(V)] \textbf{Triple phase}: $\psi(x,t)=t^p+a(x)t^q+b(x)t^r$. 
		\item[(VI)] \textbf{Variable exponent double phase}: $\psi(x,t)=t^{p(x)}+a(x)t^{q(x)}$. This type of generalized Orlicz function was studied recently in \cite{BlancoDoublephVarexpo}.
	\end{itemize}
	We say that a function $p \colon \Omega \to [1,+\infty)$ is log-H\"older continuous on $\Omega$ if 
	$$
	\exists \ C>0 \ \ \ \mbox{such that} \ \ \ |p(x)-p(y)| \leq \frac{C}{-\log|x-y|} \ \ \ \mbox{for every $x,y \in \Omega$ with $|x-y|$} \leq \frac{1}{2}.
	$$
	We use the usual shortcut $p \in C^{\mathrm{log}}(\Omega)$ to denote a log-H\"older continuous variable exponent on $\Omega$.
	In the Table below we collect conditions that are sufficient (sometimes necessary) for properties \hyperlink{idg}{(aInc)}, \hyperlink{idg}{(aDec)}, \hyperlink{A0}{(A0)}, \eqref{.} and \hyperlink{adA1}{(adA1)} in the special cases above. The usual notations $C^0$ and $C^{0, \beta}$ are used for continuous and H\"older continuous functions with exponent $\beta$, respectively. In each line, the checkmark is used to denote that a property needs no new assumption to be satisfied than the previously considered ones.
	
	\begin{table} [h!]
		\centering
		\begin{tabular}{| c || c | c | c | c | p{46mm} | }
			\hline
			 & (aInc) & (aDec) & (A0) & \eqref{.} & (adA1) \\ [0.5ex]
			\hline
			(I) & $\essinf_\Omega p>1$ & $\esssup_\Omega p<+\infty$ & $\checkmark$ & $p \in C^0$ & $p \in C^{\log}$ \\
			\hline
			(II) & $\varphi$ is (aInc) & $\varphi$ is (aDec) & $c_1 \leq a\leq c_2 $ & $\checkmark$ & $\checkmark$ \\
			\hline
			(III) & $a \geq 0$, $p>1$ & $q<+\infty$ & $a \in L^\infty$ & $a \in C^0$ & $a \in C^{0,q/p-1}$ \\
			\hline
			(IV) & $a \geq 0$, $p>1$ & $p<+\infty$ & $a \in L^\infty$ & $a \in C^0$ & $a \in C^{\log}$\\
			\hline
			(V) & $a,b \geq 0$, $p>1$ & $q \leq r <+\infty$ & $a,b \in L^\infty$ & $a,b \in C^0$ & $a \in C^{0,q/p-1}$, $b \in C^{0,r/p-1}$ \\ 
			\hline
			(VI) & $\essinf_\Omega p >1$ & $\esssup_\Omega q<+\infty$ & $a \in L^\infty$ & $a, p,q \in C^0$ & $p \in C^{\log}$, $a \in C^{0,\alpha}$, \newline $q \in C^{0,\alpha/q^-}$, $\frac{q}{p} \leq 1+\alpha$  \\[1ex]
			\hline
		\end{tabular}
	\end{table}

We remark that in the case of variable exponent, the sole continuity assumption will be enough to obtain \eqref{.}  and thus Theorem \ref{lsc of the functional}. This represents an improvement with respect to the result of \cite{DCLVlscvariable}, where log--H\"older continuity has been assumed. \\

We briefly discuss how our conditions can be checked in all the aforementioned cases. Checking  the validity of \hyperlink{idg}{(aInc)} and  \hyperlink{idg}{(aDec)} is straightforward, while we refer to Table 1 in \cite{Har-Karpp} for condition \hyperlink{A0}{(A0)}.
Concerning \eqref{.},  it follows from the following general claim: given $\varphi \in \Phi_w(\Omega)$ satisfying \hyperlink{A0}{$\textnormal{(A0)}$} on $\Omega$, we have that if $\varphi \in C^0(\Omega \times [0,+\infty))$, then it satisfies property \eqref{.}. Now, assume that $\varphi$ satisfies \hyperlink{A0}{$\textnormal{(A0)}$} with $\sigma \geq 1$ and let for simplicity $x_0=0$. Let $\zeta>\sigma$ fixed. Since $\varphi$ is uniformly continuous on $B_\delta \times [\sigma,\zeta]$ for some $\delta>0$ small enough, we observe that there exists a uniform modulus of continuity $\omega \colon [0,+\infty) \to [0,+\infty)$ such that
		$$
		|\varphi(x,t)-\varphi(y,t)| \leq \omega(|x-y|) \ \ \ \mbox{for every $x,y \in B_\delta$ and every $t \in [\sigma,\zeta]$.}
		$$
		Let $\varepsilon_0 \in (0,\delta)$ such that $\omega(2\varepsilon_0) \leq 1$. Thanks to  \hyperlink{A0}{$\textnormal{(A0)}$} and the fact that $\varphi(y,\cdot)$ is increasing,  we estimate
		$$
		\varphi(x,t) \leq \varphi(y,t) +\omega(|x-y|) \leq \varphi(y,t)+1 \leq 2\varphi(y,t), \ \ \ \mbox{for every $x,y \in B_\varepsilon$ and every $t \in [\sigma,\zeta]$}\,.
		$$
		Thus, property \eqref{.} is satisfied for every $x_0 \in \Omega$ with $C=2$.\\ 
As for condition \hyperlink{adA1}{(adA1)}, we discuss it here for example in the double phase case. Namely, we prove that if $\psi(x,t)=|t|^p+a(x)|t|^q$ with $a \in C^{0,\alpha}(\overline{\Omega})$, $a(x) \geq 0$ and $1<p<q$ with $\frac{q}{p} \leq 1+\alpha$ for some $\alpha \in (0,1]$, then $\psi$ satisfies \hyperlink{adA1}{(adA1)}.

Note that $\psi(x,t) \approx \max\{ t^p,a(x)t^q \}$. Let $B \Subset \Omega$ a ball with $\diam(B) \leq 1$, denote $a^-_B:=\min_B a(\cdot)$ and $a^+_B:=\max_B a(\cdot)$. We need to show that $\max\{ t^p,a_B^+ t^q \} \lesssim \max\{ t^p,a^-_B t^q \}$ whenever $\psi^-_B(t) \leq \frac{1}{\diam(B)}$. In view of the fact that $a \in C^{0,\alpha}(\overline{\Omega})$ and $\frac{q}{p} \leq 1+\alpha$, we have 
$$
a_B^+ \lesssim \max\{ \diam(B)^{\frac{q-p}{p}}, a_B^- \}.
$$
Now, when $t^p \leq \frac{1}{\diam(B)}$, one has in particular  $\psi^-_B(t) \leq \frac{1}{\diam(B)}$, so let us assume that $t^p \leq \frac{1}{\diam(B)}$. Hence, $a^+_B \lesssim \max\{t^{p-q},a^-_B\}$, so that  $a^+_B t^q\lesssim \max\{t^{p},a^-_B t^q\}$. From this, we conclude.

	\section{Poincar\'e inequality in $\sbv^\varphi$}
	\label{s:poincare}
	
	In this section we present a fundamental ingredient we need in the proofs of our main results and which is in our opinion of independent interest. Namely, a Poincar\'e inequality for functions belonging to $\sbv^\varphi$ with small jump set a là De Giorgi-Carriero-Leaci. We start with the case of $\Phi$-functions, while the extension to the generalized setting will be given in Subsection \ref{subs: poincaregeneralized}.
	
	\begin{thm}[Poincar\'e inequality in $\sbv^\varphi$]\label{POINCARE}
		Let $\varphi \in \Phi_w$ be a finite valued Orlicz function satisfying \hyperlink{id}{$\textnormal{(Inc)}_1$} and let $B_r(x)$ be a ball in $\R^d$ with radius $r$ and centred in $x \in \R^d$. Set $\phi:=\varphi^{\frac{d}{d-1}}t^{-\frac{1}{d-1}}$ and let $u \in \sbv^\varphi(B_r(x))$  be such that
		\begin{equation}\label{poinc small jump}
			\left( 2 \gammaiso\mathcal{H}^{d-1}(J_u \cap B_r(x)) \right)^{\frac{d}{d-1}} \leq \frac{1}{2} \mathcal{L}^d(B_r).
		\end{equation}
		Then, there exists a dimensional constant $C=C(d)$ such that the following inequality holds
		\begin{equation}\label{poinc ineq}
			\phi^{-1} \left( \fint_{B_r(x)}  \phi\left(\frac{|T_{B_r(x)}u(y)-\med(u,B_r(x))|}{r}\right) \, dy \right) \leq C \varphi^{-1} \left( \fint_{B_r(x)} \varphi(C|\nabla u(y)|) \, dy \right),
		\end{equation}
		where $T_{B_r(x)} u$ is defined in \eqref{truncat operat}.
	\end{thm}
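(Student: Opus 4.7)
The approach combines the $SBV$ truncation and coarea strategy underlying Theorem~\ref{DGCL 2 alas poinc ineq} with the rearrangement technique for Orlicz--Sobolev inequalities developed by Cianchi in~\cite{CianchiNachrichten, CianchiFuscoCrelle}. First, since $\varphi \in \Phi_w$ satisfies \hyperlink{id}{$\textnormal{(Inc)}_1$}, Lemma~\ref{upgrade} produces an equivalent convex $\Phi$-function, so up to inflating $C$ by a universal factor I may assume $\varphi \in \Phi_c$. After a translation I take $x=0$ and set $v := T_{B_r}u - \med(u,B_r)$; by construction $v \in SBV(B_r)\cap L^\infty(B_r)$, the approximate gradient satisfies $\nabla v = \nabla u\,\chi_{\{T_{B_r}u=u\}}$ a.e., and $J_v \subseteq J_u$, while $\med(v,B_r)=0$ forces $v^*(|B_r|/2)=0$, where $v^*$ denotes the decreasing rearrangement of $|v|$ on $(0,|B_r|)$.

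The core of the proof is a pointwise rearrangement estimate. Applying the relative isoperimetric inequality to the superlevel sets $\{|v|>t\}$ and combining it with the $SBV$ coarea formula, one derives a differential inequality of the form $-s^{(d-1)/d}(v^*)'(s) \le C|\nabla v|^*(s) + (\text{jump term})$, in the spirit of~\cite{AlvinoLionTromb,CianchiFuscoBV}. The jump term is supported on a set of measure at most $(2\gammaiso \mathcal{H}^{d-1}(J_u\cap B_r))^{d/(d-1)}$, which by hypothesis~\eqref{poinc small jump} is no larger than $|B_r|/2$; crucially, the quantiles selected in the definition~\eqref{tau} of $\tau',\tau''$ are exactly those for which the jump mass of $v$ in $B_r$ has been already removed by the truncation. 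Integrating the differential inequality from $s$ to $|B_r|/2$ and absorbing the jump contribution thanks to this quantile cutoff, I obtain the Hardy--type bound
$$
v^*(s) \le C\int_s^{|B_r|/2} |\nabla u|^*(\tau)\,\tau^{-(d-1)/d}\,d\tau, \qquad s\in(0,|B_r|/2),
$$
which is the standard starting point of Cianchi's method and contains only the absolutely continuous part of $Du$.

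The final step promotes this pointwise estimate to the Orlicz modular inequality~\eqref{poinc ineq}. Since by definition $\phi(t)=\varphi(t)^{d/(d-1)}t^{-1/(d-1)}$ plays the role of the Sobolev conjugate of $\varphi$ in the sense of Cianchi, I apply $\phi$ to the Hardy integral above, exploit convexity of $\varphi$ via Jensen's inequality to push $\varphi$ inside the integral, and then integrate in $s\in(0,|B_r|/2)$ using the identity $\int_0^{|B_r|}|\nabla u|^*(\tau)\,d\tau=\int_{B_r}|\nabla u|\,dy$ together with the scaling factor $r$ coming from $|B_r|^{1/d}\sim r$. After applying $\phi^{-1}$ this delivers exactly the right--hand side of~\eqref{poinc ineq} with a purely dimensional constant.

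The main obstacle is to correctly implement the rearrangement method in the $SBV$ setting: the coarea decomposition produces a jump contribution to the level--set perimeter that must be cancelled by combining the smallness hypothesis~\eqref{poinc small jump} with the quantile threshold built into $T_{B_r}$; simultaneously, one has to argue that the constant in the Hardy--type Orlicz inequality depends only on $d$ and not on $\varphi$, which forces the whole argument to be carried out at the level of $\varphi^{-1}$ rather than of $\varphi$.
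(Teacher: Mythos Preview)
Your overall architecture matches the paper's: reduce to the unit ball, pass to the truncation $T_Bu-\med(u,B)$, control its decreasing rearrangement via coarea and the relative isoperimetric inequality, and finish with Cianchi's Hardy-type estimate (your final step is exactly Lemma~\ref{hardy}). However, the central claim
\[
-s^{(d-1)/d}\,(v^*)'(s)\;\le\;C\,|\nabla v|^*(s)\;+\;(\text{jump term})
\]
is not what coarea plus isoperimetry delivers, and is in general false: there is no reason the rearranged gradient and the derivative of the rearrangement should be comparable \emph{at the same point} $s$. What actually comes out (this is the content of the paper's Proposition~\ref{p1}) is only the integral comparison
\[
\int_0^\ell\Bigl(-h_B(s)\,\tfrac{d(T_Bu)^0}{ds}(s)\Bigr)^{\!*}\,ds\;\le\;\int_0^\ell |2\nabla u|^*(s)\,ds\qquad\text{for every }\ell\in(0,\omega_d),
\]
with $h_B(s)=\gammaiso^{-1}\min\{s,\omega_d-s\}^{(d-1)/d}$, and one then invokes the Hardy--Littlewood--P\'olya type Lemma~\ref{lem -3} (from \cite{AlvinoLionTromb}) to upgrade this to the modular inequality $\int\psi(-h_B\cdot (T_Bu)^{0\prime})\le\int\psi(2|\nabla u|)$ for convex $\psi$. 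Only at that point does Lemma~\ref{hardy} apply.

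A second gap is that your integration ``from $s$ to $|B_r|/2$'' tacitly uses absolute continuity of $v^*$ (equivalently of the signed rearrangement $(T_Bu)^0$). For $SBV$ functions this is not automatic; it is precisely where the small-jump hypothesis~\eqref{poinc small jump} and the choice of the truncation levels $\tau',\tau''$ intervene. The paper spends Steps~1--2 of Proposition~\ref{p1} proving first continuity and then absolute continuity of $u^0$ on $[\Lambda,\omega_d-\Lambda]$ with $\Lambda=(2\gammaiso\mathcal{H}^{d-1}(J_u))^{d/(d-1)}$, using that on this range the isoperimetric profile $h_B(\nu_u(t))$ dominates $2\mathcal{H}^{d-1}(J_u)$ so that the jump contribution in the coarea identity can be absorbed. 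Your sentence ``the jump mass of $v$ in $B_r$ has been already removed by the truncation'' gestures at this, but $T_Bu$ is still an $SBV$ function with $J_{T_Bu}\subseteq J_u$ possibly nonempty, so the absorption has to happen at the level of the rearrangement estimate, not before.
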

	 Notice that since we have a truncation of a scalar $\sbv$ function, the result above applies also to real valued $\gsbv$ functions. Also observe that if $\varphi$ has linear growth, then the inequality in \eqref{poinc ineq} reduces to the standard homogeneous Poincaré inequality with $p=1$ for $\sbv$ functions. 
	
	\begin{rem}\label{rescaling rem}
		By translation invariance and rescaling, it is enough to verify inequality \eqref{poinc ineq} only on $B_1$. Indeed, let $u \colon B_r(x) \to \R^m$, define $v \colon B_1 \to \R^m$ as $v(y):=u(x+ry)/r$. We have $u_*(s;B_r)=rv_*(s/r^d;B_1)$ for every $s \in [0,\mathcal{L}^d(B_r)]$. Thus,
		$$
		\tau'(v,B_1)=\frac{\tau'(u,B_r)}{r}, \ \ \ \tau''(v,B_1)=\frac{\tau''(u,B_r)}{r}, \ \ \ \med(v,B_1)=\frac{\med(u,B_r)}{r},
		$$ 
		and $T_{B_1}v(y):=T_{B_r(x)}u(x+ry)/r$ for every $y \in B_1$.
		
		Then, if the Poincaré inequality \eqref{poinc ineq} holds on $B_1$ for $v$, we deduce
		\begin{align*}
			\phi^{-1} \left( \fint_{B_r(x)}  \phi\left(\frac{|T_{B_r(x)}u(y)-\med(u,B_r(x))|}{r}\right) \, dy \right) & = \phi^{-1} \left( \fint_{B_1}  \phi\left(|T_{B_1}v(z)-\med(v,B_1)|\right) \, dz \right) \\
			& \leq C \varphi^{-1} \left( \fint_{B_1} \varphi(C|\nabla v(z)|) \, dz \right) \\
			& = C \varphi^{-1} \left( \fint_{B_r} \varphi(C|\nabla u(y)|) \, dx \right).
		\end{align*}
	\end{rem}

The proof of Theorem \ref{POINCARE} will be given in Subsection \ref{subs: proofPoincare}, after recalling and proving some preliminary material.
	
	\subsection{Rearrangements of $BV$ functions} We are going to make use of the theory of rearrangements for $BV$ functions. Since they are used only in this section, we report here their main definitions and properties. For a complete treatment of the topic see \cite{CianchiFuscoBV}. We say that two functions $u$ and $v$ are equi-measurable if $\mathcal{L}^d(\{ u >t \})  =\mathcal{L}^d(\{ v >t \})$ for every $t \in \R$.
	
	For simplicity, let us denote by $B$ the unit ball in $\R^d$ with $\mathcal{L}^d(B)=\omega_d$. Let $u \colon B \to \R$ be a measurable function. We define the distribution function $\nu_u \colon \R \to [0,\omega_d]$ of $u$ as
	\begin{equation}\label{nu}
		\nu_u(t):=\mathcal{L}^d(\{ x \in B \colon u(x)>t \}) \ \ \ t \in \R.
	\end{equation}
	We have that $\nu_u$ is right-continuous and non-increasing. By definition, $\nu_u(\esssup u)=0$. Moreover, $\nu_u(t-)=\mathcal{L}^d(\{u \geq t\})$ and $\nu_u(t-)-\nu_u(t)=\mathcal{L}^d(\{ u=t \})$. Thus, $\nu_u$ is continuous in $t$ if and only if $\mathcal{L}^d(\{  u=t \})=0$. We can now define also the signed decreasing rearrangement of $u$ as a function $u^0 \colon [0,\omega_d] \to \R \cup \{\pm \infty\}$ given by
	\begin{equation}\label{sign rearr}
		u^0(s):=\sup\{ t \in \R \colon \nu_u(t)>s \} \ \ \ s \in [0,\omega_d].
	\end{equation}
	We have that $u^0$ is right-continuous, non-increasing and it is equi-measurable with $u$. By definition, $u^0(0)=\esssup u$. Moreover, notice that
	$$
	\{ s \in [0,\omega_d] \colon u^0(s)>t \}=[0,\nu_u(t)) \ \ \ \mbox{for every $t \in \R$}.
	$$
	Therefore, $u^0$ and $u$ are equi-distributed, that is $\nu_{u}=\nu_{u^0}$. We also have
	\begin{equation}\label{-1}
		u^0(\nu_u(t)) \leq t \ \ \mbox{for every $t \in \R$}  \ \ \ \mbox{and} \ \ \ t \leq u^0(\nu_u(t)-)  \ \ \mbox{for every $t \in (-\infty,\esssup u)$}
	\end{equation}
	and
	\begin{equation}\label{-2}
		\nu_u(u^0(s)) \leq s \ \ \mbox{for every $s \in [0,\omega_d]$}  \ \ \ \mbox{and} \ \ \ s \leq \nu_u(u^0(s)-)  \ \ \mbox{for every $s \in [0,\omega_d)$}.
	\end{equation}
	
	To summarize, $u^0$ is a monotone real valued function on $(0,\omega_d)$ and thus $u^0 \in BV_{\textnormal{loc}}((0,\omega_d))$. Moreover, $u^0$ is constant in the interval $(s_1,s_2)$ if and only if there exists $t \in \R$ where $\nu_u$ jumps and $(s_1,s_2) \subset (\nu_u(t),\nu_u(t-))$. Vice versa, $u^0$ jumps at some point $s_0 \in (0,\omega_d)$ and $(t_1,t_2) \subset (u^0(s_0),u^0(s_0-))$ if and only if $\nu_u$ is constant on $(t_1,t_2)$. Therefore, a countable family of left-open intervals $(\alpha_i,\beta_i]$ exists such that 
	\begin{equation}\label{-3}
		(\essinf u,\esssup u) \setminus \bigcup_{i \in I} (\alpha_i,\beta_i] \subseteq u^0((0,\omega_d)) \subseteq [\essinf u,\esssup u] \setminus \bigcup_{i \in I} (\alpha_i,\beta_i].
	\end{equation}
	
	Finally, if $u$ is a non-negative function on $B$, we indicate the distribution function of $u$ as
	\begin{equation}\label{muu}
		\mu_u(t):=\mathcal{L}^d(\{ x \in B \colon u(x)>t \}) \,, \ \ t \in [0,+\infty),
	\end{equation}
	and the decreasing rearrangement as $u^* \colon [0,\omega_d] \to [0,+\infty]$ is given by
	\begin{equation}\label{dec rearr}
		u^*(s):=\sup\{ t \in [0,+\infty) \colon \mu_u(t)>s \} \,, \ \ s \in [0,\omega_d].
	\end{equation}
	
	We start by proving the following fact.
	
	\begin{prop}\label{p1}
		Let $\psi \in \Phi_s$ be a finite valued Orlicz function satisfying \hyperlink{id}{$\textnormal{(Inc)}_1$}. Then, for every $u \in \sbv(B)$ such that \eqref{poinc small jump} holds, we have that $(T_B u)^0$ is absolutely continuous in $(0,\omega_d)$ and
		\begin{equation}\label{-7}
			\int_0^{\omega_d} \psi \left( c(d)\min\{s,1-s\}^{\frac{d-1}{d}} \left(-\frac{d(T_B u)^0}{ds}(s) \right) \right) \, ds \leq \int_B \psi\left(2|\nabla u(x)|\right) \, dx,
		\end{equation}
		where $c(d)$ is a positive constant depending only on the dimension.
	\end{prop}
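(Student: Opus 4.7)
The plan is to prove Proposition~\ref{p1} via a P\'olya--Szeg\H{o}-type argument adapted to the SBV setting, combining the relative isoperimetric inequality, the SBV coarea formula, and Jensen's inequality for the convex function $\psi$. By Remark~\ref{rescaling rem} I reduce to $B=B_1$ and set $v:=T_B u$, which belongs to $\sbv(B_1)\cap L^\infty(B_1)$ and satisfies $J_v\subseteq J_u$ together with $\nabla v=\chi_{\{\tau'<u<\tau''\}}\nabla u$, so that $|\nabla v|\le|\nabla u|$ pointwise.

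First I obtain an isoperimetric lower bound on level sets. By the SBV coarea formula, for $\mathcal{L}^1$-a.e.\ $\tau\in\mathbb{R}$, $\{v>\tau\}$ has finite perimeter in $B$ and $P(\{v>\tau\},B)=\mathcal{H}^{d-1}(\{v=\tau\}\setminus J_v)+\mathcal{H}^{d-1}(J_v\cap\{v^-<\tau<v^+\})$. The choice of $\tau'$ and $\tau''$ in \eqref{tau} ensures that $\min\{\nu_v(\tau),\omega_d-\nu_v(\tau)\}\ge(2\gammaiso\mathcal{H}^{d-1}(J_v))^{d/(d-1)}$ for a.e.\ $\tau\in(\tau',\tau'')$. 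Combining this with the relative isoperimetric inequality and noting that the jump contribution to $P(\{v>\tau\},B)$ is at most $\mathcal{H}^{d-1}(J_v)$, I deduce
\[
N(\tau):=\mathcal{H}^{d-1}(\{v=\tau\}\setminus J_v)\;\ge\;\frac{1}{2\gammaiso}\min\{\nu_v(\tau),\omega_d-\nu_v(\tau)\}^{(d-1)/d}
\]
for a.e.\ $\tau\in(\tau',\tau'')$. The strict positivity above rules out plateaus of $\nu_v$ on $(\tau',\tau'')$, and applying the coarea formula on strips $\{\tau<v<\tau+h\}$ in the spirit of \cite{CianchiFuscoBV} yields $-\nu_v'(\tau)=G(\tau):=\int_{\{v=\tau\}\setminus J_v}|\nabla v|^{-1}\,d\mathcal{H}^{d-1}$ for a.e.\ $\tau\in(\tau',\tau'')$; inversion then gives $-(v^0)'(s)=1/G(v^0(s))$, and hence $v^0$ is absolutely continuous on $(0,\omega_d)$.

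Changing variables $s=\nu_v(\tau)$ in the LHS of \eqref{-7} (note that $v^0$ is constant outside $(\nu_v(\tau''{-}),\nu_v(\tau'+))$, so its derivative vanishes there), and choosing $c(d):=1/\gammaiso$, the isoperimetric estimate combined with the monotonicity of $\psi$ gives
\[
\int_0^{\omega_d}\psi\!\left(c(d)\min\{s,\omega_d-s\}^{(d-1)/d}(-(v^0)'(s))\right)ds \;\le\; \int_{\tau'}^{\tau''} G(\tau)\,\psi\!\left(\tfrac{2N(\tau)}{G(\tau)}\right)d\tau.
\]
I then apply Jensen's inequality with the probability measure $d\mu_\tau:=(G(\tau)|\nabla v|)^{-1}d\mathcal{H}^{d-1}\llcorner(\{v=\tau\}\setminus J_v)$. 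Since $\int 2|\nabla v|\,d\mu_\tau=2N(\tau)/G(\tau)$ and $\psi$ is convex with $\psi(0)=0$, it follows that $\psi(2N/G)\le G(\tau)^{-1}\int_{\{v=\tau\}\setminus J_v}\psi(2|\nabla v|)|\nabla v|^{-1}d\mathcal{H}^{d-1}$. Multiplying by $G(\tau)$, integrating over $\tau\in(\tau',\tau'')$, and applying the SBV coarea identity $\int_B\psi(2|\nabla v|)\,dx=\int_{\mathbb{R}}\int_{\{v=\tau\}\setminus J_v}\psi(2|\nabla v|)|\nabla v|^{-1}d\mathcal{H}^{d-1}d\tau$, together with $|\nabla v|\le|\nabla u|$ and the monotonicity of $\psi$, concludes the proof.

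The main obstacle is the absolute continuity of $v^0$: general SBV functions can display range gaps producing interior jumps of $v^0$, which would render the LHS of \eqref{-7} infinite, and excluding this possibility inside $(\tau',\tau'')$ is precisely the point where the smallness hypothesis on $J_u$, the truncation device defining $T_Bu$, and the isoperimetric lower bound from the first step interact decisively. A secondary technical nuisance is ensuring that $|\nabla v|>0$ $\mathcal{H}^{d-1}$-a.e. on $\{v=\tau\}\setminus J_v$ for a.e.\ $\tau$, so that $G(\tau)$ is finite and $\mu_\tau$ is well defined; this is standard via the coarea formula.
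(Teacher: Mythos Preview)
Your overall strategy via a P\'olya--Szeg\H{o}/Jensen argument on level sets is a legitimate alternative to the paper's route, but there are two genuine gaps.

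First, your proof of the absolute continuity of $(T_Bu)^0$ is circular. Ruling out plateaus of $\nu_v$ via the isoperimetric lower bound only gives \emph{continuity} of $v^0$; it does not exclude a Cantor-type singular part. You then invoke the identity $-(v^0)'(s)=1/G(v^0(s))$ to conclude absolute continuity, but a pointwise derivative formula never yields the fundamental theorem of calculus by itself, and in any case the change of variables you use to derive that identity already presupposes absolute continuity. The paper handles this by a direct Vitali-type argument (its Step~2): from the coarea/isoperimetric inequality one gets, for any finite family of disjoint intervals $(a_i,b_i)\subset(\Lambda,\omega_d-\Lambda)$, the bound $2\mathcal{H}^{d-1}(J_u)\sum_i\bigl(u^0(a_i)-u^0(b_i)\bigr)\le 2\int_0^{\sum_i(b_i-a_i)}|\nabla u|^*\,dt$, and the equi-integrability of $|\nabla u|^*$ forces absolute continuity.

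Second, your claimed equality $-\nu_v'(\tau)=G(\tau)$ is in general false for $SBV$ (and even for $W^{1,1}$) functions: regions where $|\nabla v|=0$ on a set of positive Lebesgue measure produce atoms in $-D\nu_v$, so only $-\nu_v'(\tau)\ge G(\tau)$ holds. This is not fatal---with the correct inequality one gets $-(v^0)'(s)\le 1/G(v^0(s))$, and then the $\textnormal{(Inc)}_1$ property of $\psi$ (namely $\psi(\lambda t)\le\lambda\psi(t)$ for $\lambda\le1$) still yields $\psi(h_B(s)(-(v^0)'(s)))/(-(v^0)'(s))\le G(\tau)\,\psi(h_B(\nu_v(\tau))/G(\tau))$, after which Jensen and the weighted coarea formula conclude. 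But you do not supply this repair, nor do you justify the weighted coarea identity for $SBV$ functions that your final step relies on.

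The paper avoids both issues by never invoking a Fleming--Rishel identity or a weighted coarea formula. Instead, after proving absolute continuity as above, it establishes the Hardy--Littlewood majorization $\int_0^\ell\bigl(h_B\cdot(-(v^0)')\bigr)^*\,ds\le\int_0^\ell|2\nabla u|^*\,ds$ for all $\ell$, using only the unweighted $BV$ coarea formula and the isoperimetric inequality, and then applies the Alvino--Lions--Trombetti comparison (Lemma~\ref{lem -3}) to pass to $\int\psi(\cdot)$. This route is more robust in the $SBV$ setting precisely because it sidesteps the delicate level-set calculus that your approach requires.
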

	
	In order to prove Proposition \ref{p1}, we need the following two results, the first one is \cite[Equation (3.21)]{CianchiFuscoBV} and the second is contained in \cite[Proposition 2.1]{AlvinoLionTromb}.
	
	\begin{lem}\label{lem -2}
		Let $u \in BV(B)$ and set $C_u \subseteq B$ the set of points in $B$ where $u$ is approximately continuous. We have that
		$$
		\partial^M \{ u>t \} \cap C_u = \emptyset \ \ \mbox{for a.e. $t \in (\essinf u,\esssup u) \setminus u^0((0,\omega_d))$}.
		$$
	\end{lem}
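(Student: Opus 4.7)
The plan is to exploit the fact that at any approximate continuity point $x \in C_u$ with approximate limit $u(x) = t_0$, the superlevel sets $\{u > t\}$ behave trivially for $t \neq t_0$. By the very definition of the approximate limit, for every $\varepsilon > 0$ the set $\{y : |u(y) - t_0| \leq \varepsilon\}$ has density $1$ at $x$; choosing $\varepsilon < |t - t_0|$ forces $\{u > t\}$ to have density $1$ (if $t < t_0$) or $0$ (if $t > t_0$) at $x$, so in either case $x \notin \partial^M \{u > t\}$. Consequently,
$$
\partial^M \{u > t\} \cap C_u \subseteq \{x \in C_u : u(x) = t\},
$$
and it suffices to prove that the right-hand side is empty for $\mathcal{L}^1$-a.e. $t$ in the set $E := (\essinf u, \esssup u) \setminus u^0((0, \omega_d))$.

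To this end I would appeal to the decomposition recorded in \eqref{-3}: there is a countable family of left-open intervals $\{(\alpha_i, \beta_i]\}_{i \in I}$ with $E \subseteq \bigcup_{i \in I}(\alpha_i, \beta_i]$, each interval arising from a jump of $u^0$, equivalently from an interval on which $\nu_u$ is constant. The step I expect to require the most care is bridging this rearrangement-theoretic characterization with a measure-theoretic statement about $u$ itself. Using $\nu_u(s) = \mathcal{L}^d(\{u > s\})$ and the right-continuity of $\nu_u$, constancy on $(\alpha_i, \beta_i]$ immediately yields $\mathcal{L}^d(\{\alpha_i < u \leq \beta_i\}) = 0$ for every $i \in I$; this is the key fact that feeds the final argument.

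To conclude, fix $i \in I$ and $t \in (\alpha_i, \beta_i)$, and suppose for contradiction that some $x \in C_u$ satisfies $u(x) = t$. Choose $\varepsilon > 0$ smaller than $\min\{t - \alpha_i, \beta_i - t\}$, so that $(t - \varepsilon, t + \varepsilon) \subseteq (\alpha_i, \beta_i]$; then $\{y : |u(y) - t| < \varepsilon\}$ is contained in the null set $\{\alpha_i < u \leq \beta_i\}$, while its density at $x$ must equal $1$ by approximate continuity. This contradiction shows that $\{x \in C_u : u(x) = t\} = \emptyset$ whenever $t$ belongs to the open set $\bigcup_{i \in I} (\alpha_i, \beta_i)$. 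Since the remaining values in $E$ lie in the countable, hence $\mathcal{L}^1$-negligible, set $\{\beta_i\}_{i \in I}$, the desired conclusion follows.
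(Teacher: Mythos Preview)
Your argument is correct and self-contained; the paper itself gives no proof, merely referring to \cite[Equation (3.21)]{CianchiFuscoBV}, and your reasoning is the natural one. One small imprecision worth noting: the discussion preceding \eqref{-3} only guarantees that $\nu_u$ is constant on each \emph{open} interval $(\alpha_i,\beta_i)$ (corresponding to a jump of $u^0$), not necessarily on $(\alpha_i,\beta_i]$, so the null set you should invoke is $\{\alpha_i<u<\beta_i\}$ rather than $\{\alpha_i<u\le\beta_i\}$ (indeed, by right-continuity $\nu_u(\alpha_i)=\nu_u(\beta_i^-)$, which gives $\mathcal L^d(\{\alpha_i<u<\beta_i\})=0$). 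Since $\{|u-t|<\varepsilon\}\subseteq\{\alpha_i<u<\beta_i\}$ for $t\in(\alpha_i,\beta_i)$ and small $\varepsilon$, and you already discard the countable set $\{\beta_i\}_{i\in I}$ at the end, this does not affect the validity of your proof.
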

	
	\begin{proof}
		The proof can be carried out exactly as the proof of \cite[Equation (3.21)]{CianchiFuscoBV}.
	\end{proof}

	\begin{lem}\label{lem -3}
		Let $f,g \in L^1(0,\omega_d)$ such that $f(x) \geq 0$ and $g(x) \geq 0$ for $\mathcal{L}^1$-a.e. $x \in (0,\omega_d)$ and let $F \colon [0,+\infty) \to [0,+\infty)$ be a convex function with $F(0)=0$. Assume also that
		$$
		\int_{0}^t f^*(s) \, ds \leq \int_0^t g^*(s) \, ds \ \ \ \mbox{for every $t \in [0,\omega_d]$,} \ \ \ \ \int_0^{\omega_d} F(g^*(s)) \, ds < +\infty.
		$$
		Then, 
		\begin{equation}\label{1000}
		\int_0^{\omega_d} F(f(x)) \, dx \leq \int_0^{\omega_d} F(g(x)) \, dx.
		\end{equation}
	\end{lem}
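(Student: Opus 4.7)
The plan is to reduce the inequality to a comparison between the decreasing rearrangements $f^*$ and $g^*$, and then to decompose the convex function $F$ into elementary pieces for which the inequality is transparent. Since $f$ and $f^*$ are equi-measurable, and similarly $g$ and $g^*$, a direct Cavalieri/layer-cake computation yields $\int_0^{\omega_d} F(f(x))\,dx = \int_0^{\omega_d} F(f^*(s))\,ds$ and the analogous identity for $g$, so \eqref{1000} reduces to $\int_0^{\omega_d} F(f^*)\,ds \leq \int_0^{\omega_d} F(g^*)\,ds$. The assumptions that $F$ is convex on $[0,+\infty)$ with $F(0)=0$ and $F \geq 0$ moreover force $F$ to be non-decreasing: for $0 < t_0 < t_1$, convexity applied to the triple $0$, $t_0$, $t_1$ together with $F(0)=0$ yields $F(t_0) \leq (t_0/t_1)\,F(t_1) \leq F(t_1)$.

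The next step is to write $F$ as a superposition of the affine function $t \mapsto t$ and the translated positive parts $t \mapsto (t-\lambda)_+$ for $\lambda > 0$. Since $F$ is finite on $[0,+\infty)$, the right derivative $a := F'(0+)$ is finite and non-negative, and the Stieltjes measure $d\mu$ associated with $F' - a$ on $(0,+\infty)$ satisfies
\begin{equation*}
F(t) = at + \int_0^{+\infty}(t-\lambda)_+ \, d\mu(\lambda) \qquad \text{for every } t \geq 0,
\end{equation*}
as one verifies by differentiating both sides in $t$. It therefore suffices to prove the claimed inequality separately for each building block $F_\lambda(t) := (t-\lambda)_+$, since the linear contribution reduces immediately to the hypothesis evaluated at $t = \omega_d$.

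For any non-increasing $h \colon [0,\omega_d] \to [0,+\infty)$, the map $t \mapsto \int_0^t h(s)\,ds - \lambda t$ is concave, and its derivative $h(t) - \lambda$ changes sign precisely at $t_\lambda := \sup\{t : h(t) > \lambda\}$, so that
\begin{equation*}
\int_0^{\omega_d}(h(s) - \lambda)_+ \, ds = \int_0^{t_\lambda}(h(s) - \lambda)\, ds = \sup_{t \in [0,\omega_d]}\left(\int_0^t h(s)\, ds - \lambda t\right).
\end{equation*}
Specializing this identity to $h = f^*$ and $h = g^*$, the hypothesis $\int_0^t f^*(s)\,ds \leq \int_0^t g^*(s)\,ds$ for every $t \in [0,\omega_d]$ gives $\int_0^{\omega_d}(f^*-\lambda)_+\,ds \leq \int_0^{\omega_d}(g^*-\lambda)_+\,ds$ for each $\lambda \geq 0$. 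Integrating this family of pointwise-in-$\lambda$ inequalities against $a\,\delta_0 + d\mu(\lambda)$ — Fubini is legitimate because all integrands are non-negative and the hypothesis $\int_0^{\omega_d} F(g^*)\,ds < +\infty$ bounds the dominating quantity — then delivers \eqref{1000}. The main technical point is verifying the integral representation of $F$ cleanly and justifying the Fubini exchange, but these are routine once the convexity and growth hypotheses on $F$ are in place.
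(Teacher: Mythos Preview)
Your proof is correct and more self-contained than the paper's. The paper invokes \cite[Proposition 2.1]{AlvinoLionTromb}, which establishes the result under the extra hypothesis that $F$ is Lipschitz, and then removes this restriction by truncating $f$ at level $f^*(\eta)$ (exploiting that convex functions are locally Lipschitz), applying the cited result to $f \wedge f^*(\eta)$ and $g$, and letting $\eta \to 0^+$ via monotone convergence. Your argument bypasses the external reference entirely: you represent the convex, non-decreasing $F$ as $F(t) = at + \int_0^\infty (t-\lambda)_+\,d\mu(\lambda)$, reduce to the elementary building blocks $(t-\lambda)_+$, and use the concavity of $t \mapsto \int_0^t h - \lambda t$ for non-increasing $h$ to identify $\int_0^{\omega_d}(h-\lambda)_+\,ds$ as a supremum that is immediately controlled by the hypothesis. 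This is the classical Hardy--Littlewood--P\'olya route to majorization inequalities; it has the added benefit of making clear that the finiteness assumption $\int_0^{\omega_d} F(g^*)\,ds < +\infty$ is only needed to keep \eqref{1000} non-vacuous, since Tonelli justifies all integration exchanges regardless.
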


	\begin{proof}
		The result is essentially proved in \cite[Proposition 2.1]{AlvinoLionTromb}, which actually requires $F$ to be also Lipschitz. However, an inspection of the proof reveals that we need $F$ to be Lipschitz only on the set $\{ f^*(s) \colon s \in [0,\omega_d] \}$. Since $F$ is convex and thus locally Lipschitz, fixing $\eta>0$ arbitrarily small, we can apply \cite[Proposition 2.1]{AlvinoLionTromb} to $f \wedge f^*(\eta)$ and $g$ getting
		$$
		\int_\eta^{\omega_d} F(f^*(s)) \, ds \leq \int_0^{\omega_d} F(g^*(s)) \, ds.
		$$
		Letting $\eta \to 0^+$, by monotone convergence theorem and using the fact that $F$ is strictly increasing, we deduce
		$$
		\int_0^{\omega_d} (F(f))^*(s) \, ds \leq \int_0^{\omega_d} (F(g))^*(s) \, ds.
		$$
		Finally, using the equi-measurability of rearrangements we conclude \eqref{1000}.
	\end{proof}

	We are now in a position to prove Proposition \ref{p1}.
	
	\begin{proof}[Proof of Proposition \ref{p1}]
		In order to ease the notation we set $(2 \gammaiso \mathcal{H}^{d-1}(J_u))^{\frac{d}{d-1}}=:\Lambda$. We split the proof into three steps.
		
		\noindent \textbf{Step 1:} \textbf{$u^0$ is continuous in $[\Lambda, \omega_d-\Lambda]$}. We actually prove that $u^0$ is continuous in
		\begin{equation}\label{interval}
			\left(\left(\gammaiso\mathcal{H}^{d-1}(J_u)\right)^{\frac{d}{d-1}},\omega_d- \left(\gammaiso\mathcal{H}^{d-1}(J_u)\right)^{\frac{d}{d-1}}\right).
		\end{equation}
		Assume this is false. Then, we can find $s$ belonging to the interval in \eqref{interval} such that $u^0$ jumps in $s$ and thus $(u^0(s),u^0(s-)) \neq \emptyset$. Hence, owning Lemma \ref{lem -2} and the fact that $u \in \sbv(B)$, we can find $t \in (u^0(s),u^0(s-))$ such that $\partial^M \{u >t\} \cap C_u=\emptyset$ and the set $\{u>t\}$ has finite perimeter. This implies $\partial^* \{u >t\} \subset J_u$. Moreover, recall that $\mathcal{H}^{d-1}(\partial^M \{u>t\} \setminus \partial^* \{u>t\} )=0$. By definition of $u^0$ we have that $\mathcal{L}^d(\{ u>t \})=s$. Thus, both $\{u>t\}$ and $B \setminus \{u >t\}$ have measure strictly greater than $(\gammaiso \mathcal{H}^{d-1}(J_u))^{\frac{d}{d-1}}$. Recalling the isoperimetric inequality this gives 
		$$
		\left(\gammaiso\mathcal{H}^{d-1} (\partial^* \{u>t\})\right)^{\frac{d}{d-1}} = \left(\gammaiso P(\{ u>t \},B)\right)^{\frac{d}{d-1}} >\left(\gammaiso\mathcal{H}^{d-1}(J_u)\right)^{\frac{d}{d-1}}.
		$$
		This means that $\mathcal{H}^{d-1}(J_u) < \mathcal{H}^{d-1} (\partial^* \{u>t\})$, a contradiction. Hence, $u^0$ is continuous in the interval in \eqref{interval} and thus in $[\Lambda,\omega_d -\Lambda]$. Since $u^0$ admits a continuous representative in $[\Lambda,\omega_d-\Lambda]$, let us define 
		$$
		\tau':=u^0\left( \omega_d - \Lambda \right) \ \ \ \mbox{and} \ \ \ \tau'':=u^0 \left( \Lambda \right).
		$$
		Notice that $\tau'$ and $\tau''$ by definition of $u^0$ and $\Lambda$ coincide with $\tau'(u,B)$ and $\tau''(u,B)$, respectively, defined in \eqref{tau}.
		
		\noindent \textbf{Step 2:} \textbf{$u^0$ is absolutely continuous in $(\Lambda,\omega_d-\Lambda)$}. We shall use some arguments from \cite[Theorem 6.5 and Lemma 6.6]{CianchiNachrichten}. Assume $\mathcal{H}^{d-1}(J_u) >0$, otherwise the absolute continuity of $u^0$ follows by \cite[Lemma 6.6]{CianchiNachrichten}. Let 
		$$
		h_B(s):=\frac{1}{\gammaiso} \min\{ s,\omega_d-s \}^{\frac{d-1}{d}}.
		$$
		Fix $\varepsilon>0$. Since $|\nabla u| \in L^1(B)$, by the standard theory of rearrangements we have $|\nabla u|^* \in L^1((0,\omega_d))$. Thus, we can find $\delta>0$ such that
		\begin{equation}\label{-5}
			\int_0^\delta |\nabla u|^*(s) \, ds \leq \varepsilon.
		\end{equation}
		Let $\alpha, \beta \in u^0((\Lambda,\omega_d-\Lambda))$ with $\alpha \leq \beta$. We have that $\alpha,\beta \in (\tau',\tau'')$ by Step 1. 
		Set $u^\beta_\alpha \in \sbv(B)$ as $u^\beta_\alpha:=\alpha \vee u \wedge \beta$. Using the coarea formula we deduce
		\begin{align}\label{-6}
			\int_{\alpha}^{\beta} \! P(\{ u>t\},B) \, dt = \int_{-\infty}^{+\infty} \! P(\{ u_\alpha^\beta>t\},B) \, dt = |Du^\beta_\alpha| (B) \leq  \int_B |\nabla u^\alpha_\beta(x)| \,dx + (\beta-\alpha)\mathcal{H}^{d-1}(J_u).
		\end{align}
		On the other hand, by definition of $h_B$ and the isoperimetric inequality in $B$ we infer
		\begin{align}\label{-7bis}
			\int_{\alpha}^{\beta} P(\{ u>t\},B)  dt \geq \int_{\alpha}^\beta h_B(\nu_u(t)) \, dt.
		\end{align}
		Since $t \in (\alpha,\beta) \subset (\tau',\tau'')$, using \eqref{-2} we have that
		$$
		\lim_{t \to u^0(\Lambda)^-} \nu_u(t) = \nu_u(u^0(\Lambda)-) \geq \Lambda \ \ \ \mbox{and} \ \ \ \lim_{t \to u^0(\omega_d-\Lambda)^+} \nu_u(t)=\nu_u(u^0(\omega_d-\Lambda)) \leq \omega_d-\Lambda.
		$$
		This gives that $\nu_u(t) \in (\Lambda,\omega_d-\Lambda)$ for every $t \in (\alpha,\beta)$. Hence, for every $t \in (\alpha,\beta)$,
		\begin{equation}\label{-8}
			h_B(\nu_u(t)) \geq h_B(\Lambda) =  h_B\left( \left(2 \gammaiso \mathcal{H}^{d-1}(J_u) \right)^{\frac{d}{d-1}} \right) = 2\mathcal{H}^{d-1}(J_u).
		\end{equation}
		Combining \eqref{-6}--\eqref{-8} gives 
		$$
		\int_\alpha^\beta h_B(\nu_u(t)) \, dt \leq \int_B |\nabla u^\beta_\alpha(x)|\, dx+\frac{1}{2} \int_\alpha^\beta h_B(\nu_u(t)) \, dt,
		$$
		and finally,
		\begin{equation}\label{-9}
			\int_\alpha^\beta h_B(\nu_u(t)) \, dt \leq 2\int_{\{ \alpha \leq u \leq \beta \}} |\nabla u(x)| \, dx.
		\end{equation}
	
		Consider now a finite family of disjoint open intervals contained in $(\Lambda,\omega_d-\Lambda)$, namely $(a_i,b_i)$ for $i=1, \dots,N$, such that $\sum_{i=1}^N (b_i -a_i) \leq \delta$ where $\delta>0$ is as in \eqref{-5}.
		Now consider inequality \eqref{-9} with $\alpha:=u^0(b_i) \leq u^0(a_i)=:\beta$ for every $i=1,\dots,N$. If we add all these inequalities we get
		\begin{equation}\label{-11}
			\int_{\cup_{i=1}^N (u^0(b_i),u^0(a_i))} h_B(\nu_u(t)) \, dt \leq 2\int_{\cup_{i=1}^N \{ u^0(b_i) < u < u^0(a_i) \}} |\nabla u(x)|\, dx,
		\end{equation}
		where we have also used the fact that $\int_{\{ u=t \}} |\nabla u| \, dx=0$ for every $t \in \R$ (see \cite[Proposition 3.73 (c)]{Ambrosio2000FunctionsOB}). 
		Recalling \eqref{-2}, for every $i=1,\dots,N$ it holds
		\begin{equation}\label{-10}
			\lim_{t \to u^0(a_i)^-} \nu_u(t) = \nu_u(u^0(a_i)-) \geq a_i \ \ \ \mbox{and} \ \ \ \lim_{t \to u^0(b_i)^+} \nu_u(t)=\nu_u(u^0(b_i)) \leq b_i.
		\end{equation}
		Hence, by definition of $\nu_u$ we have that $\mathcal{L}^d(\{ u^0(b_i) < u < u^0(a_i) \})=\nu_u(u^0(b_i))-\nu_u(u^0(a_i)-) \leq b_i-a_i$ for every $i=1,\dots,N$.
		For every non-negative measurable function $f$ and every Borel set $E$, by the Hardy-Littlewood inequality, it holds
		$$
		\int_E f(x) \, dx \leq \int_0^{\mathcal{L}^d(E)} f^*(t) \, dt.
		$$
		Combining this result with inequality \eqref{-11} gives
		\begin{align}\label{-12}
			\begin{split}
				\int_{\cup_{i=1}^N (u^0(b_i),u^0(a_i))} h_B(\nu_u(t)) \, dt & \leq 2\int_0^{\sum_{i=1}^N [\nu_u(u^0(b_i))-\nu_u(u^0(a_i))]} |\nabla u|^* \, dt \leq 2\int_0^{\sum_{i=1}^N (b_i-a_i)} |\nabla u|^* \, dt  \\ & \leq 2\int_0^\delta |\nabla u|^* \, dt \leq 2\varepsilon.
			\end{split}
		\end{align}
		Again by \eqref{-10} we get that $\nu_u(t) \in (\Lambda, \omega_d-\Lambda)$ for every $t \in \cup_{i=1}^N (a_i,b_i)$. Therefore, arguing as before, 
		$$
		h_B(\nu_u(t)) \geq 2\mathcal{H}^{d-1}(J_u)
		$$ 
		for every $t \in (a_i,b_i)$, $i=1,\dots,N$. Thus,
		$$
		2\mathcal{H}^{d-1}(J_u) \sum_{i=1}^N (u^0(a_i)-u^0(b_i)) \leq 2\varepsilon,
		$$
		and this gives the absolute continuity of $u^0$ on $(\Lambda,\omega_d-\Lambda)$ since the family of sub-intervals and $\varepsilon>0$ where arbitrary.
		
		\noindent \textbf{Step 3: Conclusion.} By the previous steps we have that $u^0$ is uniformly continuous and absolutely continuous in $[\Lambda,\omega_d-\Lambda]$ and $(\Lambda,\omega_d-\Lambda)$, respectively. Hence, by definition of $\tau',\tau''$, the function $\tau' \vee u^0 \wedge \tau''$ is absolutely continuous in $(0,\omega_d)$. Observe that by definition of $u^0$ and $T_B u$ (see \eqref{truncat operat}), we have that 
		$$
		\tau' \vee u^0 \wedge \tau'' = (\tau' \vee u \wedge \tau'')^0=(T_B u)^0.
		$$
		Let $h_B$ be as in Step 2, and let us set $v:=T_B u$. Recalling inequalities \eqref{-11} and \eqref{-12}, since $v^0$ is constant in $(0,\omega_d) \setminus (\Lambda,\omega_d -\Lambda)$, without any additional effort, we can deduce that for every family of disjoint intervals $\{(a_i,b_i)\}_{i=1}^N$ in $(0,\omega_d)$, it holds
		\begin{equation*}
			\int_{\cup_{i=1}^N (v^0(b_i),v^0(a_i))} h_B(\nu_v(t)) \, dt \leq 2\int_0^{\sum_{i=1}^N (b_i-a_i)} |\nabla u|^* \, dt.
		\end{equation*}
		Moreover, since $v^0$ is absolutely continuous on $(0,\omega_d)$, by a change of variables we get
		$$
		\int_{\cup_{i=1}^N (a_i,b_i)} -\frac{dv^0}{dr}(r) h_B(r) \, dr \leq 2\int_0^{\sum_{i=1}^N (b_i-a_i)} |\nabla u|^* \, dt,
		$$
		Observe that by approximation the inequality holds even if the union is countable.
		Since every open subset of $(0,\omega_d)$ is the union of disjoint open intervals contained in $(0,\omega_d)$, given $\ell \in (0,\omega_d)$, it holds
		$$
		\sup_{A \in \mathcal{A}((0,\omega_d)), \ \mathcal{L}^1(A)=\ell \ } \int_A -\frac{dv^0}{dr}(r) h_B(r) \, dr \leq 2\int_0^\ell |\nabla u|^* \, dt.
		$$
		In particular, taking $A=(0,\omega_d)$ we deduce that $-\frac{dv^0}{dr}(r) h_B(r) \in L^1((0,\omega_d))$. Thus, since every Borel set $E$ can be approximated arbitrarily well in the sense of the Lebesgue measure by open sets, by the absolute continuity of the integral we conclude that for every $\ell \in (0,\omega_d)$
		$$
		\sup_{E \in \mathcal{B}((0,\omega_d)), \ \mathcal{L}^1(E)=\ell \ } \int_E -\frac{dv^0}{dr}(r) h_B(r) \, dr \leq 2\int_0^\ell |\nabla u|^* \, dt.
		$$
		In particular, this implies that for every $\ell \in (0,\omega_d)$
		\begin{equation}\label{-13}
			\int_0^\ell \left(-\frac{dv^0}{dr}(r) h_B(r)\right)^* \, dr \leq \int_0^\ell |2\nabla u|^* \, dt.
		\end{equation}
		Finally, since $\psi \in \Phi_s$ is convex and \eqref{-13} holds for every $\ell \in (0,\omega_d)$, we can apply Lemma \ref{lem -3} to the functions $-\frac{dv^0}{dr} h_B$ and $|2\nabla u|^*$ to infer
		$$
		\int_0^{\omega_d} \psi \left(-\frac{dv^0}{dr}(r) h_B(r)\right) \, dr \leq \int_0^{\omega_d} \psi(|2\nabla u|^*(t)) \, dt=\int_0^{\omega_d} (\psi(|2\nabla u|))^*(t) \, dt,
		$$
		where we have also used that $\psi$ is strictly increasing. Using the equi-measurability of rearrangements and recalling the definition of $h_B$, we get \eqref{-7} with $c(d):=1/\gammaiso$. This concludes the proof.
	\end{proof}
	
	\subsection{Proof of Theorem \ref{POINCARE}.}\label{subs: proofPoincare} To prove Theorem \ref{POINCARE} we  need the following Hardy type inequality proven in \cite{CianchiFuscoCrelle}.
	
	\begin{lem}[{\cite[Lemma 2.2]{CianchiFuscoCrelle}}]\label{hardy}
		Let $\ell \in (0,+\infty)$. Let $\varphi \in \Phi_w$ be a finite valued Orlicz function satisfying \hyperlink{id}{$\textnormal{(Inc)}_1$}, set $\phi:=\varphi^{\frac{d}{d-1}}t^{-\frac{1}{d-1}}$. Then, there exists a constant $c_1:=c_1(d,\ell)>0$ such that for every measurable functions $h \colon (0,\ell) \to [0,+\infty)$ it holds
		$$
		\phi^{-1} \left( \int_0^\ell \phi \left( \int_s^\ell h(t) \, dt \right) \, ds \right) \leq \varphi^{-1} \left( \int_0^\ell \varphi(c_1 s^{\frac{d-1}{d}}h(s)) \, ds \right).
		$$
	\end{lem}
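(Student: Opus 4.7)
The plan is to prove this weighted Hardy-type inequality in the Orlicz setting by combining a Minkowski-type inequality in $L^\phi$ with the structural algebraic identity
\begin{equation*}
\phi(t)/t=(\varphi(t)/t)^{d/(d-1)},
\end{equation*}
which follows immediately from the definition $\phi:=\varphi^{d/(d-1)}t^{-1/(d-1)}$ and encodes the fact that $\phi$ is the ``Sobolev conjugate'' of $\varphi$ paired with the weight $s^{(d-1)/d}$. Using Lemma \ref{upgrade}, I would first replace $\varphi$ with an equivalent convex, strong $\Phi$-function, absorbing the equivalence constants into $c_1$; this ensures in particular that $\phi$ inherits convexity (as a monotone increasing power of the convex $\varphi$ divided by a concave function of $t$) and that $\varphi^{-1}$ is concave and subadditive up to a constant.

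Setting $H(s):=\int_s^\ell h(t)\,dt=\int_0^\ell h(t)\chi_{(0,t)}(s)\,dt$, an application of Minkowski's integral inequality in the Luxemburg norm $\|\cdot\|_{L^\phi(0,\ell)}$ produces
\begin{equation*}
\|H\|_{L^\phi(0,\ell)}\le \int_0^\ell h(t)\,\|\chi_{(0,t)}\|_{L^\phi(0,\ell)}\,dt=\int_0^\ell \frac{h(t)}{\phi^{-1}(1/t)}\,dt,
\end{equation*}
where the value of the norm of an indicator follows from Lemma \ref{unit ball prop}. The next step is to use the defining identity above to deduce the pointwise comparison $1/\phi^{-1}(1/t)\le C(d,\ell)\,t^{(d-1)/d}/\varphi^{-1}(1/t)$, after which the Orlicz H\"older inequality (applied with the conjugate pair $\varphi,\varphi^{*}$) estimates the right-hand side by $C(d,\ell)\|c_1 s^{(d-1)/d}h(s)\|_{L^\varphi(0,\ell)}$, provided one verifies that $\|1/\varphi^{-1}(1/s)\|_{L^{\varphi^{*}}(0,\ell)}$ is finite and bounded only in terms of $d$ and $\ell$. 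Returning from Luxemburg norms to the modular form of the statement via the unit ball property concludes the argument.

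The main obstacle is the pointwise comparison between $\phi^{-1}(1/t)$ and $\varphi^{-1}(1/t)^{(d-1)/d}$: this is not explicit, since $\phi^{-1}$ is only implicitly determined by the identity above, and requires careful exploitation of $(\text{Inc})_1$ together with the fact that $t$ ranges in the bounded interval $(0,\ell)$. A cleaner, more conceptual alternative would be to invoke a general weighted Orlicz-Hardy inequality (for instance in the form of Opic-Kufner) and verify that the triple $(\phi,\varphi,\,v(t)=t^{(d-1)/d})$ meets its balance condition; the defining formula for $\phi$ is designed precisely so that this balance holds, which explains the dimensional naturality of the estimate and why the constant $c_1$ depends only on $d$ and $\ell$, not on $\varphi$ itself.
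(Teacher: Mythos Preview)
The paper does not prove this lemma; it is quoted verbatim from \cite[Lemma~2.2]{CianchiFuscoCrelle} and used as a black box in the proof of Theorem~\ref{POINCARE}. So there is no in-paper argument to compare against, and your sketch must be judged on its own.

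Your outline has a genuine gap at the step you yourself flag as ``the main obstacle'': the claimed pointwise inequality
\[
\frac{1}{\phi^{-1}(1/t)}\le C(d,\ell)\,\frac{t^{(d-1)/d}}{\varphi^{-1}(1/t)},\qquad t\in(0,\ell),
\]
is simply false in general. Take $\varphi(t)=t$; then $\phi(t)=t$ as well, and the inequality reads $t\le C\,t^{(2d-1)/d}$, i.e.\ $t^{(1-d)/d}\le C$, which blows up as $t\to 0^+$. More generally, for $\varphi(t)=t^{p}$ one computes $\varphi^{-1}(1/t)/\phi^{-1}(1/t)=t^{-(p-1)/(p(pd-1))}$, while the right-hand side carries the factor $t^{(d-1)/d}$; the exponents never match, and the comparison fails near $t=0$ for every $p\ge 1$. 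Since this step is the bridge between the Minkowski bound and the H\"older estimate, the whole chain collapses. (By contrast, your claim that $\|1/\varphi^{-1}(1/s)\|_{L^{\varphi^*}(0,\ell)}\le \ell$ uniformly in $\varphi$ is actually correct, via $\varphi^*(\varphi(t)/t)\le\varphi(t)$, so that part is not the issue.)

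The fallback you mention---invoking a general weighted Hardy inequality of Opic--Kufner type and checking a balance condition---is not a proof either, just a different citation, and you would still need to verify that the resulting constant is independent of $\varphi$, which is precisely the delicate point. The actual argument in \cite{CianchiFuscoCrelle} does not pass through a pointwise comparison of the inverses; it works directly with the modular and exploits the specific algebraic structure $\phi(t)/t=(\varphi(t)/t)^{d/(d-1)}$ inside the integral, rather than after inversion.
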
 
	
	Using the definitions of $u^0$, $T_B u$ and $\med(u,B)$ we have:
	\begin{align}\label{-16}
		\begin{split}
			& u^0\left( \frac{\omega_d}{2} \right)=\med(u,B)=:m(u), \\
			&((T_B u-m(u))_+)^*(s)=((T_B u)^0(s)-m(u))_+ \ \ \ \mbox{for a.e. $s \in (0,\omega_d)$}, \\
			& ((T_B u - m(u))_-)^*(s)=((T_B u)^0(\omega_d-s)-m(u))_-=(m(u)-(T_B u)^0(\omega_d-s))_+ \ \ \ \mbox{for a.e. $s \in (0,\omega_d)$}.
		\end{split}
	\end{align}
	
	Now we are in a position to prove Theorem \ref{POINCARE}. In this proof we proceed similarly to the proof of \cite[Theorem 1.2]{CianchiFuscoCrelle}.
	
	\begin{proof}[Proof of Theorem \ref{POINCARE}] We begin by recalling that since $\varphi$ satisfies \hyperlink{id}{$\textnormal{(Inc)}_1$}, 
		\begin{equation}\label{-14}
			\lambda \varphi(t) \leq \varphi(\lambda t) \ \ \mbox{and} \ \ \varphi^{-1}(s+t) \leq 2(\varphi^{-1}(s)+\varphi^{-1}(t)) \ \ \ \mbox{for every $\lambda \geq 1$, $s,t \geq 0$}.
		\end{equation}
		The same properties hold for $\phi$ by the very same definition of $\phi$. Moreover, in virtue of Theorems \ref{weak and strong equiv} and \ref{inverse equiv}, we have that there exists $\psi \in \Phi_s$ such that
		\begin{equation}\label{-15}
			\psi(t) \leq \varphi(t) \leq \psi(2t) \ \ \mbox{and} \ \ \psi^{-1}(t) \leq 2\varphi^{-1}(t) \leq 2\psi^{-1}(t) \ \ \ \mbox{for every $t \geq 0$}.
		\end{equation}
		We have the following chain of inequalities for the left hand side term in \eqref{poinc ineq}
		\begin{align}
			\phi^{-1}  \left( \int_B \right. & \phi(|T_B u(x)  -m(u)|) \left. \, \vphantom{\int} dx \right) = \phi^{-1} \left( \int_B \phi((T_B u(x)-m(u))_+) \, dx + \int_B \phi((T_B u(x)-m(u))_-) \, dx  \right) \nonumber \\
			& \leq 2 \phi^{-1} \left( \int_0^{\omega_d/2} \phi((T_B u-m(u))^*_+(s)) \, ds \right) + 2 \phi^{-1} \left( \int_0^{\omega_d/2} \phi((T_B u-m(u))^*_-(s)) \, ds  \right) \nonumber  \\
			& = 2 \phi^{-1} \left( \int_0^{\omega_d/2} \phi\left( \int_s^{\omega_d/2} -\frac{d(T_B u)^0}{dr}(r) \, dr \right) \, ds \right)\label{-17} \\
			&  \ \ \ + 2 \phi^{-1} \left( \int_0^{\omega_d/2} \phi\left(\int_s^{\omega_d/2} -\frac{d(T_B u)^0}{dr}(\omega_d-r) \, dr \right) \, ds  \right)\nonumber ,
		\end{align}
		where in the first inequality we have used the equi-measurability of the rearrangement, the fact that $\phi$ is strictly increasing and \eqref{-14} for $\phi$, in the last equality we have used \eqref{-16}.
		
		Now we estimate the right hand side term. Let $c=c(d)$ be the constant from Proposition \ref{p1} and $c_1=c_1(d,\omega_d/2)=c_1(d)$ be the constant from Lemma \ref{hardy}. We have 
		\begin{align}\label{-18}
			\begin{split}
			\varphi^{-1} & \left( \int_B \varphi\left(4\frac{c_1}{c}|\nabla u(x)|\right) \, dx \right) \geq \frac{1}{2} \psi^{-1} \left( \int_B \psi\left(4\frac{c_1}{c}|\nabla u(x)|\right) \, dx \right) \\ 
			&\geq \frac{1}{2} \psi^{-1} \left( \int_0^{\omega_d} \psi \left(2 c_1 \min\{s,1-s\}^{\frac{d-1}{d}} \left(-\frac{d(T_B u)^0}{ds}(s) \right) \right) \, ds \right) \\
			& \geq \frac{1}{4} \psi^{-1} \left( \int_0^{\omega_d/2} \psi \left( 2c_1 s^{\frac{d-1}{d}} \left(-\frac{d(T_B u)^0}{ds}(s) \right) \right) \, ds \right) \\
			& \ \ \ +\frac{1}{4} \psi^{-1} \left( \int_0^{\omega_d/2} \psi \left(2 c_1 s^{\frac{d-1}{d}} \left(-\frac{d(T_B u)^0}{ds}(\omega_d-s) \right) \right) \, ds \right) \\
			& \geq \frac{1}{4} \varphi^{-1} \left( \int_0^{\omega_d/2} \varphi \left( c_1  s^{\frac{d-1}{d}} \left(-\frac{d(T_B u)^0}{ds}(s) \right) \right) \, ds \right) \\
			& \ \ \ +\frac{1}{4} \varphi^{-1} \left( \int_0^{\omega_d/2} \varphi \left( c_1  s^{\frac{d-1}{d}} \left(-\frac{d(T_B u)^0}{ds}(\omega_d-s) \right) \right) \, ds \right),
			\end{split}
		\end{align}
		where we have used in this order: \eqref{-15}, Proposition \ref{p1}, the fact that $\psi^{-1}$ is increasing and, for the last inequality, we used \eqref{-15} together with \eqref{-14}. 
		
		Finally, combining \eqref{-17} and \eqref{-18} and using Lemma \ref{hardy}, we infer that
		$$
		8 \varphi^{-1} \left( \int_B \varphi\left(\frac{4c_1}{c}|\nabla u(x)|\right) \, dx \right) \geq 	\phi^{-1}  \left( \int_B \phi(|T_B u(x)  -m(u)|) \,  dx \right),
		$$
		which is \eqref{poinc ineq} up to taking $C=C(d)$ larger in order to obtain an averaged integral in the expression above. In view of Remark \ref{rescaling rem} we thus conclude.
	\end{proof}
	
	
	We will also make use of  the following weaker version of the Poincaré inequality in \eqref{poinc ineq} for functions which are also doubling.
	
	\begin{thm}\label{Poinc homogeneous form}
		Let $\varphi \in \Phi_w$ be an Orlicz function satisfying \hyperlink{id}{$\textnormal{(Inc)}_1$} and \hyperlink{id}{\textnormal{(aDec)}}. Then, there exists $C=C(d,K)>0$, where $K$ is the doubling constant of $\varphi$, such that for every ball $B_r(x) \subset \R^d$ and every $u \in \sbv(B_r(x))$ satisfying \eqref{poinc small jump}, the following inequality holds
		\begin{equation}\label{poinc ineq homog}
			\int_{B_r(x)} \varphi\left(\frac{|T_{B_r(x)}u(y)-\med(u,B_r(x))|}{r}\right) \, dy  \leq C \int_{B_r(x)} \varphi(|\nabla u(y)|) \, dy.
		\end{equation}
	\end{thm}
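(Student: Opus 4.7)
The plan is to derive the homogeneous estimate from the Orlicz--Sobolev Poincar\'e inequality of Theorem \ref{POINCARE}, by trading the improved integrability carried by $\phi$ for the cruder one carried by $\varphi$ through a level-set decomposition at the natural energy scale. The rescaling argument of Remark \ref{rescaling rem} transports verbatim to the homogeneous statement, so it suffices to argue on the unit ball $B_1$.

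Set $I := \fint_{B_1} \varphi(|\nabla u|)\,dy$. Applying Theorem \ref{POINCARE}, absorbing the inner constant via \hyperlink{id}{$(\textnormal{aDec})$} and the consequence of \hyperlink{id}{$(\textnormal{Inc})_1$} that $\varphi^{-1}(\lambda s)\leq \lambda\varphi^{-1}(s)$ for $\lambda\geq 1$, and using that $\phi$ is itself doubling with constant controlled by $K$ (which is immediate from $\phi(2t)=\varphi(2t)^{d/(d-1)}(2t)^{-1/(d-1)}$), I would arrive at
\begin{equation*}
	\fint_{B_1} \phi\bigl(|T_{B_1}u - \med(u,B_1)|\bigr)\,dy \,\leq\, C_1\,\phi\bigl(\varphi^{-1}(I)\bigr)
\end{equation*}
for a constant $C_1 = C_1(d,K)$.

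Next I would split $\int_{B_1}\varphi(|T_{B_1}u - \med(u,B_1)|)\,dy$ at the threshold $t_0:=\varphi^{-1}(I)$. On $\{|T_{B_1}u-\med(u,B_1)|<t_0\}$ the integrand is bounded by $\varphi(t_0)=I$, contributing at most $I|B_1|$. On the complementary set the key observation is that, by virtue of \hyperlink{id}{$(\textnormal{Inc})_1$}, the ratio
\begin{equation*}
	\frac{\phi(t)}{\varphi(t)} \,=\, \bigl(\varphi(t)/t\bigr)^{1/(d-1)}
\end{equation*}
is non-decreasing in $t$, so $\varphi(t)\leq \bigl(\varphi(t_0)/\phi(t_0)\bigr)\phi(t)$ whenever $t\geq t_0$. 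Using the explicit formula $\phi(t) = \varphi(t)^{d/(d-1)}t^{-1/(d-1)}$ one computes $\varphi(t_0)/\phi(t_0) = (\varphi^{-1}(I)/I)^{1/(d-1)}$, which multiplied against $C_1\phi(\varphi^{-1}(I)) = C_1\,I^{d/(d-1)}\varphi^{-1}(I)^{-1/(d-1)}$ produces a clean cancellation and contributes again an amount of order $I|B_1|$. Summing the two portions yields $\int_{B_1}\varphi(|T_{B_1}u-\med(u,B_1)|)\,dy\leq C(d,K)\int_{B_1}\varphi(|\nabla u|)\,dy$; the statement on $B_r(x)$ then follows by rescaling.

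The only genuine subtlety is the identification of the right threshold: $t_0=\varphi^{-1}(I)$ is precisely the scale at which both parts of the level-set decomposition contribute on the same order, and the cancellation in the high-energy part depends crucially on the factor $t^{-1/(d-1)}$ in the definition of $\phi$. Beyond this balance, the argument uses only \hyperlink{id}{$(\textnormal{Inc})_1$} and the doubling of $\varphi$ (hence of $\phi$), and requires no further analytical input.
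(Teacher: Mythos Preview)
Your proof is correct and rests on the same two pillars as the paper's: Theorem~\ref{POINCARE} and the monotonicity of $\phi/\varphi=(\varphi(t)/t)^{1/(d-1)}$ afforded by \hyperlink{id}{$(\textnormal{Inc})_1$}. The packaging, however, is different. The paper isolates an abstract Jensen-type comparison (Lemma~\ref{orlicz hierarchy}): since $\phi/\varphi$ is increasing, the composition $\psi:=\phi\circ\varphi^{-1}$ satisfies \hyperlink{id}{$(\textnormal{Inc})_1$}, is therefore equivalent to a convex function by Theorem~\ref{weak and strong equiv}, and Jensen applied to that convex surrogate yields
\[
\varphi^{-1}\Bigl(\fint \varphi(|w|)\Bigr)\le \phi^{-1}\Bigl(\fint \phi(2|w|)\Bigr),
\]
which, chained with Theorem~\ref{POINCARE} and doubling, gives \eqref{poinc ineq homog}. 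Your level-set split at the energy scale $t_0=\varphi^{-1}(I)$ is effectively the same inequality unpacked by hand: the monotonicity of $\phi/\varphi$ that you exploit on the high set is exactly what makes $\psi(s)/s$ increasing. The paper's route yields a reusable lemma; yours is more self-contained and makes the balancing role of $t_0$ explicit.

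One small point to clean up: you write $\varphi(t_0)=I$, but for a general $\varphi\in\Phi_w$ one does not have $\varphi\circ\varphi^{-1}=\mathrm{id}$. What you actually need is $\varphi(t_0)\le KI$, which follows from doubling via $\varphi(t_0)=\varphi(2\cdot t_0/2)\le K\varphi(t_0/2)<KI$ (the last inequality because $t_0/2<t_0=\varphi^{-1}(I)$). With this in place your ``clean cancellation'' produces $C_1K\,|B_1|\,I$ rather than $C_1|B_1|\,I$, and the argument goes through with a constant depending only on $d$ and $K$.
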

	
	In order to prove Theorem \ref{Poinc homogeneous form} we need the following elementary Lemma.
	
	\begin{lem}\label{orlicz hierarchy}
		Let $\varphi,\phi \in \Phi_s$ be two finite valued Orlicz functions such that
		\begin{equation}\label{-1000}
			\lim_{t \to 0^+} \phi(\varphi^{-1}(t))=0, \ \ \ t \mapsto \frac{\phi(t)}{\varphi(t)} \ \ \mbox{is increasing}.
		\end{equation}
		Then, for every ball $B_r \subset \R^d$ and every measurable function $u \in L^1(B_r)$ we have that
		\begin{equation}\label{-2000}
			\varphi^{-1} \left( \fint_{B_r} \varphi(|u(x)|)\,dx \right) \leq \phi^{-1} \left( \fint_{B_r} \phi(2|u(x)|)\,dx \right) 
		\end{equation}
	\end{lem}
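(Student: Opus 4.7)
The key idea is that the two assumptions~\eqref{-1000} together force the composition $F := \phi \circ \varphi^{-1}$ to be a convex function on $[0,+\infty)$ vanishing at $0$, so that the inequality reduces to Jensen's inequality for $F$.

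First I would establish convexity of $F$. Setting $s = \varphi(t)$ so that $\varphi^{-1}(s) = t$ (at points where $\varphi$ is strictly increasing, and using the left-inverse convention otherwise), one has
\begin{equation*}
\frac{F(s)}{s} \;=\; \frac{\phi(\varphi^{-1}(s))}{\varphi(\varphi^{-1}(s))} \cdot \frac{\varphi(\varphi^{-1}(s))}{s}.
\end{equation*}
Since $\phi/\varphi$ is increasing by hypothesis and $\varphi^{-1}$ is non-decreasing, the composition $s\mapsto \phi(\varphi^{-1}(s))/\varphi(\varphi^{-1}(s))$ is non-decreasing in $s$; the correction factor $\varphi(\varphi^{-1}(s))/s$ equals $1$ wherever $\varphi$ is continuous and strictly increasing, and the remaining pathological points can be handled by working on the closed range of $\varphi$ and extending by density. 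Combined with $F(0) = \lim_{s\to 0^+}\phi(\varphi^{-1}(s)) = 0$ from the first condition in~\eqref{-1000} and the continuity of $F$ on $[0,+\infty)$ inherited from $\phi,\varphi \in \Phi_s$, the fact that $F(s)/s$ is non-decreasing with $F(0)=0$ is the well-known characterization of convexity.

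Next, I would apply Jensen's inequality to $F$ with the normalized Lebesgue measure on $B_r$: assuming (without loss of generality) $\fint_{B_r}\varphi(|u(x)|)\,dx < +\infty$, this yields
\begin{equation*}
F\!\left(\fint_{B_r}\varphi(|u(x)|)\,dx\right) \;\le\; \fint_{B_r} F(\varphi(|u(x)|))\,dx \;=\; \fint_{B_r}\phi(\varphi^{-1}(\varphi(|u(x)|)))\,dx \;\le\; \fint_{B_r}\phi(|u(x)|)\,dx,
\end{equation*}
where the last inequality uses the left-inverse identity $\varphi^{-1}(\varphi(t)) \le t$ together with the monotonicity of $\phi$. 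Applying the increasing map $\phi^{-1}$ to both sides and noting that $\phi^{-1}(F(s)) = \phi^{-1}(\phi(\varphi^{-1}(s))) \ge \varphi^{-1}(s)$ gives
\begin{equation*}
\varphi^{-1}\!\left(\fint_{B_r}\varphi(|u(x)|)\,dx\right) \;\le\; \phi^{-1}\!\left(\fint_{B_r}\phi(|u(x)|)\,dx\right) \;\le\; \phi^{-1}\!\left(\fint_{B_r}\phi(2|u(x)|)\,dx\right),
\end{equation*}
the final step being monotonicity of $\phi$ and $\phi^{-1}$.

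The one delicate point, and the main (mild) obstacle, is the careful handling of the left inverses: the identities $\phi^{-1}(\phi(t))=t$ and $\varphi^{-1}(\varphi(t))=t$ may fail on an initial plateau where $\phi$ or $\varphi$ vanishes, and this is precisely the role played by the factor~$2$ in the right-hand side, which provides just enough slack to absorb the discrepancies $\phi(\phi^{-1}(s))\le s$ and $\varphi^{-1}(\varphi(t))\le t$ via the monotonicity of $\phi$. Apart from this bookkeeping, the argument is a straightforward convexity-plus-Jensen reasoning.
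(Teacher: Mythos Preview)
Your overall strategy---setting $F=\phi\circ\varphi^{-1}$, establishing a structural property of $F$, and then invoking Jensen---matches the paper's, but there is a genuine error in the convexity step. The claim that ``$F(s)/s$ non-decreasing together with $F(0)=0$ is the well-known characterization of convexity'' is false: this condition is exactly the property \hyperlink{id}{$\textnormal{(Inc)}_1$} of weak $\Phi$-functions, which is strictly weaker than convexity. A simple counterexample is the piecewise-linear function with $F(0)=0$, slope $1$ on $[0,1]$, slope $3$ on $[1,2]$, and slope $2$ on $[2,3]$; then $F(s)/s$ equals $1,2,2$ at $s=1,2,3$ and is non-decreasing throughout, yet $F$ is not convex. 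Consequently you cannot apply Jensen's inequality directly to $F$, and the chain of inequalities after ``Next, I would apply Jensen's inequality to $F$'' is unjustified.

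The paper handles this by exactly the mechanism you attribute to left-inverse bookkeeping: having shown that $\psi:=\phi\circ\varphi^{-1}$ satisfies \hyperlink{id}{$\textnormal{(Inc)}_1$}, it invokes Theorem~\ref{weak and strong equiv} to obtain a genuinely convex $\tilde\psi\in\Phi_s$ with $\tilde\psi(t)\le\psi(t)\le\tilde\psi(2t)$, applies Jensen to $\tilde\psi$, and then returns to $\psi$. The factor $2$ in the statement is therefore not merely slack for left-inverse identities---it is the price of replacing the non-convex $\psi$ by its convex neighbour $\tilde\psi$. Once you insert this step (and use $\varphi^{-1}(2t)\le 2\varphi^{-1}(t)$ at the end), your argument becomes the paper's.
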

	
	\begin{proof}
		Let us set $\psi:=\phi(\varphi^{-1})$. We claim that $\psi \in \Phi_w$ satisfies \hyperlink{id}{$\textnormal{(Inc)}_1$}. Indeed, since $\varphi$ is continuous, convex and finite valued, then it is a bijection, thus, $\varphi^{-1}$ is its inverse which is continuous. This gives that $\psi$ is continuous and finite valued on $(0,+\infty)$.  Moreover, by the assumptions in \eqref{-1000} and definition of $\psi$ we have that $\psi(0)=0=\lim_{t \to 0^+} \psi(t)$ and $t \mapsto \psi(t)/t$ is increasing. Therefore, by Theorem \ref{weak and strong equiv}, there exists $\tilde{\psi} \in \Phi_s$ such that
		$$
		\tilde{\psi}(t) \leq \psi(t) \leq \tilde{\psi}(2t) \ \ \ t \geq 0.
		$$
		Since $\tilde{\psi}$ is convex, by Jensen inequality we get for every $v \in L^1(B_r)$
		$$
		\psi \left(  \fint_{B_r} |v(x)| \, dx \right) \leq \tilde{\psi} \left( 2 \fint_{B_r} |v(x)| \, dx \right) \leq \fint_{B_r} \tilde{\psi}(2|v(x)|) \, dx \leq \fint_{B_r} \psi(2|v(x)|) \, dx.
		$$
		By substituting back the expression of $\psi$ with $v(x)=\varphi(|u(x)|)$ and recalling that $\varphi^{-1}(2t) \leq 2\varphi^{-1}(t)$ we get \eqref{-2000}.
	\end{proof}
	
	\begin{proof}[Proof of Theorem \ref{Poinc homogeneous form}]
		By translation invariance it is not restrictive to consider $x=0$. Using Proposition \ref{weak and strong equiv} we can find $\tilde{\varphi} \in \Phi_s$ such that $\tilde{\varphi}(t) \leq \varphi(t) \leq \tilde{\varphi}(2t)$. Setting $\tilde{\phi}:=\tilde{\varphi}^{\frac{d}{d-1}}t^{-\frac{1}{d-1}}$ we have that $\tilde{\varphi}$ and $\tilde{\phi}$ satisfy all the assumptions of Lemma \ref{orlicz hierarchy}. Using Lemma \ref{orlicz hierarchy} we thus infer
		$$
		\tilde{\varphi}^{-1} \left( \fint_{B_r} \tilde{\varphi} \left( \frac{|T_{B_r} u(x)-\med(u,B_r)|}{r} \right) \, dx \right) \leq  \tilde{\phi}^{-1} \left( \fint_{B_r} \tilde{\phi} \left( 2 \frac{|T_{B_r} u(x)-\med(u,B_r)|}{r} \right) \, dx \right);
		$$
		which gives
		$$
		\varphi^{-1} \left( \fint_{B_r} \varphi \left( \frac{|T_{B_r} u(x)-\med(u,B_r)|}{r} \right) \, dx \right) \leq 2 \phi^{-1} \left( \fint_{B_r} \phi \left( 8 \frac{|T_{B_r} u(x)-\med(u,B_r)|}{r} \right) \, dx \right).
		$$
		By Theorem \ref{POINCARE} this implies that for every $u \in \sbv(B_r)$ satisfying \eqref{poinc small jump} then
		$$
		\varphi^{-1} \left( \fint_{B_r} \varphi \left( \frac{|T_{B_r} u(x)-\med(u,B_r)|}{r} \right) \, dx \right) \leq C \varphi^{-1} \left( \fint_{B_r} \varphi(C|\nabla u(x)|) \, dx \right)
		$$
		with $C=C(d)$. Using the fact that $\varphi$ is doubling and $\varphi^{-1}$ is increasing, we get \eqref{Poinc homogeneous form} with $C=C(d,K)$ where $K \geq 2$ is the doubling constant of $\varphi$.
	\end{proof}
	
	\subsection{Poincaré inequality for generalized Orlicz spaces} \label{subs: poincaregeneralized}
	We now extend the Poincaré inequality for $\sbv(B_r(x))$ functions with small jump set to the cases where $\varphi \in \Phi_s(B_r(x))$ is a generalized Orlicz function satisfying \hyperlink{A0}{(A0)}, \hyperlink{adA1}{(adA1)}, \hyperlink{idg}{$\textnormal{(Inc)}_1$} and \hyperlink{idg}{$\textnormal{(Dec)}$} on $B_r(x)$. Notice that the statement below requires the introduction of a further truncation, depending on $\varphi$ and on the ball $B_r(x)$. In order to ease the notation, we only consider the case $x=0$.

	\begin{thm}\label{POINCARE GEN}
		Let $\varphi \in \Phi_s(B_r)$ be a generalized Orlicz function satisfying \hyperlink{A0}{\textnormal{(A0)}}, \hyperlink{adA1}{\textnormal{(adA1)}}, \hyperlink{idg}{$\textnormal{(Inc)}_1$} and \hyperlink{idg}{$\textnormal{(Dec)}$} on $B_r$. Let $\phi(x,t):=\varphi(x,t)^{\frac{d}{d-1}}t^{-\frac{1}{d-1}} \in \Phi_s(B_r)$. Moreover, let $K$ be the doubling constant of $\varphi$, $\sigma$ be the constant from \hyperlink{A0}{\textnormal{(A0)}} and $\beta$ be the constant appearing in \hyperlink{adA1}{\textnormal{(adA1)}}. For $u \in \sbv^\varphi(B_r)$, set 
\begin{equation}\label{other truncat op}
		\mathfrak{u}^{\varphi}_r(x):= \left(\med(u,B_r)-r(\varphi^-_{B_r})^{-1}\left(\frac{1}{2r}\right)\right) \vee T_{B_r}u(x) \wedge  \left(\med(u,B_r)+r(\varphi_{B_r}^-)^{-1}\left(\frac{1}{2r}\right)\right).
	\end{equation}
Then, there exists $C=C(d,K,\beta)$ such that for every $u \in \sbv^\varphi(B_r)$ satisfying \eqref{poinc small jump}, the following inequalities hold
		\begin{align}
				&\label{P1}  (\phi^-_{B_r})^{-1} \left( \fint_{B_r}  \phi_{B_r}^+ \left(\left|\frac{\mathfrak{u}^{\varphi}_r-\med(u,B_r)}{r} \right|\right) \, dx \right) \leq C (\varphi^-_{B_r})^{-1} \left( \fint_{B_r} \varphi_{B_r}^-(|\nabla u|) \, dx \right)+C\sigma, \\
				&\label{P2} \int_{B_r}  \varphi_{B_r}^+ \left( \left|\frac{\mathfrak{u}^{\varphi}_r-\med(u,B_r)}{r} \right| \right) \, dx \leq C \int_{B_r} \left( \varphi_{B_r}^-(|\nabla u|)+ \varphi^+_{B_r}\left(\left|\frac{u-\med(u,B_r)}{r}\right| \wedge \sigma\right) \right) \, dx . 
		\end{align}
	\end{thm}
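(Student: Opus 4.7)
\textbf{Proof proposal for Theorem \ref{POINCARE GEN}.}
The plan is to reduce both \eqref{P1} and \eqref{P2} to the non-generalized Poincar\'e inequalities (Theorems~\ref{POINCARE} and~\ref{Poinc homogeneous form}) applied to the ordinary $\Phi$-function $\varphi^-_{B_r}$, using the additional truncation in $\mathfrak{u}^{\varphi}_r$ together with \hyperlink{adA1}{\textnormal{(adA1)}} to convert between $\varphi^-_{B_r}$ and $\varphi^+_{B_r}$ (and likewise for $\phi^\pm_{B_r}$) on a controlled range. First I would observe that $\varphi^-_{B_r},\varphi^+_{B_r} \in \Phi_w$, inherit \hyperlink{id}{$\textnormal{(Inc)}_1$} and \hyperlink{id}{\textnormal{(aDec)}} from $\varphi$, and are doubling with constant~$K$. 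By \hyperlink{adA1}{\textnormal{(adA1)}}, $\varphi^+_{B_r}(\beta t)\leq \varphi^-_{B_r}(t)$ on $[\sigma,(\varphi^-_{B_r})^{-1}(1/(2r))]$, and since $\phi(x,t)=\varphi(x,t)^{d/(d-1)}t^{-1/(d-1)}$, an immediate computation yields $\phi^+_{B_r}(\beta t)\leq \beta^{-1/(d-1)}\phi^-_{B_r}(t)$ on the same range; combining with the doubling of $\phi^+_{B_r}$ gives
\[
\varphi^+_{B_r}(t)\leq C\varphi^-_{B_r}(t),\qquad \phi^+_{B_r}(t)\leq C\phi^-_{B_r}(t)\qquad\text{for every } t\in\bigl[\sigma,(\varphi^-_{B_r})^{-1}(1/(2r))\bigr],
\]
with $C=C(K,\beta)$. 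The crucial point is that the extra truncation at level $r(\varphi^-_{B_r})^{-1}(1/(2r))$ ensures that $|\mathfrak{u}^{\varphi}_r-\med(u,B_r)|/r$ never exceeds the upper endpoint of this range, so that the above comparison may be applied whenever the value exceeds the lower threshold~$\sigma$.

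For \eqref{P1}, I would split $B_r$ into $E_1:=\{|\mathfrak{u}^{\varphi}_r-\med(u,B_r)|/r\leq\sigma\}$ and $E_2:=B_r\setminus E_1$. On $E_1$ use $\phi^+_{B_r}(|\cdot|)\leq\phi^+_{B_r}(\sigma)$, which by \hyperlink{A0}{\textnormal{(A0)}} and the comparison above is bounded by a constant $C(K,\sigma,\beta)$. On $E_2$ apply the comparison with $t=|\mathfrak{u}^{\varphi}_r-\med(u,B_r)|/r$ and use the pointwise bound $|\mathfrak{u}^{\varphi}_r-\med(u,B_r)|\leq|T_{B_r}u-\med(u,B_r)|$. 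This yields
\[
\fint_{B_r}\phi^+_{B_r}\!\left(\tfrac{|\mathfrak{u}^{\varphi}_r-\med(u,B_r)|}{r}\right)\!dx \;\leq\; C + C\fint_{B_r}\phi^-_{B_r}\!\left(\tfrac{|T_{B_r}u-\med(u,B_r)|}{r}\right)\!dx.
\]
Applying $(\phi^-_{B_r})^{-1}$ and using the two elementary properties recalled in \eqref{-14} (namely $(\phi^-_{B_r})^{-1}(s+t)\leq 2(\phi^-_{B_r})^{-1}(s)+2(\phi^-_{B_r})^{-1}(t)$ and $(\phi^-_{B_r})^{-1}(Cs)\leq C(\phi^-_{B_r})^{-1}(s)$ for $C\geq1$), the constant term gives a contribution $\leq C\sigma$ (since $\phi^+_{B_r}(\sigma)\leq C\phi^-_{B_r}(\sigma)$ implies $(\phi^-_{B_r})^{-1}(C)\leq C\sigma$ by \hyperlink{A0}{\textnormal{(A0)}}). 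The remaining term is controlled by Theorem~\ref{POINCARE} applied to the ordinary $\Phi$-function $\varphi^-_{B_r}$, which delivers exactly the required right-hand side in \eqref{P1}, up to absorbing the internal constant $C$ in front of $|\nabla u|$ via doubling of $\varphi^-_{B_r}$.

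For \eqref{P2} the strategy is analogous but simpler, as no $\phi$ appears. Splitting $B_r$ as above, on $E_1$ one uses the pointwise bound $|\mathfrak{u}^{\varphi}_r-\med(u,B_r)|\leq|u-\med(u,B_r)|\wedge r\sigma$ to get $\varphi^+_{B_r}(|\mathfrak{u}^{\varphi}_r-\med(u,B_r)|/r)\leq \varphi^+_{B_r}(|u-\med(u,B_r)|/r\wedge\sigma)$, which is precisely the remainder term in \eqref{P2}. On $E_2$ the (adA1)-comparison yields $\varphi^+_{B_r}(|\mathfrak{u}^{\varphi}_r-\med(u,B_r)|/r)\leq C\varphi^-_{B_r}(|T_{B_r}u-\med(u,B_r)|/r)$, and an application of Theorem~\ref{Poinc homogeneous form} (the homogeneous $\sbv$-Poincar\'e for the ordinary doubling $\Phi$-function $\varphi^-_{B_r}$) bounds the latter integral by $C\int_{B_r}\varphi^-_{B_r}(|\nabla u|)\,dx$. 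Summing the two contributions gives \eqref{P2}.

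The main technical obstacle is the careful tracking of the compatibility between the (adA1)-range, the truncation level $(\varphi^-_{B_r})^{-1}(1/(2r))$, and the constants appearing in the doubling passages: one must verify that the same constant $C(d,K,\beta)$ works for every ball $B_r\subset\Omega$ with $\diam(B_r)\leq 1$, independently of $r$. The auxiliary constant $C\sigma$ on the right-hand side of \eqref{P1} appears precisely because the threshold~$\sigma$ is not absorbed by the non-averaged Poincar\'e estimate when passing through the sub-additivity of $(\phi^-_{B_r})^{-1}$; this is why \eqref{P2}, being a genuine modular estimate, carries no such additive term but instead a pointwise remainder.
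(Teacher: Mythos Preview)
Your proposal is correct and follows essentially the same route as the paper: reduce to Theorems~\ref{POINCARE} and~\ref{Poinc homogeneous form} applied to the ordinary $\Phi$-function $\varphi^-_{B_r}$, use the additional truncation in $\mathfrak{u}^{\varphi}_r$ to keep the argument within the \hyperlink{adA1}{(adA1)} range, and convert $\phi^+_{B_r}$ (resp.\ $\varphi^+_{B_r}$) into $\phi^-_{B_r}$ (resp.\ $\varphi^-_{B_r}$) via \hyperlink{adA1}{(adA1)} and doubling. The paper's proof is terser and does not spell out the split at the threshold~$\sigma$; your decomposition into $E_1$ and $E_2$ simply makes explicit how the additive term $C\sigma$ in \eqref{P1} and the pointwise remainder in \eqref{P2} arise.
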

	
	\begin{proof}
		By assumption we have that $\varphi^-_{B_r}$ satisfies  \hyperlink{idg}{$\textnormal{(Inc)}_1$}. Therefore inequality \eqref{poinc ineq} hold with $\varphi$ replaced by $\varphi^-_{B_r}$ and $\phi$ replaced by $\phi^-_{B_r}$. By definition of $\mathfrak{u}^{\varphi}_r$,
		we have that 
		\begin{equation}\label{,,,,}
		\frac{| \mathfrak{u}^{\varphi}_r(x)-\med(u,B_r)|}{r} \leq (\varphi^-_{B_r})^{-1} \left( \frac{1}{2r} \right).
		\end{equation}
		Hence, observing that $\phi_{B_r}^-$ is increasing and 
		$$
		\phi_{B_r}^+(\beta t) = (\varphi^+_{B_r}(\beta t))^{\frac{d}{d-1}}(\beta t)^{-\frac{1}{d-1}} \leq \frac{1}{\beta} (\varphi^-_B( t))^{\frac{d}{d-1}} t^{-\frac{1}{d-1}}=\frac{1}{\beta} \phi_{B_r}^-(t) \ \ \ \mbox{for } t \in \left(\sigma,(\varphi_{B_r}^-)^{-1} \left(\frac{1}{2r}\right)\right),
		$$
		inequality \eqref{P1} is a consequence of \hyperlink{adA1}{\textnormal{(adA1)}}. 
		
		Analogously, we have that inequality \eqref{P2} is a consequence of the fact that \eqref{poinc ineq homog} holds with $\varphi$ replaced by $\varphi_{B_r}^-$, \eqref{,,,,} and \hyperlink{adA1}{\textnormal{(adA1)}}.
	\end{proof}

	\section{Integral Representation}
	\label{s:integral-repr}

	This section is devoted to the proof of Theorem \ref{int rep teo} and its Corollary \ref{trans invar rep}. 
	Throughout this section, $\psi$ stands for a function in $\Phi_s(\Omega)$ satisfying \hyperlink{A0}{$\textnormal{(A0)}$}, \hyperlink{adA1}{$\textnormal{(adA1)}$}, \hyperlink{idg}{$\textnormal{(Inc)}$} and \hyperlink{idg}{$\textnormal{(Dec)}$} on $\Omega$. Notice that the additional requirements that $\psi \in \Phi_s(\Omega)$ instead of $\Phi_w(\Omega)$ and that \hyperlink{idg}{$\textnormal{(Inc)}$} and \hyperlink{idg}{$\textnormal{(Dec)}$} replace \hyperlink{idg}{$\textnormal{(aInc)}$} and \hyperlink{idg}{$\textnormal{(aDec)}$}, respectively, cause  no restriction, as we discussed in Remark \ref{rem: convexgratis}.

	\subsection{Fundamental estimate in $\gsbv^{\psi}$} 
	We begin by proving an important tool for integral representation, that is the fundamental estimate.

	\begin{lem}[Fundamental estimate in $\gsbv^{\psi}$] \label{fund est}
		Let $\eta>0$ and $B \Subset \Omega$ a ball, let $D',D'',E \in \mathcal{A}(B)$ with $D' \Subset D''$. For every functional $\mathcal{F}$ satisfying \eqref{H1}, \eqref{H3} and \eqref{H4} with $\psi$, for every $u \in \gsbv^{\psi}(D'',\R^m)$ and every $v \in \gsbv^{\psi}(E,\R^m)$ there exists a function $\varphi \in C^\infty(\R^d,[0,1])$ such that the function $w:=\varphi u+(1-\varphi)v \in \gsbv^{\psi}(E \cup D',\R^m)$ satisfies 
		\begin{align*}
			\begin{split}
				&\textnormal{(i)} \mbox{ $w=u$ on $D'$ and $w=v$ on $E \setminus D''$}; \\
				&\textnormal{(ii)} \mbox{ $\mathcal{F}(w,D' \cup E) \leq (1+\eta)(\mathcal{F}(u,D'')+\mathcal{F}(v,E))+M\displaystyle \int_F \psi\left(x, \displaystyle \frac{|u-v|}{\delta} \right) \, dx + \eta \mathcal{L}^d(D' \cup E)$,}
			\end{split}
		\end{align*}
		with $\delta:=\frac{1}{2}\dist(D',\partial D'')$, where $F:=(D'' \setminus D') \cap E$ and $M=M(D',D'',E,\psi,\eta)>0$ is independent of $u$ and $v$. Moreover, given $\varepsilon>0$ and $x_0 \in \R^d$ such that $D_{\varepsilon,x_0}',D_{\varepsilon,x_0}'',E_{\varepsilon,x_0} \Subset \Omega$, then
		$$
		M(D_{\varepsilon,x_0}',D_{\varepsilon,x_0}'',E_{\varepsilon,x_0},\psi,\eta)=M(D',D'',E,\psi,\eta)
		$$
		and the integral term in \textnormal{(ii)} becomes: $M \displaystyle \int_{F_{\varepsilon,x_0}}\psi\left(x, \displaystyle \frac{|u-v|}{\varepsilon\delta} \right) \, dx$.
	\end{lem}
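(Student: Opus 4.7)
I would adapt the classical fundamental-estimate scheme based on averaging over a family of nested cut-off functions (à la~\cite{BFLMrelaxSBVp}), the key twist being that the lack of homogeneity forces me to replace the usual scaling manipulations for $|\cdot|^p$ by the convexity and doubling property of $\psi$ (granted, respectively, by $\psi\in\Phi_s(\Omega)$ and by \hyperlink{idg}{(Dec)}).

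First, I would fix $N\in\N$ (to be tuned later depending on $\eta$), select nested open sets $D_0\Subset D_1\Subset\dots\Subset D_N\Subset D''$ with $D_0=D'$ and $\dist(D_{i-1},\partial D_i)\ge \delta/N$, and cut-offs $\varphi_i\in C_c^\infty(\R^d,[0,1])$ with $\varphi_i\equiv 1$ on $D_{i-1}$, $\supp\varphi_i\Subset D_i$, $\|\nabla\varphi_i\|_\infty\le C_dN/\delta$. Setting $w_i:=\varphi_i u+(1-\varphi_i)v\in \gsbv^\psi(D'\cup E,\R^m)$, condition (i) is automatic. Using \eqref{H1} and \eqref{H3} I would split
\[
\mathcal{F}(w_i,D'\cup E)=\mathcal{F}(u,(D'\cup E)\cap D_{i-1})+\mathcal{F}(w_i,F_i)+\mathcal{F}(v,E\setminus\overline{D_i}),
\]
where $F_i:=E\cap(D_i\setminus\overline{D_{i-1}})\subset F$ (boundaries $\partial D_i,\partial D_{i-1}$ may be chosen $\mathcal{F}$-null by the usual Fubini-type argument). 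The first and third summands are trivially dominated by $\mathcal{F}(u,D'')+\mathcal{F}(v,E)$.

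For the crucial middle term I would exploit $\nabla w_i=\varphi_i\nabla u+(1-\varphi_i)\nabla v+\nabla\varphi_i\otimes(u-v)$ and $J_{w_i}\subset J_u\cup J_v$. Convexity and the doubling constant $K$ of $\psi(x,\cdot)$ yield $c=c(\psi)$ such that
\[
\psi(x,|\nabla w_i|)\le c\bigl[\psi(x,|\nabla u|)+\psi(x,|\nabla v|)+\psi(x,C_dN|u-v|/\delta)\bigr].
\]
Plugging this into the upper bound of \eqref{H4} for $\mathcal{F}(w_i,F_i)$, re-absorbing the $u$- and $v$-terms through its lower bound, then summing over $i=1,\dots,N$ (the $F_i$'s being pairwise disjoint subsets of $F$, so that $\sum_i\mathcal{F}(u,F_i)\le\mathcal{F}(u,D'')$ and analogously for $v$, $\mathcal{L}^d$, and the $\psi$-integral) and averaging, I obtain an index $i^*$ with
\[
\mathcal{F}(w_{i^*},D'\cup E)\le\Bigl(1+\tfrac{bc}{aN}\Bigr)\bigl[\mathcal{F}(u,D'')+\mathcal{F}(v,E)\bigr]+\tfrac{b}{N}\mathcal{L}^d(D'\cup E)+\tfrac{bc}{N}\int_{F}\psi\!\left(x,\tfrac{C_dN|u-v|}{\delta}\right)dx.
\]
Invoking \hyperlink{idg}{(Dec)$_q$} to estimate $\psi(x,C_dNt)\le(C_dN)^q\psi(x,t)$, and choosing $N$ so that $bc/(aN)\le\eta$ and $b/N\le\eta$, the conclusion (ii) follows with $\varphi:=\varphi_{i^*}$, $w:=w_{i^*}$, and $M:=bc\,(C_dN)^{q-1}$.

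Finally, scale invariance of $M$ holds because the rescaling $x\mapsto x_0+\varepsilon(x-x_0)$ sends the nested partition to the corresponding rescaled one, with $\|\nabla\varphi_{i,\varepsilon,x_0}\|_\infty\le C_dN/(\varepsilon\delta)$; the very same averaging runs verbatim with $\delta$ replaced by $\varepsilon\delta$ in the argument of $\psi$, while $N$—and hence $M$—depends only on $\eta$, $a$, $b$, $c$. The main obstacle, compared to the classical $SBV^p$ fundamental estimate, lies precisely in this last passage: without homogeneity of $\psi$ the factor $C_dN$ cannot be pulled out of the argument of $\psi$ and must be absorbed via \hyperlink{idg}{(Dec)$_q$}, which in turn dictates the quantitative dependence of $M$ on $\eta$.
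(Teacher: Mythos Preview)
Your proposal is correct and follows essentially the same route as the paper: a family of nested cut-offs between $D'$ and $D''$, the splitting of $\mathcal{F}(w_i,D'\cup E)$ via \eqref{H1}--\eqref{H3}, an estimate of the transition-layer term through \eqref{H4}, and averaging over the layers to select a good index. The only cosmetic difference is that the paper absorbs the factor $N/\delta$ coming from $\|\nabla\varphi_i\|_\infty$ by iterating the doubling inequality (yielding a constant $K^{\log_2 k + 2}$), whereas you invoke \hyperlink{idg}{(Dec)$_q$} directly to get $(C_dN)^q$; since doubling and (aDec) are equivalent for $\Phi_w$-functions (Lemma~\ref{doubling and dec}), the two formulations are interchangeable and lead to the same explicit constant $M$ up to the value of the exponent.
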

	
	\begin{proof}
		Let $K$ be the doubling constant of $\psi$, take $k \in \N$ such that $k \geq \max \left\{ \frac{b(K+1)^2}{a\eta},\frac{b}{\eta} \right\}$. We set $D_1:=D'$ and, for $i=1,\dots,k$,
		$$
		D_{i+1}:=\left\{ x \in D'' \colon \dist(x,D') < \frac{i\delta}{k} \right\}.
		$$
		We thus have $D'=:D_1 \Subset D_2 \Subset \dots \Subset D_{k+1} \Subset D''$. Let $\varphi_i \in C^{\infty}_0(D_{i+1},[0,1])$ with $\varphi_i=1$ in a neighborhood $U_i$ of $\overline{D_i}$ and such that $\Vert \nabla \varphi_i \Vert_{L^\infty} \leq \frac{2k}{\delta}$. 
		
		Let $u \in \gsbv^{\psi}(D'',\R^m)$ and $v \in \gsbv^{\psi}(E,\R^m)$ such that $u-v \in L^{\psi}(F,\R^m)$, otherwise the thesis is trivial. We define the function $w_i:=\varphi_i u + (1-\varphi_i)v$. We notice that $w_i \in \gsbv^{\psi}(D' \cup E,\R^m)$, this follows from properties \hyperlink{A0}{(A0)} and \hyperlink{idg}{(Dec)} of $\psi$, with $u$ and $v$ extended arbitrarily outside $D''$ and $E$ respectively. Set $I_i:=E \cap (D_{i+1} \setminus \overline{D_i})$. Using \eqref{H1} and \eqref{H3} we infer
		\begin{align}\label{fe.1}
			\begin{split}
				\mathcal{F}(w_i,D' \cup E) & \leq \mathcal{F}(u,(D' \cup E) \cap U_i)+\mathcal{F}(v,E \setminus \supp \varphi_i)+\mathcal{F}(w_i,I_i) \\
				& \leq \mathcal{F}(u,D'')+\mathcal{F}(v,E)+\mathcal{F}(w_i,I_i).
			\end{split}
		\end{align}
		We now estimate the last term of \eqref{fe.1} using assumption \eqref{H4} and properties \hyperlink{A0}{(A0)}, \hyperlink{idg}{(Dec)} (i.e. doubling) of $\psi$ and the fact that $\psi$ is convex since it belongs to the class $\Phi_s$. We have
		\begin{align*}
			\mathcal{F}(w_i,I_i) & \leq b \mathcal{L}^d(I_i)+b(K+1)\int_{I_i} \! \psi(x,|\nabla u+\nabla v|) dx+b(K+1)\int_{I_i} \psi(x,|(u-v) \otimes\nabla \varphi|) dx \\
			& \ \ \ \ +b\mathcal{H}^{d-1}(J_u \cap I_i)+b \mathcal{H}^{d-1}(J_v \cap I_i) \\
			& \leq  b \mathcal{L}^d(I_i)+b(K+1)^2\int_{I_i} \psi(x,|\nabla u|) \, dx +b(K+1)^2\int_{I_i} \psi(x,|\nabla v|) \, dx \\
			& \ \ \ \ +b(K+1)K^{\log_2(k)+2}\int_{I_i} \psi\left(x,\frac{|u-v|}{\delta}\right) dx+b\mathcal{H}^{d-1}(J_u \cap I_i)+b \mathcal{H}^{d-1}(J_v \cap I_i) \\
			& \leq \frac{b}{a}(K+1)^2 (\mathcal{F}(u,I_i)+\mathcal{F}(v,I_i))+b(K+1)K^{\log_2(k)+2}\int_{I_i} \psi\left(x,\frac{|u-v|}{\delta}\right) dx+b\mathcal{L}^d(I_i).
		\end{align*}
		By our initial choice of $k$ and using \eqref{H1} we can find $i_0 \in \{1,\dots,k\}$ such that
		$$
		\mathcal{F}(w_{i_0},I_{i_0}) \leq \frac{1}{k} \sum_{i=1}^k \mathcal{F}(w_i,I_i) \leq \eta (\mathcal{F}(u,D'')+\mathcal{F}(v,E))+\eta \mathcal{L}^d(D' \cup E)+M \int_F \psi\left(x, \displaystyle \frac{|u-v|}{\delta} \right) dx
		$$
		with $M:=b(K+1)K^{\log_2(k)+2}k^{-1}$. This combined with \eqref{fe.1} concludes the proof by setting $w:=w_{i_0}$. For the second statement it is enough to consider cut-off functions $\varphi_i^\varepsilon \in C^{\infty}_0((D_{i+1})_{\varepsilon,x_0},[0,1])$ defined as $\varphi_i^\varepsilon(x):=\varphi_i(x_0+(x-x_0)/\varepsilon)$, for $i=1,\dots,k$ and proceed as before.
	\end{proof}
	
	\subsection{Proof of the integral representation theorem} Now we proceed with the proof of Theorem \ref{int rep teo}. We begin by showing that the Radon-Nikodym derivatives of $\mathcal{F}$ and $\textbf{m}_\mathcal{F}$ with respect to the measure 
	\begin{equation}\label{mu}
		\mu:=\mathcal{L}^d|_\Omega+\mathcal{H}^{d-1}|_{J_u \cap \Omega}
	\end{equation}
	coincide. To this aim, we state the following Lemma.
	
	\begin{lem}\label{lem 1}
		Let $\mathcal{F}$ satisfy \eqref{H1}--\eqref{H4}, let $u \in \gsbv^{\psi}(\Omega,\R^m)$ and let $\mu$ be as in \eqref{mu}. Then, for $\mu$-a.e. $x_0 \in \Omega$ it holds
		$$
		\lim_{\varepsilon \to 0^+} \frac{\mathcal{F}(u,B_\varepsilon(x_0))}{\mu(B_\varepsilon(x_0))}=\lim_{\varepsilon \to 0^+} \frac{\mathbf{m}_\mathcal{F}(u,B_\varepsilon(x_0))}{\mu(B_\varepsilon(x_0))}.
		$$
	\end{lem}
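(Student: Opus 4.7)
The plan is to show both inequalities
\begin{equation*}
\limsup_{\varepsilon \to 0^+} \frac{\mathbf{m}_\mathcal{F}(u, B_\varepsilon(x_0))}{\mu(B_\varepsilon(x_0))} \le \theta(x_0) \le \liminf_{\varepsilon \to 0^+} \frac{\mathbf{m}_\mathcal{F}(u, B_\varepsilon(x_0))}{\mu(B_\varepsilon(x_0))}
\end{equation*}
for $\mu$-a.e.\ $x_0 \in \Omega$, where $\theta$ is the Radon--Nikodym density of the Borel measure $\mathcal{F}(u,\cdot)$ (given by \eqref{H1}) with respect to $\mu$. This density exists $\mu$-a.e.\ by the Besicovitch differentiation theorem, since \eqref{H4} readily gives $\mathcal{F}(u,\cdot) \ll \mu$. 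The first inequality is immediate: $u$ itself is admissible in the infimum defining $\mathbf{m}_\mathcal{F}(u, B_\varepsilon(x_0))$, so $\mathbf{m}_\mathcal{F}(u, B_\varepsilon(x_0)) \le \mathcal{F}(u, B_\varepsilon(x_0))$ pointwise.

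For the nontrivial inequality I will argue by contradiction. Suppose there exists $\delta > 0$ such that the set $E_\delta$ of points $x_0$ at which the liminf above is strictly smaller than $\theta(x_0) - \delta$ has positive $\mu$-measure. For each $n \in \N$, the Besicovitch--Vitali covering theorem applied to the Radon measure $\mu$ and to the fine cover of $E_\delta$ by balls $B_\varepsilon(x_0)$ with $\varepsilon < 1/n$, $\mathbf{m}_\mathcal{F}(u, B_\varepsilon(x_0)) \le (\theta(x_0) - \delta) \mu(B_\varepsilon(x_0))$, and $\mathcal{H}^{d-1}(J_u \cap \partial B_\varepsilon(x_0)) = 0$ (all but countably many radii satisfy the last condition) yields a countable disjoint subfamily $\{B_{r_i^n}(x_i^n)\}_i$ with $\mu\bigl(E_\delta \setminus \bigcup_i B_{r_i^n}(x_i^n)\bigr) = 0$. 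Taking near-minimizers $v_i^n$ with error $(\delta/2) \mu(B_{r_i^n}(x_i^n))$ and setting $\tilde{u}_n := v_i^n$ on each $B_{r_i^n}(x_i^n)$ and $\tilde{u}_n := u$ elsewhere produces a function of $\gsbv^\psi(\Omega,\R^m)$, since every $v_i^n$ agrees with $u$ in a neighbourhood of $\partial B_{r_i^n}(x_i^n)$ and the balls are disjoint. By \eqref{H1}, \eqref{H3}, and the identity $\mathcal{F}(u, B) = \int_B \theta \, d\mu$, together with the Lebesgue-point property of $\theta$ at $\mu$-a.e.\ $x_i^n$, a direct computation gives
\begin{equation*}
\mathcal{F}(\tilde{u}_n, \Omega) \le \mathcal{F}(u, \Omega) - \tfrac{\delta}{2}\,\mu(E_\delta) + o_n(1).
\end{equation*}
Provided that $\tilde{u}_n \to u$ in measure on $\Omega$, the lower semicontinuity \eqref{H2} will then yield $\mathcal{F}(u, \Omega) \le \liminf_n \mathcal{F}(\tilde{u}_n, \Omega) < \mathcal{F}(u, \Omega)$, a contradiction.

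The hard part will be to ensure $\tilde{u}_n \to u$ in measure, which is not automatic because $\mathcal{L}^d\bigl(\bigcup_i B_{r_i^n}(x_i^n)\bigr)$ need not vanish. I will split $E_\delta$ according to the Lebesgue decomposition of $\mu$. At points of $E_\delta$ where $\mu$ is $\mathcal{H}^{d-1}|_{J_u}$-dominated, the ratio $\mathcal{L}^d(B_\varepsilon(x_0))/\mu(B_\varepsilon(x_0))$ tends to $0$ as $\varepsilon \to 0^+$, so a refinement of the Vitali cover forces $\mathcal{L}^d\bigl(\bigcup_i B_{r_i^n}(x_i^n)\bigr) \to 0$ and the convergence in measure is immediate. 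At points of $E_\delta$ where $\mu$ is $\mathcal{L}^d$-dominated, which are $\mathcal{L}^d$-a.e.\ approximate continuity points of $u$, I will invoke the Poincar\'e inequality in $\sbv^\psi$ of Theorem~\ref{POINCARE GEN} applied to the truncation of $v_i^n$, together with the energy bound $\int_{B_{r_i^n}(x_i^n)} \psi(x, |\nabla v_i^n|)\, dx + \mathcal{H}^{d-1}(J_{v_i^n} \cap B_{r_i^n}(x_i^n)) \le C$ inherited from \eqref{H4}. These force the truncated $v_i^n$ to stay close in measure to $\mathrm{med}(u, B_{r_i^n}(x_i^n))$, which in turn converges to $u(x_i^n)$; summing over the disjoint family then gives $\tilde{u}_n \to u$ in measure on this portion as well, closing the argument.
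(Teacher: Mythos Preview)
Your overall strategy---cover, glue near-minimizers, and invoke the lower semicontinuity \eqref{H2}---is exactly the one the paper uses, packaged there through the intermediate quantity $\mathbf{m}_\mathcal{F}^*$ of Lemma~\ref{lem 4 for lem 1}. The hard step, as you correctly isolate, is the convergence in measure of the glued function $\tilde u_n$ to $u$.

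Your proposed argument for this step has a genuine gap. The Poincar\'e inequality of Theorem~\ref{POINCARE GEN} applied to $v_i^n$ controls $\mathfrak{v}_i^n - \med(v_i^n, B_{r_i^n}(x_i^n))$, not $v_i^n - \med(u, B_{r_i^n}(x_i^n))$ as you write. There is no a priori control on $\med(v_i^n, B_i)$: a competitor $v_i^n$ need only agree with $u$ on a thin collar near $\partial B_i$ and may take any value on the bulk of the ball while keeping the energy small (introduce a sliver of jump set near the collar and shift by a constant). Even setting this aside, the right-hand side of \eqref{P2} is only \emph{bounded} by a multiple of $\mu(B_i)$ via \eqref{H4}; after summing over a family that covers a set of positive $\mathcal{L}^d$-measure, nothing tends to zero.

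The paper's route is both simpler and independent of the Orlicz structure (it remarks explicitly that the argument ``is not hinged on the growth conditions''). The key is to work with the \emph{difference} $w := u - \tilde u_n$, which has zero trace on each $\partial B_{r_i^n}(x_i^n)$ because $v_i^n = u$ there. Truncating at a fixed level $M>0$ yields $w^M \in SBV$, and the ordinary homogeneous $BV$ Poincar\'e inequality with zero boundary values on each ball gives $\|w^M\|_{L^1(B_i)} \le C r_i\, |Dw^M|(B_i)$. Summing and using $r_i < 1/n$,
\[
\|w^M\|_{L^1(\Omega)} \le \frac{C}{n}\, |Dw^M|(\Omega) \le \frac{C}{n}\Bigl( \int_\Omega |\nabla u| + |\nabla \tilde u_n|\,dx + 2M\bigl(\mathcal{H}^{d-1}(J_u) + \mathcal{H}^{d-1}(J_{\tilde u_n})\bigr) \Bigr),
\]
and the bracket is bounded uniformly in $n$ by \eqref{H4} and the energy bound on $\tilde u_n$. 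Hence $w^M \to 0$ in $L^1$, so in measure; since $\{|w|>\eta\} \subset \{|w^M|>\eta\}$ for $\eta<M$, also $\tilde u_n \to u$ in measure. This works uniformly at all points---no surface/volume split is needed---and uses neither \hyperlink{adA1}{(adA1)} nor Theorem~\ref{POINCARE GEN}.
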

	We postpone the proof of this lemma at the end of this section. As a second step we prove that asymptotically as $\varepsilon \to 0^+$ the two minimization problems $\mathbf{m}_\mathcal{F}(u,B_\varepsilon(x_0))$ and $\mathbf{m}_\mathcal{F}(\overline{u}^\textnormal{bulk}_{x_0},B_\varepsilon(x_0))$ coincide for $\mathcal{L}^d$-a.e. $x_0 \in \Omega$, where $\overline{u}^\textnormal{bulk}_{x_0}:= \ell_{x_0,u(x_0),\nabla u(x_0)}$.
	
	\begin{lem}\label{lem 2}
		Let $\mathcal{F}$ satisfy \eqref{H1} and \eqref{H3}-\eqref{H4} and let $u \in \gsbv^{\psi}(\Omega,\R^m)$. Then, for $\mathcal{L}^d$-a.e. $x_0 \in \Omega$ we have
		\begin{equation}\label{vol pts equality}
			\lim_{\varepsilon \to 0^+} \frac{\mathbf{m}_\mathcal{F}(u,B_\varepsilon(x_0))}{\omega_d \varepsilon^d}=\limsup_{\varepsilon \to 0^+} \frac{\mathbf{m}_\mathcal{F}(\overline{u}^\textnormal{bulk}_{x_0},B_\varepsilon(x_0))}{\omega_d \varepsilon^d}.
		\end{equation}
	\end{lem}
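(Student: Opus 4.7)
\emph{Setup and choice of point.} The plan is a bilateral gluing argument based on the fundamental estimate (Lemma~\ref{fund est}) performed at a generic point $x_0\in\Omega$. We fix $x_0$ so that, for $\mathcal{L}^d$-a.e.\ such $x_0$, the following hold simultaneously: $x_0\notin J_u$, $u$ is approximately differentiable at $x_0$ with gradient $\nabla u(x_0)$, and $\varepsilon^{1-d}\mathcal{H}^{d-1}(J_u\cap B_\varepsilon(x_0))\to 0$ (Theorem~\ref{DGCL 1}); $x_0$ is a $\psi$-Lebesgue point of $\nabla u$ in the sense of Proposition~\ref{leb pts prop}; the $\psi$-averaged infinitesimal decay
\begin{equation}\label{eq:prop-c}
\lim_{\varepsilon\to 0^+}\fint_{B_\varepsilon(x_0)}\psi\!\left(x,\frac{|u-\overline{u}^{\textnormal{bulk}}_{x_0}|}{\varepsilon}\right)dx=0
\end{equation}
holds; and the conclusion of Lemma~\ref{lem 1} is valid at $x_0$. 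Since these properties force $\mu(B_\varepsilon(x_0))/(\omega_d\varepsilon^d)\to 1$, Lemma~\ref{lem 1} already guarantees that the left-hand side of \eqref{vol pts equality} is a true limit at such $x_0$; it remains to match it with the right-hand side.

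\emph{Two-sided gluing via the fundamental estimate.} Fix small $s,\eta\in(0,1)$ and set $\varepsilon':=(1-s)\varepsilon$. For the $\leq$ direction, pick $v_\varepsilon$ almost optimal for $\mathbf{m}_\mathcal{F}(\overline{u}^{\textnormal{bulk}}_{x_0},B_{\varepsilon'}(x_0))$ and extend it by $\overline{u}^{\textnormal{bulk}}_{x_0}$ on $B_\varepsilon(x_0)\setminus\overline{B_{\varepsilon'}(x_0)}$ to a function $\tilde v_\varepsilon\in\gsbv^\psi(B_\varepsilon(x_0),\R^m)$. Apply Lemma~\ref{fund est} with $D':=B_{(1-3s/4)\varepsilon}(x_0)\Subset D'':=B_{(1-s/2)\varepsilon}(x_0)$, $E:=B_\varepsilon(x_0)\setminus\overline{B_{\varepsilon'}(x_0)}$, and with the roles of ``$u$'' and ``$v$'' in that lemma played by $\tilde v_\varepsilon$ and $u$. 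The output $w$ satisfies $w=u$ on $E\setminus D''=B_\varepsilon(x_0)\setminus\overline{B_{(1-s/2)\varepsilon}(x_0)}$, hence is admissible for $\mathbf{m}_\mathcal{F}(u,B_\varepsilon(x_0))$. The crucial geometric observation is that the gluing region $F=(D''\setminus D')\cap E$ is contained in $B_\varepsilon(x_0)\setminus B_{\varepsilon'}(x_0)$, where $\tilde v_\varepsilon\equiv\overline{u}^{\textnormal{bulk}}_{x_0}$; the error term in (ii) of Lemma~\ref{fund est} therefore reduces to a multiple of $\int_F\psi(x,|u-\overline{u}^{\textnormal{bulk}}_{x_0}|/\delta)\,dx$ with $\delta\asymp s\varepsilon$, and is $o(\varepsilon^d)$ by \eqref{eq:prop-c} combined with the doubling of $\psi$. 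The bulk terms split via \eqref{H1}: $\mathcal{F}(\tilde v_\varepsilon,D'')\leq \mathbf{m}_\mathcal{F}(\overline{u}^{\textnormal{bulk}}_{x_0},B_{\varepsilon'}(x_0))+O(s\varepsilon^d)$ using \eqref{H4} on the annulus $D''\setminus\overline{B_{\varepsilon'}(x_0)}$, where $\nabla \tilde v_\varepsilon\equiv \nabla u(x_0)$; and $\mathcal{F}(u,E)=O(s\varepsilon^d)+o(\varepsilon^d)$ via \eqref{H4}, the $\psi$-Lebesgue point property for $\nabla u$, and $\mathcal{H}^{d-1}(J_u\cap E)=o(\varepsilon^{d-1})$. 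Dividing by $\omega_d\varepsilon^d$, passing to the $\limsup$ in $\varepsilon$ and then letting $s,\eta\to 0$ (using $\varepsilon'/\varepsilon=1-s$) yields the $\leq$ inequality in \eqref{vol pts equality}. The $\geq$ inequality follows by the symmetric construction: take $w_\varepsilon$ almost optimal for $\mathbf{m}_\mathcal{F}(u,B_{\varepsilon'}(x_0))$, extend by $u$ on the outer annulus, and glue with $\overline{u}^{\textnormal{bulk}}_{x_0}$ via Lemma~\ref{fund est}; the gluing annulus again sits in the region where the extension coincides with $u$, and the identical error bound~\eqref{eq:prop-c} applies.

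\emph{Main obstacle.} The only genuinely nontrivial ingredient is \eqref{eq:prop-c}: approximate differentiability alone only gives $(u-\overline{u}^{\textnormal{bulk}}_{x_0})/\varepsilon\to 0$ in measure on $B_\varepsilon(x_0)$, while $\psi(x,\cdot)$ may grow arbitrarily fast on the exceptional set. This is precisely where Theorem~\ref{POINCARE GEN} comes into play. Applying \eqref{P2} to $v:=u-\overline{u}^{\textnormal{bulk}}_{x_0}\in\gsbv^\psi$ on $B_\varepsilon(x_0)$---where the smallness condition \eqref{poinc small jump} holds for small $\varepsilon$ by the vanishing surface density---controls $\int_{B_\varepsilon(x_0)}\psi(x,|\mathfrak{v}^\psi_\varepsilon-\med(v,B_\varepsilon(x_0))|/\varepsilon)\,dx$ by $\int_{B_\varepsilon(x_0)}\psi(x,|\nabla u-\nabla u(x_0)|)\,dx$, which is $o(\varepsilon^d)$ by the $\psi$-Lebesgue point property, plus the $\sigma$-capped term $\int_{B_\varepsilon(x_0)}\psi(x,|v-\med(v,B_\varepsilon(x_0))|/\varepsilon\wedge\sigma)\,dx$, which is $o(\varepsilon^d)$ by bounded convergence since $v/\varepsilon\to 0$ in measure. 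Theorem~\ref{DGCL 3} identifies $\med(v,B_\varepsilon(x_0))\to 0$, and \eqref{truncated difference} together with the vanishing surface density ensures $\mathfrak{v}^\psi_\varepsilon=v$ outside a set of $\mathcal{L}^d$-measure $o(\varepsilon^d)$. A careful splitting of the integral in \eqref{eq:prop-c} across this set and its complement, exploiting the boundedness of $\psi(x,\sigma)$ guaranteed by \hyperlink{A0}{\textnormal{(A0)}} and doubling, finally delivers \eqref{eq:prop-c} at $\mathcal{L}^d$-a.e.\ $x_0$.
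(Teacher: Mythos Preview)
Your bilateral gluing scheme is the right skeleton, but the justification of \eqref{eq:prop-c} for the \emph{original} function $u$ has a genuine gap. The Poincar\'e inequality \eqref{P2} only controls the truncated object $\mathfrak{v}^\psi_\varepsilon$, not $v=u-\overline{u}^{\textnormal{bulk}}_{x_0}$ itself. When you split the integral in \eqref{eq:prop-c} over $\{\mathfrak{v}^\psi_\varepsilon=v\}$ and its complement, the first piece is fine, but on the bad set $\{\mathfrak{v}^\psi_\varepsilon\neq v\}$ you must bound $\int\psi(x,|v|/\varepsilon)\,dx$. That set has small measure, yet it is \emph{precisely} the set where $v$ exceeds the truncation thresholds, so $|v|/\varepsilon$ is unbounded there. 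Invoking ``boundedness of $\psi(x,\sigma)$ and doubling'' yields at best $\psi(x,t)\lesssim t^q$ for large $t$, but membership in $GSBV^\psi$ gives no $L^q$-integrability of $u$ on small sets; in general \eqref{eq:prop-c} fails for the untruncated $u$, and the argument collapses.

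The paper does not attempt \eqref{eq:prop-c} for $u$. Instead, via Lemma~\ref{vol seq}, it constructs an auxiliary sequence $u_\varepsilon$ by setting $u_\varepsilon:=\overline{u}^{\textnormal{bulk}}_{x_0}+\mathfrak{u}^\psi_\varepsilon$ on an inner ball $B_{\lambda\varepsilon}(x_0)$ and $u_\varepsilon:=u$ on the outer annulus. For this truncated $u_\varepsilon$ the decay $\varepsilon^{-d}\int_{B_{\lambda\varepsilon}}\psi(x,|u_\varepsilon-\overline{u}^{\textnormal{bulk}}_{x_0}|/\varepsilon)\,dx\to 0$ \emph{does} hold, because the truncation caps the values. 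The new jump along $\partial B_{\lambda\varepsilon}(x_0)$ is harmless: since $\varepsilon^{-(d+1)}\mathcal{L}^d(\{u_\varepsilon\neq u\})\to 0$, a Fubini argument selects $\lambda$ with $\mathcal{H}^{d-1}$-negligible new jump. The fundamental-estimate gluing in Lemmas~\ref{lem 2.1}--\ref{lem 2.2} is then carried out with $u_\varepsilon$ in place of $u$ as the outer function; since $u_\varepsilon=u$ near $\partial B_\varepsilon(x_0)$, the competitor remains admissible for $\mathbf{m}_\mathcal{F}(u,B_\varepsilon(x_0))$, and the error term now involves $|u_\varepsilon-\overline{u}^{\textnormal{bulk}}_{x_0}|$, which is controlled. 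The intermediate sequence $u_\varepsilon$ is not a cosmetic device---it is what makes the error term in the fundamental estimate finite.
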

	We defer the proof of this lemma to Section \ref{secti vol}. As a third and final step we prove that asymptotically as $\varepsilon \to 0^+$ the two minimization problems $\mathbf{m}_\mathcal{F}(u,B_\varepsilon(x_0))$ and $\mathbf{m}_\mathcal{F}(\overline{u}^\textnormal{surf}_{x_0},B_\varepsilon(x_0))$ coincide for $\mathcal{H}^{d-1}$-a.e. $x_0 \in J_u$, where $\overline{u}^\textnormal{surf}_{x_0}:= u_{x_0,u^+(x_0),u^-(x_0),\nu_u(x_0)}$. 
	
	\begin{lem}\label{lem 3}
		Let $\mathcal{F}$ satisfy \eqref{H1} and \eqref{H3}-\eqref{H4} and let $u \in \gsbv^{\psi}(\Omega,\R^m)$. Then for $\mathcal{L}^d$-a.e. $x_0 \in \Omega$ we have
		\begin{equation}\label{surf pts equality}
			\lim_{\varepsilon \to 0^+} \frac{\mathbf{m}_\mathcal{F}(u,B_\varepsilon(x_0))}{\omega_{d-1} \varepsilon^{d-1}}=\limsup_{\varepsilon \to 0^+} \frac{\mathbf{m}_\mathcal{F}(\overline{u}^\textnormal{surf}_{x_0},B_\varepsilon(x_0))}{\omega_{d-1} \varepsilon^{d-1}}
		\end{equation}
	\end{lem}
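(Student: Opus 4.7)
The plan is to mirror the proof of Lemma \ref{lem 2}, replacing the bulk blow-up at a Lebesgue point by the surface blow-up at an approximate jump point; the statement should be understood for $\mathcal{H}^{d-1}$-a.e.\ $x_0\in J_u$, where $\overline{u}^{\textnormal{surf}}_{x_0}$ is defined. By Lemma \ref{lem 1}, at $\mu$-a.e.\ $x_0$ the ratios $\mathbf{m}_\mathcal{F}(u,B_\varepsilon(x_0))/\mu(B_\varepsilon(x_0))$ and $\mathcal{F}(u,B_\varepsilon(x_0))/\mu(B_\varepsilon(x_0))$ share the same limit, and since $J_u$ is countably $(d{-}1)$-rectifiable, $\mu(B_\varepsilon(x_0))/(\omega_{d-1}\varepsilon^{d-1})\to 1$ at $\mathcal{H}^{d-1}$-a.e.\ $x_0\in J_u$. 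Thus, on a set $\tilde J_u\subset J_u$ of full $\mathcal{H}^{d-1}$-measure,
\begin{equation*}
\lim_{\varepsilon\to 0^+}\frac{\mathbf{m}_\mathcal{F}(u,B_\varepsilon(x_0))}{\omega_{d-1}\varepsilon^{d-1}}=\lim_{\varepsilon\to 0^+}\frac{\mathcal{F}(u,B_\varepsilon(x_0))}{\omega_{d-1}\varepsilon^{d-1}}=:\ell(x_0).
\end{equation*}
Shrinking $\tilde J_u$ further, I may assume each $x_0$ is an approximate jump point with triple $(u^+(x_0),u^-(x_0),\nu_u(x_0))$, $\varepsilon^{1-d}\mathcal{H}^{d-1}(J_u\cap B_\varepsilon(x_0))\to\omega_{d-1}$, and $\varepsilon^{1-d}\int_{B_\varepsilon(x_0)}\psi(x,|\nabla u|)\,dx\to 0$; the last because the $\mathcal{L}^d$-absolutely continuous measure $\psi(\cdot,|\nabla u|)\mathcal{L}^d$ has vanishing $(d{-}1)$-density at $\mathcal{H}^{d-1}$-a.e.\ point of the rectifiable set $J_u$.

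For the inequality $\ell(x_0)\le L:=\limsup_{\varepsilon\to 0^+}\mathbf{m}_\mathcal{F}(\overline{u}^{\textnormal{surf}}_{x_0},B_\varepsilon(x_0))/(\omega_{d-1}\varepsilon^{d-1})$, fix $\eta\in(0,1/2)$ and pick an $\varepsilon^d$-optimal competitor $v_\varepsilon\in\gsbv^{\psi}(\Omega,\R^m)$ for $\mathbf{m}_\mathcal{F}(\overline{u}^{\textnormal{surf}}_{x_0},B_\varepsilon(x_0))$. Apply the rescaled Lemma \ref{fund est} with $D'=B_{(1-2\eta)\varepsilon}(x_0)\Subset D''=B_{(1-\eta)\varepsilon}(x_0)\Subset E=B_\varepsilon(x_0)$, inner function $v_\varepsilon$ and outer function $u$; the resulting $w_\varepsilon$ is admissible for $\mathbf{m}_\mathcal{F}(u,B_\varepsilon(x_0))$. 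Using the sharper form of Lemma \ref{fund est} that is directly read from its proof---valid when $D''\subset E$, replacing $\mathcal{F}(v,E)$ by $\mathcal{F}(v,E\setminus D')$---dividing by $\omega_{d-1}\varepsilon^{d-1}$ and taking $\limsup$ along a subsequence realizing $L$, one obtains $\ell(x_0)\le(1+\eta)\bigl[L+[1-(1-2\eta)^{d-1}]\ell(x_0)\bigr]+\mathcal{E}(\eta)$, where $\mathcal{E}(\eta)$ denotes the $\limsup$ of $M\omega_{d-1}^{-1}\varepsilon^{1-d}\int_F\psi(x,2|v_\varepsilon-u|/(\eta\varepsilon))\,dx$ with $F=D''\setminus D'$. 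Letting $\eta\to 0^+$ closes $\ell(x_0)\le L$ provided $\mathcal{E}(\eta)\to 0$.

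The reverse inequality $L\le\ell(x_0)$ is symmetric: along a subsequence realizing $L$, take a near-minimizer $v_k$ for $\mathbf{m}_\mathcal{F}(u,B_{\varepsilon_k}(x_0))$ and glue it with $\overline{u}^{\textnormal{surf}}_{x_0}$ on the same annular decomposition via the rescaled Lemma \ref{fund est}; the outcome is admissible for $\mathbf{m}_\mathcal{F}(\overline{u}^{\textnormal{surf}}_{x_0},B_{\varepsilon_k}(x_0))$, and the outer contribution $\mathcal{F}(\overline{u}^{\textnormal{surf}}_{x_0},B_{\varepsilon_k}\setminus B_{(1-2\eta)\varepsilon_k})$ is bounded directly by \eqref{H4} as $O(\eta)\omega_{d-1}\varepsilon_k^{d-1}$, since $\nabla\overline{u}^{\textnormal{surf}}_{x_0}\equiv 0$ a.e.\ and its jump set is a piece of a hyperplane. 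The analogous passage to the limit, modulo the same type of gluing error, closes the proof.

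The main obstacle is showing $\mathcal{E}(\eta)\to 0$, since the factor $1/\varepsilon$ in the argument of $\psi$ threatens to amplify the integrand on the thin annulus $F$. My plan is to first truncate both $v_\varepsilon$ and $u$ via the operator $\mathfrak u^{\psi}_\varepsilon$ of \eqref{other truncat op} adapted to $B_\varepsilon(x_0)$; by \eqref{truncated difference} the altered sets have $\mathcal{L}^d$-measure $O(\varepsilon^d)=o(\varepsilon^{d-1})$, and \hyperlink{idg}{\textnormal{(aDec)}} together with the explicit threshold $(\psi^-_{B_\varepsilon(x_0)})^{-1}(1/(2\varepsilon))$ make their contribution negligible after normalization. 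For the truncated functions, splitting via $|v_\varepsilon-u|\le|v_\varepsilon-\overline{u}^{\textnormal{surf}}_{x_0}|+|u-\overline{u}^{\textnormal{surf}}_{x_0}|$ and using the doubling of $\psi$, \hyperlink{adA1}{\textnormal{(adA1)}} replaces $\psi(x,\cdot)$ by $\psi^-_{B_\varepsilon(x_0)}(c\,\cdot)$ on the admissible range; the Poincar\'e inequality of Theorem \ref{POINCARE GEN} applied on $B_\varepsilon(x_0)$---separately on each side of the approximate jump hyperplane $\{(y-x_0)\cdot\nu_u(x_0)=0\}$, where $\overline{u}^{\textnormal{surf}}_{x_0}$ is locally constant---controls the two Orlicz modulars by $\int_{B_\varepsilon(x_0)}\psi(x,|\nabla u|)\,dx+\int_{B_\varepsilon(x_0)}\psi(x,|\nabla v_\varepsilon|)\,dx$ plus lower-order jump-set remainders. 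The $u$-part vanishes after division by $\varepsilon^{d-1}$ by the defining properties of $\tilde J_u$; the $v_\varepsilon$-part is of order $\varepsilon^{d-1}$ by \eqref{H4} but, once $F$ is localized to one of a family of concentric annuli inside $B_{(1-\eta)\varepsilon}\setminus B_{(1-2\eta)\varepsilon}$ (a procedure already built into the proof of Lemma \ref{fund est} through the average over the $k$ cutoffs), yields an $O(\eta)$ contribution that vanishes as $\eta\to 0^+$.
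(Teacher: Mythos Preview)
Your overall strategy---glue a near-minimizer for $\overline u^{\textnormal{surf}}_{x_0}$ with (a suitable modification of) $u$ via Lemma~\ref{fund est}, and symmetrically for the reverse inequality---is exactly what the paper does. The paper organizes this into Lemma~\ref{lem 3.1} and Lemma~\ref{lem 3.2}, both of which rest on the technical Lemma~\ref{surf seq}. Where your sketch falls short is precisely in the analogue of Lemma~\ref{surf seq}, i.e.\ in justifying $\mathcal E(\eta)\to 0$.

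There are two concrete gaps. First, Theorem~\ref{POINCARE GEN} is stated and proved for \emph{balls}; you cannot apply it ``separately on each side of the approximate jump hyperplane'' without further work, and in any case $J_u$ is only $(d{-}1)$-rectifiable, not flat at $x_0$. The paper handles this by a reflection trick: using that $J_u$ locally lies on a $C^1$ graph $\{x_d=\Gamma_j(x')\}$, it reflects $u$ across this graph to produce functions $w,z\in\gsbv^\psi(B_{\varepsilon_0}(x_0))$ defined on a \emph{full} ball, with $\mathcal H^{d-1}(J_w\cap B_\varepsilon(x_0))=o(\varepsilon^{d-1})$ (see \eqref{1.12}), so that the Poincar\'e inequality and the truncation $\mathfrak v^\psi_\varepsilon$ of \eqref{dp} become available. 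Second, even after truncation, Poincar\'e only controls $\mathfrak v^\psi_\varepsilon-\med(w,B_\varepsilon(x_0))$, not $\mathfrak v^\psi_\varepsilon-u^+(x_0)$. Bridging this gap requires the nontrivial rate estimate
\[
\varepsilon\,\psi^+_{B_\varepsilon(x_0)}\!\Big(\tfrac{|\med(w,B_\varepsilon(x_0))-u^+(x_0)|}{\varepsilon}\Big)\longrightarrow 0,
\]
which the paper obtains via a dyadic iteration using Theorem~\ref{Poinc homogeneous form}, the inequality \eqref{1.18} for $\Psi_\varepsilon^{-1}$ coming from \hyperlink{id}{$(\textnormal{Inc})_\gamma$}, and the structural condition \hyperlink{adA1}{(adA1)}; see \eqref{1.16}--\eqref{kjhrk}. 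This is the heart of Lemma~\ref{surf seq} and is entirely missing from your outline.

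A smaller point: in your direction $\ell(x_0)\le L$, you take $v_\varepsilon$ near-optimal on the \emph{full} ball $B_\varepsilon(x_0)$, so there is no reason $v_\varepsilon=\overline u^{\textnormal{surf}}_{x_0}$ on the annulus $F$; then $\int_F\psi(x,|v_\varepsilon-\overline u^{\textnormal{surf}}_{x_0}|/\varepsilon)\,dx$ is genuinely problematic and your proposed Poincar\'e bound by $\int_{B_\varepsilon}\psi(x,|\nabla v_\varepsilon|)\,dx$ is only $O(\varepsilon^{d-1})$, which does not vanish after normalization (the averaging over the $k$ cutoffs in Lemma~\ref{fund est} does not help, since $k$ depends only on $\eta$). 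The paper avoids this entirely by taking $z_\varepsilon$ near-optimal on the \emph{smaller} ball $B_{(1-3\theta)\varepsilon}(x_0)$ and extending by $\overline u^{\textnormal{surf}}_{x_0}$, so that on $F$ the integrand reduces to $\psi(x,|u_\varepsilon-\overline u^{\textnormal{surf}}_{x_0}|/\varepsilon)$, with $u_\varepsilon$ the sequence of Lemma~\ref{surf seq} satisfying \eqref{jump seq props}(iv).
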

	We defer the proof of this lemma to Section \ref{secti jump}. We are now ready to prove Theorem \ref{int rep teo} and Corollary \ref{trans invar rep}.
	
	\begin{proof}[Proof of Theorem \ref{int rep teo}] 
		In view of the Besicovitch derivation theorem, we need to prove that
		\begin{align}
			&\frac{\textnormal{d}\mathcal{F}(u,\cdot)}{\textnormal{d}\mathcal{L}^d}(x_0)=f(x_0,u(x_0),\nabla u(x_0)), \ \ \ \mbox{for $\mathcal{L}^d$-a.e. $x_0 \in \Omega$}, \label{0.1} \\
			&\frac{\textnormal{d}\mathcal{F}(u,\cdot)}{\textnormal{d}\mathcal{H}^{d-1}|_{J_u}}(x_0)=g(x_0,u^+(x_0),u^-(x_0),\nu_u(x_0)), \ \ \ \mbox{for $\mathcal{H}^{d-1}$-a.e. $x_0 \in J_u$}, \label{0.2}
		\end{align}
		where $f$ and $g$ are defined in \eqref{f} and \eqref{g}, respectively.
		
		Using Lemma \ref{lem 1} and that $\lim_{\varepsilon \to 0^+}(\omega_d \varepsilon^d)^{-1} \mu(B_\varepsilon(x_0))=1$ for $\mathcal{L}^d$-a.e. $x_0 \in \Omega$, we infer that for $\mathcal{L}^{d}$-a.e. $x_0 \in \Omega$
		$$
		\frac{\textnormal{d}\mathcal{F}(u,\cdot)}{\textnormal{d}\mathcal{L}^d}(x_0)=\lim_{\varepsilon \to 0^+} \frac{\mathcal{F}(u,B_\varepsilon(x_0))}{\mu(B_\varepsilon(x_0))}=\lim_{\varepsilon \to 0^+} \frac{\mathbf{m}_\mathcal{F}(u,B_\varepsilon(x_0))}{\mu(B_\varepsilon(x_0))}
		=\lim_{\varepsilon \to 0^+} \frac{\mathbf{m}_\mathcal{F}(u,B_\varepsilon(x_0))}{\omega_d \varepsilon^d} < +\infty.
		$$
		Then, \eqref{0.1} follows by definition of $f$ (\eqref{f}) and Lemma \ref{lem 2}.
		
		By Lemma \ref{lem 1} and the fact that $\lim_{\varepsilon \to 0^+}(\omega_{d-1} \varepsilon^{d-1})^{-1} \mu(B_\varepsilon(x_0))=1$ for $\mathcal{H}^{d-1}$-a.e. $x_0 \in J_u$ we infer that for $\mathcal{H}^{d-1}$-a.e. $x_0 \in J_u$
		$$
		\frac{\textnormal{d}\mathcal{F}(u,\cdot)}{\textnormal{d}\mathcal{H}^{d-1}|_{J_u}}(x_0)=\lim_{\varepsilon \to 0^+} \frac{\mathcal{F}(u,B_\varepsilon(x_0))}{\mu(B_\varepsilon(x_0))}=\lim_{\varepsilon \to 0^+} \frac{\mathbf{m}_\mathcal{F}(u,B_\varepsilon(x_0))}{\mu(B_\varepsilon(x_0))}
		=\lim_{\varepsilon \to 0^+} \frac{\mathbf{m}_\mathcal{F}(u,B_\varepsilon(x_0))}{\omega_{d-1} \varepsilon^{d-1}} < +\infty,
		$$
		and \eqref{0.2} follows by definition of $g$ (\eqref{g}) and Lemma \ref{lem 3}.
	\end{proof}
	
	\begin{proof}[Proof of Corollary \ref{trans invar rep}]
		In view of Theorem \ref{int rep teo} we have that for all $u \in \gsbv^{\psi}(\Omega,\R^m)$ and all $A \in \mathcal{A}(\Omega)$
		$$
		\mathcal{F}(u,A)=\int_A \tilde{f}(x,u(x),\nabla u(x))\, dx+ \int_{J_u \cap A} \tilde{g}(x,u^+(x),u^-(x),\nu_u(x)) \, d\mathcal{H}^{d-1},
		$$
		where $\tilde{f}$ and $\tilde{g}$ are defined in \eqref{f} and \eqref{g}, respectively. Recalling the definition of $\mathbf{m}_\mathcal{F}$, since $\mathcal{F}$ satisfies assumption \eqref{H5}, by definition of $\tilde{f}$ and $\tilde{g}$ we deduce that for every $c \in \R^m$
		\begin{align*}
			\tilde{f}(x,u(x)+c,\nabla u(x))=\tilde{f}(x,0,\nabla u(x)), \ \ \ \ \ \tilde{g}(x,u^+(x)+c,u^-(x)+c,\nu_u(x))=\tilde{g}(x,[u](x),0,\nu_u(x)).
		\end{align*}
		Hence, $\tilde{f}(x,u,\nabla u)=:f(x,\nabla u)$ and $\tilde{g}(x,u^+(x),u^-(x),\nu_u(x))=:g(x,[u](x),\nu_u(x))$.
	\end{proof}
	
	In the remaining part of the section we prove Lemma \ref{lem 1} following the lines of \cite{BFLMrelaxSBVp}. We start by fixing some notations. Given $\delta>0$ and $A \in \mathcal{A}(\Omega)$ we define
	\begin{align}\label{emmedelta}
		\mathbf{m}_\mathcal{F}^\delta(u,A) :=\inf \left\{ \sum_{i=1}^\infty \mathbf{m}_\mathcal{F}(u,B_i) \colon B_i \subset A \  \mbox{pairwise disjoint balls},  \ \diam(B_i)<\delta, \ \mu \left( A \setminus \cup_{i=1}^\infty B_i \right)=0 \right\}
	\end{align}
	where $\mu$ is the measure introduced in \eqref{mu}. Since $\mathbf{m}_\mathcal{F}^\delta(u,A)$ is decreasing in $\delta$ we can also introduce
	\begin{equation}\label{m star}
		\mathbf{m}_\mathcal{F}^*(u,A):=\lim_{\delta \to 0^+} \mathbf{m}_\mathcal{F}^\delta(u,A).
	\end{equation}
	In the next lemma we prove that under assumptions \eqref{H1}--\eqref{H4} $\mathcal{F}$ and $\mathbf{m}_\mathcal{F}^*$ coincide.
	
	\begin{lem}\label{lem 4 for lem 1}
		Let $\mathcal{F}$ satisfy \eqref{H1}--\eqref{H4} and let $u \in \gsbv^{\psi}(\Omega,\R^m)$. Then, for all $A \in \mathcal{A}(\Omega)$ we have $\mathcal{F}(u,A)=\mathbf{m}_\mathcal{F}^*(u,A)$.
	\end{lem}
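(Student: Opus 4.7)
The inequality $\mathbf{m}_\mathcal{F}^*(u,A) \leq \mathcal{F}(u,A)$ is immediate. For any admissible family $\{B_i\}$ in \eqref{emmedelta}, the function $u$ itself is a valid competitor for each $\mathbf{m}_\mathcal{F}(u,B_i)$; since $\mathcal{F}(u,\cdot)$ is additive on disjoint Borel sets by \eqref{H1} and the complement $A \setminus \bigcup_i B_i$ is $\mu$-null, \eqref{H4} ensures $\mathcal{F}(u,A \setminus \bigcup_i B_i) = 0$, so $\sum_i \mathbf{m}_\mathcal{F}(u,B_i) \leq \sum_i \mathcal{F}(u,B_i) = \mathcal{F}(u,A)$. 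Taking the infimum over admissible families and letting $\delta \to 0^+$ yields the bound.

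For the reverse inequality I fix $\epsilon > 0$ and, for each small $\delta > 0$, choose an admissible family $\{B_i^\delta\}$ with $\sum_i \mathbf{m}_\mathcal{F}(u,B_i^\delta) < \mathbf{m}_\mathcal{F}^\delta(u,A) + \epsilon$ together with competitors $v_i^\delta \in \gsbv^\psi(\Omega,\R^m)$ satisfying $v_i^\delta = u$ in a neighborhood of $\partial B_i^\delta$ and $\mathcal{F}(v_i^\delta,B_i^\delta) \leq \mathbf{m}_\mathcal{F}(u,B_i^\delta) + 2^{-i}\epsilon$. The boundary-matching condition ensures that the glued function $v^\delta$, defined as $v_i^\delta$ on $B_i^\delta$ and as $u$ on $A \setminus \bigcup_i B_i^\delta$, belongs to $\gsbv^\psi(A,\R^m)$; by \eqref{H1}, \eqref{H3}, and the $\mu$-null vanishing just used, $\mathcal{F}(v^\delta, A) = \sum_i \mathcal{F}(v_i^\delta,B_i^\delta) \leq \mathbf{m}_\mathcal{F}^\delta(u,A) + 2\epsilon$. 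Combined with \eqref{H4} and with the already established inequality $\mathbf{m}_\mathcal{F}^\delta(u,A)\leq \mathcal{F}(u,A)$, this gives the uniform estimate $\sum_i \int_{B_i^\delta}\psi(x,|\nabla v_i^\delta|)\,dx + \sum_i \mathcal{H}^{d-1}(J_{v_i^\delta}\cap B_i^\delta) \leq C$ independent of $\delta$.

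The central step is to verify that $v^\delta \to u$ in measure on $A$ as $\delta \to 0^+$, since then \eqref{H2} gives $\mathcal{F}(u,A) \leq \liminf_{\delta \to 0^+} \mathcal{F}(v^\delta,A) \leq \mathbf{m}_\mathcal{F}^*(u,A) + 2\epsilon$, and letting $\epsilon \to 0$ concludes. To this end I would split the indices into a \emph{good} family $\mathcal{I}_g^\delta$, for which $\mathcal{H}^{d-1}((J_{v_i^\delta}\cup J_u)\cap B_i^\delta) \leq c\, r_i^{d-1}$ (with $r_i$ the radius of $B_i^\delta$) so that the smallness hypothesis \eqref{poinc small jump} of Theorem \ref{POINCARE GEN} holds for $v_i^\delta - u$ on $B_i^\delta$, and a complementary \emph{bad} family. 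For bad balls, $r_i^d \lesssim \delta\, \mathcal{H}^{d-1}((J_{v_i^\delta}\cup J_u)\cap B_i^\delta)$, hence $\mathcal{L}^d\bigl(\bigcup_{\mathrm{bad}} B_i^\delta\bigr) \lesssim \delta\,\bigl(\sum_i \mathcal{H}^{d-1}(J_{v_i^\delta}\cap B_i^\delta) + \mathcal{H}^{d-1}(J_u\cap A)\bigr) \to 0$. On good balls, extending $v_i^\delta - u$ by zero outside $B_i^\delta$ (permissible by the boundary match), Theorem \ref{POINCARE GEN} bounds the truncation $\mathfrak{u}^\psi_{r_i}(v_i^\delta - u)$ in the Orlicz norm associated with $\psi^+_{B_i^\delta}$ by a quantity involving $r_i$ times $\|\nabla(v_i^\delta - u)\|$ plus the inhomogeneous $\sigma$-term; using \hyperlink{idg}{$\textnormal{(Inc)}_p$} with $p>1$ and Chebyshev's inequality then makes $\sum_i \mathcal{L}^d(\{|v_i^\delta - u| > \eta\} \cap B_i^\delta)$ vanish as $\delta \to 0^+$, after controlling the set where truncation disagrees with $v_i^\delta - u$ via \eqref{truncated difference} and the bound $\sum_i \mathcal{H}^{d-1}(J_{v_i^\delta - u}\cap B_i^\delta)^{d/(d-1)} \lesssim \delta$. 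The principal technical obstacle is the simultaneous handling of the inhomogeneous $\sigma$-correction in the generalized Poincar\'e inequality, the truncation-exception set, and the summability of Chebyshev estimates under non-standard growth: all three contributions will vanish in the limit thanks to the smallness of $\delta$ and the uniform bounds on energy, gradient, and jump-set content gathered above.
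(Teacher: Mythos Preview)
Your overall strategy matches the paper's, but two points deserve attention.

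First, the assertion that the infinite glue $v^\delta$ automatically lies in $\gsbv^\psi(A,\R^m)$ is not immediate: gluing countably many pieces, even with matched boundary values, requires a limiting argument. The paper handles this by considering the partial glues $v^{\delta,n}$ (using only the first $n$ balls), deriving uniform energy bounds from \eqref{H4}, applying Ambrosio's $\gsbv$ compactness to the sequence $v^{\delta,n}\to v^\delta$, and then invoking Ioffe's theorem to recover the $\psi$-integrability of $\nabla v^\delta$.

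Second, and more substantively, your route to convergence in measure is far more intricate than necessary. The paper bypasses Theorem~\ref{POINCARE GEN} entirely and uses only the \emph{classical} BV Poincar\'e inequality with zero trace. The point is this: set $w^\delta := u - v^\delta$, so that $w^\delta|_{B_i^\delta}$ has zero trace on $\partial B_i^\delta$. Truncating, $w^{\delta,M} := (-M)\vee w^\delta \wedge M$ is a bounded $\sbv$ function with zero trace on each $\partial B_i^\delta$, and the standard BV Poincar\'e gives $\|w^{\delta,M}\|_{L^1(B_i^\delta)} \leq C\delta\,|Dw^{\delta,M}|(B_i^\delta)$. Summing over $i$ yields $\|w^{\delta,M}\|_{L^1(A)} \leq C\delta\,|Dw^{\delta,M}|(A)$, where $|Dw^{\delta,M}|(A)$ is controlled by $\int_A|\nabla u| + \int_A|\nabla v^\delta| + 2M\bigl(\mathcal{H}^{d-1}(J_u\cap A)+\mathcal{H}^{d-1}(J_{v^\delta}\cap A)\bigr)$, all uniformly bounded. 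Taking $M=1$ already gives convergence in measure, with no good/bad splitting, no median to track, and no Orlicz machinery. Your approach via Theorem~\ref{POINCARE GEN} might be salvageable, but note that $\med(v_i^\delta - u, B_i^\delta)$ need not vanish (the boundary-matching neighborhood can be arbitrarily thin), and the ``principal technical obstacle'' you flag around the $\sigma$-correction and the summation of Chebyshev estimates is genuinely delicate; the paper's shortcut avoids all of it.
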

	
	\begin{proof}
		We mainly follow the lines of \cite[Lemma 3.3]{BFMrelaxglobal}. We start by proving the inequality $\mathcal{F}(u,A) \geq \mathbf{m}_\mathcal{F}^*(u,A)$. For every ball $B \subset A$ we have that $\mathbf{m}_\mathcal{F}(u,B) \leq \mathcal{F}(u,B)$ by definition. Using \eqref{H1} we infer $\mathcal{F}(u,A) \geq \mathbf{m}_\mathcal{F}^\delta(u,A)$ for all $\delta>0$. Thus the desired inequality follows by definition of $\mathbf{m}_\mathcal{F}^*$ in \eqref{m star}. 
		
		We now prove the reverse inequality. Fix $\delta>0$ and $A \in \mathcal{A}(\Omega)$ and let $\{ B_i^\delta \}_i$ be a family of balls as in \eqref{emmedelta} such that
		\begin{equation}\label{2.1}
			\sum_{i=1}^\infty \mathbf{m}_\mathcal{F}(u,B^\delta_i) \leq \mathbf{m}^\delta_\mathcal{F}(u,A)+\delta.
		\end{equation}
		By definition of $\mathbf{m}_\mathcal{F}$ we can find $v_i^\delta \in \gsbv^{\psi}(B_i^\delta,\R^m)$ such that $v_i^\delta=u$ in a neighborhood of $\partial B_i^\delta$ and
		\begin{equation}\label{2.2}
			\mathcal{F}(v_i^\delta,B_i^\delta) \leq \mathbf{m}_\mathcal{F}(u,B_i^\delta)+\delta\mathcal{L}^d(B_i^\delta).
		\end{equation}
		We define
		\begin{equation}\label{2.3}
			v^{\delta,n}:=\sum_{i=1}^n v_i^\delta \chi_{B_i^\delta}+u \chi_{N_0^{\delta,n}} \ \ n \in \N, \ \ \ \ v^{\delta}:=\sum_{i=1}^\infty v_i^\delta \chi_{B_i^\delta}+u \chi_{N_0^{\delta}},
		\end{equation}
		where $N_0^{\delta,n}:= \Omega \setminus \cup_{i=1}^n B_i^\delta$ and $N_0^{\delta}:= \Omega \setminus \cup_{i=1}^\infty B_i^\delta$. By construction, we have that $v^{\delta,n} \in \gsbv^{\psi}(\Omega,\R^m)$ for every $n \in \N$ and, using \eqref{2.1}, \eqref{2.2} and \eqref{H4}, 
		\begin{equation}\label{2.4}
			\sup_{n \in N} \left( \int_\Omega \psi\left(x,|\nabla v^{\delta,n}(x)|\right) \, dx +\mathcal{H}^{d-1}(J_{v^{\delta,n}}) \right) < +\infty.
		\end{equation}
		Moreover, we have that $v^{\delta,n} \to v^\delta$ pointwise $\mathcal{L}^d$-a.e. and in measure on $\Omega$ by construction. Recalling that $\psi$ satisfies \hyperlink{idg}{$\textnormal{(Inc)}_\gamma$} for some $\gamma>1$, using \cite[Theorem 2.2]{AmbrosioExistence} (see also \cite{AmbrosioCompact1} and \cite{AmbrosioCompact2}) together with the compactness in $L^0$ of $\{ v^{\delta,n} \}_n$ gives that $v^\delta \in \gsbv^\gamma(\Omega,\R^m)$ and $\nabla v^{\delta ,n} \rightharpoonup \nabla v^\delta$ weakly in $L^\gamma$. Now using Ioffe's theorem and \eqref{2.4}, we infer that
		$$
		\int_\Omega \psi \left(x,|\nabla v^\delta(x)| \right) \, dx \leq \liminf_{n \to +\infty} \int_\Omega \psi \left(x,|\nabla v^{\delta,n} (x)| \right) \, dx < +\infty.
		$$
		Therefore, $v^\delta \in \gsbv^{\psi}(\Omega,\R^m)$. Observe that it holds
		\begin{align}\label{2.5}
			\begin{split}
				\mathcal{F}(v^\delta,A) & = \sum_{i=1}^\infty \mathcal{F}(v^\delta_i,B^\delta_i)+\mathcal{F}(u,A \cap N_0^\delta) \leq \sum_{i=1}^\infty (\mathbf{m}_\mathcal{F}(u,B_i^\delta)+\delta \mathcal{L}^d(B_i^\delta)) \\
				& \leq \mathbf{m}_\mathcal{F}^\delta(u,A)+\delta(1+\mathcal{L}^d(A)),
			\end{split}
		\end{align} 
		where we have used that $\mu(A \cap N_0^\delta)=\mathcal{F}(u,A \cap N_0^\delta)=0$ by definition of $\{ B_i^\delta \}_i$ and \eqref{H4}. Thanks to \hyperlink{idg}{$\textnormal{(Inc)}_\gamma$} we have $\psi^+_\Omega(1)t^\gamma \leq \psi(x,t)$ for $\mathcal{L}^d$-a.e. $x \in \Omega$ and every $t \geq 1$. Therefore, \eqref{2.5} together with \eqref{H4} implies that
		\begin{equation}\label{2.6}
			\int_A |\nabla v^\delta|^\gamma \, dx  + \mathcal{H}^{d-1}(J_{v^\delta} \cap A) \leq \frac{\psi_\Omega^+(1)}{a} \left(\mathbf{m}_\mathcal{F}^\delta(u,A)+\delta(1+\mathcal{L}^d(A))\right)+\mathcal{L}^d(A).
		\end{equation}
		We now claim that 
		\begin{equation}\label{2.7}
			w^\delta:=u-v^\delta \to 0 \ \ \mbox{in measure on $A$}.
		\end{equation}
		Notice that if \eqref{2.7} holds, then \eqref{H2}, \eqref{m star} and \eqref{2.5} imply that $\mathbf{m}^*_\mathcal{F}(u,A) \geq \mathcal{F}(u,A)$ in the limit $\delta \to 0^+$. 
		
		In order to prove \eqref{2.7}, we first notice that $w^\delta|_{B_i^\delta} \in \gsbv^\gamma(B_i^\delta,\R^m)$ has zero trace on $\partial B_i^\delta$. Setting $w^{\delta,M}:=-M \vee w^\delta \wedge M$ with $M>0$, by the homogeneous Poincaré inequality in $BV$ we have
		$$
		\Vert w^{\delta,M} \Vert_{L^1(B_i^\delta)} \leq C \delta |Dw^{\delta,M}|(B_i^\delta).
		$$
		Therefore, by definition of $\{B_i^\delta\}_i$ we deduce that
		$$
		\Vert w^{\delta,M} \Vert_{L^1(A)} \leq C \delta |D w^{\delta,M}|(\cup_{i=1}^\infty B_i^\delta) \leq C \delta |D w^{\delta,M}|(A).
		$$
		The quantity $|D w^{\delta,M}|(A)$ is bounded in view of \eqref{2.6} since $u \in \gsbv(A,\R^m)$. Indeed,
		$$
		|D w^{\delta,M}|(A) \leq \int_A |\nabla v^\delta| \, dx+\int_A |\nabla u| \, dx+ 2M \left( \mathcal{H}^{d-1}(J_u \cap A)+\mathcal{H}^{d-1}(J_{v^\delta} \cap A) \right) < +\infty.
		$$
		This implies $w^{\delta,M} \to 0$ in $L^1(A,\R^m)$ as $\delta \to 0^+$ and thus in measure on $A$ for every $M>0$. Observe that if we take $M=1$ then we have that for every $\varepsilon \in (0,1)$
		$$
		\{ x \in A \colon |w^{\delta}(x)|> \varepsilon \} \subseteq \{ x \in A \colon |w^{\delta,1}(x)|> \varepsilon \},
		$$
		and this gives \eqref{2.7}.
	\end{proof}
	
	We finally prove Lemma \ref{lem 1}.
	
	\begin{proof}[Proof of Lemma \ref{lem 1}]
		The proof follows the same arguments of \cite[Lemma 5 and Lemma 6]{BFLMrelaxSBVp}. It essentially relies on Lemma \ref{lem 4 for lem 1} and on the assumptions on $\mathcal{F}$ but it is not hinged on the growth conditions. Hence we omit it.
	\end{proof}
	
	It remains to prove Lemmas \ref{lem 2} and \ref{lem 3}. This is the subject of the following two sections.
	
	\subsection{Lebesgue points} \label{secti vol}
	
	This section is devoted to the proof of Lemma \ref{lem 2}.
	We now proceed with the bulk part of the energy, that is, we analyze the blow up at points where the approximate gradient exists.
	
	\begin{lem}\label{vol seq}
		Let $u \in \gsbv^{\psi}(\Omega,\R^m)$. Then for $\mathcal{L}^d$-a.e. $x_0 \in \Omega$ and $\mathcal{L}^1$-a.e. $\lambda \in (0,1)$ there exists a sequence ${u}_\varepsilon \in \gsbv^{\psi}(B_\varepsilon(x_0))$ such that the following properties hold:
		\begin{align}\label{bulk seq props}
			\begin{split}
				&(i) \ \ {u}_\varepsilon=u \mbox{ on } B_\varepsilon(x_0) \setminus \overline{B_{\lambda\varepsilon}(x_0)}, \ \ \ \ \lim_{\varepsilon \to 0^+} \varepsilon^{-(d+1)} \mathcal{L}^d(\{ {u}_\varepsilon \neq u \} \cap B_\varepsilon(x_0))=0, \\
				&(ii) \ \ \lim_{\varepsilon \to 0^+} \frac{1}{\varepsilon^d} \int_{B_{\lambda \varepsilon}(x_0)} \psi \left(x,\frac{|{u}_\varepsilon(x)-u(x_0)-\nabla u(x_0)(x-x_0)|}{\varepsilon} \right) \, dx=0; \\
				&(iii) \ \ \lim_{\varepsilon \to 0^+} \frac{1}{\varepsilon^{d}} \mathcal{H}^{d-1}(J_{{u}_\varepsilon} \cap B_\varepsilon(x_0))=0.
			\end{split} 
		\end{align}
	\end{lem}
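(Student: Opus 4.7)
Set $\ell := \overline{u}^{\textnormal{bulk}}_{x_0} = \ell_{x_0, u(x_0), \nabla u(x_0)}$. I will define the candidate sequence by leaving $u$ untouched outside $B_{\lambda\varepsilon}(x_0)$ and replacing it inside by the SBV truncation from \eqref{truncat operat}, that is,
$$
u_\varepsilon(x) := \begin{cases} T_{B_{\lambda\varepsilon}(x_0)}(u)(x) & x \in B_{\lambda\varepsilon}(x_0), \\ u(x) & x \in \Omega \setminus B_{\lambda\varepsilon}(x_0). \end{cases}
$$
The point $x_0$ is chosen generic in the following senses, each of which holds $\mathcal{L}^d$-a.e.: $u$ is approximately differentiable at $x_0$; $x_0 \notin J_u$; the density at $x_0$ of the $\mathcal{L}^d$-singular measure $\mathcal{H}^{d-1}\llcorner J_u$ vanishes, so that $\mathcal{H}^{d-1}(J_u \cap B_\varepsilon(x_0)) = o(\varepsilon^d)$; Theorem \ref{DGCL 3} applies at $x_0$, so $\med(u,B_\varepsilon(x_0)) \to u(x_0)$ and, together with approximate differentiability, $|\tau'(u,B_\varepsilon(x_0)) - u(x_0)|,\ |\tau''(u,B_\varepsilon(x_0)) - u(x_0)| = O(\varepsilon)$; and $x_0$ is a point at which \eqref{.} holds, which by Lemma \ref{adA1 then 2.6} follows from \hyperlink{adA1}{(adA1)} and gives $\psi^+_{B_\varepsilon(x_0)}(\tau) \le C \psi^-_{B_\varepsilon(x_0)}(\tau)$ on bounded $\tau$-intervals for small $\varepsilon$.

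\paragraph*{Verification of (i) and (ii).} The equality $u_\varepsilon = u$ on $B_\varepsilon \setminus \overline{B_{\lambda\varepsilon}}$ is by construction, and \eqref{truncated difference} yields
$$
\mathcal{L}^d(\{u_\varepsilon \neq u\} \cap B_\varepsilon(x_0)) \le 2\bigl(2\gammaiso \mathcal{H}^{d-1}(J_u \cap B_{\lambda\varepsilon}(x_0))\bigr)^{d/(d-1)} = o\bigl(\varepsilon^{d^2/(d-1)}\bigr) = o(\varepsilon^{d+1}),
$$
because $d^2/(d-1) \ge d+1$ for $d \ge 2$. For (ii), the bounds on $\tau',\tau''$ give $u_\varepsilon(B_{\lambda\varepsilon}(x_0)) \subseteq u(x_0) + O(\varepsilon)$, so that $|u_\varepsilon - \ell|/\varepsilon$ is bounded by some constant $M$ on $B_{\lambda\varepsilon}(x_0)$. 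Splitting $B_{\lambda\varepsilon}(x_0)$ according to the three cases $\{|u-\ell| \le \tau\varepsilon\}$ (where the integrand is dominated by $\psi^+_{B_{\lambda\varepsilon}}(\tau)$), $\{u_\varepsilon = u\} \cap \{|u-\ell|>\tau\varepsilon\}$ (of $\mathcal{L}^d$-measure $o(\varepsilon^d)$ by approximate differentiability), and $\{u_\varepsilon \neq u\}$ (of $\mathcal{L}^d$-measure $o(\varepsilon^{d+1})$), and using that $\psi^+_{B_{\lambda\varepsilon}}(M)$ stays bounded by \eqref{.} together with \hyperlink{idg}{(Dec)}, one obtains
$$
\frac{1}{\varepsilon^d}\int_{B_{\lambda\varepsilon}(x_0)} \psi(x, |u_\varepsilon - \ell|/\varepsilon)\, dx \le \omega_d \lambda^d\, \psi^+_{B_{\lambda\varepsilon}(x_0)}(\tau) + o(1).
$$
Sending first $\varepsilon \to 0^+$ (so $\psi^+_{B_{\lambda\varepsilon}}(\tau) \to \psi(x_0,\tau)$ up to a constant by \eqref{.}) and then $\tau \to 0^+$ (using \hyperlink{idg}{$\textnormal{(Inc)}_p$} to ensure $\psi(x_0,\tau) \to 0$) yields (ii).

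\paragraph*{Verification of (iii) and main obstacle.} One decomposes
$$
J_{u_\varepsilon} \cap B_\varepsilon(x_0) \subseteq \bigl(J_u \cap B_\varepsilon(x_0)\bigr) \cup \bigl(\partial B_{\lambda\varepsilon}(x_0) \cap \{T_{B_{\lambda\varepsilon}(x_0)}u \neq u\}\bigr),
$$
since the truncation creates no new jumps inside $B_{\lambda\varepsilon}(x_0)$. The first contribution is $o(\varepsilon^d)$ directly by the density-zero property at $x_0$. The boundary contribution is the delicate term: I would enclose the $\lambda$-dependent set $\{T_{B_{\lambda\varepsilon}(x_0)}u \neq u\}$ inside a $\lambda$-independent superset $A_\varepsilon \supseteq \{|u - u(x_0)| > C\varepsilon\}$ (the inclusion follows because $[\tau',\tau''] \supseteq [u(x_0)-C\varepsilon, u(x_0)+C\varepsilon]$ for a suitably large $C$ depending on $\nabla u(x_0)$), still with $\mathcal{L}^d(A_\varepsilon \cap B_\varepsilon) = o(\varepsilon^{d+1})$ by approximate differentiability and the density-zero property. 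Then the coarea formula in the radial variable,
$$
\int_0^1 \mathcal{H}^{d-1}\bigl(\partial B_{\lambda\varepsilon}(x_0) \cap A_\varepsilon\bigr)\, d\lambda \le \varepsilon^{-1}\, \mathcal{L}^d(A_\varepsilon \cap B_\varepsilon) = o(\varepsilon^d),
$$
combined with a Markov/Borel--Cantelli selection along a sequence $\varepsilon_n \to 0$, isolates a full $\mathcal{L}^1$-measure set of $\lambda$'s for which $\mathcal{H}^{d-1}(\partial B_{\lambda\varepsilon_n}(x_0) \cap A_{\varepsilon_n}) = o(\varepsilon_n^d)$. The main obstacle is precisely the construction of such a $\lambda$-independent superset together with the quantitative bound on its Lebesgue measure; the degenerate case $\nabla u(x_0) = 0$ is the most delicate, as then one has to replace the explicit interval $[u(x_0)-C\varepsilon, u(x_0)+C\varepsilon]$ by a finer estimate on the distribution of $u$ near $x_0$, but approximate differentiability still produces an $o(\varepsilon)$-rate of concentration sufficient to close the argument.
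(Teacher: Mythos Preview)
Your construction has a genuine gap at the heart of (ii): the claim that $|\tau'(u,B_{\lambda\varepsilon})-u(x_0)|$ and $|\tau''(u,B_{\lambda\varepsilon})-u(x_0)|$ are $O(\varepsilon)$ is \emph{false} in general. Approximate differentiability and Theorem~\ref{DGCL 3} give information about medians and quantile orderings, but no $L^\infty$ control on the truncated function. To see this concretely, take $u\in W^{1,p}(\Omega)$ with $p\le d$ (so $J_u=\emptyset$, hence $\Lambda_\varepsilon=0$ and $T_{B_{\lambda\varepsilon}}u=u$). Then your bound $|u_\varepsilon-\ell|/\varepsilon\le M$ would force $\|u-u(x_0)\|_{L^\infty(B_\varepsilon)}=O(\varepsilon)$, which fails at any point where $u$ is not locally bounded. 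More generally, $\tau''$ is the $(\omega_d\varepsilon^d-\Lambda_\varepsilon)$-quantile of $u$ on $B_\varepsilon$; since $\Lambda_\varepsilon=o(\varepsilon^{d^2/(d-1)})$ can be \emph{much} smaller than the $o(\varepsilon^d)$ measure of $\{|u-\ell|>K\varepsilon\}$ coming from approximate differentiability, $\tau''$ can sit far above $u(x_0)+K\varepsilon$ for every $K$. Without this $L^\infty$ bound, your case splitting collapses: on the set $\{u_\varepsilon=u\}\cap\{|u-\ell|>\tau\varepsilon\}$ the integrand $\psi(x,|u-\ell|/\varepsilon)$ is unbounded, and the $o(\varepsilon^d)$ measure of that set does not compensate.

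A second, independent gap appears in (iii): you assert $\mathcal{L}^d(\{|u-u(x_0)|>C\varepsilon\}\cap B_\varepsilon)=o(\varepsilon^{d+1})$, but approximate differentiability only yields $o(\varepsilon^d)$ with no quantitative rate, so your coarea argument for the boundary jump does not close. The paper avoids both issues by a different construction: it truncates $u-\ell$ (not $u$) via $T_{B_\varepsilon(x_0)}$, and then applies a \emph{second} truncation at the $\psi$-adapted levels $\mathrm{med}\pm\varepsilon(\psi^-_{B_\varepsilon})^{-1}(1/(2\varepsilon))$, producing a $\lambda$-independent function $v_\varepsilon$. The measure of the set where this second truncation acts is controlled by Chebyshev together with the Poincar\'e inequality of Theorem~\ref{Poinc homogeneous form}, which converts it into $\varepsilon\int_{B_\varepsilon}\psi(x,|\nabla u-\nabla u(x_0)|)\,dx=o(\varepsilon^{d+1})$ via the Lebesgue-point property of $\psi(\cdot,|\nabla u-\nabla u(x_0)|)$. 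Property~(ii) then follows from the generalized Poincar\'e inequality~\eqref{P2} and~\hyperlink{adA1}{(adA1)}, not from any pointwise bound. The Fubini/selection step for $\lambda$ is applied to this $\lambda$-independent $v_\varepsilon$, so no $\lambda$-uniform superset is needed.
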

	
	\begin{proof}
		Without loss of generality we can assume $m=1$. Take $x_0 \in \Omega$ satisfying
		\begin{subequations}
				\begin{align}
				& \lim_{\varepsilon \to 0^+} \frac{1}{\varepsilon^d} \int_{B_\varepsilon(x_0)} \! \psi(x,|\nabla u(x)-\nabla u(x_0)|) \, dx=0; \label{vpta a} \\
				& \lim_{\varepsilon \to 0^+} \frac{1}{\varepsilon^d} \mathcal{H}^{d-1}(J_u \cap B_\varepsilon(x_0))=0; \label{vpta b} \\
				& \lim_{\varepsilon \to 0^+} \frac{1}{\varepsilon^d} \mathcal{L}^d \left( \left\{ x \in B_\varepsilon(x_0) \colon \frac{|u(x)-u(x_0)-\nabla u(x_0)(x-x_0)|}{|x-x_0|}>\rho \right\} \right)=0, \ \ \ \mbox{$\rho>0$} \label{vpta c}.
				\end{align}
		\end{subequations}
		Notice that \eqref{vpta a}--\eqref{vpta c} are satisfied by $\mathcal{L}^d$-a.e $x_0 \in \Omega$ by Proposition \ref{leb pts prop}. Set for brevity $T_\varepsilon:=T_{B_\varepsilon(x_0)}$. Observe that for $\varepsilon$ small enough, \eqref{vpta b} implies \eqref{poinc small jump}. For every $x \in B_\varepsilon(x_0)$ we can thus define the truncated functions 
		$$
		\overline{u}_\varepsilon(x):=T_\varepsilon \left(u(x)-u(x_0)-\nabla u(x_0)(x-x_0)\right)
		$$
		and, similarly as \eqref{other truncat op},
		$$
		\mathfrak{u}_\varepsilon^{\psi}(x):= \left(\med(\overline{u}_\varepsilon(x),B_\varepsilon(x_0))-\varepsilon (\psi^-_{B_\varepsilon(x_0)})^{-1}\left(\frac{1}{2\varepsilon}\right)\right) \vee \overline{u}_\varepsilon(x) \wedge \left(\med(\overline{u}_\varepsilon,B_\varepsilon(x_0))+\varepsilon (\psi_{B_\varepsilon(x_0)}^-)^{-1}\left(\frac{1}{2\varepsilon}\right)\right).
		$$
		Finally, we set $v_\varepsilon(x):=u(x_0)+\nabla u(x_0)(x-x_0)+\mathfrak{u}_\varepsilon^\psi(x)$. 
		
		We now want to estimate the quantity $\mathcal{L}^d(\{ v_\varepsilon \neq u \} \cap B_\varepsilon(x_0))$. We have 
		\begin{align}
			\begin{split}\label{blblbllb}
		\mathcal{L}^d(\{ v_\varepsilon \neq u \} \cap B_\varepsilon(x_0)) \leq & \ \mathcal{L}^d(\{ \mathfrak{u}^\psi_\varepsilon \neq \overline{u}_\varepsilon \} \cap B_\varepsilon(x_0)) \\
		& + \mathcal{L}^d(\{ \overline{u}_\varepsilon \neq u(x)-u(x_0)-\nabla u(x_0)(x-x_0)\} \cap B_\varepsilon(x_0)).
			\end{split}
		\end{align}
		By definition of $T_\varepsilon$ and \eqref{truncated difference}, we can estimate the second quantity on the right hand side of \eqref{blblbllb} with 
		$$
		\mathcal{L}^d(\{ \overline{u}_\varepsilon \neq u(x)-u(x_0)-\nabla u(x_0)(x-x_0)\} \cap B_\varepsilon(x_0)) \leq \left(2\gammaiso \mathcal{H}^{d-1}(J_u \cap B_\varepsilon(x_0)) \right)^{\frac{d}{d-1}}.
		$$
		For the first term of \eqref{blblbllb} recall that by Theorem \ref{weak and strong equiv} and Lemma \ref{equiv properties} we have that for every $t \geq 0$
		$$
		\psi^-_{B_\varepsilon(x_0)}\left((\psi^-_{B_\varepsilon(x_0)})^{-1}(t)\right) \geq \frac{t}{2K},
		$$ 
		where $K \geq 2 $ is the doubling constant of $\psi$. 
		
		We set $m_\varepsilon:=\med(u-\nabla u(x_0)(\cdot-x_0),B_\varepsilon(x_0))$. Using the definition of $\mathfrak{u}^\psi_\varepsilon$, Chebychev inequality and Theorem \ref{Poinc homogeneous form},
		\begin{align*}
			\mathcal{L}^d\left(\left\{  \mathfrak{u}^\psi_\varepsilon \neq \overline{u}_\varepsilon \right\} \cap B_\varepsilon(x_0)\right) &  \leq \mathcal{L}^d\left(B_\varepsilon(x_0) \cap\left\{ |\overline{u}_\varepsilon-\med(\overline{u}_\varepsilon,B_\varepsilon(x_0))|\geq \varepsilon (\psi^-_{B_\varepsilon(x_0)})^{-1}\left( \frac{1}{2\varepsilon} \right) \right\}\right) \\
			& \leq \mathcal{L}^d \left(B_\varepsilon(x_0)\cap \left\{ \psi^-_{B_\varepsilon(x_0)} \left( \frac{|\overline{u}_\varepsilon-\med(\overline{u}_\varepsilon,B_\varepsilon(x_0))|}{\varepsilon}\right) \geq \frac{1}{4K\varepsilon} \right\} \right) \\
			&\leq  4K\varepsilon \int_{B_\varepsilon(x_0)} \psi^-_{B_\varepsilon(x_0)} \left( \frac{|\overline{u}_\varepsilon(x)-\med(\overline{u}_\varepsilon,B_\varepsilon(x_0))|}{\varepsilon}\right) \, dx \\
			&=4K\varepsilon \int_{B_\varepsilon(x_0)} \psi^-_{B_\varepsilon(x_0)} \left( \frac{|T_\varepsilon(u(x)-\nabla u(x_0)(x-x_0))-m_\varepsilon|}{\varepsilon}\right) \, dx \\
			& \leq \varepsilon C \int_{B_\varepsilon(x_0)} \psi_{B_\varepsilon(x_0)}^-(|\nabla u(x)-\nabla u(x_0)|) \, dx \\
			& \leq \varepsilon C \int_{B_\varepsilon(x_0)} \psi(x,|\nabla u(x)-\nabla u(x_0)|) \, dx.
		\end{align*}
		where $C=C(d,K)$. Thus, using Fubini Theorem and \eqref{vpta a}--\eqref{vpta b}, we deduce that
		\begin{align}\label{1.2}
			\begin{split}
				\limsup_{\varepsilon \to 0^+} & \frac{1}{\varepsilon^d} \int_0^1 \! \mathcal{H}^{d-1}(\{ v_\varepsilon \neq u \} \cap \partial B_{\lambda \varepsilon}(x_0)) d\lambda = \limsup_{\varepsilon \to 0^+} \frac{2}{\varepsilon^{d+1}} \mathcal{L}^d(\{ v_\varepsilon \neq u \} \cap B_\varepsilon(x_0)) \\
				& \leq \limsup_{\varepsilon \to 0^+} \frac{1}{\varepsilon^{d+1}}\left( \varepsilon C \int_{B_\varepsilon(x_0)} \psi(x,|\nabla u(x)-\nabla u(x_0)|) \, dx+\left(2 \gammaiso \mathcal{H}^{d-1}(J_u \cap B_\varepsilon(x_0)) \right)^{\frac{d}{d-1}} \right) \\
				& =0.
			\end{split}
		\end{align}
		Therefore, since $d \geq 2$, for every sequence $\varepsilon \to 0$, one can find a subsequence such that for $\mathcal{L}^1$-a.e. $\lambda \in (0,1)$ it holds
		\begin{align}\label{1.1}
			\begin{split}
				&\mathcal{H}^{d-1}(\partial B_{\lambda\varepsilon}(x_0)\cap J_{v_\varepsilon})=0, \\
				&\lim_{\varepsilon \to 0^+} \varepsilon^{-d} \mathcal{H}^{d-1}(\{ v_\varepsilon \neq u \} \cap \partial B_{\lambda \varepsilon}(x_0))=0.
			\end{split}
		\end{align}
		Now let us fix a subsequence $\varepsilon \to 0$ (not relabelled) and $\lambda \in (0,1)$ such that \eqref{1.1} holds. We set
		\begin{equation*}
			u_\varepsilon(x):=
			\begin{cases*}
				v_\varepsilon(x) & if $x \in B_{\lambda \varepsilon}(x_0)$, \\
				u(x) & if $x \in B_\varepsilon(x_0) \setminus \overline{B_{\lambda \varepsilon}(x_0)}$.
			\end{cases*}
		\end{equation*}
		From the definition of $u_\varepsilon$ and the estimates in \eqref{1.2}, we deduce \eqref{bulk seq props}(i) and \eqref{bulk seq props}(iii). We are left to prove \eqref{bulk seq props}(ii).
		
		We actually will prove a slightly stronger statement, namely 
		\begin{equation}\label{1.3}
			\lim_{\varepsilon \to 0^+} \frac{1}{\varepsilon^d} \int_{B_{\varepsilon}(x_0)} \psi \left(x,\frac{|v_\varepsilon(x)-u(x_0)-\nabla u(x_0)(x-x_0)|}{\varepsilon} \right) \, dx=\lim_{\varepsilon \to 0^+} \frac{1}{\varepsilon^d} \int_{B_{\varepsilon}(x_0)} \psi \left(x,\frac{|\mathfrak{u}^\psi_\varepsilon(x)|}{\varepsilon}\right) \, dx=0.
		\end{equation}
		Let $\varepsilon_0>0$ be fixed. Set $\psi_\varepsilon^-:=\psi^-_{B_\varepsilon(x_0)}$ and $\psi_\varepsilon^+:=\psi^+_{B_\varepsilon(x_0)}$ for brevity. Using property \hyperlink{adA1}{(adA1)} and the fact that $\psi$ satisfies \hyperlink{idg}{$\textnormal{(aDec)}$}, by definition of $\mathfrak{u}_\varepsilon^\psi$ and by inequality \eqref{P2} we estimate, for every $\varepsilon<\varepsilon_0$,
		\begin{align*}
			 & \int_{B_{\varepsilon}(x_0)}   \psi \left(x,\frac{| \mathfrak{u}^\psi_\varepsilon(x)-\med(\overline{u}_\varepsilon,B_\varepsilon(x_0))|}{\varepsilon}\right) \, dx \\ 
			 & \leq C \int_{B_{\varepsilon}(x_0)} \psi^-_\varepsilon \left(|\nabla u(x)-\nabla u(x_0)|\right) \, dx +\int_{B_\varepsilon(x_0)} \psi^+_{\varepsilon_0}\left(\frac{|\overline{u}_\varepsilon(x)-\med(\overline{u}_\varepsilon,B_\varepsilon(x_0))|}{\varepsilon}\wedge \sigma\right) \, dx.
		\end{align*}
		with $C=C(d,K,\beta)$, where $\sigma$ and $\beta$ are the constants appearing in \hyperlink{A0}{(A0)} and \hyperlink{adA1}{(adA1)}, respectively. 
		
		\noindent By \eqref{vpta a}, in order to prove \eqref{1.3} we only need to show that 
		\begin{align*}
		&\lim_{\varepsilon \to 0^+} \frac{\med(\overline{u}_\varepsilon,B_\varepsilon(x_0))}{\varepsilon}=\lim_{\varepsilon \to 0^+} \med\left( \frac{u-u(x_0)-\nabla u(x_0)(\cdot-x_0)}{\varepsilon},B_\varepsilon(x_0) \right)=0, \\
		&\lim_{\varepsilon \to 0^+} \frac{1}{\varepsilon^d} \int_{B_\varepsilon(x_0)} \psi^+_{\varepsilon_0}\left(\frac{|\overline{u}_\varepsilon(x)|}{\varepsilon} \wedge \sigma\right) \, dx=0.
		\end{align*}
		The first limit follows by \eqref{vpta c} The second limit follows again by \eqref{vpta c} and dominate convergence theorem after rescaling.
	\end{proof}

	We can now prove Lemma \ref{lem 2} which will actually follow as a consequence of the next two Lemmas.
	
	\begin{lem}\label{lem 2.1}
		Let $\mathcal{F}$ satisfy \eqref{H1} and \eqref{H3}-\eqref{H4} and let $u \in \gsbv^{\psi}(\Omega,\R^m)$. Then, for $\mathcal{L}^d$-a.e $x_0 \in \Omega$
		\begin{equation}\label{2.8}
			\lim_{\varepsilon \to 0^+} \frac{\mathbf{m}_\mathcal{F}(u,B_\varepsilon(x_0))}{\omega_d \varepsilon^d} \leq \limsup_{\varepsilon \to 0^+} \frac{\mathbf{m}_\mathcal{F}(\overline{u}^\textnormal{bulk}_{x_0},B_\varepsilon(x_0))}{\omega_d \varepsilon^d}.
		\end{equation}
	\end{lem}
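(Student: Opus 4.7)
The strategy is to build, for each small $\varepsilon>0$, a competitor $w_\varepsilon$ for $\mathbf{m}_\mathcal{F}(u,B_\varepsilon(x_0))$ by gluing an almost-minimizer for $\mathbf{m}_\mathcal{F}(\ubulk,B_{r\varepsilon}(x_0))$ (with $r$ close to $1$) with the approximating sequence from Lemma \ref{vol seq}, via the fundamental estimate of Lemma \ref{fund est}.

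First I would fix $x_0\in\Omega$ at which the conclusions of Lemma \ref{vol seq} are available, at which Proposition \ref{leb pts prop} yields the Lebesgue-type property for $\psi(\cdot,|\nabla u(\cdot)|)$, with zero $\mathcal{H}^{d-1}$-density in $J_u$, and at which (by Lemma \ref{adA1 then 2.6}, exploiting \hyperlink{A0}{(A0)} and \hyperlink{adA1}{(adA1)}) $\psi^+_{B_\varepsilon(x_0)}(|\nabla u(x_0)|)$ stays bounded by a multiple of $\psi(x_0,|\nabla u(x_0)|)$ for $\varepsilon$ small. Choose $\lambda\in(0,1)$ for which Lemma \ref{vol seq} produces $u_\varepsilon$ satisfying \eqref{bulk seq props}, and auxiliary radii $r''<r<r'<\lambda$. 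For each $\varepsilon$ take $v_\varepsilon\in\gsbv^\psi(B_{r\varepsilon}(x_0),\R^m)$ with $v_\varepsilon=\ubulk$ in a neighborhood of $\partial B_{r\varepsilon}(x_0)$ and $\mathcal{F}(v_\varepsilon,B_{r\varepsilon}(x_0))\le\mathbf{m}_\mathcal{F}(\ubulk,B_{r\varepsilon}(x_0))+\varepsilon^{d+1}$, and extend $v_\varepsilon$ by $\ubulk$ on $B_{r'\varepsilon}(x_0)\setminus B_{r\varepsilon}(x_0)$. Applying Lemma \ref{fund est} with $D'=B_{r\varepsilon}(x_0)\Subset D''=B_{r'\varepsilon}(x_0)$ and $E=B_\varepsilon(x_0)\setminus\overline{B_{r''\varepsilon}(x_0)}$ (so that $E\cup D'=B_\varepsilon(x_0)$), to the pair $v_\varepsilon$ on $D''$ and $u_\varepsilon$ on $E$, one obtains $w_\varepsilon$ that equals $v_\varepsilon$ on $B_{r\varepsilon}(x_0)$ and $u_\varepsilon$ on $B_\varepsilon(x_0)\setminus\overline{B_{r'\varepsilon}(x_0)}$. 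Since $r'<\lambda$, property (i) of Lemma \ref{vol seq} gives $w_\varepsilon=u$ in a neighborhood of $\partial B_\varepsilon(x_0)$, so $w_\varepsilon$ is admissible for $\mathbf{m}_\mathcal{F}(u,B_\varepsilon(x_0))$.

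The inequality produced by the fundamental estimate is then handled term-by-term. The cost $\mathcal{F}(v_\varepsilon,D'')$ splits into the near-optimal inner contribution $\mathbf{m}_\mathcal{F}(\ubulk,B_{r\varepsilon}(x_0))+\varepsilon^{d+1}$ plus the cost on the annulus $B_{r'\varepsilon}(x_0)\setminus B_{r\varepsilon}(x_0)$, where $v_\varepsilon=\ubulk$; by \eqref{H4} the latter is bounded by $b\omega_d\varepsilon^d(r'^d-r^d)(1+\psi^+_{B_\varepsilon}(|\nabla u(x_0)|))$. For $\mathcal{F}(u_\varepsilon,E)$, again via \eqref{H4}, the jump part is $o(\varepsilon^d)$ by property (iii) of Lemma \ref{vol seq}, while the bulk part is $O(\varepsilon^d(1-r''^d))$ in the limit, obtained from the pointwise bound $|\nabla u_\varepsilon|\le|\nabla u(x_0)|+|\nabla u|$ implicit in the construction of $u_\varepsilon$ combined with the Lebesgue behavior of $\psi(\cdot,|\nabla u(\cdot)|)$ at $x_0$. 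Finally, since the transition annulus $F=B_{r'\varepsilon}(x_0)\setminus\overline{B_{r\varepsilon}(x_0)}$ lies inside $B_{\lambda\varepsilon}(x_0)$ and $v_\varepsilon\equiv\ubulk$ on $F$, the cross-term $M\int_F\psi(x,|v_\varepsilon-u_\varepsilon|/\delta)\,dx$ becomes $M\int_F\psi(x,|u_\varepsilon-\ubulk|/\delta)\,dx$ with $\delta\sim(r'-r)\varepsilon$; doubling of $\psi$ absorbs the factor $\varepsilon/\delta$ into a constant depending only on $r'-r$, and property (ii) of Lemma \ref{vol seq} then ensures that $\varepsilon^{-d}$ times this term vanishes as $\varepsilon\to 0^+$.

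Dividing by $\omega_d\varepsilon^d$ and taking successive limits ($\varepsilon\to 0^+$ first; then $r'\searrow r$ and $r''\nearrow r$; then $r,\lambda\nearrow 1$; finally $\eta\to 0$) produces \eqref{2.8}. The main obstacle is controlling the cross-term: in the classical $\sbvp$ framework of \cite{BFLMrelaxSBVp} the homogeneity of $|t|^p$ absorbs it almost for free, whereas in the present generalized Orlicz setting one genuinely needs $L^\psi$-closeness of the truncated approximation $u_\varepsilon$ to the affine expansion $\ubulk$ at $x_0$, which is exactly the content of property (ii) of Lemma \ref{vol seq}.
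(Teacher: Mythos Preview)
Your proposal is correct and follows essentially the same route as the paper's proof: the paper also picks a near-minimizer $z_\varepsilon$ for $\mathbf{m}_\mathcal{F}(\ubulk,\cdot)$ on a slightly smaller ball, extends it by $\ubulk$, and glues it to the sequence $u_\varepsilon$ from Lemma~\ref{vol seq} via the fundamental estimate, with the cross term killed by \eqref{bulk seq props}(ii) and the annular contributions controlled through \eqref{H4}, doubling, and the Lebesgue-point property \eqref{vpta a}. The only cosmetic difference is that the paper parameterizes all the nested radii by a single $\theta$ (taking $\lambda=1-\theta$ and using $B_{(1-4\theta)\varepsilon}\Subset B_{(1-3\theta)\varepsilon}\Subset B_{(1-2\theta)\varepsilon}\Subset B_{(1-\theta)\varepsilon}$), which lets it collapse your successive limits $r'\searrow r$, $r''\nearrow r$, $r,\lambda\nearrow 1$ into a single $\theta\to 0$; the paper also records explicitly that the existence of the limit on the left of \eqref{2.8} comes from Lemma~\ref{lem 1} via \eqref{2.10}, which you leave implicit.
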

	
	\begin{proof}
		We will prove the assertion for all the points $x_0 \in \Omega$ for which the statement of Lemma \ref{vol seq} holds, $\lim_{\varepsilon \to 0^+} (\omega_d\varepsilon^d)^{-1} \mu(B_\varepsilon(x_0))=1$ and 
		\begin{equation}\label{2.10}
			\lim_{\varepsilon \to 0^+} \frac{\mathcal{F}(u,B_\varepsilon(x_0))}{\omega_d \varepsilon^d} = \lim_{\varepsilon \to 0^+} \frac{\mathbf{m}_\mathcal{F}(u,B_\varepsilon(x_0))}{\omega_d\varepsilon^d} <+\infty.
		\end{equation}  
		By Lemma \ref{lem 1}, property \eqref{2.10} hold for $\mathcal{L}^{d}$-a.e. $x_0 \in \Omega$.
		Let $(u_\varepsilon)_\varepsilon$ be the sequence given by Lemma \ref{vol seq} and fix $\lambda \in (0,1)$ such that \eqref{bulk seq props}(ii) holds. We write $\lambda=1-\theta$ for some $\theta \in (0,1)$.
		
		Given $z_\varepsilon \in \gsbv^{\psi}(B_{(1-3\theta)\varepsilon}(x_0),\R^m)$ such that $z_\varepsilon=\ubulk$ in a neighborhood of $\partial B_{(1-3\theta)\varepsilon}(x_0)$ and
		\begin{equation}\label{2.10a}
			\mathcal{F}(z_\varepsilon,B_{(1-3\theta)\varepsilon}(x_0)) \leq \mathbf{m}_\mathcal{F}(\ubulk,B_{(1-3\theta)\varepsilon}(x_0))+\omega_d \varepsilon^{d+1},
		\end{equation}
		we extend it to $z_\varepsilon \in \gsbv(B_\varepsilon(x_0),\R^m)$ by setting $z_\varepsilon=\ubulk$ outside of $B_{(1-3\theta)\varepsilon}(x_0)$. We further set $C_{\varepsilon,\theta}(x_0):=B_\varepsilon(x_0) \setminus \overline{B_{(1-4\theta)\varepsilon}(x_0)}$. Now we use Lemma \ref{fund est} with $u$ and $v$ replaced by $z_\varepsilon$ and $u_\varepsilon$, respectively, and with the sets
		\begin{equation}\label{2.18}
			D'_{\varepsilon,x_0}:=B_{(1-2\theta)\varepsilon}(x_0), \ \ \ D''_{\varepsilon,x_0}:=B_{(1-\theta)\varepsilon}(x_0), \ \ \ E_{\varepsilon,x_0}:=C_{\varepsilon,\theta}(x_0).
		\end{equation}
		Notice that $C_{\varepsilon,\theta}(x_0)=(C_{1,\theta}(x_0))_{\varepsilon,x_0}$ according to the notation introduced in \eqref{000}, where $C_{1,\theta}(x_0):=B_1(x_0) \setminus \overline{B_{(1-4\theta)}(x_0)}$. Moreover, we observe that $\mathcal{L}^d(C_{1,\theta}(x_0))=\omega_d(1-(1-4\theta)) \to 0$ as $\theta \to 0$.
		For an arbitrarily fixed $\eta>0$, we find $w_\varepsilon \in \gsbv^{\psi}(B_\varepsilon(x_0),\R^m)$ such that $w_\varepsilon=u_\varepsilon$ on $B_\varepsilon(x_0) \setminus B_{(1-\theta)\varepsilon}(x_0)$ and
		\begin{align}\label{2.11}
			\begin{split}
				\mathcal{F}(w_\varepsilon,B_\varepsilon(x_0)) & \leq (1+\eta)(\mathcal{F}(z_\varepsilon,B_{(1-\theta)\varepsilon}(x_0))+\mathcal{F}(u_\varepsilon,C_{\varepsilon,\theta}(x_0)))+\eta \mathcal{L}^d(B_\varepsilon(x_0)) \\
				&+M \int_{B_{(1-\theta)\varepsilon}(x_0) \setminus B_{(1-2\theta)\varepsilon}(x_0)} \psi \left(x, \frac{|z_\varepsilon-u_\varepsilon|}{\varepsilon} \right) \, dx
			\end{split}
		\end{align}
		Recalling \eqref{bulk seq props}(i) in Lemma \ref{vol seq}, we have that $w_\varepsilon=u_\varepsilon=u$ in a neighborhood of $\partial B_\varepsilon(x_0)$. Moreover, since $z_\varepsilon=\ubulk$ outside $B_{(1-3\theta)\varepsilon}(x_0)$, using \eqref{bulk seq props}(ii) we infer that
		\begin{align}\label{2.12}
			\begin{split}
				\lim_{\varepsilon \to 0^+} & \frac{1}{\varepsilon^d} \int_{B_{(1-\theta)\varepsilon}(x_0) \setminus B_{(1-2\theta)\varepsilon}(x_0)} \psi \left(x, \frac{|z_\varepsilon-u_\varepsilon|}{\varepsilon} \right) \, dx \\  & \leq \lim_{\varepsilon \to 0^+} \frac{1}{\varepsilon^d} \int_{B_{(1-\theta)\varepsilon}(x_0)} \psi \left(x, \frac{|\ubulk-u_\varepsilon|}{\varepsilon} \right) \, dx =0.
			\end{split}
		\end{align}
		From \eqref{2.11} and \eqref{2.12} we deduce that there exists a non-negative infinitesimal sequence $(\rho_\varepsilon)_\varepsilon$ such that
		\begin{equation}\label{2.13}
			\mathcal{F}(w_\varepsilon,B_\varepsilon(x_0)) \leq (1+\eta)(\mathcal{F}(z_\varepsilon,B_{(1-\theta)\varepsilon}(x_0))+\mathcal{F}(u_\varepsilon,C_{\varepsilon,\theta}(x_0)))+\varepsilon^d \rho_\varepsilon+\eta \omega_d \varepsilon^d.
		\end{equation}
		Then, using the fact that $z_\varepsilon = \ubulk$ in $B_\varepsilon(x_0) \setminus B_{(1-3\theta)\varepsilon}(x_0) \subset C_{\varepsilon,\theta}(x_0)$, \eqref{H1}, \eqref{H4} and \eqref{2.10a}, setting $\psi_\varepsilon^+(t):=\psi^+_{B_\varepsilon(x_0)}(\cdot,t)$ for $t>0$, we deduce that there exists $\varepsilon_0>0$ small enough such that for every $\varepsilon<\varepsilon_0$
		\begin{equation}\label{1700}
			\frac{\mathcal{F}(\ubulk,C_{\varepsilon,\theta}(x_0))}{\varepsilon^d} \leq b \mathcal{L}^d(C_{\varepsilon,\theta}(x_0))(1+\psi^+_{\varepsilon_0}(|\nabla u(x_0)|)).
		\end{equation}
		Thus,
		\begin{align}
				\limsup_{\varepsilon \to 0^+} \frac{\mathcal{F}(z_\varepsilon,B_{(1-\theta)\varepsilon}(x_0))}{\varepsilon^d}& \leq \limsup_{\varepsilon \to 0^+} \left( \frac{\mathcal{F}(z_\varepsilon,B_{(1-3\theta)\varepsilon}(x_0))}{\varepsilon^d}+\frac{\mathcal{F}(\ubulk,C_{\varepsilon,\theta}(x_0))}{\varepsilon^d} \right) \label{2.14} \\
				& \leq \limsup_{\varepsilon \to 0^+} \left(\frac{\mathbf{m}_\mathcal{F}(\ubulk,B_{(1-3\theta)\varepsilon}(x_0))}{\varepsilon^d}+b \mathcal{L}^d(C_{\varepsilon,\theta}(x_0))(1+\psi^+_{\varepsilon_0}(|\nabla u(x_0)|)) \right) \nonumber \\
				& \leq (1-3\theta)^d \limsup_{\varepsilon \to 0^+} \frac{\mathbf{m}_\mathcal{F}(\ubulk,B_{(1-3\theta)\varepsilon}(x_0))}{(1-3\theta)^d\varepsilon^d} \nonumber \\
				& \ \ \ \ +\limsup_{\varepsilon \to 0^+} b \mathcal{L}^d(C_{\varepsilon,\theta}(x_0))(1+\psi^+_{\varepsilon_0}(|\nabla u(x_0)|)) \nonumber .
		\end{align}
		On the other hand, using again \eqref{H4} we also deduce
		\begin{align*}
			\mathcal{F}(u_\varepsilon,C_{\varepsilon,\theta}(x_0))  \leq b \mathcal{L}^d(C_{\varepsilon,\theta}(x_0))+ b\int_{C_{\varepsilon,\theta}(x_0)} \psi(x,|\nabla u_\varepsilon(x)|) \, dx+b\mathcal{H}^{d-1}(J_{u_\varepsilon} \cap C_{\varepsilon,\theta}(x_0)).
		\end{align*}
		Now, using \eqref{bulk seq props}(iii), we get
		\begin{equation}\label{2.15}
			\limsup_{\varepsilon \to 0^+} \frac{\mathcal{F}(u_\varepsilon,C_{\varepsilon,\theta}(x_0))}{\varepsilon^d} \leq b \mathcal{L}^d(C_{1,\theta}(x_0))+\limsup_{\varepsilon \to 0^+} \frac{b}{\varepsilon^d} \int_{C_{\varepsilon,\theta}(x_0)}\psi(x,|\nabla u_\varepsilon(x)|) \, dx.
		\end{equation}
		By exploiting the construction in Lemma \ref{vol seq}, we notice that $|\nabla u_\varepsilon| \leq |\nabla u|$ $\mathcal{L}^d$-a.e. in $B_\varepsilon(x_0)$ for any $\varepsilon>0$. Thus, using \eqref{vpta a} and the fact that $\psi$ is doubling with constant $K \geq 2$, we have
		\begin{align}
				& \limsup_{\varepsilon \to 0^+} \ \frac{b}{\varepsilon^d} \int_{C_{\varepsilon,\theta}(x_0)} \psi(x,|\nabla u_\varepsilon|) \, dx \label{2.16} \\
				& \ \leq \limsup_{\varepsilon \to 0^+} \ \frac{b}{\varepsilon^d} \int_{C_{\varepsilon,\theta}(x_0)} \psi(x,|\nabla u|) \, dx \nonumber \\
				& \ \leq \limsup_{\varepsilon \to 0^+} \ (1+K) \left( \frac{b}{\varepsilon^d} \int_{B_{ \varepsilon}(x_0)} \psi(x,|\nabla u(x)-\nabla u(x_0)|) \, dx+b\mathcal{L}^d(C_{\varepsilon,\theta}(x_0))\psi^+_{\varepsilon_0}(|\nabla u(x_0)|) \right) \nonumber \\
				& \ \leq b(1+K)\mathcal{L}^d(C_{1,\theta}(x_0))\psi^+_{\varepsilon_0}(|\nabla u(x_0)|). \nonumber
		\end{align}
		Combining \eqref{2.15} and \eqref{2.16} we infer
		\begin{equation}\label{2.17}
			\limsup_{\varepsilon \to 0^+} \frac{\mathcal{F}(u_\varepsilon,C_{\varepsilon,\theta}(x_0))}{\varepsilon^d} \leq b\mathcal{L}^d(C_{1,\theta}(x_0))\left(1+(1+K)\psi^+_{\varepsilon_0}(|\nabla u(x_0)|)\right).
		\end{equation}
		Recalling that $w_\varepsilon=u_\varepsilon$ in a neighborhood of $\partial B_\varepsilon(x_0)$, by \eqref{2.13}, \eqref{2.14} and \eqref{2.17} we finally get
		\begin{align*}
			\lim_{\varepsilon \to 0^+} \frac{\mathbf{m}_\mathcal{F}(u,B_\varepsilon(x_0))}{\omega_d \varepsilon^d} & \leq \limsup_{\varepsilon \to 0^+} \frac{\mathcal{F}(w_\varepsilon,B_\varepsilon(x_0))}{\omega_d\varepsilon^d} \\
			& \leq (1+\eta)(1-3\theta)^d \limsup_{\varepsilon \to 0^+} \frac{\mathbf{m}_\mathcal{F}(\ubulk,B_\varepsilon(x_0))}{\omega_d\varepsilon^d} \\
			& \ \ \ + (1+\eta)b\omega_d^{-1} \mathcal{L}^d(C_{1,\theta}(x_0))\left(1+(1+K)\psi^+_{\varepsilon_0}(|\nabla u(x_0)|)\right)+\eta.
		\end{align*}
		Letting $\eta \to 0$ and $\theta \to 0$ we get \eqref{2.8}.
		
	\end{proof}
	
	\begin{lem}\label{lem 2.2}
			Let $\mathcal{F}$ satisfy \eqref{H1} and \eqref{H3}-\eqref{H4} and let $u \in \gsbv^{\psi}(\Omega,\R^m)$. Then, for $\mathcal{L}^d$-a.e $x_0 \in \Omega$ we have
		\begin{equation}\label{2.9}
			\lim_{\varepsilon \to 0^+} \frac{\mathbf{m}_\mathcal{F}(u,B_\varepsilon(x_0))}{\omega_d \varepsilon^d} \geq \limsup_{\varepsilon \to 0^+} \frac{\mathbf{m}_\mathcal{F}(\overline{u}^\textnormal{bulk}_{x_0},B_\varepsilon(x_0))}{\omega_d \varepsilon^d}.
		\end{equation}
	\end{lem}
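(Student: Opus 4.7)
I mirror the proof of Lemma~\ref{lem 2.1}, exchanging the roles of $u$ and $\ubulk$. I work at $\mathcal{L}^d$-a.e.\ $x_0 \in \Omega$ for which the conclusion of Lemma~\ref{vol seq} holds (in particular~\eqref{vpta a}--\eqref{vpta c}) and for which Lemma~\ref{lem 1} provides the finite limit $L(x_0) := \lim_{\varepsilon \to 0^+} \mathbf{m}_\mathcal{F}(u,B_\varepsilon(x_0))/(\omega_d\varepsilon^d)$. Fix $\eta > 0$ and $\theta \in (0,1)$, let $(u_\varepsilon)_\varepsilon$ be the approximating sequence from Lemma~\ref{vol seq} with $\lambda = 1 - \theta$, and select a near-minimizer $z_\varepsilon \in \gsbv^{\psi}(B_{(1-3\theta)\varepsilon}(x_0),\R^m)$ for $\mathbf{m}_\mathcal{F}(u,B_{(1-3\theta)\varepsilon}(x_0))$ with $z_\varepsilon=u$ in a neighborhood of $\partial B_{(1-3\theta)\varepsilon}(x_0)$ and
$$
\mathcal{F}(z_\varepsilon, B_{(1-3\theta)\varepsilon}(x_0)) \leq \mathbf{m}_\mathcal{F}(u, B_{(1-3\theta)\varepsilon}(x_0)) + \omega_d\varepsilon^{d+1}.
$$

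Extend $z_\varepsilon$ to $B_\varepsilon(x_0)$ by setting it equal to $u_\varepsilon$ outside $B_{(1-3\theta)\varepsilon}(x_0)$; recall that $u_\varepsilon=v_\varepsilon$ inside $B_{\lambda\varepsilon}$, where $v_\varepsilon$ is the truncated function appearing in the proof of Lemma~\ref{vol seq}. Apply Lemma~\ref{fund est} with the same sets as in Lemma~\ref{lem 2.1}, namely $D'_{\varepsilon,x_0} = B_{(1-2\theta)\varepsilon}$, $D''_{\varepsilon,x_0} = B_{(1-\theta)\varepsilon}$ and $E_{\varepsilon,x_0} = C_{\varepsilon,\theta}(x_0)$, gluing the extended $z_\varepsilon$ with $\ubulk$. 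This produces $w_\varepsilon \in \gsbv^{\psi}(B_\varepsilon(x_0),\R^m)$ with $w_\varepsilon = z_\varepsilon$ on $B_{(1-2\theta)\varepsilon}$ and $w_\varepsilon = \ubulk$ on $B_\varepsilon(x_0) \setminus B_{(1-\theta)\varepsilon}$, so that $w_\varepsilon$ is an admissible competitor in $\mathbf{m}_\mathcal{F}(\ubulk, B_\varepsilon(x_0))$.

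The fundamental estimate then reads
$$
\mathcal{F}(w_\varepsilon, B_\varepsilon(x_0)) \leq (1 + \eta)\bigl[\mathcal{F}(z_\varepsilon, B_{(1-\theta)\varepsilon}) + \mathcal{F}(\ubulk, C_{\varepsilon, \theta})\bigr] + M \int_{F_\varepsilon}\! \psi\!\left(x, \frac{|z_\varepsilon - \ubulk|}{\varepsilon}\right) dx + \eta\omega_d\varepsilon^d,
$$
where $F_\varepsilon := B_{(1-\theta)\varepsilon} \setminus B_{(1-2\theta)\varepsilon}$. Since $F_\varepsilon \subset B_{\lambda\varepsilon}$ and the extension gives $z_\varepsilon = u_\varepsilon = v_\varepsilon$ on $F_\varepsilon$, the gluing integrand reduces to $\psi(x, |v_\varepsilon - \ubulk|/\varepsilon)$, and the strengthened version~\eqref{1.3} of Lemma~\ref{vol seq}(ii) yields $\varepsilon^{-d} \int_{B_\varepsilon} \psi(x, |v_\varepsilon - \ubulk|/\varepsilon) \, dx \to 0$. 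The remaining bulk contributions are bounded exactly as in Lemma~\ref{lem 2.1}: decomposing $\mathcal{F}(z_\varepsilon, B_{(1-\theta)\varepsilon}) \leq \mathcal{F}(z_\varepsilon, B_{(1-3\theta)\varepsilon}) + \mathcal{F}(u_\varepsilon, B_{(1-\theta)\varepsilon} \setminus B_{(1-3\theta)\varepsilon})$, the first term is controlled by near-minimality and the second by~\eqref{H4}, \eqref{vpta a}, \eqref{vpta b} and Lemma~\ref{vol seq}(iii) in a manner parallel to~\eqref{2.17}; meanwhile $\mathcal{F}(\ubulk, C_{\varepsilon,\theta})/\varepsilon^d = O(\mathcal{L}^d(C_{1,\theta}))$ by~\eqref{H4} as in~\eqref{1700}.

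The delicate point I anticipate is the consistency of the extension of $z_\varepsilon$ across $\partial B_{(1-3\theta)\varepsilon}(x_0)$: inside $z_\varepsilon=u$, while just outside $u_\varepsilon=v_\varepsilon$, and these functions disagree on the (small) set where the truncations defining $v_\varepsilon$ are active. I expect to resolve this by interposing a secondary application of Lemma~\ref{fund est} in a thin buffer annulus straddling $\partial B_{(1-3\theta)\varepsilon}(x_0)$: the cost is negligible thanks to the bound $\mathcal{L}^d(\{u \neq v_\varepsilon\} \cap B_\varepsilon(x_0)) = o(\varepsilon^d)$ established in the proof of Lemma~\ref{vol seq}, the doubling property of $\psi$, and a Fubini-type selection of a favourable intermediate radius making the $\mathcal{H}^{d-1}$ contribution on the buffer sphere vanish fast enough. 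Once this technical issue is settled, dividing by $\omega_d\varepsilon^d$, using Lemma~\ref{lem 1} under the rescaling $\mathbf{m}_\mathcal{F}(u, B_{(1-3\theta)\varepsilon})/(\omega_d ((1-3\theta)\varepsilon)^d) \to L(x_0)$, taking $\limsup$ as $\varepsilon \to 0^+$, and finally letting $\eta \to 0$ and $\theta \to 0$ yields~\eqref{2.9}.
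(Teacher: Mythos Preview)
Your overall strategy mirrors the paper's proof and is correct in spirit, but your handling of the interface issue---the one point you correctly flag as delicate---is both more complicated than needed and partly flawed.

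The paper does \emph{not} use a secondary fundamental estimate at the interface. Instead, it performs the Fubini selection \emph{before} choosing the near-minimizer: for each $\varepsilon$ one selects $s=s(\varepsilon)\in(1-4\theta,1-3\theta)$ so that $\mathcal{H}^{d-1}(\partial B_{s\varepsilon}(x_0)\cap(J_u\cup J_{u_\varepsilon}))=0$ and $\varepsilon^{-d}\mathcal{H}^{d-1}(\{u_\varepsilon\neq u\}\cap\partial B_{s\varepsilon}(x_0))\to 0$. Then the near-minimizer $z_\varepsilon$ is taken on $B_{s\varepsilon}(x_0)$ (so $z_\varepsilon=u$ near $\partial B_{s\varepsilon}$), and one simply sets $z_\varepsilon=u_\varepsilon$ on $B_\varepsilon\setminus B_{s\varepsilon}$. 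The extension creates a jump on $\partial B_{s\varepsilon}$ whose $\mathcal{H}^{d-1}$ measure is $o(\varepsilon^d)$ by the Fubini choice, and this extra surface cost is absorbed directly through \eqref{H4}; no second gluing is needed. Note that this Fubini argument relies on the bound from Lemma~\ref{vol seq}(i) being $\mathcal{L}^d(\{u_\varepsilon\neq u\}\cap B_\varepsilon)=o(\varepsilon^{d+1})$, not merely $o(\varepsilon^d)$ as you wrote.

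Your proposed alternative---a secondary application of Lemma~\ref{fund est} to glue $u$ with $u_\varepsilon$ in a buffer annulus---is problematic: the penalty term would involve $\int \psi(x,|u-u_\varepsilon|/(\varepsilon\delta))\,dx$, and since $u\in\gsbv^\psi$ is generally unbounded on $\{u\neq u_\varepsilon\}$, smallness of $\mathcal{L}^d(\{u\neq u_\varepsilon\})$ and the doubling of $\psi$ alone do not make this integral vanish. The Fubini selection you mention is indeed the right ingredient, but it should be used to control a \emph{jump} on a sphere (paid for linearly via \eqref{H4}), not a volume penalty in a fundamental estimate. Once you reorder the argument this way---Fubini first, near-minimizer on the selected ball, hard extension---the rest of your outline goes through exactly as in the paper.
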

	
	\begin{proof}
		We prove the statement for points $x_0 \in \Omega$ considered in Lemma \ref{lem 2.1}. Let $\eta>0$ and $\lambda=1-\theta$ be fixed as in the proof of Lemma \ref{lem 2.1}, and let $(u_\varepsilon)_\varepsilon$ be the sequence given by Lemma \ref{vol seq}. Using Fubini's Theorem (as in \eqref{1.2} and \eqref{1.1}), we have that for every $\varepsilon>0$ we can find $s \in (1-4\theta,1-3\theta)$ such that
		\begin{align}\label{2.19}
			\begin{split}
				& \mathcal{H}^{d-1}(\partial B_{s\varepsilon}(x_0) \cap (J_u \cup J_{u_\varepsilon}))=0 \ \ \ \mbox{for every $\varepsilon>0$}, \\
				& \lim_{\varepsilon \to 0^+} \varepsilon^{-d} \mathcal{H}^{d-1}(\{ u_\varepsilon \neq u \} \cap \partial B_{s\varepsilon}(x_0))=0.
			\end{split}
		\end{align}
		From now on, we will follow mainly the arguments of the proof of Lemma \ref{lem 2.1}. We choose a sequence $z_\varepsilon \in \gsbv^{\psi}(B_{s \varepsilon}(x_0),\R^m)$ such that $z_\varepsilon =u$ in a neighborhood of $\partial B_{s \varepsilon}(x_0)$ and
		\begin{equation}\label{2.20}
			\mathcal{F}(z_\varepsilon,B_{s\varepsilon}(x_0)) \leq \mathbf{m}_\mathcal{F}(u,B_{s\varepsilon}(x_0))+\omega_d\varepsilon^{d+1}.
		\end{equation}
		Setting $z_\varepsilon =u_\varepsilon$ outside $B_{s\varepsilon}(x_0)$ we extend it to $z_\varepsilon \in \gsbv^{\psi}(B_\varepsilon(x_0),\R^m)$. We now use Lemma \ref{fund est} applied with $u$ and $v$ replaced by $z_\varepsilon$ and $\ubulk$, respectively, and the same choice of sets $D'_{\varepsilon,x_0},D''_{\varepsilon,x_0},E_{\varepsilon,x_0}$ as in \eqref{2.18}.
		
		Hence, we find $w_\varepsilon \in \gsbv^{\psi}(B_\varepsilon(x_0),\R^m)$ such that $w_\varepsilon = \ubulk$ on $B_\varepsilon(x_0) \setminus B_{(1-\theta)\varepsilon} (x_0)$ and 
		\begin{align*}
			\mathcal{F}(w_\varepsilon,B_\varepsilon(x_0)) \leq & (1+\eta)\left( \mathcal{F}(z_\varepsilon,B_{(1-\theta)\varepsilon}(x_0))+\mathcal{F}(\ubulk,C_{\varepsilon,\theta}(x_0)) \right)+\eta \mathcal{L}^d(B_\varepsilon(x_0))+ \\
			& + M \int_{B_{(1-\theta)\varepsilon}(x_0) \setminus B_{(1-2\theta)\varepsilon}(x_0)} \psi \left(x,\frac{|z_\varepsilon-\ubulk|}{\varepsilon}\right) \, dx. 
		\end{align*}
		Since from the initial choice of $s$ we have that $z_\varepsilon =u_\varepsilon$ outside $B_{(1-3\theta)\varepsilon}(x_0)$, by arguing as in the proof of Lemma \ref{lem 2.1} (see \eqref{2.12} and \eqref{2.13}), we find an non-negative infinitesimal sequence $(\rho_\varepsilon)_\varepsilon$ such that
		\begin{equation}\label{2.21}
			\mathcal{F}(w_\varepsilon,B_\varepsilon(x_0)) \leq (1+\eta)(\mathcal{F}(z_\varepsilon,B_{(1-\theta)\varepsilon}(x_0))+\mathcal{F}(u_\varepsilon,C_{\varepsilon,\theta}(x_0)))+\varepsilon^d \rho_\varepsilon+\eta \omega_d \varepsilon^d.
		\end{equation}
		We now estimate the terms in \eqref{2.21}. Using that $z_\varepsilon=u_\varepsilon$ on $B_\varepsilon(x_0) \setminus B_{s\varepsilon}(x_0) \subset C_{\varepsilon,\theta}(x_0)$, \eqref{H1}, \eqref{H4} and \eqref{2.20} we get
		\begin{align}\label{2.22}
			\begin{split}
				\mathcal{F}(z_\varepsilon,B_{(1-\theta)\varepsilon}(x_0)) \leq & \mathbf{m}_\mathcal{F}(u,B_{s\varepsilon}(x_0))+\omega_d \varepsilon^{d+1}+b \mathcal{H}^{d-1}(\partial B_{s\varepsilon}(x_0) \cap (J_{u_\varepsilon} \cup J_u))  \\ 
				&+\mathcal{F}(u_\varepsilon,C_{\varepsilon,\theta}(x_0)).
			\end{split}
		\end{align}
		Now, recalling \eqref{2.17} and using \eqref{2.19} together with the fact that $s\varepsilon \leq (1-3\theta)\varepsilon$, we estimate
		\begin{align}
				\limsup_{\varepsilon \to 0^+} \label{2.23} \frac{\mathcal{F}(z_\varepsilon,B_{(1-\theta)\varepsilon}(x_0))}{\varepsilon^d} & \leq s^d \limsup_{\varepsilon \to 0^+} \frac{\mathbf{m}_\mathcal{F}(u,B_{s\varepsilon}(x_0))}{s^d\varepsilon^d}+b\mathcal{L}^d(C_{1,\theta}(x_0))\left(1+C\psi^+_\varepsilon(|\nabla u(x_0)|)\right) \\
				& \leq (1-3\theta)^d \limsup_{\varepsilon \to 0^+} \frac{\mathbf{m}_\mathcal{F}(u,B_{\varepsilon}(x_0))}{\varepsilon^d}+b\mathcal{L}^d(C_{1,\theta}(x_0))\left(1+C\psi^+_\varepsilon(|\nabla u(x_0)|)\right) \nonumber
		\end{align}
		where $C>0$ is a constant depending only on $\psi$.
		
		The estimate for $\mathcal{F}(\ubulk,C_{\varepsilon,\theta}(x_0))$ in \eqref{1700} together with the estimates \eqref{2.21} and \eqref{2.23} gives, recalling that $\rho_\varepsilon \to 0$ as $\varepsilon \to 0$,
		\begin{align*}
			\begin{split}
				\limsup_{\varepsilon \to 0^+} \frac{\mathcal{F}(w_\varepsilon,B_\varepsilon(x_0))}{\varepsilon^d} \leq & (1+\eta)(1-3\theta)^d\limsup_{\varepsilon \to 0^+} \frac{\mathbf{m}_\mathcal{F}(u,B_\varepsilon(x_0))}{\varepsilon^d} \\
				& +2(1+\eta)b\mathcal{L}^d(C_{1,\theta})\left(1+C\psi^+_\varepsilon(|\nabla u(x_0)|) \right) +\omega_d \eta.
			\end{split}
		\end{align*}
		Finally, letting $\eta \to 0$ and $\theta \to 0$ and recalling that $w_\varepsilon=\ubulk$ in a neighborhood of $\partial B_\varepsilon(x_0)$, we deduce
		\begin{align*}
			\begin{split}
				\limsup_{\varepsilon \to 0^+} \frac{\mathbf{m}_\mathcal{F}(\ubulk,B_\varepsilon(x_0))}{\omega_d \varepsilon} & \leq \limsup_{\varepsilon \to 0^+} \frac{\mathcal{F}(w_\varepsilon,B_\varepsilon(x_0))}{\omega_d \varepsilon} \\ 
				& \leq \limsup_{\varepsilon \to 0^+} \frac{\mathbf{m}_\mathcal{F}(u,B_\varepsilon(x_0))}{\omega_d \varepsilon}
				= \limsup_{\varepsilon \to 0^+} \frac{\mathbf{m}_\mathcal{F}(u,B_\varepsilon(x_0))}{\omega_d \varepsilon},
			\end{split}
		\end{align*}
		this proves \eqref{2.9}.
	\end{proof}
	
	\subsection{Surface points} \label{secti jump}
	
	This section is devoted to the proof of Lemma \ref{lem 2}.
	We can now perform a similar analysis for the jump points.
	
	\begin{lem}\label{surf seq}
		Let $u \in \gsbv^{\psi}(\Omega,\R^m)$. Then, for $\mathcal{H}^{d-1}$-a.e. $x_0 \in J_u$ and $\mathcal{L}^1$-a.e. $\lambda \in (0,1)$ there exists a sequence ${u}_\varepsilon \in \gsbv^{\psi}(B_\varepsilon(x_0))$ such that the following properties hold
		\begin{align}\label{jump seq props}
			\begin{split}
				&(i) \ \ {u}_\varepsilon=u \mbox{ on } B_\varepsilon(x_0) \setminus \overline{B_{\lambda\varepsilon}(x_0)}, \ \ \ \ \lim_{\varepsilon \to 0^+} \varepsilon^{-d} \mathcal{L}^d(\{ {u}_\varepsilon \neq u \} \cap B_\varepsilon(x_0))=0; \\
				&(ii) \ \ J_{u_\varepsilon} \setminus J_u \subset \partial B_{\lambda\varepsilon}(x_0), \ \ \ \lim_{\varepsilon \to 0^+} \varepsilon^{-(d-1)} \mathcal{H}^{d-1}(J_{u_\varepsilon} \setminus J_u)=0; \\
				&(iii) \ \ \lim_{\varepsilon \to 0^+} \frac{1}{\varepsilon^{d-1}} \int_{B_{ \varepsilon}(x_0)} \psi(x,|\nabla u_\varepsilon(x)|) \, dx=0; \\
				&(iv) \ \ \lim_{\varepsilon \to 0^+} \frac{1}{\varepsilon^{d-1}} \int_{B_{\lambda \varepsilon}(x_0)} \psi \left(x,\frac{|{u}_\varepsilon(x)-u_{x_0,\nu}(x)|}{\varepsilon} \right) \, dx=0. \\
			\end{split} 
		\end{align}
	\end{lem}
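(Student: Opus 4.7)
The strategy mirrors that of Lemma \ref{vol seq} but targets jump points rather than Lebesgue points. Fix $x_0 \in J_u$ satisfying the standard jump-point calculus (valid for $\mathcal{H}^{d-1}$-a.e.\ $x_0 \in J_u$): the triple $(\mathfrak{a},\mathfrak{b},\nu):=(u^+(x_0),u^-(x_0),\nu_u(x_0))$ exists; $\mathcal{H}^{d-1}(J_u\cap B_\varepsilon(x_0))/\omega_{d-1}\varepsilon^{d-1}\to 1$ with approximate tangent plane $\pi := \{y:(y-x_0)\cdot\nu = 0\}$, giving $\mathcal{H}^{d-1}(J_u\cap B^{\pm}_\varepsilon(x_0)) = o(\varepsilon^{d-1})$ on the open half-balls $B^{\pm}_\varepsilon(x_0) := B_\varepsilon(x_0)\cap\{\pm(y-x_0)\cdot\nu>0\}$; $\varepsilon^{-(d-1)}\int_{B_\varepsilon(x_0)}\psi(x,|\nabla u|)\,dx\to 0$ via componentwise application of Theorem \ref{DGCL 1}; and Lebesgue-density convergence of $u$ to $\mathfrak{a}, \mathfrak{b}$ on the two half-balls, including the $\psi$-averaged analogue furnished by Proposition \ref{leb pts prop}.

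The truncation of Theorem \ref{DGCL 2 alas poinc ineq} cannot be applied directly to $u$ on $B_{\lambda\varepsilon}(x_0)$, since $\mathcal{H}^{d-1}(J_u\cap B_{\lambda\varepsilon}) \sim \varepsilon^{d-1}$ violates \eqref{poinc small jump}. I therefore work in each half-ball separately and lift back to $B_{\lambda\varepsilon}$ by reflection across $\pi$: let $\hat u := u$ on $B^+_{\lambda\varepsilon}$ and $\hat u(y) := u(R(y))$ on $B^-_{\lambda\varepsilon}$, where $R$ is the reflection across $\pi$. A direct trace check shows that no new jumps arise on $\pi$ from the reflection itself, whence $\mathcal{H}^{d-1}(J_{\hat u}\cap B_{\lambda\varepsilon}) = 2\mathcal{H}^{d-1}(J_u\cap B^+_{\lambda\varepsilon}) = o(\varepsilon^{d-1})$; analogously construct $\check u$ from $B^-_{\lambda\varepsilon}$. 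Apply componentwise the double truncation \eqref{truncat operat}--\eqref{other truncat op} to $\hat u - \mathfrak{a}$ and $\check u - \mathfrak{b}$ on $B_{\lambda\varepsilon}(x_0)$, producing $\mathfrak{v}^{\pm}_\varepsilon$, and set
\begin{equation*}
u_\varepsilon(x) := \begin{cases} \mathfrak{v}^+_\varepsilon(x)+\mathfrak{a}, & x\in B^+_{\lambda\varepsilon}(x_0),\\ \mathfrak{v}^-_\varepsilon(x)+\mathfrak{b}, & x\in B^-_{\lambda\varepsilon}(x_0),\\ u(x), & x\in B_\varepsilon(x_0)\setminus B_{\lambda\varepsilon}(x_0). \end{cases}
\end{equation*}

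Properties (i), (iii), and (iv) are verified in close analogy with Lemma \ref{vol seq}. For (i) we combine \eqref{truncated difference} (using $\mathcal{H}^{d-1}(J_{\hat u})^{d/(d-1)} = o(\varepsilon^d)$) with a Chebyshev estimate on the secondary truncation, exploiting Lebesgue-density convergence of $u$ to $\mathfrak{a}, \mathfrak{b}$. For (iii) we use $|\nabla\mathfrak{v}^\pm_\varepsilon|\leq|\nabla\hat u|,|\nabla\check u|$ a.e.\ together with the smallness of $\int_{B_\varepsilon}\psi(x,|\nabla u|)\,dx$. For (iv) we apply Theorem \ref{POINCARE GEN} to $\hat u - \mathfrak{a}$ and $\check u - \mathfrak{b}$ on $B_{\lambda\varepsilon}(x_0)$, combined with convergence of the medians to zero via Theorem \ref{DGCL 3} and the Lebesgue property. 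For (ii), the componentwise truncation preserves the jump locations within each open half-ball so that $J_{u_\varepsilon}\cap B^{\pm}_{\lambda\varepsilon}\subset J_u$, and a Fubini-type selection of $\lambda$ in the spirit of \eqref{1.2}--\eqref{1.1} places $\partial B_{\lambda\varepsilon}$ so that it contributes only $o(\varepsilon^{d-1})$ to $\mathcal{H}^{d-1}(J_{u_\varepsilon}\setminus J_u)$.

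The main obstacle is the control of the interface jumps on $\pi\cap B_{\lambda\varepsilon}$ created by gluing the two half-constructions: these lie in the interior of $B_{\lambda\varepsilon}$ and must be absorbed into $J_u$ up to a set of $\mathcal{H}^{d-1}$-measure $o(\varepsilon^{d-1})$. The required bound relies on the approximate tangent plane property of $J_u$ at $x_0$, combined with a careful trace analysis for the truncated functions on $\pi$, in order to identify the effective interface jumps with an $\mathcal{H}^{d-1}$-negligible subset of $\pi\setminus J_u$.
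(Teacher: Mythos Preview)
Your approach has a genuine gap at the very first step: the claim that $\mathcal{H}^{d-1}(J_u\cap B^{\pm}_\varepsilon(x_0)) = o(\varepsilon^{d-1})$ does \emph{not} follow from the approximate tangent plane property of $J_u$ at $x_0$. That property only gives weak* convergence of $\varepsilon^{-(d-1)}\mathcal{H}^{d-1}\llcorner J_u$ to $\mathcal{H}^{d-1}\llcorner\pi$ after blow-up; equivalently, $J_u$ is asymptotically contained in any slab $\{|(y-x_0)\cdot\nu|<\delta\varepsilon\}$, but \emph{not} in the hyperplane $\pi$ itself. Take $J_u$ locally the graph of a strictly convex $C^1$ function tangent to $\pi$ at $x_0$: then $J_u\cap B_\varepsilon(x_0)\setminus\{x_0\}\subset B^+_\varepsilon(x_0)$ entirely, and $\mathcal{H}^{d-1}(J_u\cap B^+_\varepsilon)\sim\omega_{d-1}\varepsilon^{d-1}$. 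Consequently your reflected function $\hat u$ satisfies $\mathcal{H}^{d-1}(J_{\hat u}\cap B_{\lambda\varepsilon})\sim\varepsilon^{d-1}$, the small-jump hypothesis \eqref{poinc small jump} fails, and neither the truncation estimates nor Theorem~\ref{POINCARE GEN} are available.

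The same issue undermines your treatment of the ``main obstacle'': the interface $\pi\cap B_{\lambda\varepsilon}$ has $\mathcal{H}^{d-1}$-measure of order $\varepsilon^{d-1}$, while $\mathcal{H}^{d-1}(J_u\cap\pi\cap B_\varepsilon)$ is typically zero (in the example above $J_u\cap\pi=\{x_0\}$). No trace analysis can absorb jumps on $\pi$ into $J_u$ when $J_u$ and $\pi$ are essentially disjoint near $x_0$. The paper's remedy is to exploit rectifiability more fully: write $J_u=\bigcup_j K_j\cup N$ with $K_j$ compact in a $C^1$ hypersurface $\{x_d=\Gamma_j(x')\}$, and reflect across the \emph{curved} graph $\Gamma_j$ rather than the flat plane $\pi$. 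The reflected function $w$ then has no jump on $K_j$, so $x_0\in K_j$ lies outside $J_w$, and Theorem~\ref{DGCL 1} yields $\mathcal{H}^{d-1}(J_w\cap B_\varepsilon(x_0))=o(\varepsilon^{d-1})$ for $\mathcal{H}^{d-1}$-a.e.\ such $x_0$. Gluing the two one-sided truncations along $\Gamma_j$ places the interface on a set where $K_j$ (hence $J_u$) has full density, so the extra jump in (ii) is indeed $o(\varepsilon^{d-1})$. Replacing $\pi$ by $\Gamma_j$ in your construction is exactly what is needed.
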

	
	\begin{proof}
		As before we assume without loss of generality $m=1$. Since $\rho_\psi(\nabla u)<+\infty$ we have that for $\mathcal{H}^{d-1}$-a.e $x_0 \in J_u$ it holds
		\begin{equation}\label{1.8}
			\lim_{\varepsilon \to 0^+} \frac{1}{\varepsilon^{d-1}} \int_{B_{\varepsilon}(x_0)} \psi(x,|\nabla u(x)|) \, dx=0;
		\end{equation}
		see e.g. \cite[Section 2.4.3, Theorem 2.10]{EvansGariepy}. Since $J_u$ is $(d-1)$-rectifiable, there exists a sequence of compact sets $K_j \subset \Omega$ and a set $N \subset \Omega$ such that
		$$
		J_u = \bigcup_{j=1}^\infty K_j \cup N, \ \ \ \ \  \mathcal{H}^{d-1}(N)=0;
		$$
		and each $K_j$ is a subset of a $C^1$-hypersurface. Then, taking $\varepsilon_0>0$ small enough, in a neighborhood $B_{\varepsilon_0}(y)$ of each point $y \in K_j$, up to rotations, we may find a $C^1$-function $\Gamma_j:\R^{d-1} \to \R$ such that
		$$
		K_j \cap B_{\varepsilon_0}(y) \subseteq \{ x=(x',x_d) \in B_{\varepsilon_0}(x_0) \colon x_d=\Gamma_j(x') \}.
		$$ 
		Since $u \in \gsbv(\Omega)$, if we restrict it to the Lipschitz set $\Omega_j:=\{ (x',x_d) \in \Omega \colon x_d > \Gamma_j(x') \}$ then it has unique measurable trace on its boundary, that is, for $\mathcal{H}^{d-1}$-a.e. $x_0 \in \partial \Omega_j$ there exists tr$(u)(x_0) \in \R^d$ such that 
		\begin{equation}\label{1.10}
			\lim_{\varepsilon \to 0^+} \varepsilon^{-d} \mathcal{L}^d \left( \Omega_j \cap B_\varepsilon(x_0) \cap \{ |u-\mbox{tr}(u)(x_0)|>\rho \} \right)=0 \ \ \ \mbox{for all $\rho>0$};
		\end{equation}
		see e.g. \cite[Theorem 5.5]{GDMjems}.
		Up to taking $\varepsilon_0$ smaller, we can define a function $w \in \gsbv^{\psi}(B_{\varepsilon_0}(y))$ as
		\begin{equation*}
			w(x):=
			\begin{cases*}
				u(x',x_d) & if $x_d > \Gamma_j(x')$, \\
				u(x',-x_d+2\Gamma_j(x')) & if $x_d < \Gamma_j(x')$.
			\end{cases*}
		\end{equation*}
		By construction we have that $|\nabla w| \leq C|\nabla u|$ $\mathcal{L}^d$-a.e. on $B_{\varepsilon_0}(y)$, thus indeed $w \in \gsbv^{\psi}(B_{\varepsilon_0}(y))$. Moreover, $J_w \cap B_{\varepsilon_0}(y) \subset B_{\varepsilon_0}(y) \setminus K_j$. Now we claim that for $\mathcal{H}^{d-1}$-a.e. $x_0 \in B_{\varepsilon_0}(y) \cap K_j$ we have either $w(x_0)=u^+(x_0)$ or $w(x_0)=u^-(x_0)$. Indeed, by definition of $w$ and \eqref{1.10} we infer that for $\mathcal{H}^{d-1}$-a.e. $x_0 \in B_{\varepsilon_0}(y) \cap K_j$ it holds
		\begin{equation}\label{1.11}
			\lim_{\varepsilon \to 0^+} \varepsilon^{-d} \mathcal{L}^d \left( B_\varepsilon(x_0) \cap \{ |w-w(x_0)|>\rho \} \right)=0 \ \ \ \mbox{for all $\rho>0$}.
		\end{equation}
		Thus, combining \eqref{1.10} and \eqref{1.11} we deduce the claim, being tr$(u)(x_0)$ either $u^+(x_0)$ or $u^-(x_0)$. 
		
		Thanks to Theorem \ref{DGCL 1}, we have that for $\mathcal{H}^{d-1}$-a.e. $x_0 \in K_j \cap B_{\varepsilon_0}(y)$
		\begin{equation}\label{1.12}
			\lim_{\varepsilon \to 0^+} \frac{1}{\varepsilon^{d-1}} \mathcal{H}^{d-1}(J_w \cap B_\varepsilon(x_0)) =0.
		\end{equation}
		Moreover, by \eqref{1.8} and definition of $w$, we have that for $\mathcal{H}^{d-1}$-a.e. $x_0 \in K_j \cap B_{\varepsilon_0}(y)$
		\begin{equation}\label{1.13}
			\lim_{\varepsilon \to 0^+} \frac{1}{\varepsilon^{d-1}} \int_{B_{\varepsilon}(x_0)} \psi_\varepsilon^- (|\nabla w(x)|) \, dx=0,
		\end{equation}
		where we recall that $\psi_\varepsilon^-:=\inf_{B_\varepsilon(x_0)} \psi(x,\cdot)$. 
		
		Let us fix $x_0 \in K_j \cap B_{\varepsilon_0}(y)$ such that properties \eqref{1.11}--\eqref{1.13} hold and assume without loss of generality that $w(x_0)=u^+(x_0)$.
		We fix $\eta>0$ arbitrarily small as before, by \eqref{1.12} and \eqref{1.13}, for every $\varepsilon>0$ small enough we have
		\begin{equation}\label{1.14}
			\int_{B_{\varepsilon}(x_0)} \psi_\varepsilon^- (|\nabla w(x)|) \, dx+\mathcal{H}^{d-1}(J_w \cap B_\varepsilon(x_0)) < \eta \varepsilon^{d-1}.
		\end{equation}
		Set $T_\varepsilon:=T_{B_\varepsilon(x_0)}$ and $m_\varepsilon:=\med(w,B_\varepsilon(x_0))$. We define similarly to \eqref{other truncat op}
		\begin{equation}\label{dp}
		\mathfrak{v}_\varepsilon^\psi = \left(m_\varepsilon-\varepsilon(\psi^-_{B_\varepsilon(x_0)})^{-1}\left(\frac{1}{2\varepsilon}\right)\right) \wedge T_\varepsilon w(x) \vee \left(m_\varepsilon+\varepsilon(\psi_{B_\varepsilon(x_0)}^-)^{-1}\left(\frac{1}{2\varepsilon}\right)\right).
		\end{equation}
		Using property \hyperlink{adA1}{(adA1)} of $\psi$, Remark \ref{RESCALE} and inequality \eqref{P2}, we infer that there exist $\varepsilon_0>0$ and $C>0$ depending on the dimension and on $\psi$ but not on $\varepsilon$ such that, for every $\varepsilon<\varepsilon_0$,
		\begin{align*}
			\int_{B_\varepsilon(x_0)} \psi\left(x,\frac{|\mathfrak{v}_\varepsilon^\psi-m_\varepsilon|}{\varepsilon}\right) dx  
			\leq C \int_{B_\varepsilon(x_0)} \psi_\varepsilon^- \left( \left| \nabla w(x) \right| \right) \, dx+\omega_d\varepsilon^d \psi_{\varepsilon_0}^+(\sigma).
		\end{align*}
		Hence, keeping in mind \eqref{1.13},
		\begin{equation}\label{1.15}
			\lim_{\varepsilon \to 0^+} \frac{1}{\varepsilon^{d-1}} \int_{B_\varepsilon(x_0)} \psi\left(x,\frac{|\mathfrak{v}_\varepsilon^\psi-m_\varepsilon|}{\varepsilon}\right) \, dx=0.
		\end{equation}
		We now want to show that
		\begin{equation}\label{1.16}
			\lim_{\varepsilon \to 0^+} \frac{1}{\varepsilon^{d-1}} \int_{B_\varepsilon(x_0)} \psi^+_\varepsilon \left(\frac{|m_\varepsilon-u^+(x_0)|}{\varepsilon} \right) \, dx=\lim_{\varepsilon \to 0^+} \varepsilon\omega_d \psi^+_\varepsilon \left(\frac{|m_\varepsilon-u^+(x_0)|}{\varepsilon} \right)= 0.
		\end{equation}
		To this aim observe that condition \eqref{1.11} and the fact that $w(x_0)=u^+(x_0)$ imply that
		\begin{equation}\label{1.17}
			\lim_{\varepsilon \to 0^+} \med(w,B_\varepsilon(x_0))=u^+(x_0).
		\end{equation}
		Assume that $|m_\varepsilon-u^+(x_0)| \geq \varepsilon \sigma$ for every $\varepsilon>0$ small enough, otherwise the thesis is obvious. The function $\psi_\varepsilon^- \in \Phi_w$ satisfies \hyperlink{id}{$\textnormal{(Inc)}_\gamma$} with the same $\gamma>1$ of $\psi$ and it is doubling with the same constant $K \geq 2$ as $\psi$. Therefore, by Theorem \ref{weak and strong equiv} and Lemma \ref{equiv properties}, for every $\varepsilon>0$ there exists an Orlicz function $\Psi_\varepsilon \in \Phi_s$ which satisfies \hyperlink{id}{$\textnormal{(Inc)}_\gamma$}, it is doubling with the same constant $K$ and
		$$
		\Psi_\varepsilon(t) \leq \psi_\varepsilon^-(t) \leq \Psi_\varepsilon(2t), \qquad \Psi_\varepsilon^{-1}(t) \leq 2(\psi_\varepsilon^-)^{-1}(t) \leq 2 \Psi_\varepsilon^{-1}(t) \ \ \ \ \mbox{for every $t \geq 0$.}
		$$
		In particular, by continuity, $\Psi_\varepsilon(\Psi_\varepsilon^{-1}(t))=\Psi_\varepsilon^{-1}(\Psi_\varepsilon(t))=t$ for every $t \geq 0$.
		We claim that for every $\delta \in (0,1)$ and for every $\varepsilon>0$ it holds
		\begin{equation}\label{1.18}
			(\Psi_\varepsilon)^{-1}\left(\frac{t}{\delta}\right) \leq \delta^{-\frac{1}{\gamma}} (\Psi_\varepsilon)^{-1}(t) \ \ \ \mbox{for every $t>0$.}
		\end{equation}
		Indeed, this can be checked using the relation $\Psi_\varepsilon(\lambda s) \geq \lambda^\gamma\Psi_\varepsilon(s)$ for $\lambda \geq 1$ given by the property \hyperlink{id}{$\textnormal{(Inc)}_\gamma$}, with $\lambda:=\delta^{-\frac{1}{\gamma}}$ and $s:=(\Psi_\varepsilon)^{-1}(t)$. Fix $\delta \in (3/4,1)$ and $\delta \varepsilon<\rho<\varepsilon$. We set
		$$
		\widehat{w}:=(w \wedge \tau''(w,B_{\varepsilon}(x_0)) \wedge \tau''(w,B_\rho(x_0))) \vee (\tau'(w,B_{\varepsilon}(x_0)) \vee \tau'(w,B_\rho(x_0))).
		$$ 
		By Theorem \ref{DGCL 3}, the following inequalities hold
		\begin{equation}\label{masi}
			|\widehat{w}-m_\varepsilon |\leq |T_\varepsilon w-m_\varepsilon| \qquad |\widehat{w}-m_\rho| \leq |T_\rho w - m_\rho|.
		\end{equation}
		Using property \hyperlink{id}{$\textnormal{(Inc)}_\gamma$}, the inequalities in \eqref{masi}, Theorem \ref{Poinc homogeneous form} applied to $\Psi_\varepsilon$ and \eqref{1.14}, we get that
		\begin{align*}
			\Psi_\varepsilon \left(\frac{|m_\varepsilon - m_\rho|}{\varepsilon}\right) & = \fint_{B_\rho(x_0)} \Psi_\varepsilon \left(\frac{|m_\varepsilon - m_\rho|}{\varepsilon}\right) \, dx \\
			&\leq  2^q\fint_{B_\rho(x_0)} \Psi_\varepsilon \left(\frac{|\widehat{w}-m_\rho |}{\varepsilon}\right) \, dx +2^q \fint_{B_\rho(x_0)} \Psi_\varepsilon \left(\frac{|\widehat{w} - m_\varepsilon|}{\varepsilon}\right) \, dx \\
			& \leq 2^q \fint_{B_\rho(x_0)} \Psi_\varepsilon \left(\frac{|\widehat{w}-m_\rho |}{\rho}\right) \, dx + \frac{2^q}{\delta^d}\fint_{B_\varepsilon(x_0)} \Psi_\varepsilon \left(\frac{|\widehat{w} - m_\varepsilon|}{\varepsilon}\right) \, dx \\
			& \leq C\fint_{B_\rho(x_0)}  \Psi_\varepsilon \left(|\nabla w(x)| \right) \, dx + \frac{C}{\delta^d}\fint_{B_\varepsilon(x_0)} \Psi_\varepsilon \left(|\nabla w(x)|\right) \, dx \\
			& \leq C \eta \varepsilon^{-1}.
		\end{align*}
		where $C=C(d,\psi)$. Reasoning analogously one also gets that there exists $C>0$ not depending on $\varepsilon$ and on $k \geq 1$ such that it holds
		\begin{equation*}
			\Psi_\varepsilon \left( \frac{|m_{\delta^k \varepsilon}-m_{\delta^{k+1}\varepsilon}|}{\delta^k \varepsilon} \right) \leq C\eta (\delta^k\varepsilon)^{-1}.
		\end{equation*}
		Let $\rho_k$ be an infinitesimal sequence such that $\delta^{k+1} \varepsilon \leq \rho_k \leq \delta^k \varepsilon$ for every $k \geq 1$. Combining the estimates above, we have that
		\begin{align}\label{kjhrk}
			\begin{split}
			|m_\varepsilon - m_{\rho_k}| & \leq |m_{\delta^k \varepsilon}-m_{\rho_k}|+\sum_{j=0}^{k-1} |m_{\delta^{j+1} \varepsilon}-m_{\delta^j \varepsilon}| \\
			& \leq \delta^k \varepsilon \Psi_\varepsilon^{-1}\left(C \eta (\delta^k\varepsilon)^{-1} \right)+\sum_{j=0}^{k-1} \delta^j \varepsilon \Psi_\varepsilon^{-1}\left( C\eta (\delta^j\varepsilon)^{-1} \right).
			\end{split}
		\end{align}
		In the following $C>0$ will denote a generic constant depending only on the dimension $d$ and on $\psi$. Let $k \to +\infty$ in \eqref{kjhrk}, keeping in mind \eqref{1.17} and \eqref{1.18}, we have for every $\varepsilon>0$ small enough
		\begin{align*}
			|m_\varepsilon - u^+(x_0)| & \leq \sum_{j=0}^{\infty} \delta^j \varepsilon \Psi_\varepsilon^{-1}\left( C \eta (\delta^j\varepsilon)^{-1} \right) \leq C \sum_{j=0}^{\infty} \delta^j \varepsilon \Psi_\varepsilon^{-1}\left( \eta (\delta^j\varepsilon)^{-1} \right) \\
			& \leq \varepsilon C  \sum_{j=0}^{\infty} (\delta^j)^{1-\frac{1}{\gamma}}  \Psi_\varepsilon^{-1}\left( \eta \varepsilon^{-1} \right)=\varepsilon C \frac{1}{1-\delta^{(\gamma-1)/\gamma}} \Psi_\varepsilon^{-1}\left( \eta \varepsilon^{-1} \right) \\
			& \leq \varepsilon C \Psi_\varepsilon^{-1}\left( \eta \varepsilon^{-1} \right).
		\end{align*}
		Taking $\eta$ smaller if necessary, we get that 
		$$
		C\Psi_\varepsilon^{-1}(\eta \varepsilon^{-1}) \leq \frac{1}{2}\Psi_\varepsilon^{-1} (2^{-1} \varepsilon^{-1}) \leq (\psi_\varepsilon^-)^{-1}(2^{-1} \varepsilon^{-1}).
		$$ 
		Thus, by property \hyperlink{adA1}{(adA1)} of $\psi$ and using that $\Psi_\varepsilon$ is doubling, we deduce that 
		$$
		\psi^+_\varepsilon \left( \frac{|m_\varepsilon-u^+(x_0)|}{\varepsilon} \right) \leq C  \Psi_\varepsilon \left( 2\frac{|m_\varepsilon-u^+(x_0)|}{\varepsilon} \right) \leq C \Psi_\varepsilon \left( \frac{|m_\varepsilon-u^+(x_0)|}{\varepsilon} \right).
		$$
		Therefore,
		\begin{align*}
			\lim_{\varepsilon \to 0^+} \varepsilon \psi^+_\varepsilon \left( \frac{|m_\varepsilon-u^+(x_0)|}{\varepsilon} \right) & \leq \lim_{\varepsilon \to 0^+} \varepsilon C \Psi_\varepsilon \left( \frac{|m_\varepsilon-u^+(x_0)|}{\varepsilon} \right) \\
			& \leq \lim_{\varepsilon \to 0^+} \varepsilon C \Psi_\varepsilon \left( C \Psi_\varepsilon^{-1} \left( \frac{\eta}{\varepsilon} \right) \right) \leq C \eta.
		\end{align*}
		Since $\eta>0$ was arbitrarily small, letting $\eta \to 0^+$ we infer \eqref{1.16}.
		Combining \eqref{1.15} and \eqref{1.16} we deduce that
		\begin{equation}\label{1.19}
			\lim_{\varepsilon \to 0^+} \frac{1}{\varepsilon^{d-1}} \int_{B_\varepsilon(x_0)} \psi\left(x,\frac{|\mathfrak{v}_\varepsilon^\psi(x)-u^+(x_0)|}{\varepsilon}\right) \, dx=0.
		\end{equation}
		
		We repeat the above arguments for the function defined as
		\begin{equation*}
			z(x):=
			\begin{cases*}
				u(x',x_d) & if $x_d < \Gamma_j(x')$, \\
				u(x',-x_d+2\Gamma_j(x')) & if $x_d > \Gamma_j(x')$.
			\end{cases*}
		\end{equation*}
		We have that \eqref{1.12} and \eqref{1.13} are satisfied with $w$ replaced by $z$. Moreover, let $\mathfrak{z}^\psi_\varepsilon$ be defined as in \eqref{dp} with $w$ replaced by $z$, reasoning analogously we deduce 
		\begin{equation}\label{1.19bis}
			\lim_{\varepsilon \to 0^+} \frac{1}{\varepsilon^{d-1}} \int_{B_\varepsilon(x_0)} \psi\left(x,\frac{|\mathfrak{z}_\varepsilon^\psi(x)-u^-(x_0)|}{\varepsilon}\right) \, dx=0.
		\end{equation}
		We set
		\begin{equation*}
			v_\varepsilon(x):=
			\begin{cases*}
				\displaystyle \mathfrak{v}_\varepsilon^\psi(x) & if $x_d > \Gamma_j(x')$, \\
				\displaystyle \mathfrak{z}_\varepsilon^\psi(x) & if $x_d < \Gamma_j(x')$.
			\end{cases*}
		\end{equation*}
		Using \eqref{1.8}, since $|\nabla v_\varepsilon| \leq |\nabla u|$ on $B_\varepsilon(x_0)$ by definition, we get
		\begin{equation}\label{1.20}
			\lim_{\varepsilon \to 0^+} \frac{1}{\varepsilon^{d-1}} \int_{B_{\varepsilon}(x_0)} \psi(x,|\nabla v_\varepsilon(x)|) \, dx=0.
		\end{equation}
		From \eqref{1.19} and \eqref{1.19bis} we also infer that
		\begin{equation}\label{1.21}
			\lim_{\varepsilon \to 0^+} \frac{1}{\varepsilon^{d-1}} \int_{B_\varepsilon(x_0)} \psi\left(x,\frac{|v_\varepsilon(x)-u_{x_0,\nu}(x)|}{\varepsilon} \right) \, dx=0.
		\end{equation}
		By definition of $\mathfrak{v}_\varepsilon^\psi$, $\mathfrak{z}_\varepsilon^\psi$, by Chebychev inequality, by \eqref{truncated difference} and by Theorem \ref{Poinc homogeneous form}, we estimate
		\begin{align*}
		\mathcal{L}^d \left( \left\{  \mathfrak{v}_\varepsilon^\psi \neq   w \right\} \cup \left\{ \mathfrak{z}_\varepsilon^\psi \neq  z \right\} \right)
		& \leq \mathcal{L}^d \left( \left\{ \mathfrak{v}_\varepsilon^\psi \neq T_\varepsilon w \right\}\right)+\mathcal{L}^d \left( \left\{ \mathfrak{z}_\varepsilon^\psi \neq T_\varepsilon z \right\}\right)+\mathcal{L}^d \left( \{T_\varepsilon w \neq w  \} \right)+\mathcal{L}^d \left( \{T_\varepsilon z\neq z  \} \right) \\
		& \leq \varepsilon C\int_{B_\varepsilon(x_0)} \psi^-_\varepsilon(|\nabla w(x)|) \, dx +\varepsilon C\int_{B_\varepsilon(x_0)} \psi^-_\varepsilon(|\nabla z(x)|) \, dx \\ 
		&  \ \ \ \ \ + \left( 2\gammaiso\mathcal{H}^{d-1}(J_w \cap B_\varepsilon(x_0)) \right)^{\frac{d}{d-1}}+\left( 2\gammaiso\mathcal{H}^{d-1}(J_z \cap B_\varepsilon(x_0)) \right)^{\frac{d}{d-1}},
		\end{align*}
		where $C=C(d,K)$. Thus, by definition of $v_\varepsilon$, \eqref{1.12} and \eqref{1.13} for $w$ and $z$, we infer that 
		\begin{equation}\label{1.22}
			\lim_{\varepsilon \to 0^+} \varepsilon^{-d} \mathcal{L}^d (\{x \in B_\varepsilon(x_0) \colon v_\varepsilon(x) \neq u(x) \})=0.
		\end{equation}
		Now, an analogous argument as in \eqref{1.2} shows that for every sequence $\varepsilon \to 0^+$ one can find a subsequence (not relabeled) such that for $\mathcal{L}^1$-a.e. $\lambda \in (0,1)$
		\begin{equation}\label{1.23}
			\mathcal{H}^{d-1}(\partial B_{\lambda \varepsilon}(x_0) \cap J_{v_\varepsilon})=0 \qquad \lim_{\varepsilon \to 0^+} \varepsilon^{1-d} \mathcal{H}^{d-1}(\{ v_\varepsilon \neq u \} \cap \partial B_{\lambda \varepsilon}(x_0))=0.
		\end{equation}
		
		We finally define
		\begin{equation*}
			u_\varepsilon(x):=
			\begin{cases*}
				v_\varepsilon(x) & if $x \in B_{\lambda\varepsilon}(x_0) $, \\
				u(x) & if $x \in B_\varepsilon(x_0) \setminus \overline{B_{\lambda \varepsilon}(x_0)}$.
			\end{cases*}
		\end{equation*}
		By definition it is clear that $u_\varepsilon \in \gsbv^{\psi}(B_\varepsilon(x_0))$.
		Properties \eqref{jump seq props}(i)--(ii) follow by definition of $u_\varepsilon$, \eqref{1.22} and \eqref{1.23}. Property \eqref{jump seq props}(iii) follows by \eqref{1.8} and \eqref{1.20}. Finally, property \eqref{jump seq props}(iv) is a consequence of the definition of $u_\varepsilon$ and \eqref{1.21}.
	\end{proof}

	With this approximation tool, we can now prove Lemma \ref{lem 3} which will follow as a consequence of Lemma \ref{lem 3.1} and Lemma \ref{lem 3.2}.
	
	\begin{lem}\label{lem 3.1}
		Let $\mathcal{F}$ satisfy \eqref{H1} and \eqref{H3}-\eqref{H4} and let $u \in \gsbv^{\psi}(\Omega,\R^m)$. Then, for $\mathcal{H}^{d-1}$-a.e. $x_0 \in J_u$ it holds
		\begin{equation}\label{3.1}
			\lim_{\varepsilon \to 0^+} \frac{\mathbf{m}_\mathcal{F}(u,B_\varepsilon(x_0))}{\omega_{d-1} \varepsilon^{d-1}} \leq \limsup_{\varepsilon \to 0^+} \frac{\mathbf{m}_\mathcal{F}(\overline{u}^\textnormal{surf}_{x_0},B_\varepsilon(x_0))}{\omega_{d-1} \varepsilon^{d-1}}.
		\end{equation}
	\end{lem}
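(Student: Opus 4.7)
The plan is to mirror the strategy of Lemma \ref{lem 2.1}, replacing the bulk approximation given by Lemma \ref{vol seq} with the surface approximation in Lemma \ref{surf seq}, and rescaling by $\omega_{d-1}\varepsilon^{d-1}$ in place of $\omega_d\varepsilon^d$. First, I would restrict attention to points $x_0 \in J_u$ for which: the statement of Lemma \ref{surf seq} holds, $\mu(B_\varepsilon(x_0))/(\omega_{d-1}\varepsilon^{d-1})\to 1$, and, by Lemma \ref{lem 1}, $\mathcal{F}(u,B_\varepsilon(x_0))/\mu(B_\varepsilon(x_0))$ and $\mathbf{m}_\mathcal{F}(u,B_\varepsilon(x_0))/\mu(B_\varepsilon(x_0))$ have the same finite limit as $\varepsilon\to 0^+$. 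This covers $\mathcal{H}^{d-1}$-almost every jump point. Fix an arbitrarily small $\eta>0$ and a small $\theta\in(0,1/4)$, and set $\lambda:=1-\theta$. Let $(u_\varepsilon)_\varepsilon$ be the sequence provided by Lemma \ref{surf seq} corresponding to this $\lambda$.

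Next, for each $\varepsilon$ I would pick a near optimizer $z_\varepsilon\in\gsbv^{\psi}(B_{(1-3\theta)\varepsilon}(x_0),\R^m)$ with $z_\varepsilon=\usurf$ in a neighborhood of $\partial B_{(1-3\theta)\varepsilon}(x_0)$ and $\mathcal{F}(z_\varepsilon,B_{(1-3\theta)\varepsilon}(x_0))\le \mathbf{m}_\mathcal{F}(\usurf,B_{(1-3\theta)\varepsilon}(x_0))+\omega_{d-1}\varepsilon^{d}$, then extend it by $\usurf$ to all of $B_\varepsilon(x_0)$. I would then apply the fundamental estimate Lemma \ref{fund est} to the pair $(z_\varepsilon,u_\varepsilon)$ with the sets $D'_{\varepsilon,x_0}:=B_{(1-2\theta)\varepsilon}(x_0)$, $D''_{\varepsilon,x_0}:=B_{(1-\theta)\varepsilon}(x_0)$, $E_{\varepsilon,x_0}:=B_\varepsilon(x_0)\setminus\overline{B_{(1-4\theta)\varepsilon}(x_0)}$, producing $w_\varepsilon\in\gsbv^{\psi}(B_\varepsilon(x_0),\R^m)$ that agrees with $u_\varepsilon$, and hence (by property \eqref{jump seq props}(i)) with $u$, in a neighborhood of $\partial B_\varepsilon(x_0)$. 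Hence $w_\varepsilon$ is an admissible competitor for $\mathbf{m}_\mathcal{F}(u,B_\varepsilon(x_0))$.

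The last step is to bound the four contributions to $\mathcal{F}(w_\varepsilon,B_\varepsilon(x_0))/(\omega_{d-1}\varepsilon^{d-1})$ produced by the fundamental estimate. The mixing integral $M\varepsilon^{-(d-1)}\int_{B_{(1-\theta)\varepsilon}\setminus B_{(1-2\theta)\varepsilon}}\psi(x,|z_\varepsilon-u_\varepsilon|/\varepsilon)\,dx$ vanishes thanks to $z_\varepsilon\equiv\usurf$ on that annulus (since it lies outside $B_{(1-3\theta)\varepsilon}(x_0)$) combined with property \eqref{jump seq props}(iv). The term $\mathcal{F}(z_\varepsilon,B_{(1-\theta)\varepsilon})$ splits via \eqref{H1} as the minimizing contribution on $B_{(1-3\theta)\varepsilon}$ plus $\mathcal{F}(\usurf,C_{\varepsilon,\theta})$; using \eqref{H4} and the fact that $\nabla\usurf\equiv 0$ a.e., the latter is bounded by $b\mathcal{L}^d(C_{\varepsilon,\theta})+b\mathcal{H}^{d-1}(\Pi\cap C_{\varepsilon,\theta})$, where $\Pi$ is the hyperplane $\{(x-x_0)\cdot\nu_u(x_0)=0\}$, and after dividing by $\omega_{d-1}\varepsilon^{d-1}$ this is $o_\theta(1)$ as $\theta\to 0$. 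Finally, for $\mathcal{F}(u_\varepsilon,C_{\varepsilon,\theta})$ I would use \eqref{H4} together with properties \eqref{jump seq props}(ii)--(iii) of Lemma \ref{surf seq}, plus the standard fact that $\mathcal{H}^{d-1}(J_u\cap C_{\varepsilon,\theta}(x_0))/(\omega_{d-1}\varepsilon^{d-1})\to \mathcal{H}^{d-1}(\Pi\cap(B_1\setminus\overline{B_{1-4\theta}}))/\omega_{d-1}$, which is also $o_\theta(1)$. Putting these together yields
\[
\lim_{\varepsilon\to 0^+}\frac{\mathbf{m}_\mathcal{F}(u,B_\varepsilon(x_0))}{\omega_{d-1}\varepsilon^{d-1}}\le (1+\eta)(1-3\theta)^{d-1}\limsup_{\varepsilon\to 0^+}\frac{\mathbf{m}_\mathcal{F}(\usurf,B_\varepsilon(x_0))}{\omega_{d-1}\varepsilon^{d-1}}+o_\theta(1)+\eta,
\]
and sending $\theta\to 0$ and then $\eta\to 0$ gives \eqref{3.1}. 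The most delicate point is verifying that the mixing integral genuinely vanishes after the surface rescaling $\varepsilon^{1-d}$: this relies crucially on the non-standard growth estimate \eqref{jump seq props}(iv) of Lemma \ref{surf seq}, whose proof in turn invokes the generalized Poincaré inequality of Theorem \ref{POINCARE GEN} and condition \hyperlink{adA1}{(adA1)}.
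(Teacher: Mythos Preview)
Your proposal is correct and follows essentially the same approach as the paper's own proof: the same choice of near-optimizer $z_\varepsilon$ extended by $\usurf$, the same application of Lemma~\ref{fund est} with the sets in \eqref{2.18}, the same use of \eqref{jump seq props}(iv) to kill the mixing term, and the same handling of $\mathcal{F}(\usurf,C_{\varepsilon,\theta})$ and $\mathcal{F}(u_\varepsilon,C_{\varepsilon,\theta})$ via \eqref{H4} and the rectifiability of $J_u$. The only cosmetic difference is the order in which you fix $\lambda=1-\theta$ versus invoking Lemma~\ref{surf seq}; since that lemma provides the sequence for $\mathcal{L}^1$-a.e.\ $\lambda\in(0,1)$, you should phrase it as choosing $\theta$ small within the full-measure set of admissible values, but this does not affect the argument.
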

	
	\begin{proof}
		Let $x_0 \in J_u$ such that the statement of Lemma \ref{surf seq} holds and
		\begin{equation}\label{3.3}
			\lim_{\varepsilon \to 0^+} \frac{\mathcal{F}(u,B_\varepsilon(x_0))}{\mu(B_\varepsilon(x_0))} \leq \lim_{\varepsilon \to 0^+} \frac{\mathbf{m}_\mathcal{F}(u,B_\varepsilon(x_0))}{\omega_{d-1}\varepsilon^{d-1}} < +\infty.
		\end{equation} 
		By Lemma \ref{lem 1} and recalling the definition of $\mu$, \eqref{3.3} holds $\mathcal{H}^{d-1}$-a.e. in $ J_u \cap \Omega$.
		Let $(u_\varepsilon)_\varepsilon$ be the sequence given by said Lemma; in the following we set $\nu:=\nu_u(x_0)$ for brevity. 
		Fix $\eta>0$, take $\lambda \in (0,1)$ such that \eqref{jump seq props} holds, and set $\lambda=1-\theta$ with $\theta \in (0,1)$. Take a sequence $z_\varepsilon \in \gsbv^{\psi}(B_{(1-3\theta)\varepsilon}(x_0),\R^m)$ with $z_\varepsilon=\usurf$ in a neighborhood of $\partial B_{(1-3\theta)\varepsilon}(x_0)$ and such that
		\begin{equation}\label{3.4}
			\mathcal{F}(z_\varepsilon,B_{(1-3\theta)\varepsilon}(x_0)) \leq \mathbf{m}_\mathcal{F}(\usurf,B_{(1-3\theta)\varepsilon}(x_0))+\omega_{d-1}\varepsilon^d.
		\end{equation}
		We extend $z_\varepsilon$ to a function in $\gsbv^{\psi}(B_\varepsilon(x_0),\R^m)$ by setting $z_\varepsilon=\usurf$ outside $B_{(1-3\theta)\varepsilon}(x_0)$. Now we apply Lemma \ref{fund est} with $u$ and $v$ replaced by $z_\varepsilon$ and $u_\varepsilon$, respectively, and $D'_{\varepsilon,x_0}$, $D''_{\varepsilon,x_0}$, $E_{\varepsilon,x_0}$ defined as in \eqref{2.18}. Thus, we find $w_\varepsilon \in \gsbv^{\psi}(B_\varepsilon(x_0),\R^m)$ such that $w_\varepsilon=u_\varepsilon$ on $B_\varepsilon(x_0) \setminus B_{(1-\theta)\varepsilon}(x_0)$ and
		\begin{align}\label{3.5}
			\begin{split}
				\mathcal{F}(w_\varepsilon,B_\varepsilon(x_0)) \leq & (1+\eta)\left(\mathcal{F}(z_\varepsilon,B_{(1-\theta)\varepsilon}(x_0))+\mathcal{F}(u_\varepsilon,C_{\varepsilon,\theta}(x_0))\right)+\eta\mathcal{L}^d(B_\varepsilon(x_0)) \\
				& +M\int_{B_{(1-\theta)\varepsilon}(x_0) \setminus B_{(1-2\theta)\varepsilon}(x_0)} \psi \left(x,\frac{|z_\varepsilon-u_\varepsilon|}{\varepsilon}\right) \, dx.
			\end{split}
		\end{align}
		By \eqref{jump seq props}(i) we have that $w_\varepsilon=u_\varepsilon=u$ on a neighborhood of $\partial B_\varepsilon(x_0)$. Moreover, using the fact that $z_\varepsilon=\usurf$ outside $B_{(1-3\theta)\varepsilon}(x_0)$, by \eqref{jump seq props}(iv) we deduce
		\begin{align*}
			\lim_{\varepsilon \to 0^+} \frac{1}{\varepsilon^{d-1}} & \int_{B_{(1-\theta)\varepsilon}(x_0) \setminus B_{(1-2\theta)\varepsilon}(x_0)} \psi \left(x,\frac{|z_\varepsilon-u_\varepsilon|}{\varepsilon}\right) \, dx  \\
			& \leq  \lim_{\varepsilon \to 0^+} \frac{1}{\varepsilon^{d-1}} \int_{B_{(1-\theta)\varepsilon}(x_0)} \psi \left(x,\frac{|u_\varepsilon-\usurf|}{\varepsilon}\right) \, dx=0.
		\end{align*}
		Recall that $C_{\varepsilon,\theta}(x_0):=B_\varepsilon(x_0) \setminus \overline{B_{(1-4\theta)\varepsilon}(x_0)}$ as defined in Lemma \ref{2.1}. Together with \eqref{3.5} we infer that there exists a non-negative infinitesimal sequence $(\rho_\varepsilon)_\varepsilon$ such that
		\begin{equation}\label{3.6}
			\mathcal{F}(w_\varepsilon,B_\varepsilon(x_0)) \leq  (1+\eta)\left(\mathcal{F}(z_\varepsilon,B_{(1-\theta)\varepsilon}(x_0))+\mathcal{F}(u_\varepsilon,C_{\varepsilon,\theta}(x_0))\right)+\varepsilon^{d-1}\rho_\varepsilon+\eta\omega_d\varepsilon^d.
		\end{equation}
		We now estimate the terms in \eqref{3.6}. Let $\Pi_0$ be the hyperplane passing through $x_0$ with normal $\nu$. 
		Using that $z_\varepsilon=u_\varepsilon$ on $B_\varepsilon(x_0) \setminus B_{(1-3\theta)\varepsilon}(x_0) \subset C_{\varepsilon,\theta}(x_0)$ together with \eqref{H1}, \eqref{H4} and \eqref{3.4} we get
		\begin{equation}\label{3.15}
			\limsup_{\varepsilon \to 0^+} \frac{\mathcal{F}(\usurf,C_{\varepsilon,\theta}(x_0))}{\omega_{d-1}\varepsilon^{d-1}} \leq b(1-(1-4\theta)^{d-1}),
		\end{equation}
		and
		\begin{align}
				\limsup_{\varepsilon \to 0^+} \label{3.7} \frac{\mathcal{F}(z_\varepsilon,B_{(1-\theta)\varepsilon}(x_0))}{\omega_{d-1}\varepsilon^{d-1}} & \leq \limsup_{\varepsilon \to 0^+} \frac{\mathcal{F}(z_\varepsilon,B_{(1-3\theta)\varepsilon}(x_0))}{\omega_{d-1}\varepsilon^{d-1}}+\limsup_{\varepsilon \to 0^+} \frac{\mathcal{F}(\usurf,C_{\varepsilon,\theta}(x_0))}{\omega_{d-1}\varepsilon^{d-1}} \\
				& \leq \limsup_{\varepsilon \to 0^+} \frac{\mathbf{m}_\mathcal{F}(\usurf,B_{(1-3\theta)\varepsilon}(x_0))}{\omega_{d-1}\varepsilon^{d-1}}+\frac{b}{\omega_{d-1}}\mathcal{H}^{d-1}(C_{1,\theta}(x_0)\cap \Pi_0) \nonumber \\
				& \leq (1-3\theta)^{d-1} \limsup_{\varepsilon \to 0^+} \frac{\mathbf{m}_\mathcal{F}(\usurf,B_{\varepsilon}(x_0))}{\omega_{d-1}\varepsilon^{d-1}}+\frac{b}{\omega_{d-1}}(1-(1-4\theta)^{d-1}) \nonumber .
		\end{align}
		Notice that by rectifiability of $J_u$ and \eqref{jump seq props}(ii) it holds
		\begin{align}\label{3.8}
			\begin{split}
				\limsup_{\varepsilon \to 0^+} \frac{\mathcal{H}^{d-1}(J_{u_\varepsilon} \cap C_{\varepsilon,\theta}(x_0))}{\omega_{d-1}\varepsilon^{d-1}} \leq \lim_{\varepsilon \to 0^+} \frac{\mathcal{H}^{d-1}(J_{u} \cap C_{\varepsilon,\theta}(x_0))}{\omega_{d-1}\varepsilon^{d-1}}=\frac{\mathcal{H}^{d-1}(\Pi_0 \cap C_{1,\theta}(x_0))}{\omega_{d-1}}
			\end{split}
		\end{align}
		Therefore, using \eqref{jump seq props}(iii) and \eqref{H4} again we obtain
		\begin{align}\label{3.9}
			\begin{split}
				\limsup_{\varepsilon \to 0^+} \frac{\mathcal{F}(u_\varepsilon,C_{\varepsilon,\theta}(x_0))}{\omega_{d-1}\varepsilon^{d-1}} & \leq \limsup_{\varepsilon \to 0^+} \frac{b}{\omega_{d-1}\varepsilon^{d-1}}\left( \int_{C_{\varepsilon,\theta}(x_0)} (1+\psi(x,|\nabla u_\varepsilon|)) \, dx + \mathcal{H}^{d-1}(J_{u_\varepsilon} \cap C_{\varepsilon,\theta}(x_0)) \right) \\
				&  \leq b(1-(1-4\theta)^{d-1}).
			\end{split}
		\end{align}
		Finally, recalling that $\rho_\varepsilon \to 0$ as $\varepsilon \to 0$, that $w_\varepsilon=u$ in a neighborhood of $\partial B_\varepsilon(x_0)$, and using \eqref{3.6}, \eqref{3.7} and \eqref{3.9}, we deduce
		\begin{align*}
			\lim_{\varepsilon \to 0^+} \frac{\mathbf{m}_\mathcal{F}(u,B_\varepsilon(x_0))}{\omega_{d-1}\varepsilon^{d-1}} & \leq \limsup_{\varepsilon \to 0^+} \frac{\mathcal{F}(w_\varepsilon,B_\varepsilon(x_0))}{\omega_{d-1}\varepsilon^{d-1}} \\
			& \leq (1+\eta)(1-3\theta)^{d-1} \limsup_{\varepsilon \to 0^+} \frac{\mathbf{m}_\mathcal{F}(\usurf,B_\varepsilon(x_0))}{\omega_{d-1}\varepsilon^{d-1}}
			+2b(1+\eta)(1-(1-4\theta)^{d-1}).
		\end{align*}
		Letting $\eta \to 0$ and $\theta \to 0$ we get \eqref{3.1}.
	\end{proof}
	
	\begin{lem}\label{lem 3.2}
		Let $\mathcal{F}$ satisfy \eqref{H1} and \eqref{H3}-\eqref{H4} and let $u \in \gsbv^{\psi}(\Omega,\R^m)$. Then, for $\mathcal{H}^d$-a.e $x_0 \in J_u$ we have
		\begin{equation}\label{3.2}
			\lim_{\varepsilon \to 0^+} \frac{\mathbf{m}_\mathcal{F}(u,B_\varepsilon(x_0))}{\omega_{d-1} \varepsilon^{d-1}} \geq \limsup_{\varepsilon \to 0^+} \frac{\mathbf{m}_\mathcal{F}(\overline{u}^\textnormal{surf}_{x_0},B_\varepsilon(x_0))}{\omega_{d-1} \varepsilon^{d-1}}.
		\end{equation}
	\end{lem}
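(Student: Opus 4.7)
The plan is to mimic the strategy of Lemma \ref{lem 2.2}, replacing the volume rescaling $\varepsilon^d$ with the surface rescaling $\varepsilon^{d-1}$ and using Lemma \ref{surf seq} in place of Lemma \ref{vol seq}. We work at points $x_0 \in J_u$ for which the statement of Lemma \ref{surf seq} holds, $\lim_{\varepsilon \to 0^+}(\omega_{d-1}\varepsilon^{d-1})^{-1}\mu(B_\varepsilon(x_0))=1$, and the conclusion of Lemma \ref{lem 1} applies, so that in particular $\lim_{\varepsilon \to 0^+} \mathbf{m}_\mathcal{F}(u,B_\varepsilon(x_0))/(\omega_{d-1}\varepsilon^{d-1}) < +\infty$. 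Fix $\eta > 0$ and $\lambda = 1-\theta \in (0,1)$ so that \eqref{jump seq props} holds, and let $(u_\varepsilon)_\varepsilon$ be the sequence produced by Lemma \ref{surf seq}. Using a Fubini argument along the lines of \eqref{1.2}--\eqref{1.1} (combined with \eqref{jump seq props}(i)--(ii)), for every $\varepsilon>0$ we can select a radius $s \in (1-4\theta, 1-3\theta)$ such that $\mathcal{H}^{d-1}(\partial B_{s\varepsilon}(x_0) \cap (J_u \cup J_{u_\varepsilon})) = 0$ and $\varepsilon^{1-d}\mathcal{H}^{d-1}(\{u_\varepsilon \neq u\} \cap \partial B_{s\varepsilon}(x_0)) \to 0$.

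Next, choose $z_\varepsilon \in GSBV^\psi(B_{s\varepsilon}(x_0),\R^m)$ with $z_\varepsilon = u$ in a neighbourhood of $\partial B_{s\varepsilon}(x_0)$ and
\[
\mathcal{F}(z_\varepsilon, B_{s\varepsilon}(x_0)) \leq \mathbf{m}_\mathcal{F}(u, B_{s\varepsilon}(x_0)) + \omega_{d-1}\varepsilon^d,
\]
and extend $z_\varepsilon$ to $B_\varepsilon(x_0)$ by setting $z_\varepsilon := u_\varepsilon$ outside $B_{s\varepsilon}(x_0)$; this is consistent because $u_\varepsilon = u$ in $B_\varepsilon(x_0) \setminus \overline{B_{\lambda \varepsilon}(x_0)}$ and $s < \lambda$, and the Fubini selection guarantees no additional surface energy on $\partial B_{s\varepsilon}(x_0)$. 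Now apply the fundamental estimate (Lemma \ref{fund est}) with the pair $(z_\varepsilon, \usurf)$ and the sets $D'_{\varepsilon,x_0}, D''_{\varepsilon,x_0}, E_{\varepsilon,x_0}$ as in \eqref{2.18}, producing $w_\varepsilon \in GSBV^\psi(B_\varepsilon(x_0),\R^m)$ coinciding with $\usurf$ in a neighbourhood of $\partial B_\varepsilon(x_0)$ and with
\[
\mathcal{F}(w_\varepsilon, B_\varepsilon(x_0)) \leq (1+\eta)(\mathcal{F}(z_\varepsilon, B_{(1-\theta)\varepsilon}(x_0)) + \mathcal{F}(\usurf, C_{\varepsilon,\theta}(x_0))) + \eta\mathcal{L}^d(B_\varepsilon(x_0)) + M\!\!\int_{B_{(1-\theta)\varepsilon}(x_0) \setminus B_{(1-2\theta)\varepsilon}(x_0)} \!\! \psi\!\left(x, \tfrac{|z_\varepsilon - \usurf|}{\varepsilon}\right) dx.
\]
Since $z_\varepsilon = u_\varepsilon$ on the annulus $B_{(1-\theta)\varepsilon}(x_0) \setminus B_{s\varepsilon}(x_0)$ (note that $s < 1-3\theta < 1-2\theta$), the integral term is bounded by $\int_{B_{(1-\theta)\varepsilon}(x_0)} \psi(x, |u_\varepsilon - \usurf|/\varepsilon)\,dx$, which is $o(\varepsilon^{d-1})$ by \eqref{jump seq props}(iv). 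Thus we get a negligible remainder $\varepsilon^{d-1}\rho_\varepsilon$.

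It remains to estimate each piece. For $\mathcal{F}(z_\varepsilon, B_{(1-\theta)\varepsilon}(x_0))$ we split on $B_{s\varepsilon}(x_0)$ and on the annulus $B_{(1-\theta)\varepsilon}(x_0) \setminus B_{s\varepsilon}(x_0) \subset C_{\varepsilon,\theta}(x_0)$: by \eqref{H1}, \eqref{H4} and the Fubini choice of $s$,
\[
\mathcal{F}(z_\varepsilon, B_{(1-\theta)\varepsilon}(x_0)) \leq \mathbf{m}_\mathcal{F}(u, B_{s\varepsilon}(x_0)) + \omega_{d-1}\varepsilon^d + b\mathcal{H}^{d-1}(\partial B_{s\varepsilon}(x_0) \cap (J_u \cup J_{u_\varepsilon})) + \mathcal{F}(u_\varepsilon, C_{\varepsilon,\theta}(x_0)),
\]
and the boundary jump term vanishes. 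The term $\mathcal{F}(u_\varepsilon, C_{\varepsilon,\theta}(x_0))/\varepsilon^{d-1}$ is controlled by \eqref{jump seq props}(ii)--(iii) and the upper bound in \eqref{H4}, yielding at most $b(1-(1-4\theta)^{d-1})$ in the limsup, exactly as in \eqref{3.9}. Similarly, $\mathcal{F}(\usurf, C_{\varepsilon,\theta}(x_0))/\varepsilon^{d-1} \leq b(1-(1-4\theta)^{d-1})$ in the limsup by \eqref{3.15}. Combining everything and using $s \leq 1-3\theta$, we obtain
\[
\limsup_{\varepsilon \to 0^+} \frac{\mathbf{m}_\mathcal{F}(\usurf, B_\varepsilon(x_0))}{\omega_{d-1}\varepsilon^{d-1}} \leq (1+\eta)(1-3\theta)^{d-1} \lim_{\varepsilon \to 0^+} \frac{\mathbf{m}_\mathcal{F}(u, B_\varepsilon(x_0))}{\omega_{d-1}\varepsilon^{d-1}} + 2b(1+\eta)(1-(1-4\theta)^{d-1}) + \eta\omega_d^{-1}\omega_{d-1}^{-1}\cdot 0,
\]
and letting $\eta \to 0$ and $\theta \to 0$ gives \eqref{3.2}. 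The main subtlety, as in the bulk case, is the Fubini-based selection of $s$ ensuring no spurious jump contribution on $\partial B_{s\varepsilon}(x_0)$ from either $u$, $u_\varepsilon$, or $z_\varepsilon$, so that the gluing does not introduce a surface term of order $\varepsilon^{d-1}$ that would destroy the estimate.
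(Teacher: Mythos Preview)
Your proof is correct and follows essentially the same route as the paper's own argument (which parallels Lemma \ref{lem 2.2}, with your $s$ playing the role of the paper's second use of $\lambda$). One small imprecision: in your displayed bound for $\mathcal{F}(z_\varepsilon,B_{(1-\theta)\varepsilon}(x_0))$ the boundary term on $\partial B_{s\varepsilon}(x_0)$ should also include the contribution from $\{u_\varepsilon\neq u\}$ (since you glue $u$ from inside with $u_\varepsilon$ from outside), not only $J_u\cup J_{u_\varepsilon}$; however, you already selected $s$ so that $\varepsilon^{1-d}\mathcal{H}^{d-1}(\{u_\varepsilon\neq u\}\cap\partial B_{s\varepsilon}(x_0))\to 0$, so the estimate goes through unchanged.
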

	
	\begin{proof}
		Again, we prove the assertion for the points $x_0 \in J_u$ considered in the proof of Lemma \ref{lem 3.1}. Fix $\eta>0$ and let $(u_\varepsilon)_\varepsilon$ be the sequence given by Lemma \ref{surf seq}. By \eqref{jump seq props}(i) and Fubini's Theorem it follows that
		\begin{equation*}
			\lim_{\varepsilon \to 0^+} \frac{1}{\varepsilon^{d-1}}\int_0^1 \mathcal{H}^{d-1}(\{ u_\varepsilon \neq u \} \cap \partial B_{\lambda \varepsilon}(x_0))\,d\lambda = \lim_{\varepsilon \to 0^+} \frac{2}{\varepsilon^d} \mathcal{L}^d(\{ u_\varepsilon \neq u \} \cap  B_{ \varepsilon}(x_0)) = 0.
		\end{equation*}
		Thus, given $\theta \in (0,1)$, for every $\varepsilon >0$ there exists $\lambda \in (1-4\theta,1-3\theta)$ such that
		\begin{align}\label{3.10}
			\begin{split}
				& \mathcal{H}^{d-1}((J_{u_\varepsilon} \cup J_u) \cap \partial B_{\lambda \varepsilon}(x_0))=0 \ \ \ \mbox{for every $\varepsilon>0$}, \\
				& \lim_{\varepsilon \to 0^+} \varepsilon^{-(d-1)} \mathcal{H}^{d-1}(\{ u_\varepsilon \neq u \} \cap \partial B_{\lambda \varepsilon}(x_0))=0.
			\end{split}
		\end{align}
		Take a sequence $z_\varepsilon \in \gsbv^{\psi}(B_{\lambda \varepsilon}(x_0),\R^m)$ with $z_\varepsilon = u$ in a neighborhood of $\partial B_{\lambda \varepsilon}(x_0) $ and such that
		\begin{equation}\label{3.11}
			\mathcal{F}(z_\varepsilon,B_{\lambda\varepsilon}(x_0)) \leq \mathbf{m}_\mathcal{F}(u,B_{\lambda\varepsilon}(x_0))+\omega_{d-1} \varepsilon^{d}.
		\end{equation}
		Again, we can extend $z_\varepsilon$ to a function in $\gsbv^{\psi}(B_\varepsilon(x_0),\R^m)$ by setting $z_\varepsilon=u$ outside $B_{\lambda\varepsilon}(x_0)$. We apply Lemma \ref{fund est} with $u$ and $v$ replaced by $z_\varepsilon$ and $\usurf$ respectively, with the sets $D'_{\varepsilon,x_0}$, $D''_{\varepsilon,x_0}$ and $E_{\varepsilon,x_0}$ defined as in \eqref{2.18}. We thus find $w_\varepsilon \in \gsbv^{\psi}(B_\varepsilon(x_0),\R^m)$ such that $w_\varepsilon = \usurf$ in a neighborhood of $\partial B_\varepsilon(x_0)$ and 
		\begin{align*}
			\mathcal{F}(w_\varepsilon,B_\varepsilon(x_0)) \leq & (1+\eta) \left( \mathcal{F}(z_\varepsilon,B_{(1-\theta)\varepsilon}(x_0))+\mathcal{F}(\usurf,C_{\varepsilon,\theta}(x_0)) \right)+\eta \mathcal{L}^d(B_\varepsilon(x_0)) \\
			& + M \int_{B_{(1-\theta)\varepsilon}(x_0) \setminus B_{(1-2\theta)\varepsilon}(x_0)} \psi \left( x,\frac{|z_\varepsilon-\usurf|}{\varepsilon} \right) \, dx.
		\end{align*}
		By our initial choice of $\lambda$ then, $z_\varepsilon=u_\varepsilon$ outside of $B_{(1-3\theta)\varepsilon}(x_0)$. Therefore, using \eqref{jump seq props}(iv), there exists a non-negative infinitesimal sequence $(\rho_\varepsilon)_\varepsilon$ such that 
		\begin{align}\label{3.12}
			\mathcal{F}(w_\varepsilon,B_\varepsilon(x_0)) \leq & (1+\eta) \left( \mathcal{F}(z_\varepsilon,B_{(1-\theta)\varepsilon}(x_0))+\mathcal{F}(\usurf,C_{\varepsilon,\theta}(x_0)) \right)+\rho_\varepsilon\varepsilon^{d-1}+\eta \omega_d\varepsilon^d.
		\end{align}
		Now what is left is to estimate the terms in \eqref{3.12}. Recalling that by our choice of $\lambda$ we have $z_\varepsilon=u_\varepsilon$ on $B_\varepsilon(x_0) \setminus B_{\lambda\varepsilon}(x_0)$ and using \eqref{H1}, \eqref{H4} and \eqref{3.11} we deduce
		\begin{align}\label{3.13}
			\begin{split}
				\mathcal{F}(z_\varepsilon,B_{(1-\theta)\varepsilon}(x_0)) \leq & \mathbf{m}_\mathcal{F}(u,B_{\lambda\varepsilon}(x_0))+\omega_{d-1}\varepsilon^d+\mathcal{F}(u_\varepsilon,C_{\varepsilon,\theta}(x_0)) \\
				& +b \mathcal{H}^{d-1}\left((\{u_\varepsilon \neq u\} \cup J_{u_\varepsilon} \cup J_u) \cap \partial B_{\lambda\varepsilon}(x_0)\right).
			\end{split}
		\end{align}
		Since $\lambda \leq 1-3\theta$, using the estimate in \eqref{3.9} and \eqref{3.10} we get
		\begin{align}\label{3.14}
			\begin{split}
				\limsup_{\varepsilon \to 0^+} \frac{\mathcal{F}(z_\varepsilon,B_{(1-\theta)\varepsilon}(x_0))}{\omega_{d-1}\varepsilon^{d-1}} & \leq \limsup_{\varepsilon \to 0^+} \frac{\mathbf{m}_\mathcal{F}(u,B_{\lambda\varepsilon}(x_0))}{\omega_{d-1}\varepsilon^{d-1}}+b(1-(1-4\theta)^{d-1}) \\
				& \leq (1-3\theta)^{d-1}\limsup_{\varepsilon \to 0^+} \frac{\mathbf{m}_\mathcal{F}(u,B_{\varepsilon}(x_0))}{\omega_{d-1}\varepsilon^{d-1}}+b(1-(1-4\theta)^{d-1}).
			\end{split}
		\end{align}
		Thus, collecting the estimates \eqref{3.15}, \eqref{3.12} and \eqref{3.14} we obtain
		$$
		\limsup_{\varepsilon \to 0^+} \frac{\mathcal{F}(w_\varepsilon,B_\varepsilon(x_0))}{\omega_{d-1}\varepsilon^{d-1}} \leq (1+\eta) \left( (1-3\theta)^{d-1}\limsup_{\varepsilon \to 0^+}\frac{\mathbf{m}_\mathcal{F}(u,B_\varepsilon(x_0))}{\omega_{d-1}\varepsilon^{d-1}}+2b(1-(1-4\theta)^{d-1}) \right).
		$$
		Finally, since $w_\varepsilon = \usurf$ in a neighborhood of $\partial B_\varepsilon(x_0)$ and letting $\eta \to 0$ and $\theta \to 0$ we infer that
		\begin{align*}
			\limsup_{\varepsilon \to 0^+} \frac{\mathbf{m}_\mathcal{F}(\usurf,B_\varepsilon(x_0))}{\omega_{d-1}\varepsilon^{d-1}}& \leq \limsup_{\varepsilon \to 0^+} \frac{\mathcal{F}(w_\varepsilon,B_\varepsilon(x_0))}{\omega_{d-1}\varepsilon^{d-1}} \leq \limsup_{\varepsilon \to 0^+} \frac{\mathbf{m}_\mathcal{F}(u,B_\varepsilon(x_0))}{\omega_{d-1}\varepsilon^{d-1}} \\
			& = \lim_{\varepsilon \to 0^+} \frac{\mathbf{m}_\mathcal{F}(u,B_\varepsilon(x_0))}{\omega_{d-1}\varepsilon^{d-1}}.
		\end{align*}
		This gives \eqref{3.2}. 
	\end{proof}

		\section{Lower Semicontinuity}
		\label{s:lower}
	
	Recalling the assumptions on $\psi \in \Phi_w(\Omega)$ required by Theorem \ref{lsc of the functional}, and Remark \ref{rem: convexgratis}, throughout this section we will assume wiht no loss of generality that $\psi$ is a  function in $\Phi_s(\Omega)$ satisfying \hyperlink{A0}{$\textnormal{(A0)}$}, \hyperlink{idg}{$\textnormal{(Inc)}$}, \hyperlink{idg}{$\textnormal{(Dec)}$} and \eqref{.} on $\Omega$.  Before proving the lower semicontinuity, we need some preliminary results.
	We start by presenting a truncation Lemma. The original one \cite[Lemma 4.1]{CDMSZ} is presented in terms of a fixed exponent. We formulate it here in our setting. The adaptation of the original proof requires only some minor changes and it is hinged on the fact that $\psi(x,\cdot)$ is strictly increasing for every $x \in \Omega$.
	
	\begin{lem}\label{truncation lemma}
		Let $\mathcal{G}$ be as in \eqref{functional} and $f \colon \Omega \times \R^{m \times d} \to [0,+\infty)$ satisfy \eqref{f1}--\eqref{f2}. Let $\eta,\lambda>0$. There exists $\mu>\lambda$ depending on $\eta,\lambda,a,b$ such that the following holds: for every $A \in \mathcal{A}(\Omega)$ and every $u \in L^0(\R^d,\R^m)$ such that $u|_A \in \gsbv^{\psi}(A,\R^m)$, there exists $\tilde{u} \in L^\infty(\R^d,\R^m)$ such that $\tilde{u}|_A \in \sbv^{\psi}(A,\R^m)$ and
		\begin{enumerate}
			\item[\textnormal{(i)}] $|\tilde{u}| \leq \mu$ on $\R^d$;
			\item[\textnormal{(ii)}] $\tilde{u}=u $ $\mathcal{L}^d$-a.e. in $\{ |u| \leq \lambda \}$;
			\item[\textnormal{(iii)}] $\mathcal{G}(\tilde{u},A) \leq (1+\eta)\mathcal{G}(u,A)+b \mathcal{L}^d(A \cap \{ |u| \geq \lambda \})$.
		\end{enumerate}
	\end{lem}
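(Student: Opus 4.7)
I would follow the strategy of \cite[Lemma~4.1]{CDMSZ}: introduce a geometrically-spaced family of $1$-Lipschitz radial truncations and select a favorable level via a pigeonhole argument. Concretely, fix $\rho > 1$ and $N \in \N$ to be determined at the end in terms of $\eta, a, b$ and of the structural constants of $\psi$ (the exponent in $\textnormal{(Inc)}_p$ and the doubling constant), set $\lambda_k := \lambda \rho^k$ for $k = 0, \ldots, N$, and put $\mu := \lambda_N$. For each $k$, let $T_k \colon \R^m \to \overline{B_{\lambda_{k-1}}(0)}$ be the radial retraction $T_k(y) = y$ for $|y| \leq \lambda_{k-1}$ and $T_k(y) = \lambda_{k-1} y/|y|$ otherwise, and set $\tilde u_k := T_k \circ u$. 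Properties (i) and (ii) are then immediate from $|\tilde u_k| \leq \lambda_{k-1} \leq \mu$ and $\tilde u_k = u$ on $\{|u| \leq \lambda_{k-1}\} \supseteq \{|u| \leq \lambda\}$, while $\tilde u_k \in \sbv^\psi(A, \R^m)$ follows from $|\nabla \tilde u_k| \leq |\nabla u|$, $J_{\tilde u_k} \subseteq J_u$, and boundedness of $\tilde u_k$.

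For the bulk, I would decompose $A$ into $\{|u| \leq \lambda_{k-1}\}$, the annulus $A_k^\circ := \{\lambda_{k-1} < |u| < \lambda_k\}$, and $\{|u| \geq \lambda_k\}$. On the first set $\nabla \tilde u_k = \nabla u$; on the annulus $|\nabla \tilde u_k| \leq |\nabla u|$ combined with \eqref{f2} gives $f(x, \nabla \tilde u_k) \leq b + (b/a) f(x, \nabla u)$; on the third set the sharper estimate $|\nabla \tilde u_k| \leq (\lambda_{k-1}/|u|)|\nabla u| \leq \rho^{-1}|\nabla u|$ combined with $\textnormal{(Inc)}_p$ gives $\psi(x, |\nabla \tilde u_k|) \leq \rho^{-p}\psi(x, |\nabla u|)$. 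Choosing $\rho \geq (b/a)^{1/p}$ then makes the last contribution absorbable by $f(x, \nabla u)$ itself, so that
\[
\int_A f(x, \nabla \tilde u_k)\,dx \leq \int_A f(x, \nabla u)\,dx + b\,\mathcal{L}^d(A \cap \{|u| \geq \lambda\}) + \Bigl(\tfrac{b}{a}-1\Bigr) \int_{A_k^\circ} f(x, \nabla u)\,dx.
\]
Since the annuli $\{A_k^\circ\}_{k=1}^N$ are pairwise disjoint, $\sum_k \int_{A_k^\circ} f(x, \nabla u)\,dx \leq (1/a)\mathcal{G}(u,A)$, and pigeonhole produces a $k^*$ for which the annular bulk is $\leq \mathcal{G}(u,A)/(aN)$; choosing $N$ large enough absorbs this excess into $(\eta/2)\mathcal{G}(u,A)$.

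For the surface one has $J_{\tilde u_k} \subseteq J_u$ with $[\tilde u_k] = [u]$ whenever $|u^+|, |u^-| \leq \lambda_{k-1}$, while $g(x, [\tilde u_k], \nu) \leq \alpha_2$ otherwise; combined with the lower bound $g \geq \alpha_1$ on the original traces this produces an error bounded by $(\alpha_2 - \alpha_1)\mathcal{H}^{d-1}(J_u \cap \{|u^+| > \lambda_{k-1}\text{ or }|u^-| > \lambda_{k-1}\})$. The main obstacle is that the natural pigeonhole, applied to the disjoint annular jump-slices $\{|u^\pm| \in [\lambda_{k-1}, \lambda_k)\}$ (whose total $\mathcal{H}^{d-1}$-mass is $\leq 2\mathcal{H}^{d-1}(J_u)$), only controls the annular contribution, whereas the actual error involves the full upper-level set. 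I would close the gap by telescoping the upper set as a disjoint union of higher annular slices plus the residual $\{|u^\pm| \geq \mu\}$, and use once more $\textnormal{(Inc)}_p$ together with the strict monotonicity of $\psi(x, \cdot)$ flagged in the statement to make the residual uniformly small once $\mu$ is chosen large enough in terms of $\eta, \lambda, a, b$, in exactly the same spirit as for the bulk. A joint pigeonhole over $k$ then selects a $k^*$ working simultaneously for both terms, and the desired estimate (iii) follows.
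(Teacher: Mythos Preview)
Your overall strategy matches the paper's: the paper defers to \cite[Lemma~4.1]{CDMSZ} and notes only that the adaptation to the Orlicz setting ``is hinged on the fact that $\psi(x,\cdot)$ is strictly increasing''. That is precisely your use of $\textnormal{(Inc)}_p$ to gain the contraction factor $\rho^{-p}$ on $\{|u|\geq\lambda_k\}$, and your bulk pigeonhole is correct.

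The surface argument, however, has a genuine gap. You rightly observe that with the radial retraction the error lives on the \emph{nested} sets $\{\max(|u^+|,|u^-|)>\lambda_{k-1}\}$, so a direct pigeonhole fails. But your proposed remedy --- telescoping and then invoking $\textnormal{(Inc)}_p$ together with the strict monotonicity of $\psi$ to control the residual $\mathcal{H}^{d-1}(J_u\cap\{|u^\pm|\geq\mu\})$ --- is a category error: the surface term is built from $g$ and $\mathcal{H}^{d-1}\lfloor J_u$ alone and carries no dependence on $\psi$, so no growth property of $\psi$ can bound that residual uniformly in $u$. In fact radial retraction by itself cannot give (iii) for the surface. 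Take $u=a\,\chi_{\{x_d>0\}}+c\,\chi_{\{x_d<0\}}$ on $B_1$ with $|c|<\lambda$ and $|a|$ enormous, and choose (after $\mu$ has been fixed from $\eta,\lambda,a,b$) a continuous $g$ satisfying \textnormal{(g2)} with $g(x,\zeta,\nu)=\alpha_2$ whenever $|\zeta|\leq 2\mu$ and $g(x,\zeta,\nu)=\alpha_1$ for $|\zeta|$ large. Then $|[T_k(u)]|\leq\lambda_{k-1}+|c|<2\mu$ for \emph{every} $k$, so the surface of $T_k(u)$ equals $\alpha_2\,\omega_{d-1}$ while that of $u$ equals $\alpha_1\,\omega_{d-1}$; the excess $(\alpha_2-\alpha_1)\omega_{d-1}$ is independent of $k$ and, if $\alpha_2-\alpha_1$ is taken large enough relative to $b$, cannot be absorbed into $\eta\,\mathcal{G}(u,B_1)+b\,\mathcal{L}^d(\{|u|\geq\lambda\})$ for any $k$. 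The paper's remark about strict monotonicity of $\psi$ concerns only the bulk; for the surface you must go back to the actual construction and pigeonhole in \cite[Lemma~4.1]{CDMSZ} as written rather than import a $\psi$-based mechanism that has no bearing on the jump energy.
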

	
	The next lemma concerns the approximation of $\sbv^\psi$ functions with Lipschitz functions in the unit ball.
	
	\begin{lem}[Lusin approximation in $\sbv^{\psi}$]\label{lusin approx}
		For every $u \in \sbv^{\psi}(B_1,\R^m)$ and every $\lambda >0$ there exists a Lipschitz function $v:B_1 \to \R^m$ satisfying $\lip(v) \leq \tau \lambda$ with $\tau=\tau(d,m)$, such that $v=u$ in $\{ M(Du) \leq \lambda \}$ and 
		\begin{equation}\label{lusin ineq}
			\mathcal{L}^d(A \cap \{ M(Du)>\lambda \}) \leq \frac{\tau}{\lambda} \int_{J_u} |u^+-u^-| \, d\mathcal{H}^{d-1}+ \frac{1}{ \psi_A^-(\lambda)} \int_{A \cap \{ M(|\nabla u|) > \lambda  \}} \psi(x,M(|\nabla u|)) \, dx,
		\end{equation}
		for any $A \in \mathcal{B}(B_1)$, where $M$ is the restricted maximal function to $B_1$ (see Definition \ref{maximal ope}).
	\end{lem}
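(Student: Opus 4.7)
\medskip

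\noindent\textbf{Proof plan for Lemma \ref{lusin approx}.} The strategy is a direct adaptation of the classical Lusin-type approximation for $SBV$ functions (in the spirit of Liu, Acerbi--Fusco, and as used in \cite{DGCLexistence}) to the generalized Orlicz setting. Only the Chebyshev-type estimate on the bad set needs to be modified, replacing $L^p$-growth with the Orlicz bound coming from $\psi_A^-$.

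\smallskip

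\noindent\textbf{Step 1 (pointwise Lipschitz estimate on the good set).} Let $E_\lambda := \{x \in B_1 : M(Du)(x) \leq \lambda\}$. I would first show that, at every pair of Lebesgue points $x,y \in E_\lambda$, one has $|u(x)-u(y)| \leq \tau_1 \lambda |x-y|$ for some $\tau_1 = \tau_1(d,m)$. This follows from the standard telescoping argument for $BV$ functions: by Poincar\'e in $BV$ on each ball $B_\rho(z)$,
$$
\fint_{B_\rho(z)} |u - u_{B_\rho(z)}| \, dw \leq C_d\,\rho\, \frac{|Du|(B_\rho(z))}{\omega_d \rho^d} \leq C_d \, \rho \, M(Du)(z),
$$
so summing over dyadic radii $\rho_k = 2^{-k}|x-y|$ centred at $x$ and then at $y$ and comparing the averages over two intermediate balls of comparable size yields $|u(x)-u(y)| \leq \tau_1 \lambda |x-y|$.

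\smallskip

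\noindent\textbf{Step 2 (McShane--Whitney extension).} Modulo an $\mathcal{L}^d$-null set, every point of $E_\lambda$ is a Lebesgue point of $u$, so $u|_{E_\lambda}$ is $\tau_1 \lambda$-Lipschitz. I would then apply the componentwise McShane--Whitney extension to produce $v : B_1 \to \R^m$ with $\lip(v) \leq \tau \lambda$ for some $\tau = \tau(d,m)$ and $v = u$ on $E_\lambda$, which is property (i).

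\smallskip

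\noindent\textbf{Step 3 (measure of the bad set).} Since $u \in \sbv^\psi(B_1,\R^m)$, the total variation decomposes as $|Du| = |\nabla u| \mathcal{L}^d + |D^j u|$, where $|D^j u| = |u^+ - u^-| \mathcal{H}^{d-1} \llcorner J_u$. Subadditivity of $M$ gives
$$
\{M(Du) > \lambda\} \subseteq \{M(|\nabla u|) > \lambda/2\} \cup \{M(|D^j u|) > \lambda/2\}.
$$
For the jump contribution, the weak $(1,1)$ bound of the restricted maximal operator on the finite measure $|D^j u|$ yields
$$
\mathcal{L}^d\big(\{M(|D^j u|) > \lambda/2\}\big) \leq \frac{C_d}{\lambda} \int_{J_u} |u^+ - u^-| \, d\mathcal{H}^{d-1}.
$$
For the absolutely continuous part intersected with $A$, I would use the Orlicz--Chebyshev inequality: as $\psi(x,\cdot)$ is increasing and $\psi_A^-(\lambda) \leq \psi(x,\lambda)$ for a.e. $x \in A$,
$$
\psi_A^-(\lambda) \, \mathcal{L}^d\big(A \cap \{M(|\nabla u|) > \lambda\}\big) \leq \int_{A \cap \{M(|\nabla u|) > \lambda\}} \psi(x, M(|\nabla u|)) \, dx.
$$
Combining the two bounds, and letting the overall constant $\tau = \tau(d,m)$ absorb the dimensional factors and the factor $2$ arising from the dyadic splitting, yields \eqref{lusin ineq}.

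\smallskip

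The proof is almost entirely classical; the only genuinely new ingredient is the Orlicz--Chebyshev step. This introduces no real obstacle because $\psi_A^-$ is already a $\Phi$-function (by \hyperlink{A0}{(A0)}), so it provides a usable threshold. The subtlest technical point is matching the exact threshold $\lambda$ in the statement (rather than $\lambda/2$) for the absolutely continuous term; this is harmless since it only affects the dimensional constant $\tau$.
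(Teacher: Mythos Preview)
Your approach matches the paper's: the authors simply say the proof is the standard $SBV^p$ Lusin approximation from \cite[Theorems 5.34 and 5.36]{Ambrosio2000FunctionsOB}, so your Steps 1--3 are exactly the intended argument, and the Orlicz--Chebyshev replacement in Step 3 is the only new wrinkle.

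There is one point you gloss over that the paper singles out as the only subtlety. Your Step 1 telescoping uses Poincar\'e on balls $B_\rho(z)$, but for $z$ near $\partial B_1$ these balls need not lie inside $B_1$, and the restricted maximal function $M(Du)$ is defined with integrals over $B_r(x)\cap B_1$ but normalised by the full volume $\omega_d r^d$. The paper's entire proof consists of the remark that
\[
\inf_{x,x'\in B_1,\ \rho=|x-x'|}\frac{\mathcal{L}^d(B_\rho(x)\cap B_\rho(x')\cap B_1)}{\rho^d}>0,
\]
which is precisely what lets you compare averages over $B_\rho(x)\cap B_1$ and $B_\rho(x')\cap B_1$ and run the telescoping argument uniformly up to the boundary. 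You should incorporate this into Step 1; otherwise the pointwise Lipschitz bound on $E_\lambda$ is only justified in the interior.

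On the $\lambda$ versus $\lambda/2$ mismatch in Step 3: your observation is fine, but note that the second term in \eqref{lusin ineq} carries no constant $\tau$ in front, so absorbing the factor $2$ really requires the doubling of $\psi$ (which is available here via \hyperlink{idg}{(Dec)}), not just enlarging $\tau$.
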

	
	\begin{proof}
		The proof of this fact is no different than the standard Lusin approximation on the whole space $\R^d$ for $\sbv^p$ and builds upon the fact that
		$$
		\inf_{x,x' \in B_1, \ \rho=|x-x'|} \frac{\mathcal{L}^d(B_\rho(x) \cap B_\rho(x') \cap B_1)}{\rho^d}>0,
		$$
		see e.g. \cite[Theorem 5.34 and Theorem 5.36]{Ambrosio2000FunctionsOB}.
	\end{proof}
	
	We are finally ready to prove the lower semicontinuity. In our proof we will combine some arguments contained in \cite{Ambrosiolsc} with some ideas from \cite{marc} about the approximation of the integrand from below.
	
	\begin{prop}\label{100}
		Let $\psi \in \Phi_s(B_1)$ satisfy \hyperlink{A0}{$\textnormal{(A0)}$}, \hyperlink{idg}{$\textnormal{(Inc)}$}, \hyperlink{idg}{$\textnormal{(Dec)}$} on $B_1$ and property \eqref{.} in $x_0=0$.
		Consider $\{ \varepsilon_k \}_k$ an infinitesimal sequence and set $\psi_k:B_{1} \times [0,+\infty) \to [0,+\infty)$ as $$\psi_k(x,t):=\psi\left({\varepsilon_k}x,t\right).$$
		Let $a,b>0$ and $\{ f_k \}_k $ be a sequence of Carathéodory functions such that for each $k \geq 1$
		\begin{equation}\label{growth of seq}
			a \psi_k(x,|\xi|) \leq f_k(x,\xi) \leq b(1+\psi_k(x,|\xi|)) \ \ \ (x,\xi) \in B_1 \times \R^{m \times d}.
		\end{equation}
		Assume also that there exists a quasi-convex function $f$ such that
		\begin{equation}\label{conv to the qc}
			\lim_{k \to +\infty} f_k(x,\xi)=f(\xi)
		\end{equation}
		locally uniformly in $\R^{m \times d}$ for a.e. $x \in B_1$.
		Then,
		\begin{equation}\label{lminf}
			\liminf_{k \to +\infty} \int_{B_1} f_k(x,\nabla u_k) \, dx \geq \int_{B_1} f(\nabla u) \, dx
		\end{equation}
		for any sequence $u_k \in \gsbv(B_1,\R^m)$ converging in measure to a linear function $u \colon B_1 \to \R^m$ and satisfying $\mathcal{H}^{d-1}(J_{u_k} \cap B_1) \to 0$ as $k \to +\infty$.
	\end{prop}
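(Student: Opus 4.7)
The argument adapts the blow-up lower semicontinuity scheme of \cite{Ambrosiolsc} to the non-standard growth setting, combining Lemmas \ref{truncation lemma} and \ref{lusin approx} with the Orlicz maximal operator estimate of Corollary \ref{max op bdd cor}. The decisive role of condition \eqref{.} at $x_0=0$ is to render the rescaled family $\{\psi_k|_{B_1}\}_k$ uniformly equivalent, on any bounded range of arguments, to the single $\Phi$-function $\psi(0,\cdot)$, producing $k$-uniform constants in all subsequent Orlicz estimates.

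\textbf{Truncation and Lipschitz approximation.} Up to a non-relabelled subsequence, the liminf is a finite limit and $\sup_k\rho_{\psi_k}(\nabla u_k)<+\infty$. Fix $L>\|u\|_{L^\infty(B_1)}$ and apply Lemma \ref{truncation lemma} (with trivial surface density $g\equiv\alpha_1$) to obtain $\tilde u_k\in\sbv^{\psi_k}(B_1,\R^m)\cap L^\infty(B_1,\R^m)$ with $\|\tilde u_k\|_\infty\le\mu$, $\tilde u_k=u_k$ on $\{|u_k|\le L\}$, $\mathcal{H}^{d-1}(J_{\tilde u_k})\le\mathcal{H}^{d-1}(J_{u_k})\to 0$, and bulk energy at most $(1+\eta)\int_{B_1} f_k(x,\nabla u_k)\,dx+b\mathcal{L}^d(\{|u_k|\ge L\})$; since $u_k\to u$ in measure and $|u|\le L$, the additive term vanishes and $\tilde u_k\to u$ in measure. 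For $\lambda>\sigma$, apply Lemma \ref{lusin approx} to $\tilde u_k$ to obtain a Lipschitz $v_k^\lambda$ with $\lip(v_k^\lambda)\le\tau\lambda$ and $v_k^\lambda=\tilde u_k$ on $\{M(D\tilde u_k)\le\lambda\}$. Combining \eqref{lusin ineq} with Corollary \ref{max op bdd cor} applied to $(\psi_k)^-_{B_1}$, whose constants are $k$-independent by the opening remark, yields
\begin{equation*}
\mathcal{L}^d(B_1\cap\{v_k^\lambda\neq\tilde u_k\})\le\frac{\tau\mathcal{H}^{d-1}(J_{\tilde u_k})}{\lambda}+\frac{C\bigl(1+\rho_{\psi_k}(\nabla\tilde u_k)\bigr)^q}{(\psi_k)^-_{B_1}(\lambda)}=o_k(1)+O(1/\psi(0,\lambda)).
\end{equation*}

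\textbf{Compactness and conclusion.} Since $\{v_k^\lambda\}_k$ is equi-Lipschitz and uniformly bounded, Ascoli--Arzelà along a subsequence produces a Lipschitz limit $v^\lambda$ with $\nabla v_k^\lambda\overset{*}{\rightharpoonup}\nabla v^\lambda$ in $L^\infty$. The previous estimate together with $\tilde u_k\to u$ in measure forces $\mathcal{L}^d(B_1\cap\{v^\lambda\neq u\})\le C/\psi(0,\lambda)$; by Rademacher and Lebesgue density, $\nabla v^\lambda=\xi_0$ a.e.\ on $\{v^\lambda=u\}$, so
\begin{equation*}
\int_{B_1}f(\nabla v^\lambda)\,dx\ge f(\xi_0)\,\mathcal{L}^d(\{v^\lambda=u\})\ge f(\xi_0)\bigl(\omega_d-C/\psi(0,\lambda)\bigr).
\end{equation*}
On the other hand, since $|\nabla v_k^\lambda|\le\tau\lambda$ a.e.,
\begin{equation*}
\int_{B_1}f_k(x,\nabla u_k)\,dx\ge\int_{B_1}f_k(x,\nabla v_k^\lambda)\,dx-b\bigl(1+(\psi_k)^+_{B_1}(\tau\lambda)\bigr)\mathcal{L}^d(\{v_k^\lambda\neq u_k\}),
\end{equation*}
where the subtracted term vanishes in the iterated limit thanks to the uniform bound on $(\psi_k)^+_{B_1}(\tau\lambda)$ from \eqref{.}. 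The locally uniform convergence $f_k\to f$, dominated on $\{|\xi|\le\tau\lambda\}$ by a $\lambda$-dependent constant, together with dominated convergence gives $\int_{B_1}|f_k(x,\nabla v_k^\lambda)-f(\nabla v_k^\lambda)|\,dx\to 0$; standard quasiconvex lower semicontinuity in $W^{1,\infty}$ yields $\liminf_k\int_{B_1}f(\nabla v_k^\lambda)\,dx\ge\int_{B_1}f(\nabla v^\lambda)\,dx$. Chaining the inequalities and sending $\lambda\to+\infty$, then $\eta\to 0$, concludes.

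\textbf{Main obstacle.} The principal technical issue is the $k$-uniform control of the Lusin exceptional set: Corollary \ref{max op bdd cor} carries constants depending on $\varphi^{-1}(1)$ and on the (Inc)/(Dec) exponents of the ambient $\Phi$-function, and applying it to the varying family $\{(\psi_k)^-_{B_1}\}_k$ with $k$-independent bounds is possible precisely because condition \eqref{.} at $x_0=0$ makes this family comparable, up to uniform multiplicative constants on bounded ranges, to the fixed $\psi(0,\cdot)$.
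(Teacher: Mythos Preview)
Your overall architecture matches the paper's (truncation, maximal operator, Lusin approximation, quasiconvex lower semicontinuity for the Lipschitz sequence), but there is a genuine gap in the key energy comparison step, and it is precisely the point where the paper invests the most additional work.

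\textbf{The gap.} You assert that the subtracted term
\[
b\bigl(1+(\psi_k)^+_{B_1}(\tau\lambda)\bigr)\,\mathcal{L}^d\bigl(\{v_k^\lambda\neq\tilde u_k\}\bigr)
\]
vanishes in the iterated limit. It does not. Your own Lusin estimate gives, after the jump term has been killed by $k\to\infty$,
\[
\mathcal{L}^d\bigl(\{v_k^\lambda\neq\tilde u_k\}\bigr)\;\lesssim\;\frac{1}{(\psi_k)^-_{B_1}(\lambda)}\int_{\{M(|\nabla\tilde u_k|)>\lambda\}}(\psi_k)^-_{B_1}\bigl(M(|\nabla\tilde u_k|)\bigr)\,dx,
\]
while by \eqref{.} and the doubling property $(\psi_k)^+_{B_1}(\tau\lambda)\lesssim(\psi_k)^-_{B_1}(\lambda)$ with a constant independent of $k$ and $\lambda$. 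The product is therefore controlled by
\[
C_0\int_{\{M(|\nabla\tilde u_k|)>\lambda\}}(\psi_k)^-_{B_1}\bigl(M(|\nabla\tilde u_k|)\bigr)\,dx,
\]
and Corollary~\ref{max op bdd cor} only says this family is \emph{bounded} in $L^1$, not equiintegrable. Hence the integral over the shrinking superlevel set need not tend to zero uniformly in $k$, and the error term is merely $O(1)$. This is exactly the obstruction that the Chacon biting lemma is designed to remove: the paper applies it to the bounded sequence $\{(\psi_k)^-_{B_1}(M(|\nabla v_k|))\}_k$ to produce small sets $E_h$ off which equiintegrability holds, so that the corresponding error satisfies $\limsup_k\psi_k^-(\lambda)\,\mathcal{L}^d(E_k^\lambda\setminus E_h)\le\Lambda_h(C\lambda^{-\gamma})\to 0$ as $\lambda\to\infty$; the residual set $E_h$ is then absorbed at the very end by $h\to\infty$. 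The paper also introduces the auxiliary integrands $f_k^\zeta$ (coinciding with $f_k$ for $|\xi|\le\zeta$ and with $a\psi_k^-$ for $|\xi|\ge 2\zeta$) so that the upper growth bound is expressed in terms of $\psi_k^-$ rather than $\psi_k$, which is what makes the biting-lemma equiintegrability directly applicable.

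\textbf{A secondary remark.} Your ``Main obstacle'' paragraph misidentifies where \eqref{.} is actually needed. The $k$-uniformity of the constant in Corollary~\ref{max op bdd cor} follows from \hyperlink{A0}{(A0)} alone, since $((\psi_k)^-_{B_1})^{-1}(1)\in[1/\sigma,\sigma]$ for every $k$. Condition \eqref{.} enters instead to compare $(\psi_k)^+_{B_1}$ with $(\psi_k)^-_{B_1}$ on bounded ranges of $t$, which is what yields the $k$-independent upper growth constant $\mathfrak C$ for the modified integrands $f_k^\zeta$ in the paper's argument.
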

	
	\begin{proof}
		Assume that $\psi $ satisfies \hyperlink{idg}{$\textnormal{(Inc)}_\gamma$} and \hyperlink{idg}{$\textnormal{(Dec)}_q$} with $\gamma>1$ and $q \in (1,\infty)$. Taking into account Remark \ref{RESCALE}, we have that for every $k \geq 1$, $\psi_k \in \Phi_s(B_{1})$ satisfies properties \hyperlink{A0}{$\textnormal{(A0)}$} with $\sigma=\sigma(\psi) \geq 1$, \hyperlink{idg}{$\textnormal{(Inc)}_\gamma$} and \hyperlink{idg}{$\textnormal{(Dec)}_q$} on $B_{1}$. 
		
		Assuming the liminf in \eqref{lminf} to be finite, from \eqref{growth of seq} we deduce that
		$$
		\sup_{k \geq 1} \int_{B_1} \psi_k (x,|\nabla u_k(x)|) \, dx < +\infty.
		$$
		Hence, $u_k \in \gsbv^{\psi_k}(B_1,\R^m)$. 
		
		We first use the truncation argument in Lemma \ref{truncation lemma} applied to $u_k$ with $\eta>0$ small and $\theta>1$ large enough, in order to find $\{ v_k \}_k \subset \sbv^{\psi_k}(B_1,\R^m)$ and $\mu>\theta$, such that $v_k \to u$ in measure, $\Vert v_k \Vert_{L^\infty} \leq \mu$ and, for every $k \geq 1$,
		\begin{equation}\label{1}
			\int_{B_1} f_k(x,\nabla v_k(x)) \, dx \leq (1+\eta) \int_{B_1} f_k(x,\nabla u_k(x)) + b \mathcal{L}^d(\{ |u_k| \geq \theta \}).
		\end{equation}
		Notice that for $\theta $ large enough $\mathcal{L}^d(\{ |u_k| \geq \theta \}) \to 0$ as $k \to +\infty$, since $u_k$ converges to a linear bounded function in measure. Therefore, if we prove the liminf inequality with $v_k$ in place of $u_k$, the thesis follows from \eqref{1} sending $\eta \to 0$.
		From now on $C>0$ will indicate a positive constant depending only on $\psi$. Taking into account \eqref{growth of seq} and \eqref{1}, we have
		\begin{equation}\label{2}
			\sup_{k \geq 1} \int_{B_1} \psi_k(x,|\nabla v_k(x)|) \, dx < +\infty
		\end{equation}
		and $|D^s v_k|(B_1) \leq 2 \mu \mathcal{H}^{d-1}(J_{v_k}) \to 0$ as $k \to +\infty$. Moreover, by (ii) of Lemma \ref{truncation lemma}, we have that for $\theta>0$ large enough, $v_k \to u$ in measure on $B_1$.
		
		
		Let $\psi_k^-(t):= \inf_{B_1} \psi_k(\cdot,t)$ and $\psi_k^+(t):= \sup_{B_1} \psi_k(\cdot,t)$. By Remark \ref{RESCALE} and Corollary \ref{max op bdd cor} applied to $\psi_k^-$, given a function $w \in L^{\psi_k}(B_1,\R^m)$, we have that there exists $C=C(d,\sigma,\gamma,q)>0$ such that for every $k \geq 1$
		\begin{equation}\label{30}
			\int_{B_1} {\psi}^-_k\left(M(w)(x)\right) \, dx \leq C	\left(\int_{B_1} {\psi}_k^-\left(|w(x)|\right) \, dx+1 \right)^q.
		\end{equation}
		Thus, by \eqref{2} and \eqref{30} we get that the sequence $\{ {\psi}_k^-(M(|\nabla v_k|)(x)) \}_k$ is bounded in $L^1(B_1)$. Hence, by Chacon biting lemma, we can find a sequence of sets $E_h \in \mathcal{B}(B_1)$ such that $\mathcal{L}^d(E_h) \to 0$ as $h \to +\infty$ and $\{{\psi}^-_k(M(|\nabla v_k|)) \chi_{B_1 \setminus E_h}\}_k$ is equiintegrable for every $h \geq 1$. Let
		$$
		\Lambda_h(s):= \sup \left\{ \limsup_{k \to +\infty} \int_{F} {\psi}^-_k(M(|\nabla v_k|)(x)) \, dx: \ F \in \mathcal{B}(B_1), \ F \subset B_1 \cap E_h, \ \mathcal{L}^d(F) \leq s  \right\}.
		$$
		Due to equiintegrability, we deduce that $\Lambda_h(s) \to 0$ as $s \searrow 0$ for every $h \geq 1$.
		
		Let $\lambda > 0$. Thanks to Lemma \ref{lusin approx} applied to $v_k$, we find functions $u_k^\lambda:B_1 \to \R^m$ and $\mathcal{L}^d$-measurable sets $E_k^\lambda$ such that 
		\begin{equation}\label{10}
			\lip(u_k^\lambda) \leq \tau \lambda \ \ \  \mbox{and} \ \ \ u_k^\lambda=v_k \mbox{ in $B_1 \setminus E_k^\lambda$},
		\end{equation}
		where $\tau=\tau(d,m)$.
		Using \eqref{lusin ineq} we deduce that for every $\lambda>0$ large enough and every $A \in \mathcal{B}(B_1)$ it holds
		\begin{equation}\label{3}
			\mathcal{L}^d(E_k^\lambda \setminus A) \leq \frac{\tau}{\lambda}  2 \mu \mathcal{H}^{d-1}(J_{v_k})+ \frac{1}{{\psi}_k^-(\lambda)} \int_{\{ M(|\nabla v_k|) > \lambda  \} \setminus A} {\psi}^-_k(M(|\nabla v_k|)) \, dx.
		\end{equation}
		Since $\{ v_k\}_k$ is bounded in $L^\infty$, using a truncation argument, we can moreover assume that $\Vert u_k^\lambda \Vert_{L^\infty} \leq m\mu$ for every $k \geq 1$ and every $\lambda>0$.
		
		Observe that for every $k \geq 1$ and every $\lambda>0$ large enough, using Chebychev inequality, \eqref{2} and \eqref{30}, we get
		$$
		\mathcal{L}^d(\{ M(|\nabla v_k|) > \lambda  \}) \leq \frac{C}{\lambda^\gamma} \int_{B_1} {\psi}_k^-(M(|\nabla v_k|)(x)) \, dx \leq \frac{C}{\lambda^\gamma},
		$$
		where in the first inequality we have also used that $\psi$ satisfies \hyperlink{A0}{\textnormal{(A0)}} and \hyperlink{idg}{$\textnormal{(Inc)}_\gamma$} on $B_1$ with $\gamma>1$. Hence, keeping in mind the fact that $|D^s v_k|(B_1) \to 0$ as $k \to +\infty$, taking $A=E_h$ in \eqref{3}, we obtain
		\begin{equation}\label{4}
			\limsup_{k \to + \infty} \left( {\psi}^-_k(\lambda) \ \mathcal{L}^d(E_k^\lambda \setminus E_h) \right) \leq \Lambda_h\left( \frac{C}{\lambda^\gamma} \right),
		\end{equation}
		for every $\lambda>0$ large enough and every $h \geq 1$.
		
		For every fixed $\lambda >0$ large enough, the sequence $\{u_k^\lambda\}_k$ is equibounded and equicontinuous. Therefore, it converges uniformly in $\overline{B}_1$ as $k \to +\infty$ to a function $u_\lambda \in C(B_1,\R^m)$. Moreover, by the lower semicontinuity under convergence in measure of the map
		$$
		w \mapsto \mathcal{L}^d(\{ x \in B_1 \setminus E_h: \ w(x) \neq 0 \}),
		$$
		for a fixed $h \geq 1$, using \eqref{4} we have that for every $\lambda >0$ large enough
		\begin{equation*}
			\lambda^\gamma\mathcal{L}^d(\{x \in B_1 \setminus E_h: \ u_\lambda(x) \neq u(x) \}) \leq \limsup_{k \to +\infty} \lambda^\gamma\mathcal{L}^d(\{x \in B_1 \setminus E_h: \ u_k^\lambda(x) \neq v_k(x) \}| \leq  \Lambda_h \left( \frac{C}{\lambda^\gamma} \right).
		\end{equation*}
		Hence, if we set $L_\lambda:=\{x \in B_1 : u_\lambda(x) \neq u(x) \}$, we have $\mathcal{L}^d(L_\lambda \setminus E_h) \to 0$ as $\lambda \to +\infty$ for every $h \geq 1$.
		
		We are finally in a position to conclude. Since $\mathcal{L}^d(E_h)\to 0$ as $h \to +\infty$, we need to prove that for every $h \geq 1$
		\begin{equation}\label{4000}
		\liminf_{k \to + \infty} \int_{B_\rho} f_k(x,\nabla v_k) \, dx \geq \mathcal{L}^d(B_1 \setminus E_h)  f(\nabla u).
		\end{equation}
		We now fix $\zeta > \tau \lambda>\sigma$. Let $g \in C^\infty( [0,+\infty); [0,1])$ such that $g(s)=1$ for $s \leq \zeta$ and $g(s)=0$ for $s \geq 2 \zeta$. We define for every $k \geq 1$ the Carathéodory function 
		\begin{equation*}
			f_k^\zeta(x,\xi):=g(|\xi|)f_k(x,\xi)+a(1-g(s))\psi_k^-(|\xi|),
		\end{equation*}
		where $a>0$ is the constant appearing in \eqref{growth of seq}. Notice that in view of \eqref{growth of seq} and property \eqref{.} of $\psi$ in $0$, there exists $k_0 \geq 1$ such that for every $k \geq k_0$ it holds
		\begin{equation}\label{10000}
			a \psi^-_k(|\xi|) \leq f^\zeta_k(x,\xi) \leq 2\mathfrak{C}b(1+\psi_k^-(|\xi|)),\ \ \ (x,\xi) \in B_1 \times \R^{m \times d},
		\end{equation}
		where $\mathfrak{C}$ does not depend on $\zeta$ nor on $k$.
		Using \eqref{10} and \eqref{10000}, for every $k \geq k_0$ we have
		\begin{align}\label{5000}
			\begin{split}
			\int_{B_1} f^\zeta_k(x,\nabla v_k) \, dx & \geq \int_{B_1 \setminus (E_h \cup E_k^\lambda)} f_k^\zeta(x,\nabla v_k) \, dx \\
			& = \int_{B_1 \setminus (E_h \cup E_k^\lambda)} f_k^\zeta(x,\nabla u_k^\lambda) \, dx \\
			& \geq \int_{B_1 \setminus E_h} f_k^\zeta(x,\nabla u_k^\lambda) \, dx-\int_{E_k^\lambda \setminus E_h} 2\mathfrak{C}b (1+{\psi}_k^-(\lambda)) \, dx.
			\end{split}
		\end{align}
		By \eqref{5000}, the fact that $\zeta>\tau \lambda$, the locally uniform convergence of $f_k$ to $f$ for $\mathcal{L}^d$-a.e. $x \in B_1$ and the quasiconvexity of $f$, we deduce
		\begin{align*}
			\begin{split}
			\liminf_{k \to + \infty} \int_{B_1} f_k(x,\nabla v_k) \, dx & \geq \liminf_{k \to + \infty} \int_{B_1} f_k^\zeta(x,\nabla v_k) \, dx \\
			& \geq \liminf_{k \to + \infty} \int_{B_1 \setminus E_h} f(\nabla u_k^\lambda) +\left(f_k^\zeta(x,\nabla u_k^\lambda)-f(\nabla u_k^\lambda)\right) \, dx \\ & \ \ \ \ \ -\int_{E_k^\lambda \setminus E_h} 2\mathfrak{C} b (1+{\psi}_k^-(\lambda)) \, dx \\ 
			& \geq  \liminf_{k \to + \infty} \int_{B_1 \setminus E_h} f(\nabla u_k^\lambda) \, dx - 4\mathfrak{C}b \int_{E_k^\lambda \setminus E_h} {\psi}_k^-(\lambda) \, dx, \\
			& \geq \int_{B_1 \setminus E_h} f(\nabla u_\lambda) \, dx - 4\mathfrak{C}b \limsup_{k \to + \infty} \left( {\psi}_k^-(\lambda) \ \mathcal{L}^d(E_k^\lambda \setminus E_h) \right).
			\end{split}
		\end{align*}
		Therefore, using \eqref{4}, we have that for every $\lambda>0$ large enough it holds
		\begin{equation}\label{7000}
		\liminf_{k \to + \infty} \int_{B_1} f_k(x,\nabla v_k) \, dx \geq \int_{B_1 \setminus E_h} f(\nabla u_\lambda) \, dx - 4\mathfrak{C}b \Lambda_h \left( \frac{C}{\lambda^\gamma} \right).
		\end{equation}
		For the first term in the right hand side of \eqref{7000} we estimate
		\begin{equation}\label{8000}
		\int_{B_1 \setminus E_h} f(\nabla u_\lambda) \, dx \geq \int_{B_1 \setminus (E_h \cup L_\lambda)} f(\nabla u) \, dx \geq (\mathcal{L}^d(B_1 \setminus E_h)-\mathcal{L}^d(L_\lambda \setminus E_h))f(\nabla u).
		\end{equation}
		Thus, for every $h \geq 1$ fixed, sending $\lambda \to +\infty$ from \eqref{7000} and \eqref{8000} we conclude \eqref{4000}.
	\end{proof}
	
	We now prove the lower semicontinuity for the bulk energy.
	
	\begin{prop}\label{lsc bulk}
		Let $f \colon \Omega \times \R^{m \times d} \to [0,+\infty)$ be a Borel measurable function satisfying \eqref{f1}--\eqref{f2} with $a,b>0$ and such that $z \mapsto f(x,z)$ is quasiconvex in $\R^{m \times d}$ for every $x \in \Omega$. Let $\psi \in \Phi_s(\Omega)$ satisfy \hyperlink{A0}{$\textnormal{(A0)}$}, \hyperlink{idg}{$\textnormal{(Inc)}$}, \hyperlink{idg}{$\textnormal{(Dec)}$} and property \eqref{.} for $\mathcal{L}^d$-a.e $x_0 \in \Omega$.
		Given $A \in \mathcal{A}(\Omega)$,
		$$
		\liminf_{k \to +\infty} \int_{A} f(x,\nabla u_k) \, dx \geq \int_{A} f(x,\nabla u) \, dx
		$$
		for every sequence $\{ u_k \}_k \subset \gsbv^{\psi}(A,\R^m)$ converging to a function $u \in \gsbv^{\psi}(A,\R^m)$ in measure and such that $\sup_k \mathcal{H}^{d-1}(J_{u_k} \cap A) < +\infty$.
	\end{prop}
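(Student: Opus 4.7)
The plan is to apply the Ambrosio blow-up scheme, reducing the lower semicontinuity to a pointwise density estimate that is then handled by Proposition \ref{100}. Without loss of generality assume the liminf is a finite limit along a subsequence, and extract weak-$\ast$ limits $\mu_k := f(\cdot,\nabla u_k)\,\mathcal{L}^d\!\lfloor A \rightharpoonup^{\ast} \mu$ and $\nu_k := \mathcal{H}^{d-1}\!\lfloor J_{u_k} \rightharpoonup^{\ast} \nu$ in $\mathcal{M}(A)$. Since $A$ is open, $\mu(A) \leq \liminf_k \mu_k(A)$, so it suffices to show
$$\frac{\mathrm{d}\mu}{\mathrm{d}\mathcal{L}^d}(x_0) \;\geq\; f(x_0,\nabla u(x_0)) \qquad \text{for } \mathcal{L}^d\text{-a.e. } x_0 \in A. \qquad (\star)$$

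\textbf{Point selection.} By Scorza-Dragoni, for each $n \in \mathbb{N}$ there is a compact $K_n \subset A$ with $\mathcal{L}^d(A \setminus K_n) < 1/n$ and $f|_{K_n \times \mathbb{R}^{m \times d}}$ continuous. For $\mathcal{L}^d$-a.e.\ $x_0 \in A$, the following hold simultaneously: $u$ is approximately differentiable at $x_0$; $\mu$ and $\nu$ admit finite Lebesgue densities at $x_0$, so in particular $\nu(B_\varepsilon(x_0)) = O(\varepsilon^d) = o(\varepsilon^{d-1})$; there exists $n_0$ with $x_0 \in K_{n_0}$ and having density $1$ in $K_{n_0}$; and property \eqref{.} holds at $x_0$. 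Fix such an $x_0$ and set $\ell(z) := \nabla u(x_0)\,z$.

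\textbf{Blow-up and diagonalization.} For $\varepsilon>0$ and $z \in B_1$, put
$$v_k^\varepsilon(z) := \frac{u_k(x_0+\varepsilon z)-u(x_0)}{\varepsilon},\qquad w_\varepsilon(z) := \frac{u(x_0+\varepsilon z)-u(x_0)}{\varepsilon},$$
so that $\nabla v_k^\varepsilon(z) = \nabla u_k(x_0+\varepsilon z)$. A change of variables and weak-$\ast$ convergence yield, whenever $\mu(\partial B_\varepsilon(x_0))=0$,
$$\frac{\mu(B_\varepsilon(x_0))}{\omega_d\varepsilon^d} = \frac{1}{\omega_d}\lim_{k\to\infty}\int_{B_1} f(x_0+\varepsilon z, \nabla v_k^\varepsilon(z))\,dz.$$
Pick $\varepsilon_j \to 0^+$ with $\mu(\partial B_{\varepsilon_j}(x_0))=0$, $\mu(B_{\varepsilon_j}(x_0))/(\omega_d\varepsilon_j^d) \to \tfrac{\mathrm{d}\mu}{\mathrm{d}\mathcal{L}^d}(x_0)$, and $\mathcal{L}^d(\{z\in B_1 : x_0+\varepsilon_j z \notin K_{n_0}\}) < 2^{-j}$; then choose $k_j\to\infty$ so that, setting $V_j := v_{k_j}^{\varepsilon_j}$,
$$\Bigl|\tfrac{1}{\omega_d}\!\int_{B_1}\!f(x_0+\varepsilon_j z, \nabla V_j)\,dz - \tfrac{\mu(B_{\varepsilon_j}(x_0))}{\omega_d\varepsilon_j^d}\Bigr|\to 0, \quad \mathcal{L}^d(\{|V_j - w_{\varepsilon_j}| > 2^{-j}\}) \to 0,$$
and $\mathcal{H}^{d-1}(J_{u_{k_j}}\cap B_{\varepsilon_j}(x_0)) \leq 2\nu(\overline{B_{\varepsilon_j}(x_0)})$. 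Approximate differentiability gives $w_{\varepsilon_j}\to\ell$ in measure, hence $V_j \to \ell$ in measure on $B_1$; the Lebesgue-density bound on $\nu$ yields $\mathcal{H}^{d-1}(J_{V_j}\cap B_1)=\varepsilon_j^{1-d}\mathcal{H}^{d-1}(J_{u_{k_j}}\cap B_{\varepsilon_j}(x_0))=O(\varepsilon_j)\to 0$. By Borel-Cantelli, for $\mathcal{L}^d$-a.e.\ $z\in B_1$ we have $x_0+\varepsilon_j z \in K_{n_0}$ eventually, and continuity of $f|_{K_{n_0}\times \mathbb{R}^{m\times d}}$ then gives $f_j(z,\xi):=f(x_0+\varepsilon_j z,\xi)\to f(x_0,\xi)$ locally uniformly in $\xi$ at such $z$. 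Setting $\psi_j(z,t):=\psi(x_0+\varepsilon_j z,t)$, which inherits \hyperlink{A0}{\textnormal{(A0)}}, \hyperlink{idg}{\textnormal{(Inc)}}, \hyperlink{idg}{\textnormal{(Dec)}} by Remark \ref{RESCALE} and \eqref{.} at $0$ from the choice of $x_0$, Proposition \ref{100} applies and gives
$$\omega_d\,\tfrac{\mathrm{d}\mu}{\mathrm{d}\mathcal{L}^d}(x_0) = \lim_j \int_{B_1} f_j(z,\nabla V_j)\,dz \;\geq\; \omega_d\,f(x_0,\nabla u(x_0)),$$
which is $(\star)$.

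\textbf{Main obstacle.} The delicate point is the diagonal extraction: the choice of $\varepsilon_j$ must simultaneously ensure that the rescaled energy density tends to $\tfrac{\mathrm{d}\mu}{\mathrm{d}\mathcal{L}^d}(x_0)$, that the rescaled jump measure vanishes at the right rate, and that the freezing of $f$ in $x$ becomes pointwise valid via Borel-Cantelli; meanwhile $k_j$ must be chosen so that $V_j \to \ell$ in measure and the double limit in the rescaled energy remains controlled. The hardest sub-point is the locally uniform convergence of $f_j$ to $f(x_0,\cdot)$ for a.e. $z$: the Carathéodory assumption alone does not give pointwise continuity of $x \mapsto f(x,\cdot)$ at $x_0$, and it is essential to combine Scorza-Dragoni with a density-$1$ selection of $x_0$ inside a compact set of continuity, so that the rescaled points $x_0+\varepsilon_j z$ remain in that set for almost every $z$.
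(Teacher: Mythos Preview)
Your proof is correct and follows exactly the blow-up strategy the paper uses (which the paper itself only sketches, declaring it ``standard''). You fill in precisely the details the paper omits: the Scorza--Dragoni argument combined with a density-$1$ selection and Borel--Cantelli to justify the locally uniform convergence $f_j(z,\xi)\to f(x_0,\xi)$ for a.e.\ $z$, and the explicit diagonalization ensuring $V_j\to\ell$ in measure together with $\mathcal{H}^{d-1}(J_{V_j}\cap B_1)\to 0$ via the finite Lebesgue density of the weak-$\ast$ limit of the jump measures.
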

	
	\begin{proof}
		The proof is standard and we only sketch it. Possibly extracting a subsequence we can assume that $u_k$ converges to $u$ $\mathcal{L}^d$-a.e. in $A$ and that $\mathcal{H}^{d-1}(J_{u_k})$ and $f(x,\nabla u_k(x))\mathcal{L}^d$ weakly* converge in $A$ to Radon measures $\mu$ and $\lambda$ respectively. By Besicovitch derivation theorem, in order to prove the statement, we need to show that for a.e. $x_0 \in A$ it holds
		\begin{equation}\label{40}
			\limsup_{\varepsilon \searrow 0} \frac{\lambda(B_\varepsilon(x_0))}{\varepsilon^d} \geq \omega_d f(x_0,\nabla u(x_0)).
		\end{equation}  
		Take $x_0 \in \Omega$ such that property \eqref{.} holds and let $\{\varepsilon_k\}_k$ be such that $\varepsilon_k \searrow 0$ and $\lambda(\partial B_{\varepsilon_k}(x_0))=0$ for every $k \geq 1$. Take $\varepsilon_1$ suitably small so that $B_{\varepsilon_1}(x_0) \Subset A$ and define $\psi_1: B_1 \times [0,+\infty) \to [0,+\infty)$ as
		$$
		\psi_1(x,t):=\psi\left({\varepsilon_1}x+x_0,t\right)
		$$ 
		for every $x \in B_1$ and every $t \geq 0$. By Remark \ref{RESCALE} we have that $\psi_1 \in \Phi_s(B_1)$ satisfies the same properties of $\psi$ with the same constants in $B_1$ and property \eqref{.} in $0 \in B_1$.
		
		Define $f_k(x,\xi):=f(x_0+\varepsilon_k x,\xi)$ for any $x \in B_1$. We have that
		\begin{equation*}
			\lim_{k \to +\infty} f_k(x,\xi)=f(x_0,\xi)
		\end{equation*}
		locally uniformly in $\R^{m \times d}$ for $\mathcal{L}^d$-a.e. $x \in B_1$. We can also find a sequence $\{ w_k \}_k \subset \gsbv(B_1,\R^m)$ such that $w_k$ converges in measure to the map $y \mapsto \nabla u(x_0) y$ in $B_1$, $\mathcal{H}^{d-1}(J_{w_k}) \to 0$ as $k \to +\infty$ and 
		\begin{equation}\label{50}
			\int_{B_1} f_k(x,\nabla w_k) \, dx \leq \frac{\lambda(B_{\varepsilon_k}(x_0))}{\varepsilon_k^d}+\varepsilon_k.
		\end{equation} 
		Since $a\psi(x_0+\varepsilon_k x,|\xi|) \leq f(x_0+\varepsilon_k x,\xi) \leq b(1+\psi(x_0+\varepsilon_k x,|\xi|))$ by \eqref{f2}, using Proposition \ref{100} with $\psi_1$, $f_k$ and $w_k$, we deduce that
		$$
		\liminf_{k \to + \infty} \int_{B_1} f_k(x,\nabla w_k) \, dx \geq \omega_d  f(x_0,\nabla u(x_0)).
		$$
		This together with \eqref{50} gives \eqref{40}.
	\end{proof}
	
	We conclude with the surface part. The following result is contained in \cite[Theorem 3.3]{AmbrosioExistence}.
	\begin{thm}\label{lsc surf}
		Let $g \colon \Omega \times \R^m_0 \times \Sf^{d-1} \to [0, +\infty)$ satisfy \eqref{g1}--\eqref{g2} and such that $(\zeta,\nu) \to g(x,\zeta,\nu)$ is BV-elliptic for every $x \in \Omega$. Let $A \in \mathcal{A}(\Omega)$. Then, for every $u \in \gsbvp(A,\R^m)$ and any sequence $u_k \in \gsbvp(A,\R^m)$ converging in measure to $u$ in $A$ and such that
		\begin{equation}\label{equiint cond}
			\sup_k  \int_A |\nabla u_k|^p \, dx<+\infty
		\end{equation}
		for some $p>1$, the following inequality holds
		\begin{equation*}
			\int_{J_{u } \cap A} g(x,[u],\nu_u) \, d\mathcal{H}^{d-1} \leq \liminf_{k \to + \infty} 	\int_{J_{u_k }\cap A} g(x,[u_k],\nu_{u_k}) \, d\mathcal{H}^{d-1}.
		\end{equation*}
	\end{thm}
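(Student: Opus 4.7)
The plan is to follow the blow--up and Besicovitch derivation strategy that is classical in this setting, adapted to account for the $x$-dependence of $g$ through assumption \eqref{g1}. Assume $\liminf_{k}\int_{J_{u_k}\cap A}g(x,[u_k],\nu_{u_k})\,d\mathcal{H}^{d-1}<+\infty$ and pass to a subsequence attaining the liminf. By \eqref{g2}, $\mathcal{H}^{d-1}(J_{u_k}\cap A)$ is uniformly bounded, and together with \eqref{equiint cond} this gives uniform $GSBV^p$-bounds on the sequence. Introduce the Radon measures $\mu_k:=g(\cdot,[u_k],\nu_{u_k})\,\mathcal{H}^{d-1}\llcorner(J_{u_k}\cap A)$, whose total masses are uniformly controlled; extract a further subsequence such that $\mu_k\rightharpoonup^*\mu$ weakly in the sense of Radon measures on $A$. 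By the Besicovitch derivation theorem, the goal reduces to proving
\begin{equation*}
\Theta(x_0):=\liminf_{r\to 0^+}\frac{\mu(\overline{B_r(x_0)})}{\omega_{d-1}r^{d-1}}\geq g(x_0,[u](x_0),\nu_u(x_0))\qquad \text{for $\mathcal{H}^{d-1}$-a.e. } x_0\in J_u\cap A.
\end{equation*}

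Fix such an $x_0$, which we additionally take to be a point where the blow-ups $u(x_0+r\cdot)$ converge in measure on $B_1$ to $u_0:=u_{x_0,u^+(x_0),u^-(x_0),\nu_u(x_0)}$; this is ensured at $\mathcal{H}^{d-1}$-a.e.\ $x_0\in J_u$ by the rectifiability of $J_u$. Choose, via a standard diagonal extraction, radii $r_k\downarrow 0$ with $\mu(\partial B_{r_k}(x_0))=0$ and such that the rescaled maps $v_k(y):=u_k(x_0+r_ky)$ converge in measure on $B_1$ to $u_0$. The rescalings satisfy the same energy bound, up to the factor $r_k^{-(d-1)}$, and one has $\limsup_k r_k^{-(d-1)}\mu_k(\overline{B_{r_k}(x_0)})\leq \Theta(x_0)\,\omega_{d-1}$ together with $r_k^{-(d-1)}\mathcal{H}^{d-1}(J_{v_k}\cap B_1) \le C$. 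By a truncation argument (replacing $v_k$ by $\tau_N\circ v_k$ for $N$ large and then letting $N\to\infty$, using the continuity of $g$ in the jump variable from \eqref{g1}), we may further assume $v_k\in SBV^p(B_1,\R^m)\cap L^\infty$.

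The core step is to convert the weak* convergence into a pointwise lower bound using BV-ellipticity of $g(x_0,\cdot,\cdot)$. For $\eta>0$ fixed, the lower semicontinuity of $g(\cdot,\zeta,\nu)$ in $x$ (assumption \eqref{g1}) together with continuity in $(\zeta,\nu)$ yields a radius $\rho_\eta>0$ such that $g(x,\zeta,\nu)\geq g(x_0,\zeta,\nu)-\eta$ whenever $|x-x_0|\le\rho_\eta$ and $(\zeta,\nu)$ ranges in a compact set determined by the $L^\infty$ bounds and $\Sf^{d-1}$. Hence, for $k$ large enough,
\begin{equation*}
\frac{\mu_k(\overline{B_{r_k}(x_0)})}{r_k^{d-1}}\geq \int_{J_{v_k}\cap B_1} g(x_0,[v_k],\nu_{v_k})\,d\mathcal{H}^{d-1}-\eta\,\frac{\mathcal{H}^{d-1}(J_{v_k}\cap B_1)}{1}.
\end{equation*}
Since $v_k\to u_0$ in measure on $B_1$ with the uniform gradient bound, the BV-ellipticity of $g(x_0,\cdot,\cdot)$ (applied in the form of the abstract lower semicontinuity statement for BV-elliptic surface energies on $SBV^p$ sequences with equiintegrable gradients) gives
\begin{equation*}
\liminf_{k\to\infty}\int_{J_{v_k}\cap B_1} g(x_0,[v_k],\nu_{v_k})\,d\mathcal{H}^{d-1}\geq g(x_0,[u](x_0),\nu_u(x_0))\,\mathcal{H}^{d-1}(J_{u_0}\cap B_1)=\omega_{d-1}\,g(x_0,[u](x_0),\nu_u(x_0)).
\end{equation*}
Letting $\eta\to 0$ yields $\Theta(x_0)\,\omega_{d-1}\geq \omega_{d-1}\,g(x_0,[u](x_0),\nu_u(x_0))$, which is the required inequality.

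The main obstacle is the interaction between the $x$-dependence of $g$ and the blow-up procedure: pure BV-ellipticity of $g(x_0,\cdot,\cdot)$ governs the rescaled densities, while the measures $\mu_k$ integrate $g(x,\cdot,\cdot)$. Bridging this requires the uniform approximation of $g(x,\cdot,\cdot)$ by $g(x_0,\cdot,\cdot)$ on compact subsets of $\R^m_0\times\Sf^{d-1}$, which in turn forces the preliminary truncation step to confine $[v_k]$ in a bounded set and the use of the lower semicontinuity of $g$ in $x$ from \eqref{g1}. Once this localization is achieved, the BV-ellipticity inequality is applied exactly once, to a translation-invariant integrand, and the rest of the argument is standard.
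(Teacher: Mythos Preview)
The paper does not give its own proof of this statement: it is quoted verbatim as \cite[Theorem 3.3]{AmbrosioExistence} and used as a black box. Your sketch follows precisely the blow-up/Besicovitch strategy that underlies the cited reference, so in substance you are reconstructing the known proof rather than offering a different route.

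Two points in your write-up deserve tightening. First, the passage ``lower semicontinuity of $g(\cdot,\zeta,\nu)$ together with continuity in $(\zeta,\nu)$ yields a radius $\rho_\eta$ such that $g(x,\zeta,\nu)\geq g(x_0,\zeta,\nu)-\eta$ uniformly on a compact set of $(\zeta,\nu)$'' is not automatic from separate lower semicontinuity in $x$; one needs either joint lower semicontinuity or a compactness argument (e.g.\ approximate $g$ from below by an increasing sequence of continuous functions and pass to the limit). Second, your diagonal selection of radii $r_k$ should also control $r_k^{-(d-1)}\int_{B_{r_k}(x_0)}|\nabla u_k|^p\,dx$, not just the surface measures; otherwise the rescaled sequence $v_k$ need not satisfy the gradient bound required to invoke BV-ellipticity in the form you state. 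Both issues are standard to fix, but as written they are gaps in the argument rather than mere omissions of routine detail.
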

	
	Using Proposition \ref{lsc bulk} and Theorem \ref{lsc surf} we are now ready to conclude the proof of Theorem \ref{lsc of the functional}.
	
	\begin{proof}[Proof of Theorem \ref{lsc of the functional}]
		Thanks to the assumptions on $\psi$, can apply Proposition \ref{lsc bulk} and Theorem \ref{lsc surf}
		once we notice that $\sup_k \int_A \psi(x,|\nabla u_k|) \, dx<+\infty$ and that $\psi$ satisfies \hyperlink{idg}{$\textnormal{(Inc)}_\gamma$} with $\gamma>1$. Therefore, condition \eqref{equiint cond} holds.
	\end{proof}

	\section{Relaxation}
	\label{s:relax}
	
	Using the integral representation and the lower semicontinuity result for quasiconvex and BV-elliptic integrands, we can now prove the relaxation Theorem \ref{relax}.
	
	\begin{proof}[Proof of Theorem \ref{relax}]
		Using Theorem \ref{lsc of the functional} and taking the infimum over all the sequences, for every function $u \in \gsbv^{\psi}(A,\R^m)$ and $A \in \mathcal{A}(\Omega)$ we get
		\begin{equation}
			\int_A \mathcal{Q}f(x,\nabla u(x)) \, dx +\int_{J_u \cap A} \mathcal{R}g(x,[u](x),\nu_u(x)) \, d\mathcal{H}^{d-1} \leq \overline{\mathcal{G}}(u,A).
		\end{equation}
		We now want to show that $\overline{\mathcal{G}}$ satisfies \eqref{H1}--\eqref{H5}. By definition of $\overline{\mathcal{G}}$ we deduce immediately that it satisfies \eqref{H2}, \eqref{H3} and \eqref{H5}. The upper bound of \eqref{H4} is obvious since we have by definition $\overline{\mathcal{G}}(u,A) \leq \mathcal{G}(u,A)$ for every $u \in \gsbv^\psi(\Omega,\R^m)$ and $A \in \mathcal{A}(\Omega)$. On the other hand, the lower bound of \eqref{H4} for $\overline{\mathcal{G}}$ can be deduced observing that by Ioffe's Theorem the functional
		$$
		u \mapsto \int_A \psi(x,|\nabla u(x)|) \, dx + \mathcal{H}^{d-1}(J_u \cap A)
		$$
		is lower semicontinuous with respect to the convergence in measure on $A$. Finally, the proof of \eqref{H1} for $\overline{\mathcal{G}}$ is standard and builds upon the fundamental estimate (see e.g. \cite{BFLMrelaxSBVp} proof of Theorem 10), we omit it.
		
		Hence, by Corollary \ref{trans invar rep} there exist two Borel functions $\overline{f} \colon \Omega \times \R^{m \times d} \to [0,+\infty)$ and $\overline{g}\colon \Omega \times  \R^m_0 \times \Sf^{d-1} \to [0,+\infty)$ such that, for every $u \in \gsbv^{\psi}(A,\R^m)$ and $A \in \mathcal{A}(\Omega)$, 
		\begin{equation*}
			\overline{\mathcal{G}}(u,A)=\int_A \overline{f}(x,\nabla u(x)) \, dx +\int_{J_u \cap A} \overline{g}(x,[u](x),\nu_u(x)) \, d\mathcal{H}^{d-1}.
		\end{equation*}
		We now proceed as in \cite[Proof of Theorem 4]{BFLMrelaxSBVp} and show that
		\begin{equation}\label{101}
			\overline{f}(x,\xi) \leq \mathcal{Q}f(x,\xi) \ \ \ \mbox{for $\mathcal{L}^d$ -a.e. $x \in \Omega$ and every $\xi \in \R^{m \times d}$}.
		\end{equation}
		Let $\eta>0$, since $f$ is a Carathéodory function, by the Scorza-Dragoni theorem there exists a compact set $K \subset \Omega$ with $\mathcal{L}^d(\Omega \setminus K) \leq \eta$, such that the function
		$$
		f \colon K \times \R^{m \times d} \to [0,+\infty)
		$$
		is continuous. Let $K^1$ be the set of points with density one for $K$. Fix $(x,\xi) \in K^1 \times \R^{m \times d}$ and let $\varphi \in C^\infty_0(B,\R^m)$ be such that
		\begin{equation}\label{102}
			\mathcal{Q}f(x,\xi) +\eta \geq \int_B f(x,\xi+\nabla \varphi(y)) \, dy.
		\end{equation}
		For any $\varepsilon >0$ such that $B_\varepsilon(x) \subset \Omega$, set $v_\varepsilon(y):=\varepsilon \varphi((y-x)/\varepsilon)$. Then, $v_\varepsilon \in W^{1,\infty}_0(B_\varepsilon(x),\R^m)$ and, by definition of $\mathbf{m}_\mathcal{G}$ and $\overline{f}$ (see \eqref{m} and \eqref{f}),
		\begin{align}\label{103}
			\overline{f}(x,\xi) \leq \limsup_{\varepsilon \searrow 0} \frac{\mathbf{m}_\mathcal{G}(\xi(\cdot-x)+v_\varepsilon,B_\varepsilon(x))}{\varepsilon^d} \leq \limsup_{\varepsilon \searrow 0} \frac{1}{\varepsilon^d} \int_{B_\varepsilon(x)} f(y,\xi+\nabla v_\varepsilon(y)) \, dy.
		\end{align}
		Using the modulus of uniform continuity for $f$ we get that for $\varepsilon$ small enough
		\begin{equation}\label{104}
			|f(x,\xi+\nabla v_\varepsilon(y))-f(y,\xi+\nabla v_\varepsilon(y))| \leq \eta
		\end{equation}
		for every $y \in B_\varepsilon(x)$. Thus, by \eqref{102}, \eqref{103}, \eqref{104} and recalling properties \hyperlink{A0}{(A0)} and \hyperlink{idg}{(Dec)} of $\psi$ together with \eqref{f2},
		\begin{align*}
			\overline{f}(x,\xi) & \leq \limsup_{\varepsilon \searrow 0} \left( \frac{1}{\varepsilon^d} \int_{B_\varepsilon(x) \cap K} \! f(y,\xi+\nabla v_\varepsilon(y)) \, dy+ \frac{1}{\varepsilon^d} \int_{B_\varepsilon(x) \setminus K} \! b(1+\psi(x,|\xi|+\Vert \nabla \varphi \Vert_{L^\infty})) \, dy \right) \\
			& \leq \limsup_{\varepsilon \searrow 0} \frac{1}{\varepsilon^d} \int_{B_\varepsilon(x) \cap K} f(x,\xi+\nabla v_\varepsilon(y)) \, dy+\eta \\
			& \leq \int_{B} f(x,\xi+\nabla \varphi(y)) \, dy  +\eta \leq \mathcal{Q}f(x,\xi) +2\eta,
		\end{align*}
		where we have used also that $x \in K^1$. Letting $\eta \to 0$, gives \eqref{101}.
		
		We now deal the surface part, showing that, for $\mathcal{H}^{d-1}$-a.e. $x \in \Omega$ and every $(\zeta,\nu) \in \R^m_0 \times \Sf^{d-1}$,
		\begin{equation}\label{99}
			\overline{g}(x,\zeta,\nu) \leq \mathcal{R}g(x,\zeta,\nu).
		\end{equation}
		Take $\eta>0$ and fix $(x,\zeta,\nu) \in \Omega \times \R^m_0 \times \Sf^{d-1}$. Since $g$ satisfies \eqref{g1}--\eqref{g4}, we can find a function $w \in \sbv^{\psi}(Q_\nu,\R^m) \cap L^\infty(B,\R^m)$ such that $\nabla w=0$ a.e. in $B$, $w=u_{x,\zeta,0,\nu}$ in a neighborhood of $\partial B$ and
		\begin{equation}\label{105}
			\mathcal{R}g(x,\zeta,\nu) +\eta \geq \int_{B} g(x,[w](y),\nu_w(y)) \, d\mathcal{H}^{d-1}(y).
		\end{equation}
		We recall that
		\begin{equation*}
			u_{x,\zeta,0,\nu}(y)=
			\begin{cases*}
				\zeta & if $(y-x)\cdot \nu >0$, \\
				0 & if $(y-x)\cdot \nu \leq 0$.
			\end{cases*}
		\end{equation*}
		For any $\varepsilon >0$ such that $B_\varepsilon(x) \subset \Omega$, set $v_\varepsilon(y):= w((y-x)/\varepsilon)$. Then, by definition of $\mathbf{m}_\mathcal{G}$ and $\overline{g}$ (see \eqref{m} and \eqref{g}),
		\begin{align}\label{106}
			\overline{g}(x,\zeta,\nu)\leq \limsup_{\varepsilon \searrow 0} \frac{\mathbf{m}_\mathcal{G}(v_\varepsilon,B_\varepsilon(x))}{\varepsilon^{d-1}} \leq \limsup_{\varepsilon \searrow 0} \frac{1}{\varepsilon^{d-1}} \int_{B_\varepsilon(x) \cap J_{v_\varepsilon}} g(y,[v_\varepsilon](y),\nu_{v_\varepsilon}(y)) \, d\mathcal{H}^{d-1}(y).
		\end{align}
		Using the modulus of uniform continuity of $g$ we get that for $\varepsilon$ small enough, for every $y \in B_\varepsilon(x)$ it holds
		\begin{equation}\label{107}
			|g(x,[v_\varepsilon](y),\nu_{v_\varepsilon}(y))-g(y,[v_\varepsilon](y),\nu_{v_\varepsilon}(y))| \leq \eta.
		\end{equation} 
		Combining \eqref{105}, \eqref{106} and \eqref{107} we get
		\begin{align*}
			\overline{g}(x,\zeta,\nu) & \leq \limsup_{\varepsilon \searrow 0} \frac{1}{\varepsilon^{d-1}} \int_{B_\varepsilon(x) \cap J_{v_\varepsilon}} g(y,[v_\varepsilon](y),\nu_{v_\varepsilon}(y)) \, d\mathcal{H}^{d-1}(y) \\
			& \leq \limsup_{\varepsilon \searrow 0} \frac{1}{\varepsilon^{d-1}} \int_{B_\varepsilon(x) \cap J_{v_\varepsilon}} g(x,[v_\varepsilon](y),\nu_{v_\varepsilon}(y)) \, d\mathcal{H}^{d-1}(y)+\eta \\
			& = \int_{B \cap J_w} g(x,[w](y),\nu_{w}(y)) \, d\mathcal{H}^{d-1}(y)+\eta \leq \mathcal{R}g(x,\zeta,\nu) +2\eta.
		\end{align*}
		Letting $\eta \to 0$ gives \eqref{99}.
	\end{proof}

\section*{Acknowledgments}

\noindent The work of S. Almi was funded by the FWF Austrian Science Fund through the Projects ESP-61 and P35359-N.

\noindent The work of F. Solombrino is part of the MIUR - PRIN 2017, project Variational Methods for Stationary and Evolution Problems with Singularities and Interfaces, No. 2017BTM7SN.  He also acknowledges support by project Starplus 2020 Unina Linea 1 "New challenges in the variational modeling of continuum mechanics" from the University of Naples Federico II and Compagnia di San Paolo, and by Gruppo Nazionale per l’Analisi Matematica, la Probabilit\`a e le loro Applicazioni (GNAMPA-INdAM).

	\bibliographystyle{siam}
	\bibliography{bibliography_NEW}

\end{document}